\newtheorem{theorem}{Theorem}[section] 
\newtheorem{lemma}[theorem]{Lemma}
\newtheorem{corollary}[theorem]{Corollary}
\newtheorem{proposition}[theorem]{Proposition}
\newtheorem{fact}[theorem]{Fact}
\newtheorem{question}{Question}
\theoremstyle{definition}
\newtheorem{definition}[theorem]{Definition}
\newtheorem{remark}[theorem]{Remark}
\numberwithin{equation}{section}
\newcommand{\op}{\operatorname}
\newcommand{\R}{\mathbb{R}}
\newcommand{\C}{\mathbb{C}}
\newcommand{\N}{\mathbb{N}}
\newcommand{\RE}{\mathrm{Re}\,}
\newcommand{\IM}{\mathrm{Im}\,}
\newcommand{\Hom}{\operatorname{Hom}}
\newcommand{\uR}{\prescript{u}{}{\mathcal{R}}}
\newcommand{\uL}{\prescript{u}{}{\mathcal{L}}}
\newcommand{\ad}{\operatorname{ad}}
\newcommand{\Ad}{\operatorname{Ad}}
\begin{document}  

\title[Asymptotic support of Plancherel measures]{On the asymptotic support of Plancherel measures for homogeneous spaces}

\author{Benjamin Harris}
\address[B.Harris]{The MITRE Corporation. Approved for Public Release; Distribution Unlimited. Public Release Case Number 21-3406. Affiliation with the MITRE Corporation is for identification purposes only and is not intended to convey or imply MITRE's concurrent with, or support for, the positions, opinions, or viewpoints expressed by the author.}
\email{bharris@mitre.org}

\author{Yoshiki Oshima}
\address[Y.Oshima]{Department of Pure and Applied Mathematics, Graduate School of Information Science and Technology, Osaka University, 1-5 Yamadaoka, Suita, Osaka 565-0871, Japan}
\email{oshima@ist.osaka-u.ac.jp}

\subjclass[2010]{22E46}

\keywords{Plancherel Measure, Homogeneous Space, The Orbit Method, Coadjoint Orbit, Harmonic Analysis, Reductive Group, Discrete Series}

\begin{abstract}
Let $G$ be a real linear reductive group
 and let $H$ be a unimodular, locally algebraic subgroup.
Let $\op{supp} L^2(G/H)$ 
 be the set of irreducible unitary representations of $G$
 contributing to the decomposition of $L^2(G/H)$,
 namely the support of the Plancherel measure. 
In this paper, we will relate
 $\op{supp} L^2(G/H)$ with the image of moment map
 from the cotangent bundle $T^*(G/H)\to \mathfrak{g}^*$.

For the homogeneous space $X=G/H$, we attach a complex Levi subgroup $L_X$
 of the complexification of $G$ and we show that in some sense ``most" of
 representations in 
 $\op{supp} L^2(G/H)$ are obtained as quantizations of coadjoint orbits
 $\mathcal{O}$ such that $\mathcal{O}\simeq G/L$ and that the complexification
 of $L$ is conjugate to $L_X$.
Moreover, the union of such coadjoint orbits $\mathcal{O}$
 coincides asymptotically with the moment map image.
As a corollary, we show that $L^2(G/H)$ has a discrete series if
 the moment map image contains a nonempty subset of elliptic elements.
\end{abstract}

\maketitle


\section{Introduction}\label{sec:introduction}

Let $G$ be a connected, complex reductive group, let $\sigma$ be an antiholomorphic involution of $G$, and let 
\[(G^{\sigma})_e\subset G_{\mathbb{R}}\subset G^{\sigma}\]
be a real form of $G$. Let $H\subset G$ be a (Zariski) closed, complex algebraic subgroup, and let $X= G/H$ be the corresponding algebraic homogeneous space for $G$. Assume $H$ is $\sigma$ stable with real points $H_{\mathbb{R}}:=H^{\sigma}\cap G_{\mathbb{R}}\subset H$. Let $\mathfrak{g}$ (resp. $\mathfrak{g}_{\mathbb{R}}$, $\mathfrak{h}$, $\mathfrak{h}_{\mathbb{R}}$) denote the Lie algebra of $G$ (resp. $G_{\mathbb{R}}$, $H$, $H_{\mathbb{R}}$). Let $H_0\subset G_{\mathbb{R}}$ be a closed (not necessarily algebraic) subgroup for which the Lie algebra $\mathfrak{h}_0$ of $H_0$ is equal to the Lie algebra $\mathfrak{h}_{\mathbb{R}}$ of $H_{\mathbb{R}}$. In this case, we say that the corresponding homogeneous space $X_0:=G_{\mathbb{R}}/H_0$ is \emph{locally algebraic}.

Next, we assume that $X_0$ admits a nonzero, $G_{\mathbb{R}}$-invariant density $\nu$. Recall $G_{\mathbb{R}}$ acts continuously on the Hilbert space
\[L^2(X_0):=\left\{f\colon X_0\rightarrow \mathbb{C}\ \text{measurable}\,\Bigl |\, \int_{X_0} |f(x)|^2d\nu<\infty\right\}\]
and it preserves the unitary structure on $L^2(X_0)$. 
The theory of direct integrals yields a decomposition of $L^2(X_0)$ into
 irreducible unitary representations of $G_{\mathbb{R}}$. 
To be more precise, let $\widehat{G}_{\mathbb{R}}$ be the unitary dual of $G_{\mathbb{R}}$,
 that is, the set of all isomorphism classes of irreducible unitary representations,
 equipped with the Fell topology
 and the corresponding Borel structure.
Then there exist a finite Borel measure $m$ on $\widehat{G}_{\mathbb{R}}$
 and a measurable function $n(\cdot):\widehat{G}_{\mathbb{R}}\to \mathbb{Z}_{>0}\cup\{\infty\}$
 such that
\[
L^2(X_0)\simeq \int_{\widehat{G}_{\mathbb{R}}}^{\oplus} \pi^{\oplus n(\pi)} dm.
\]
The measure $m$ is unique up to equivalence because $G_{\mathbb{R}}$ is of type I.
See \cite[Paragraphe~VIII]{Di}, \cite[\S\,7.4]{Fo16}, \cite{Ma76}
 and \cite[Chapter~14]{Wa92} for this theory.
The \emph{support} of $L^2(X_0)$, denoted $\op{supp}L^2(X_0)$,
 is defined to be the support of the measure $m$.
Therefore, $\op{supp}L^2(X_0)\subset \widehat{G}_{\mathbb{R}}$
 is the smallest closed subset satisfying
 $m\bigl(\widehat{G}_{\mathbb{R}}\setminus \op{supp}L^2(X_0)\bigr)=0$.

The explicit form of the above decomposition of $L^2(X_0)$ is called the Plancherel formula.
It has been studied for a long time in several settings after the pioneering work of Gelfand.
Among them we note that:
\begin{itemize}
\item Harish-Chandra obtained the Plancherel formula for Riemannian symmetric spaces
 $X_0=G_{\mathbb{R}}/K_{\mathbb{R}}$
 and the group case $X_0=(G'_{\mathbb{R}}\times G'_{\mathbb{R}})/\Delta(G'_{\mathbb{R}})$.
\item The Plancherel formula for symmetric spaces was established by works of T.Oshima,
 Delorme~\cite{De98}, and van den Ban-Schlichtkrull~\cite{Bs05b}.
\item Delorme-Knop-Kr\"{o}tz-Schlichtkrull~\cite{DKKS} is a recent study toward
 the Plancherel formula for real spherical spaces.
\item When $H_0$ is an arithmetic subgroup,
 the study of irreducible decomposition of $L^2(X_0)$ 
 is a vast subject in connection with automorphic representations.
\end{itemize}
Our setting that $H_0$ is unimodular and locally algebraic
 include these settings.
The aim of this paper is to study the asymptotic behavior of $\op{supp}L^2(X_0)$.
As far as the authors know, this is the first result about the spectrum of $L^2(X_0)$
 in this generality.

We would also like to note two general results
 on the space of functions on $X_0$ when $H_0$
 has finitely many connected components.
Kobayashi-Oshima~\cite{KO13} proved that the finiteness of multiplicities on
 the space of functions on $X_0$
 (or more generally, induced representations)
 is characterized by the real sphericity.
Recently, Benoist-Kobayashi \cite{BK15, BKII, BKIII, BKIV}
 obtained a simple criterion for $L^2(X_0)$ to be a tempered representation.
A relationship between Benoist-Kobayashi's result and
 our theorem will be discussed at the end of introduction.

Our study is motivated by the orbit method \cite{Kir04}, \cite{Ver83}.
Let us briefly explain.
For a Lie group $G$, we write $\widehat{G}$ for the unitary dual of $G$, that is, the set of equivalence classes of the irreducible unitary representations of $G$.
When $G$ is a connected, simply connected nilpotent Lie group,
 Kirillov~\cite{Kir62} establishes a bijective correspondence between $\widehat{G}$ and
 the coadjoint orbits of $G$.
Moreover, characters, inductions, and restrictions of representations
 can be simply described in terms of the corresponding coadjoint orbit geometry.
For example, when $H$ is a connected closed subgroup of $G$,
 the following equivalence holds for $\pi\in\widehat{G}$:
\begin{equation}\label{orbit_method_induction}
\pi\in \op{supp}L^2(G/H) \Longleftrightarrow \mathcal{O}\subset
 \op{Im}(\mu\colon T^*(G/H)\to \mathfrak{g}^*),
\end{equation}
where $\mathcal{O}$ denotes the coadjoint orbit for $G$ corresponding to $\pi$
 and $\mu$ denotes the moment map.
See \cite{Kir04} for the details.

For a reductive Lie group $G_{\mathbb{R}}$, most irreducible, unitary representations
 naturally arise from coadjoint orbits.
However, some do not.
For instance, complementary series of $\op{SL}(2,\mathbb{R})$ are not naturally
 associated to coadjoint orbits.
Nevertheless, the set of coadjoint orbits is a good approximation of $\widehat{G}_{\mathbb{R}}$.
In particular, we can define an irreducible, unitary representation from a semisimple orbital parameter (see Definition~\ref{def:semisimple_orbital_parameter}).
Our main result Theorem~\ref{thm:main2} shows that the equivalence
 \eqref{orbit_method_induction} is ``asymptotically true" in our setting.

To be more precise, we need some notation and terminology.
For $\xi\in \mathfrak{g}^*$,
 let $G(\xi)$  denote the stabilizer subgroup of $\xi$ for the coadjoint action of $G$ 
 and let $\mathfrak{g}(\xi)$ denote its Lie algebra, namely,
\[
G(\xi)=\{g\in G\mid \op{Ad}^*(g)(\xi)=\xi\},\quad
\mathfrak{g}(\xi)=\{Y\in\mathfrak{g}\mid \op{ad}^*(Y)(\xi)=0\}.
\]
Similarly, for $\xi\in \mathfrak{g}_{\mathbb{R}}^*$ or
 $\xi\in \sqrt{-1}\mathfrak{g}_{\mathbb{R}}^*$,
 define
\[
G_{\mathbb{R}}(\xi)=\{g\in G_{\mathbb{R}}\mid \op{Ad}^*(g)(\xi)=\xi\},\quad
\mathfrak{g}_{\mathbb{R}}(\xi)=\{Y\in\mathfrak{g}_{\mathbb{R}}\mid \op{ad}^*(Y)(\xi)=0\}.
\]
When $\xi$ is semisimple, i.e.\ the coadjoint orbit through $\xi$ is closed, 
 $\mathfrak{g}(\xi)$ (resp.\ $\mathfrak{g}_{\mathbb{R}}(\xi)$)
 is called a Levi subalgebra of $\mathfrak{g}$ (resp.\ $\mathfrak{g}_{\mathbb{R}}$).
In the following, we often abbreviate the coadjoint action $\op{Ad}^*(g)(\xi)$
 to $g\cdot \xi$.

Let $\mathfrak{l}\subset \mathfrak{g}$ be a Levi subalgebra.
Write $Z(\mathfrak{l})$ for the center of $\mathfrak{l}$  and define 
\begin{equation*}
Z(\mathfrak{l})^*_{\text{reg}}
:=\left\{\lambda\in Z(\mathfrak{l})^*
 \mid \mathfrak{g}(\lambda)=\mathfrak{l}\right\},
\end{equation*}
namely, $Z(\mathfrak{l})^*_{\text{reg}}$
 is the set of
 $\mathbb{C}$-linear functionals on the center of $\mathfrak{l}$
 with (minimal possible) stabilizer $\mathfrak{l}$. 
Fix a Cartan subalgebra $\mathfrak{j}\subset \mathfrak{l}$. 
Let $\Delta(\mathfrak{g},\mathfrak{j})$ (resp.\ $\Delta(\mathfrak{l},\mathfrak{j})$)
 be the roots of $\mathfrak{g}$ with respect to $\mathfrak{j}$
 (resp.\ $\mathfrak{l}$ with respect to $\mathfrak{j}$),
 and let $\Delta^+(\mathfrak{l},\mathfrak{j})\subset \Delta(\mathfrak{l},\mathfrak{j})$
 be a choice of positive roots. 
We say
 $\lambda\in Z(\mathfrak{l})^*_{\text{reg}}$
 is in the \emph{good range} if 
\[\alpha\in \Delta(\mathfrak{g},\mathfrak{j})\ \&\ \langle \lambda, \alpha^{\vee}\rangle\in \mathbb{R}_{>0}\Longrightarrow \langle \lambda+\rho_{\mathfrak{l}},\alpha^{\vee}\rangle\in \mathbb{R}_{>0}.\]
This definition is independent of the choices of
 $\mathfrak{j}\subset \mathfrak{l}$
 and $\Delta^+(\mathfrak{l},\mathfrak{j})$. 
Denote by $Z(\mathfrak{l})^*_{\text{gr}}$ the collection of
 $Z(\mathfrak{l})^*_{\text{reg}}$ that lie in the good range. 
Suppose moreover that $\mathfrak{l}$ is $\sigma$-stable and
 let $\mathfrak{l}_{\mathbb{R}}:=\mathfrak{l}^\sigma$.
Let $\sqrt{-1}Z(\mathfrak{l}_{\mathbb{R}})^*$
 denote the set of purely imaginary valued linear
 functionals on the center of $\mathfrak{l}_{\mathbb{R}}$.
Then $\sqrt{-1}Z(\mathfrak{l}_{\mathbb{R}})^*$ is naturally viewed as a real form
 of $Z(\mathfrak{l})^*$.
Let
\begin{equation*}
\sqrt{-1}Z(\mathfrak{l}_{\mathbb{R}})^*_{\text{reg}}
:=Z(\mathfrak{l})^*_{\text{reg}}\cap \sqrt{-1}Z(\mathfrak{l}_{\mathbb{R}})^*,
\quad
\sqrt{-1}Z(\mathfrak{l}_{\mathbb{R}})^*_{\text{gr}}
:=Z(\mathfrak{l})^*_{\text{gr}}\cap \sqrt{-1}Z(\mathfrak{l}_{\mathbb{R}})^*.
\end{equation*}
Then $\sqrt{-1}Z(\mathfrak{l}_{\mathbb{R}})^*_{\text{reg}}$
 is a complement of a finite union of coroot subspaces
 with codimension one or two in $\sqrt{-1}Z(\mathfrak{l}_{\mathbb{R}})^*$. 

If $\lambda\in \mathcal{O}\subset \sqrt{-1}\mathfrak{g}_{\mathbb{R}}^*$
 is a point within a coadjoint orbit, then we define the {\it Duflo double cover}
 of $G_{\mathbb{R}}(\lambda)$ by 
 $\widetilde{G}_{\mathbb{R}}(\lambda)=
 G_{\mathbb{R}}(\lambda)\times_{\op{Sp}(T_{\lambda}\mathcal{O})} \op{Mp}(T_{\lambda}\mathcal{O})$.
See \cite[\S 2.1]{HO20} for a more detailed explanation about this double cover.

\begin{definition}\label{def:semisimple_orbital_parameter}
A \emph{semisimple orbital parameter} for $G_{\mathbb{R}}$ is a pair $(\mathcal{O},\Gamma)$ where 
\begin{enumerate}[(a)]
\item $\mathcal{O}\subset \sqrt{-1}\mathfrak{g}_{\mathbb{R}}^*$ is a semisimple (i.e.\ closed) coadjoint orbit
\item for every $\lambda\in \mathcal{O}$, $\Gamma_{\lambda}$ is a genuine one-dimensional unitary representation of $\widetilde{G_{\mathbb{R}}}(\lambda)$.
\end{enumerate}
In addition, this pair must satisfy
\begin{enumerate}[(i)]
\item $g\cdot \Gamma_{\lambda}=\Gamma_{g\cdot \lambda}$ 
for every $g\in G_{\mathbb{R}}$, $\lambda\in \mathcal{O}$
\item $d\Gamma_{\lambda}=\lambda|_{\mathfrak{g}_{\mathbb{R}}(\lambda)}$
for every $\lambda\in \mathcal{O}$.
\end{enumerate}
\end{definition}

Let $(\mathcal{O},\Gamma)$ be a semisimple orbital parameter. 
Take $\lambda\in \mathcal{O}$ and put $\mathfrak{l}:=\mathfrak{g}(\lambda)$.
Then we can regard $\lambda$ as an element of 
 $\sqrt{-1} Z(\mathfrak{l}_{\mathbb{R}})^*_{\text{reg}}$
 by restriction.
Assume  $\lambda\in \sqrt{-1} Z(\mathfrak{l}_{\mathbb{R}})^*_{\text{reg}}$
 is in the good range.  This assumption only depends on $\mathcal{O}$ and
 not on the choice of $\lambda\in\mathcal{O}$;
 hence, in this case, we say $\mathcal{O}$ is in the good range.
Then we can construct an irreducible unitary representation
 $\pi(\mathcal{O},\Gamma)$ by using cohomological induction.
 See Section~\ref{sec:quantization_semisimple} for the definition.
If we take $\lambda\in \mathcal{O}$ and put
 $\mathfrak{l}_{\mathbb{R}}:=\mathfrak{g}_{\mathbb{R}}(\lambda)$, 
 then we also write $\pi(\mathfrak{l}_{\mathbb{R}},\Gamma_\lambda)$
 for $\pi(\mathcal{O},\Gamma)$.

Let $\mu\colon T^*X\rightarrow \mathfrak{g}^*$ denote the moment map defined by
\[(x,\xi)\mapsto \xi\in T_x^*X\simeq (\mathfrak{g}/\mathfrak{g}_x)^*\hookrightarrow \mathfrak{g}^*.\]
The following theorem is a consequence of \cite[3.3 Corollary]{Kno94}.

\begin{theorem}[cf.\ \cite{Kno94}]\label{thm:moment_image}
Let $X$ be an algebraic homogeneous space for a connected, complex reductive group $G$ admitting a nonzero $G$-invariant density. Then there exists a complex Levi subalgebra $\mathfrak{l}_X\subset \mathfrak{g}$ and a complex subspace $\mathfrak{a}_X^*\subset Z(\mathfrak{l}_X)^*$ satisfying $Z_{\mathfrak{g}}(\mathfrak{a}_X^*)=\mathfrak{l}_X$, both unique up to $G$-conjugacy, such that
$\overline{\mu(T^*X)}=\overline{G\cdot \mathfrak{a}_X^*}$.
\end{theorem}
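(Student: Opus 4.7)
The plan is to deduce the theorem from Knop's Corollary 3.3 in \cite{Kno94} by direct translation. Knop, working with normal $G$-varieties, attaches to each such $X$ a \emph{Cartan subspace}, realized as a subspace $\mathfrak{a}_X^* \subset \mathfrak{t}^* \subset \mathfrak{g}^*$ for a fixed Cartan subalgebra $\mathfrak{t}$, uniquely determined up to the action of the little Weyl group of $X$, and his Corollary 3.3 can be read as the identity $\overline{\mu(T^*X)} = \overline{G \cdot \mathfrak{a}_X^*}$. Since the homogeneous space $X = G/H$ is smooth and therefore normal, Knop's hypotheses are satisfied and the subspace $\mathfrak{a}_X^*$ is produced directly.

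To obtain the Levi subalgebra, I would set $\mathfrak{l}_X := Z_{\mathfrak{g}}(\mathfrak{a}_X^*)$. All elements of $\mathfrak{a}_X^* \subset \mathfrak{t}^*$ are semisimple, and a generic $\xi \in \mathfrak{a}_X^*$ has stabilizer $\mathfrak{g}(\xi) = \mathfrak{t} \oplus \bigoplus_{\alpha|_{\mathfrak{a}_X^*} = 0} \mathfrak{g}_{\alpha}$, which coincides with $Z_{\mathfrak{g}}(\mathfrak{a}_X^*)$. This centralizer is a Levi subalgebra by definition, and both $\mathfrak{a}_X^* \subset Z(\mathfrak{l}_X)^*$ and $Z_{\mathfrak{g}}(\mathfrak{a}_X^*) = \mathfrak{l}_X$ hold by construction. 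Uniqueness of the pair $(\mathfrak{l}_X,\mathfrak{a}_X^*)$ up to $G$-conjugacy then transfers from Knop's subspace: its little-Weyl-group ambiguity is realized inside $N_G(\mathfrak{t}) \subset G$, and $\mathfrak{l}_X$ is a functorial invariant of $\mathfrak{a}_X^*$.

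The main obstacle I anticipate is matching conventions between Knop's formulation and the statement here. Knop typically frames his moment map results in terms of the Chevalley quotient $T^*X /\!/ G$ and a polarization of the cotangent bundle, rather than $\mu(T^*X)$ itself as a subset of $\mathfrak{g}^*$. The translation reduces to the observations that $\mu(T^*X)$ is $G$-stable, that its image in $\mathfrak{g}^* /\!/ G$ equals the image of $\mathfrak{a}_X^*$ modulo the little Weyl group, and that semisimple coadjoint orbits are closed; combining these gives $\overline{\mu(T^*X)} = \overline{G \cdot \mathfrak{a}_X^*}$ on passing to closures.
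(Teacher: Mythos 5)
Your approach is correct and rests on exactly the same ultimate input as the paper, namely Knop's Corollary 3.3 (together with his Lemma 3.1, which underlies the Kostant-section/polarization step you allude to). The genuine point of divergence is how $\mathfrak{l}_X$ is produced. You take Knop's Cartan subspace $\mathfrak{a}_X^*\subset\mathfrak{t}^*$ as primary, set $\mathfrak{l}_X:=Z_{\mathfrak{g}}(\mathfrak{a}_X^*)$, and then check it is a Levi with $\mathfrak{a}_X^*\subset Z(\mathfrak{l}_X)^*$. The paper instead takes Knop's \emph{local structure theorem} as primary: it produces a parabolic $Q_X$ with Levi factor $L_X$ and an $L_X$-stable slice $Z$, defines $L_X^0=\ker\bigl(L_X\to\operatorname{Aut}(Z)\bigr)$ and $A_X=L_X/L_X^0$ with Lie algebra $\mathfrak{a}_X$, and only afterwards observes (using that $\mathfrak{a}_X^*$ meets $Z(\mathfrak{l}_X)^*_{\mathrm{reg}}$) that $\mathfrak{l}_X=\{Y\in\mathfrak{g}\mid\operatorname{ad}^*(Y)(\mathfrak{a}_X^*)=0\}$, so that the two definitions agree. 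Your route is the shorter one for Theorem~\ref{thm:moment_image} in isolation, but the paper's geometric construction is not wasted: the parabolic $Q_X$ and its subgroup $Q_X^0=L_X^0N_X$ are exactly what is needed later to define the ideal $J_{\mathfrak{a}_X}$ and to invoke the Borho–Brylinski identity (Fact~\ref{lem:BB}), none of which is visible if one only has $\mathfrak{l}_X$ as a centralizer. One small caution in your translation paragraph: equality of the images of two $G$-stable sets in $\mathfrak{g}^*/\!/G$ does not by itself force their closures to coincide (compare the nilpotent cone and the origin); what actually closes the gap is the Kostant-section statement from Knop's Lemma 3.1, which shows generic points of $\mu(T^*X)$ are semisimple and lie on $G$-orbits through $\mathfrak{a}_X^*$. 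You gesture at this with "polarization of the cotangent bundle," but it is worth making explicit that the section, not just the GIT-quotient identification, is doing the work there.
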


To state our main result we introduce some notation.

Let $Z$ be a finite dimensional real vector space and let $S\subset Z$ be a subset.
We define the \emph{asymptotic cone} of $S$ in $Z$ to be
\begin{equation*}
\op{AC}(S) 
:=\biggl\{\xi\in Z \;\biggl|\;
\begin{aligned} 
 &\text{ $S\cap C$ is unbounded for}\\
 &\text{ any open conic neighborhood $C$ of $\xi$ } 
\end{aligned}
\biggr\}\cup\{0\}.
\end{equation*}

If $\mathfrak{l}_1,\mathfrak{l}_2\subset \mathfrak{g}$ are subalgebras, we write $\mathfrak{l}_1\sim \mathfrak{l}_2$ if there exists $g\in G$ such that $\operatorname{Ad}(g)\mathfrak{l}_1=\mathfrak{l}_2$.

\begin{theorem}\label{thm:main}
Let $\mathfrak{l}_{\mathbb{R}}$ be a Lie subalgebra of $\mathfrak{g}_{\mathbb{R}}$
 such that
 $\mathfrak{l}_{\mathbb{R}}\otimes \mathbb{C}\sim \mathfrak{l}_X$.
Then
\begin{equation}
\begin{split}
&\phantom{=} \op{AC}\Bigl(\Bigl\{
\lambda\in \sqrt{-1}Z(\mathfrak{l}_{\mathbb{R}})^*_{\mathrm{gr}}
\;\Bigl|\; 
\begin{aligned} 
 &\pi(\mathfrak{l}_{\mathbb{R}},\Gamma_{\lambda})\in \op{supp}L^2(X_0)\\
 &\text{and }(G\cdot\lambda) \cap \mathfrak{a}_X^*\neq\emptyset 
\end{aligned}\Bigr\}\Bigr)
\cap
 \sqrt{-1}Z(\mathfrak{l}_{\mathbb{R}})_{\rm reg}^*\\  \label{eq:main}
&=\op{AC}\bigl(\bigl\{
\lambda\in \sqrt{-1}Z(\mathfrak{l}_{\mathbb{R}})^*_{\mathrm{gr}}
\mid\pi(\mathfrak{l}_{\mathbb{R}},\Gamma_{\lambda})\in \op{supp}L^2(X_0)\bigr\}\bigr)
\cap
 \sqrt{-1}Z(\mathfrak{l}_{\mathbb{R}})_{\rm reg}^*\\  
&=\overline{\sqrt{-1}\mu(T^*X_0)}\cap \sqrt{-1}Z(\mathfrak{l}_{\mathbb{R}})_{\mathrm{reg}}^*.
\end{split}
\end{equation}
Further, we have either
\begin{equation}\label{eq:moment}
\dim\bigl(\overline{\sqrt{-1}\mu(T^*X_0)}\cap
 \sqrt{-1}Z(\mathfrak{l}_{\mathbb{R}})_{\mathrm{reg}}^*\bigr)
 =\dim_{\mathbb{C}} \mathfrak{a}_X^*
\end{equation}
or 
\begin{equation*}
\overline{\sqrt{-1}\mu(T^*X_0)}\cap \sqrt{-1}Z(\mathfrak{l}_{\mathbb{R}})_{\mathrm{reg}}^*
=\emptyset.
\end{equation*}
The intersection is always nonempty for some $\mathfrak{l}_{\mathbb{R}}$.
\end{theorem}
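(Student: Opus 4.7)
The plan is to prove the chain of equalities in (\ref{eq:main}) via three inclusions — (a)~the top set is contained in the middle set, (b)~the middle set is contained in the bottom set, (c)~the bottom set is contained in the top set — and then to derive the dimension dichotomy and the nonemptiness claim from Knop's Theorem~\ref{thm:moment_image}.

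Inclusion (a) is automatic because $\op{AC}$ is monotone with respect to inclusion: adding the side condition $(G\cdot\lambda)\cap\mathfrak{a}_X^*\neq \emptyset$ only shrinks the defining set. For (b), the plan is to use an asymptotic orbit-method input. The key assertion is that if $\pi(\mathfrak{l}_{\mathbb{R}},\Gamma_\lambda)\in \op{supp}L^2(X_0)$ with $\lambda$ in the good range, then the coadjoint orbit $G_{\mathbb{R}}\cdot\lambda$ is contained in $\overline{\sqrt{-1}\mu(T^*X_0)}$ up to a bounded error. This should follow from a combination of Howe's wavefront-set description of $\op{supp}L^2(X_0)$, Harish-Chandra's character asymptotics, and the known associated-variety data of cohomologically induced representations. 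Passing to the asymptotic cone removes the bounded error, and intersecting with the regular stratum $\sqrt{-1}Z(\mathfrak{l}_{\mathbb{R}})^*_{\text{reg}}$ isolates the semisimple piece of the orbit lying in $Z(\mathfrak{l}_{\mathbb{R}})^*$.

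The substantive inclusion is (c): given a regular point $\nu$ in $\overline{\sqrt{-1}\mu(T^*X_0)}\cap \sqrt{-1}Z(\mathfrak{l}_{\mathbb{R}})^*_{\text{reg}}$, I would produce a sequence of semisimple orbital parameters $(\mathcal{O}_n,\Gamma_n)$ with $\lambda_n\in \mathcal{O}_n\cap \mathfrak{a}_X^*$ tending to infinity in the direction of $\nu$, such that $\pi(\mathfrak{l}_{\mathbb{R}},\Gamma_{\lambda_n})\in \op{supp}L^2(X_0)$. Observe that achieving the extra condition $(G\cdot\lambda_n)\cap\mathfrak{a}_X^*\neq\emptyset$ gives inclusion into the top set directly, hence yields all three equalities at once (the middle equality being then forced by (a) and (b)). The natural construction is a Plancherel-type decomposition of a suitable component of $L^2(X_0)$ parametrized by $\mathfrak{a}_X^*$, in the spirit of the Helgason--Harish-Chandra formula for Riemannian symmetric spaces and the Delorme and van den Ban--Schlichtkrull formulas for reductive symmetric spaces: one induces a discrete-series-like representation from a real form of the Levi associated to $\mathfrak{l}_X$ and exhibits a family of matrix coefficients lying in $L^2(X_0)$, then verifies via the orbit-method correspondence that these coincide with $\pi(\mathfrak{l}_{\mathbb{R}},\Gamma_\lambda)$ for $\lambda\in \mathfrak{a}_X^*$ in the good range. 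This step will be by far the main obstacle, since it is here that the locally algebraic, unimodular hypothesis is really used and the analysis must be robust enough to cover non-spherical $X_0$.

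For the dimension dichotomy, Theorem~\ref{thm:moment_image} gives $\overline{\mu(T^*X)}=\overline{G\cdot\mathfrak{a}_X^*}$. Since $Z_{\mathfrak{g}}(\mathfrak{a}_X^*)=\mathfrak{l}_X$, any point $\xi\in \overline{G\cdot\mathfrak{a}_X^*}\cap Z(\mathfrak{l}_{\mathbb{R}})^*_{\text{reg}}$ has centralizer $\mathfrak{l}_{\mathbb{R}}\otimes\mathbb{C}\sim \mathfrak{l}_X$ and is therefore $G$-conjugate to an element of $\mathfrak{a}_X^*$; near such $\xi$ the intersection with $\sqrt{-1}Z(\mathfrak{l}_{\mathbb{R}})^*_{\text{reg}}$ is a smooth submanifold of real dimension $\dim_{\mathbb{C}}\mathfrak{a}_X^*$, which yields (\ref{eq:moment}) unless the intersection is empty. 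Finally, to prove nonemptiness for some $\mathfrak{l}_{\mathbb{R}}$, I would pick an element $\xi_0\in \sqrt{-1}\mu(T^*X_0)$ whose semisimple Jordan part has centralizer $G$-conjugate to a real form of $\mathfrak{l}_X$ and let $\mathfrak{l}_{\mathbb{R}}$ be that centralizer; existence of such $\xi_0$ is guaranteed by Theorem~\ref{thm:moment_image} together with the fact that the real moment image is invariant under the real Cartan involution of $G_{\mathbb{R}}$.
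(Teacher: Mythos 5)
Your inclusions (a) and (b) are in the right spirit, but your plan for the substantive inclusion (c) is not the paper's route and, as sketched, would not go through. You propose to establish $\overline{\sqrt{-1}\mu(T^*X_0)}\cap \sqrt{-1}Z(\mathfrak{l}_{\mathbb{R}})^*_{\rm reg}\subset \op{AC}(\cdots)$ by explicitly constructing families of matrix coefficients in $L^2(X_0)$, ``in the spirit of the Helgason--Harish-Chandra formula.'' That amounts to proving a Plancherel-type formula for an arbitrary unimodular locally algebraic $X_0$, which is far beyond the scope of this result (and is precisely what Question~\ref{ques:regular} leaves open). The paper never produces representations in $\op{supp}L^2(X_0)$ directly. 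Instead it uses the \emph{equality} $\op{WF}(L^2(X_0))=\overline{\sqrt{-1}\mu(T^*X_0)}$ from \cite[Theorem~2.1]{HW17} (Theorem~\ref{thm:wavefront}), decomposes $L^2(X_0)=V'\oplus\bigoplus_{\mathfrak{l}_{\mathbb{R}}}V_{\mathfrak{l}_{\mathbb{R}}}$ according to Theorem~\ref{thm:reduction_semisimple}, kills $\op{WF}(V')$ on the regular stratum by Lemma~\ref{lem:singular_spectrum}, and then invokes Theorem~\ref{thm:direct_integral}, which computes $\op{WF}(V_{\mathfrak{l}_{\mathbb{R}}})\cap G\cdot Z(\mathfrak{l})^*_{\rm reg}$ \emph{exactly} as the asymptotic cone of the union of the orbits $G_{\mathbb{R}}\cdot\lambda$ occurring in $\op{supp}m_\Pi$. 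Because this last step is an equality, both inclusions come out simultaneously; there is no need to construct anything.

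Your (b) is correspondingly underpowered: ``Howe's wavefront-set description, Harish-Chandra's character asymptotics, and associated-variety data'' gesture at the right tools but do not by themselves give the precise two-sided estimate needed. The nontrivial content here is the lower bound $\op{WF}(\Pi)\supset \op{AC}(\cdots)$ in Theorem~\ref{thm:direct_integral}, which requires the delicate Gaussian/FBI-transform estimates of Sections~\ref{sec:WF2}--\ref{sec:KKSform} applied to the contour-integral character formula of \cite{HO20}; nothing in your sketch supplies this. (Associated varieties track nilpotent orbits, not the semisimple orbits that parametrize $\pi(\mathfrak{l}_{\mathbb{R}},\Gamma_\lambda)$ here, so they are not the right invariant.) Your treatment of the dimension dichotomy and the final nonemptiness claim is closer to the paper's, though you should be more careful: a regular point of $\overline{G\cdot\mathfrak{a}_X^*}\cap\sqrt{-1}Z(\mathfrak{l}_{\mathbb{R}})^*_{\rm reg}$ is a limit of conjugates of $\mathfrak{a}_X^*$, not automatically a conjugate; the paper handles this by working on the open locus $(T^*X_{\mathbb{R}})^o$ where $d\mu$ has maximal rank and by using that $G_{\mathbb{R}}\cdot Z(\mathfrak{l}_{\mathbb{R}})^*_{\rm reg}$ is both open and closed in $(G\cdot Z(\mathfrak{l}_X)^*_{\rm reg})\cap\mathfrak{g}_{\mathbb{R}}^*$.
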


Note that
 $\overline{\sqrt{-1}\mu(T^*X_0)}\cap\sqrt{-1}Z(\mathfrak{l}_{\mathbb{R}})_{\mathrm{reg}}^*$
 is a semialgebraic set and its dimension is well-defined.
Theorem~\ref{thm:main} says there exist $\mathfrak{l}_{\mathbb{R}}$
 and infinitely many representations
 of the form $\pi(\mathfrak{l}_{\mathbb{R}},\Gamma_{\lambda})$ in $\operatorname{supp}L^2(X_0)$.

Following the spirit of orbit method, we can restate
 Theorem~\ref{thm:main} as follows.
\begin{theorem}\label{thm:main2}
In the above setting, we have
\begin{align*}
&\phantom{=}\op{AC}\Biggl(
\bigcup_{\substack{\pi(\mathcal{O},\Gamma)\in \op{supp} L^2(X_0) \\
 (G\cdot \mathcal{O})\cap \mathfrak{a}_X^* \neq \emptyset}}
 \mathcal{O}\Biggr)\cap (G\cdot Z(\mathfrak{l}_X)^*_{\mathrm{reg}})\\
&=\op{AC}\Biggl(
\bigcup_{\substack{\pi(\mathcal{O},\Gamma)\in \op{supp} L^2(X_0)}}
 \mathcal{O}\Biggr)\cap (G\cdot Z(\mathfrak{l}_X)^*_{\mathrm{reg}})\\
&=\overline{\sqrt{-1}\mu(T^*X_0)}\cap (G\cdot Z(\mathfrak{l}_X)^*_{\mathrm{reg}}).
\end{align*}
Here, we assume
 $\mathcal{O}\subset G\cdot Z(\mathfrak{l}_X)^*_{\mathrm{reg}}$
 and $\mathcal{O}$ is in the good range for the first two lines of above equations.
\end{theorem}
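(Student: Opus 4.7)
The plan is to derive Theorem~\ref{thm:main2} from Theorem~\ref{thm:main} by decomposing $(G\cdot Z(\mathfrak{l}_X)^*_{\mathrm{reg}})\cap\sqrt{-1}\mathfrak{g}_{\mathbb{R}}^*$ along $G_{\mathbb{R}}$-conjugacy classes of real forms of $\mathfrak{l}_X$ and then $G_{\mathbb{R}}$-saturating. There are only finitely many such classes $\mathfrak{l}_{\mathbb{R},1},\dots,\mathfrak{l}_{\mathbb{R},k}$ of real Lie subalgebras of $\mathfrak{g}_{\mathbb{R}}$ whose complexifications are $G$-conjugate to $\mathfrak{l}_X$. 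Every semisimple coadjoint orbit $\mathcal{O}$ in $\sqrt{-1}\mathfrak{g}_{\mathbb{R}}^*$ with $\mathcal{O}\subset G\cdot Z(\mathfrak{l}_X)^*_{\mathrm{reg}}$ has the form $G_{\mathbb{R}}\cdot\lambda$ for some $\lambda\in \sqrt{-1}Z(\mathfrak{l}_{\mathbb{R},i})^*_{\mathrm{reg}}$, unique up to the action of the normalizer of $\mathfrak{l}_{\mathbb{R},i}$ in $G_{\mathbb{R}}$; moreover $\pi(\mathcal{O},\Gamma)=\pi(\mathfrak{l}_{\mathbb{R},i},\Gamma_\lambda)$ by construction of the quantization, and $\mathcal{O}$ is in the good range iff $\lambda\in \sqrt{-1}Z(\mathfrak{l}_{\mathbb{R},i})^*_{\mathrm{gr}}$.

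Write $V_i := \sqrt{-1}Z(\mathfrak{l}_{\mathbb{R},i})^*$, and set
\begin{equation*}
S_i := \bigl\{\lambda\in V_i^{\mathrm{gr}}\;\bigl|\;\pi(\mathfrak{l}_{\mathbb{R},i},\Gamma_\lambda)\in\op{supp}L^2(X_0)\bigr\},
\end{equation*}
with $S_i'\subset S_i$ the subset obtained by imposing the additional condition $(G\cdot\lambda)\cap \mathfrak{a}_X^*\neq\emptyset$. By the first paragraph, the unions of orbits on the first two lines of the theorem coincide with $\bigcup_i G_{\mathbb{R}}\cdot S_i'$ and $\bigcup_i G_{\mathbb{R}}\cdot S_i$ respectively. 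Since $\op{AC}$ commutes with finite unions, the theorem reduces to showing, for each $i$,
\begin{equation*}
\op{AC}(G_{\mathbb{R}}\cdot S_i)\cap(G_{\mathbb{R}}\cdot V_i^{\mathrm{reg}})=G_{\mathbb{R}}\cdot\bigl(\op{AC}(S_i)\cap V_i^{\mathrm{reg}}\bigr),
\end{equation*}
the same identity with $S_i$ replaced by $S_i'$, and the vanishing $\op{AC}(G_{\mathbb{R}}\cdot S_i)\cap(G_{\mathbb{R}}\cdot V_j^{\mathrm{reg}})=\emptyset$ for $j\neq i$.

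The geometric crux is this compatibility of asymptotic cone with $G_{\mathbb{R}}$-saturation along the regular semisimple locus. The inclusion $\supset$ is immediate from $G_{\mathbb{R}}$-invariance of the left-hand side. For $\subset$, the key input is that the $G_{\mathbb{R}}$-action on a neighborhood of $V_i^{\mathrm{reg}}$ is proper with stabilizer conjugate to the Levi subgroup $L_{\mathbb{R},i}$ with Lie algebra $\mathfrak{l}_{\mathbb{R},i}$. Given a sequence $g_n\cdot\lambda_n\in G_{\mathbb{R}}\cdot S_i$ whose directions converge to some $\xi\in V_i^{\mathrm{reg}}$, one may, after replacing $(g_n,\lambda_n)$ by $(g_nl_n,l_n^{-1}\lambda_n)$ for suitable $l_n$ in the normalizer of $\mathfrak{l}_{\mathbb{R},i}$, arrange $g_n$ to be bounded. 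Passing to a subsequence, $\lambda_n/\|\lambda_n\|$ then converges in $V_i$ to a direction in $\op{AC}(S_i)\cap V_i^{\mathrm{reg}}$ which is $G_{\mathbb{R}}$-conjugate to $\xi$. The cross terms with $j\neq i$ vanish by the same properness, since the complex Levi stabilizer type is preserved along an asymptotically convergent sequence of regular semisimple directions.

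With this in hand, Theorem~\ref{thm:main} applied to each $\mathfrak{l}_{\mathbb{R},i}$ gives
\begin{equation*}
\op{AC}(S_i')\cap V_i^{\mathrm{reg}}=\op{AC}(S_i)\cap V_i^{\mathrm{reg}}=\overline{\sqrt{-1}\mu(T^*X_0)}\cap V_i^{\mathrm{reg}},
\end{equation*}
and $G_{\mathbb{R}}$-saturating both sides and summing over $i$ yields Theorem~\ref{thm:main2}, where on the right one uses that $\overline{\sqrt{-1}\mu(T^*X_0)}$ is already $G_{\mathbb{R}}$-invariant so that
\begin{equation*}
\bigcup_i G_{\mathbb{R}}\cdot\bigl(\overline{\sqrt{-1}\mu(T^*X_0)}\cap V_i^{\mathrm{reg}}\bigr)=\overline{\sqrt{-1}\mu(T^*X_0)}\cap(G\cdot Z(\mathfrak{l}_X)^*_{\mathrm{reg}}).
\end{equation*}
The main obstacle is the properness/slice argument controlling limits of $G_{\mathbb{R}}$-translates near the regular semisimple locus; the rest is an orbit-parameterization bookkeeping.
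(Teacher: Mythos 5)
Your proposal inverts the paper's logical dependency, and as a result is circular relative to the actual structure of the paper. In Section~\ref{sec:proof}, Theorem~\ref{thm:main2} is proved \emph{first}, directly from the analytic machinery: one combines Theorem~\ref{thm:reduction_semisimple} (which places $\op{supp}L^2(X_0)$ inside $\widehat{G}_{\mathbb{R}}(\mathfrak{l}_X,d)\cup\bigcup_{\mathfrak{l}_{\mathbb{R}}}\widehat{G}_{\mathbb{R}}^{\mathfrak{l}_{\mathbb{R}}}$), Lemma~\ref{lem:singular_spectrum} (to kill the contribution from $\widehat{G}_{\mathbb{R}}(\mathfrak{l}_X,d)$ on the regular locus), Theorem~\ref{thm:direct_integral} (the wave front set of a direct integral of representations attached to a fixed real Levi), and Theorem~\ref{thm:wavefront} (the wave front set of $L^2(X_0)$ equals $\overline{\sqrt{-1}\mu(T^*X_0)}$). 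Theorem~\ref{thm:main} is then \emph{deduced} from the identity \eqref{eq:wavefront_decomp} obtained in the course of proving Theorem~\ref{thm:main2}, by intersecting with $\sqrt{-1}Z(\mathfrak{l}_{\mathbb{R}})^*_{\mathrm{reg}}$ and invoking Lemma~\ref{lem:asymptotic_cone}. You reverse this arrow, deriving Theorem~\ref{thm:main2} from Theorem~\ref{thm:main}; within this paper, Theorem~\ref{thm:main} is not available before Theorem~\ref{thm:main2}, so your reduction does not give a proof.

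That said, as a formal implication your bookkeeping is correct, and the geometric lemma you call the "crux" is precisely the paper's Lemma~\ref{lem:asymptotic_cone}, equation \eqref{eq:asymptotic_cone}: $\op{AC}(G_{\mathbb{R}}\cdot S)\cap(G\cdot Z(\mathfrak{l})^*_{\mathrm{reg}})=G_{\mathbb{R}}\cdot(\op{AC}(S)\cap\sqrt{-1}Z(\mathfrak{l}_{\mathbb{R}})^*_{\mathrm{reg}})$, combined with the decomposition \eqref{eq:regular_decomp} of $(G\cdot Z(\mathfrak{l}_X)^*_{\mathrm{reg}})\cap\mathfrak{g}_{\mathbb{R}}^*$ into its $G_{\mathbb{R}}$-conjugacy pieces $G_{\mathbb{R}}\cdot Z(\mathfrak{l}_{\mathbb{R},i})^*_{\mathrm{reg}}$. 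Your sketch of this lemma ("properness" of the $G_{\mathbb{R}}$-action and boundedness of the $g_n$) is in the right spirit but not quite what the paper argues: $G_{\mathbb{R}}$ does not act properly on $\sqrt{-1}\mathfrak{g}_{\mathbb{R}}^*$, and boundedness of $g_n$ modulo the normalizer would also require $S_i$ to be normalizer-stable (it is, since the representation $\pi(\mathfrak{l}_{\mathbb{R}},\Gamma_\lambda)$ depends only on the orbit of $\lambda$, but this should be said). The paper instead uses submersivity of $(g,\eta)\mapsto g\cdot\eta$ on $G_{\mathbb{R}}\times\sqrt{-1}(\mathfrak{l}'_{\mathbb{R}})^{*,o}$ to localize, and then the fact that two elements of $\sqrt{-1}Z(\mathfrak{l}_{\mathbb{R}})^*$ that are $G_{\mathbb{R}}$-conjugate are already conjugate under the finite real Weyl group $W_{\mathbb{R}}$, reducing to finitely many translates. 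If you want a self-contained proof of Theorem~\ref{thm:main2}, you need to run the wave front set argument of Section~\ref{sec:proof} rather than invoke Theorem~\ref{thm:main}.
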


We remark that 
$\overline{\sqrt{-1}\mu(T^*X_0)}\cap (G\cdot Z(\mathfrak{l}_X)^*_{\mathrm{reg}})$
 is an open dense subset of $\overline{\sqrt{-1}\mu(T^*X_0)}$
 by Theorem~\ref{thm:moment_image}.

The significance of Theorem~\ref{thm:main} and Theorem~\ref{thm:main2} is that
 in some sense ``most'' of the representations in $\operatorname{supp}L^2(X_0)$
 are of the form $\pi(\mathfrak{l}_{\mathbb{R}},\Gamma_{\lambda})$
 where $\mathfrak{l}_{\mathbb{R}}$ is a real form of $\mathfrak{l}$
 and $\mathfrak{l}\sim \mathfrak{l}_X$.
Next, we give a precise statement along these lines.

If $\mathfrak{l}\subset \mathfrak{g}$ is a Levi subalgebra,
 denote by $\widehat{G}_{\mathbb{R}}^{\mathfrak{l}}$
 the collection of irreducible, unitary representations of $G_{\mathbb{R}}$
 of the form $\pi(\mathfrak{l}_{\mathbb{R}}',\Gamma_{\lambda})$
 such that the complexification of $\mathfrak{l}'_{\mathbb{R}}$ is
 $G$-conjugate to $\mathfrak{l}$ and $\lambda$ is in the good range.

Let $\mathfrak{j}$ be a Cartan subalgebra of $\mathfrak{g}$
 and let $W=W(\mathfrak{g},\mathfrak{j})$ be the Weyl group.
An irreducible unitary representation $\pi$ of $G_{\mathbb{R}}$
 has an infinitesimal character, which is regarded as a $W$-orbit
 in $\mathfrak{j}^*$ via the Harish-Chandra isomorphism.
We write $\chi_{\pi}\in \mathfrak{j}^*/W$ for this.
By taking a conjugation, we may assume $\mathfrak{j}\subset \mathfrak{l}_X$
 and then we have inclusions
 $\mathfrak{a}_X^*\subset Z(\mathfrak{l}_X)^* \subset \mathfrak{j}^*$.
Write $\rho_{\mathfrak{l}_X}\in\mathfrak{j}^*$ for the half sum of positive roots in
 $\mathfrak{l}_X$.

The following theorem is essentially same as Theorem~\ref{thm:reduction_semisimple}.

\begin{theorem} \label{thm:inf_char} \ 
\begin{enumerate}[{\rm (i)}]
\item \label{inf.char._cond} 
 If $\pi\in \op{supp}L^2(G_{\mathbb{R}}/H_0)$,
 then $\chi_{\pi}$ has a representative
 $\xi\in \mathfrak{a}_X^*+\rho_{\mathfrak{l}_X}$,
 namely, $\chi_{\pi}\subset W\cdot (\mathfrak{a}_X^*+\rho_{\mathfrak{l}_X})$.
\item \label{singular_inf.char.}
 There exists a constant $d>0$
 which only depends on $G$ such that the following  holds:
 if $\pi\in \op{supp}L^2(G_{\mathbb{R}}/H_0)
 \setminus \widehat{G}_{\mathbb{R}}^{\mathfrak{l}_X}$,
 then there exist a representative $\xi\in \chi_{\pi}(\subset \mathfrak{j}^*)$ and
 a root
 $\alpha\in \Delta(\mathfrak{g},\mathfrak{j})\setminus
 \Delta(\mathfrak{l}_X,\mathfrak{j})$
 such that $\xi\in \mathfrak{a}_X^*+\rho_{\mathfrak{l}_X}$ and 
 $|\langle \xi,\alpha^{\vee}\rangle |<d$.
\end{enumerate}
\end{theorem}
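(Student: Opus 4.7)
The plan is to establish part~(i) first as a spectral statement and to derive part~(ii) by a contrapositive argument combining it with Theorem~\ref{thm:main}.

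For~(i), I would work with the action of the center $\mathfrak{Z}(\mathfrak{g})$ of the universal enveloping algebra on the $G_{\mathbb{R}}$-smooth vectors of $L^2(X_0)$. This action factors through the algebra of $G_{\mathbb{R}}$-invariant differential operators on $X_0$. The Knop theory underlying Theorem~\ref{thm:moment_image} produces a Harish-Chandra type homomorphism for this latter algebra whose image corresponds, after the appropriate $\rho$-shift, to polynomial functions on $\mathfrak{a}_X^* + \rho_{\mathfrak{l}_X}$ modulo the relevant little Weyl group. Hence the joint spectrum of $\mathfrak{Z}(\mathfrak{g})$ on $L^2(X_0)^{\infty}$ lies in $W \cdot (\mathfrak{a}_X^* + \rho_{\mathfrak{l}_X})$. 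Transferring this through the direct integral decomposition of $L^2(X_0)$ shows that $\chi_\pi \subset W\cdot(\mathfrak{a}_X^*+\rho_{\mathfrak{l}_X})$ for $m$-almost every $\pi$; closedness of this set together with the definition of $\op{supp} L^2(X_0)$ as the smallest closed full-measure subset upgrade the conclusion to every $\pi \in \op{supp} L^2(X_0)$.

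For~(ii) I proceed by contrapositive. I would choose $d$ in terms of $\|\rho_{\mathfrak{l}_X}\|$ and the root geometry of $\mathfrak{g}$ (so that $d$ depends only on $G$) large enough that the hypothesis $|\langle \xi,\alpha^\vee\rangle|\ge d$ for every $\alpha\in \Delta(\mathfrak{g},\mathfrak{j})\setminus \Delta(\mathfrak{l}_X,\mathfrak{j})$ already forces $\mathfrak{g}(\xi-\rho_{\mathfrak{l}_X})=\mathfrak{l}_X$, so that $\xi-\rho_{\mathfrak{l}_X}$ lies in $Z(\mathfrak{l}_X)^*_{\mathrm{reg}}$ and in the good range. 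A Vogan/Zuckerman standard-module analysis of irreducible Harish-Chandra modules with sufficiently regular infinitesimal character, combined with Theorem~\ref{thm:main}, then pins down such a $\pi$ as cohomologically induced from a real Levi $\mathfrak{l}'_{\mathbb{R}}$ with $\mathfrak{l}'_{\mathbb{R}}\otimes\mathbb{C}\sim \mathfrak{l}_X$ and good-range parameter, i.e.\ $\pi \in \widehat{G}_{\mathbb{R}}^{\mathfrak{l}_X}$, contradicting the hypothesis on $\pi$.

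The hard part is the last step: extracting cohomological induction not just from \emph{some} Levi of the right infinitesimal character, but from one specifically conjugate to $\mathfrak{l}_X$, purely from regularity of $\xi$. I expect this to go through by leveraging Theorem~\ref{thm:main} (or Theorem~\ref{thm:main2}) to rule out alternative Levi conjugacy classes via the moment-map characterization: representations cohomologically induced from Levis not conjugate to $\mathfrak{l}_X$ yield coadjoint orbits whose asymptotic cone falls outside $\overline{\mu(T^*X_0)}$, and so cannot populate $\op{supp}L^2(X_0)$ once the parameter is asymptotically regular. That $d$ is independent of $H_0$ then follows because every quantity in its definition is intrinsic to the root system of $\mathfrak{g}$ and to the $G$-conjugacy class of $\mathfrak{l}_X$, both of which are determined once $G$ is fixed.
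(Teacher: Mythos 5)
Your route for part~(i) is genuinely different from the paper's, and for part~(ii) it contains a circularity that is fatal in context.

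For part~(i): you propose to use a Harish--Chandra--type homomorphism for the algebra of $G_{\mathbb{R}}$-invariant differential operators on $X_0$, coming from Knop's theory. The paper does not do this, and there is a reason: $H_0$ is allowed to be non-algebraic with infinitely many components (e.g.\ $\op{GL}(k,\mathbb{Z})$), so the algebra $\op{Diff}(X_0)^{G_{\mathbb{R}}}$ need not admit the kind of Harish--Chandra homomorphism Knop constructs in the complex algebraic setting. The paper sidesteps this by working entirely with the complex homogeneous space $G/H$: it proves $\op{Ker}\Phi_0 = \op{Ker}\Phi$ by the totally-real-embedding argument, then computes $\op{Ker}\Phi$ via the Peter--Weyl decomposition of $R(G/H)$ and Lemma~\ref{lem:orbit_rest} (highest weights of $F_\lambda\subset R(G/H)$ lie in $\mathfrak{a}_X^*$), obtaining $J_{\mathfrak{a}_X}\subset\op{Ker}\Phi_0$ from Fact~\ref{lem:BB}, and then Lemma~\ref{lem:inf_character} extracts the constraint on $\chi_\pi$ from $\op{Ann}_{\mathcal{U}(\mathfrak{g})}(V)\supset J_{\mathfrak{a}_X}$ by a central-character argument. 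If you want to make your version watertight you would still need to reduce to the complex algebraic picture to get the HC homomorphism, at which point you are essentially redoing the paper's argument via $R(G/H)$.

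For part~(ii): the step you flag as ``the hard part''---ruling out Levi conjugacy classes other than $\mathfrak{l}_X$ once $\xi$ is far from the walls---is indeed the crux, and you propose to handle it by invoking Theorem~\ref{thm:main} (or Theorem~\ref{thm:main2}). This is circular. Theorem~\ref{thm:main} and Theorem~\ref{thm:main2} are proved in Section~\ref{sec:proof} by first decomposing $\op{supp}L^2(X_0)$ via Theorem~\ref{thm:reduction_semisimple}, of which Theorem~\ref{thm:inf_char} is, in the paper's own words, ``essentially the same.'' So the tool you want to use is downstream of the statement you are proving. What actually supplies the Levi rigidity in the paper is purely algebraic and has nothing to do with the moment map: Lemma~\ref{lem:Ann} upgrades $\op{Ann}(V)\supset J_{\mathfrak{a}_X}$ to $\op{Ann}(V)\supset J_{\lambda}$ for a \emph{single} $\lambda\in\mathfrak{a}_X^*$ (the annihilator of a generalized Verma module with Levi $\mathfrak{l}_X$); Proposition~\ref{prop:partial_flag} then uses the isomorphism $\mathcal{U}(\mathfrak{g})/J_\lambda\simeq\Gamma(Y,\mathscr{D}_{Y,\lambda})$ for $Y=G/Q$ with Levi factor $\mathfrak{l}_X$, together with the $\mathscr{D}$-affinity of $Y$ when $\lambda$ is in the good range, to realize $V$ as global sections of a $K$-equivariant $\mathscr{D}_{Y,\lambda}$-module on $G/Q$---this is where the conjugacy class of the Levi is pinned to $\mathfrak{l}_X$. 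The remaining step, that $\lambda$ lies in $\sqrt{-1}Z(\mathfrak{l}_{\mathbb{R}})^*_{\mathrm{gr}}$ and $V\simeq\pi(\mathcal{O},\Gamma)$, uses unitarity of $\pi$ to force $\RE(\lambda)=0$ by comparing $d\gamma$ with its Hermitian dual in the Langlands classification; a moment-map or wave-front argument is never used in this part. Your Vogan--Zuckerman sketch does not replace these steps, and without the annihilator ideal $J_\lambda$ there is no mechanism forcing the Levi to be (conjugate to) $\mathfrak{l}_X$ rather than some other Levi of the same dimension carrying the same infinitesimal character orbit.
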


The conclusion of \eqref{singular_inf.char.} means
 that the distance between $\xi$ and
 $Z(\mathfrak{l}_X)^* \setminus Z(\mathfrak{l}_X)^*_{\rm reg}$ is bounded by a constant.

As a corollary to Theorems~\ref{thm:main} and \ref{thm:inf_char},
 we obtain the following.
The proof is given in Section~\ref{sec:reduction_ss}.

\begin{corollary}\label{cor:singular_representation} \ 
\begin{enumerate}[{\rm (i)}]
\item \label{AC.regular}
The asymptotic cone 
\[\op{AC}\Bigl(
\bigcup_{\pi\in \op{supp}L^2(G_{\mathbb{R}}/H_0) \cap \widehat{G}_{\mathbb{R}}^{\mathfrak{l}_X}}
 \chi_{\pi}\Bigr)\]
 in $\mathfrak{j}^*$ contains a real semialgebraic variety
 with real dimension $\dim_{\mathbb{C}}\mathfrak{a}_X^*$. 
\item \label{AC.singular}
The asymptotic cone 
\[\op{AC}\Bigl(
\bigcup_{\pi\in \op{supp}L^2(G_{\mathbb{R}}/H_0) \setminus
 \widehat{G}_{\mathbb{R}}^{\mathfrak{l}_X}}
 \chi_{\pi}\Bigr)\]
 in $\mathfrak{j}^*$ is contained in a real algebraic variety
 with real dimension less than $\dim_{\mathbb{C}}\mathfrak{a}_X^*$. 
\end{enumerate}
\end{corollary}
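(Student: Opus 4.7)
The approach is to translate both parts into statements about asymptotic cones of linear parameters $\lambda$ via the fact that the infinitesimal character of $\pi(\mathfrak{l}_{\mathbb{R}},\Gamma_\lambda)$, after fixing $\mathfrak{j}\subset\mathfrak{l}_X$, contains the point $\lambda+\rho_{\mathfrak{l}_X}$ (modulo $W$), together with the observation that asymptotic cones are insensitive to the bounded shift $\rho_{\mathfrak{l}_X}$.

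For part (i), I would invoke the last sentence of Theorem \ref{thm:main} to produce a real Levi $\mathfrak{l}_{\mathbb{R}}$ with $\mathfrak{l}_{\mathbb{R}}\otimes\mathbb{C}\sim\mathfrak{l}_X$ for which the intersection $\overline{\sqrt{-1}\mu(T^*X_0)}\cap\sqrt{-1}Z(\mathfrak{l}_{\mathbb{R}})^*_{\mathrm{reg}}$ is nonempty, so by \eqref{eq:moment} it is a real semialgebraic set of real dimension $\dim_{\mathbb{C}}\mathfrak{a}_X^*$. Every representation $\pi(\mathfrak{l}_{\mathbb{R}},\Gamma_\lambda)$ arising from the second line of \eqref{eq:main} lies in $\op{supp}L^2(X_0)\cap\widehat{G}_{\mathbb{R}}^{\mathfrak{l}_X}$ by definition of the latter, and its infinitesimal character contains $\lambda+\rho_{\mathfrak{l}_X}$. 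Hence the asymptotic cone in part (i) contains a translate of the right-hand side of \eqref{eq:main}, a semialgebraic variety of the required dimension.

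For part (ii), I would apply Theorem \ref{thm:inf_char}(ii): each $\pi\in\op{supp}L^2(X_0)\setminus\widehat{G}_{\mathbb{R}}^{\mathfrak{l}_X}$ admits a representative $\xi\in\chi_\pi$ with $\xi\in\mathfrak{a}_X^*+\rho_{\mathfrak{l}_X}$ and $|\langle\xi,\alpha^\vee\rangle|<d$ for some $\alpha\in\Delta(\mathfrak{g},\mathfrak{j})\setminus\Delta(\mathfrak{l}_X,\mathfrak{j})$. The infinitesimal characters appearing here lie in a specific real form $V$ of $\mathfrak{j}^*$ whose intersection with $\mathfrak{a}_X^*+\rho_{\mathfrak{l}_X}$ has real dimension $\dim_{\mathbb{C}}\mathfrak{a}_X^*$. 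Within this real affine subspace the condition $|\langle\xi,\alpha^\vee\rangle|<d$ cuts out a slab whose asymptotic cone is the real hyperplane $\ker\alpha^\vee$, using that $\alpha^\vee$ is nontrivial on $\mathfrak{a}_X^*$ (which follows from $Z_{\mathfrak{g}}(\mathfrak{a}_X^*)=\mathfrak{l}_X$ in Theorem \ref{thm:moment_image}). Since there are finitely many such roots and $\chi_\pi$ is a finite $W$-orbit, the asymptotic cone in part (ii) is contained in a finite union of real algebraic hyperplanes inside a real space of dimension $\dim_{\mathbb{C}}\mathfrak{a}_X^*$, so it has real dimension strictly less than $\dim_{\mathbb{C}}\mathfrak{a}_X^*$.

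The main obstacle is the dimension bookkeeping in part (ii): identifying the real form $V\subset\mathfrak{j}^*$ in which the relevant unitary infinitesimal characters lie, and verifying both that $V\cap(\mathfrak{a}_X^*+\rho_{\mathfrak{l}_X})$ has real dimension exactly $\dim_{\mathbb{C}}\mathfrak{a}_X^*$ (a larger dimension would break the bound) and that the coroots $\alpha^\vee$ with $\alpha\notin\Delta(\mathfrak{l}_X,\mathfrak{j})$ restrict nontrivially to its linear asymptotic cone. The latter follows from $Z_{\mathfrak{g}}(\mathfrak{a}_X^*)=\mathfrak{l}_X$ in Theorem \ref{thm:moment_image}; the former is a reality check on the parameters $\lambda$ used to build the cohomologically induced representations.
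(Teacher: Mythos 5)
Your part (i) is the paper's argument verbatim: Corollary~\ref{cor:singular_representation}~\eqref{AC.regular} is just the second displayed line of \eqref{eq:main} together with the observation that the infinitesimal character is $\lambda+\rho_{\mathfrak{l}_X}$ modulo $W$ and that the bounded shift $\rho_{\mathfrak{l}_X}$ disappears under $\op{AC}$. For part (ii) you have identified the right ingredients (Theorem~\ref{thm:inf_char}~\eqref{singular_inf.char.}, unitarity bounding $\RE(d\gamma)$, and $Z_{\mathfrak{g}}(\mathfrak{a}_X^*)=\mathfrak{l}_X$ forcing $\alpha^\vee|_{\mathfrak{a}_X^*}\neq 0$), but the two "obstacles" you flag at the end are exactly where your ordering differs from the paper's and where your formulation has a real gap. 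You intersect with the real form $V$ first and only then take the asymptotic cone of the slab; this requires $\alpha^\vee$ to remain nontrivial on the totally real intersection $V\cap\mathfrak{a}_X^*$, which need not hold. The paper instead takes $\op{AC}$ of the slab inside the full complex affine space $\mathfrak{a}_X^*+\rho_{\mathfrak{l}_X}$ first, obtaining the \emph{complex} subspace $\mathfrak{a}_X^*\cap\alpha^\perp$ of complex dimension $\dim_{\mathbb{C}}\mathfrak{a}_X^*-1$ (here $\alpha^\vee|_{\mathfrak{a}_X^*}\neq 0$ is used, and it holds), and only then intersects with $\sqrt{-1}\mathfrak{j}_{\mathbb{R}}^*$; a totally real subspace meets a complex subspace of complex dimension $k-1$ in real dimension at most $k-1$, so the strict inequality follows with no further checks. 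In particular your first worry (that $V\cap(\mathfrak{a}_X^*+\rho_{\mathfrak{l}_X})$ might have real dimension larger than $\dim_{\mathbb{C}}\mathfrak{a}_X^*$) cannot occur, and your second worry (that $\alpha^\vee$ might vanish on $V\cap\mathfrak{a}_X^*$) is harmless, because in that case $V\cap\mathfrak{a}_X^*$ already sits inside $\mathfrak{a}_X^*\cap\alpha^\perp$ and has real dimension at most $\dim_{\mathbb{C}}\mathfrak{a}_X^*-1$ for the same reason. Two more small imprecisions worth fixing: the infinitesimal characters do not literally lie in $V$ (only $\RE(d\gamma)$ is bounded, so only the asymptotic cone lands in $\sqrt{-1}\mathfrak{j}_{\mathbb{R}}^*$), and the real form $V$ depends on the Cartan subgroup $J_{\mathbb{R}}$, so one must fix $J_{\mathbb{R}}$ among its finitely many conjugacy classes and take a finite union at the end, as the paper does explicitly.
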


Finally, we can show that, under certain additional assumptions, some of the subfamilies of representations of the form $\pi(\mathfrak{l}_{\mathbb{R}},\Gamma_{\lambda})$ occurring in $\operatorname{supp}L^2(X_0)$ must be discrete.
An element $\xi\in \mathfrak{g}_{\mathbb{R}}^*$ is said to be \emph{elliptic}
 if there exists a Cartan involution $\theta$
 such that $\theta(\xi)=\xi$. 
A coadjoint orbit $\mathcal{O}\subset \mathfrak{g}_{\mathbb{R}}^*$ is said to be \emph{elliptic}
 if one of (or equivalently, every) element in $\mathcal{O}$ is elliptic.
Let $(\mathfrak{g}_{\mathbb{R}})^*_{\text{ell}}\subset \mathfrak{g}_{\mathbb{R}}^*$
 denote the subset of all elliptic elements.

\begin{theorem}\label{thm:ds}
Assume $\mathfrak{g}\neq \mathfrak{h}$.
If $\mu(T^*X_0)\cap (\mathfrak{g}_{\mathbb{R}}^*)_{\mathrm{ell}}$
contains a nonempty open subset of $\mu(T^*X_0)$, then there exist infinitely many distinct irreducible, unitary representations $(\pi,V)$ such that
\[\Hom_{G_{\mathbb{R}}}(V,L^2(X_0))\neq \{0\}.\]
In particular, $X_0$ has a discrete series.
\end{theorem}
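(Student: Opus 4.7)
My plan is to combine Theorem~\ref{thm:main} with Theorem~\ref{thm:inf_char} to produce infinitely many elliptic orbital parameters whose associated representations are isolated points of $\op{supp}L^2(X_0)$, and hence embed into $L^2(X_0)$.

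\textbf{Choice of Levi.} The elliptic hypothesis, combined with $\overline{\mu(T^*X_0)}=\overline{G\cdot\mathfrak{a}_X^*}$ from Theorem~\ref{thm:moment_image}, forces $\mathfrak{a}_X^*$, after $G$-conjugation, to lie entirely inside the fixed subspace of some Cartan involution $\theta$ acting on $\sqrt{-1}\mathfrak{g}_{\mathbb{R}}^*$: the elliptic locus is a finite union of real linear subspaces, and the intersection with $\mathfrak{a}_X^*$ having nonempty interior forces $\mathfrak{a}_X^*$ to sit in one of them. Thus $\mathfrak{l}_X$ is $\theta$-stable, and $\mathfrak{l}_{\mathbb{R}}:=\mathfrak{l}_X^{\sigma}$ is a $\theta$-stable real Levi whose center $Z(\mathfrak{l}_{\mathbb{R}})$ lies in a compact Cartan subalgebra of $\mathfrak{g}_{\mathbb{R}}$.

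\textbf{Extracting infinitely many parameters.} With this $\mathfrak{l}_{\mathbb{R}}$, Theorem~\ref{thm:main} together with \eqref{eq:moment} yields that $S:=\{\lambda\in\sqrt{-1}Z(\mathfrak{l}_{\mathbb{R}})^*_{\mathrm{gr}}\mid \pi(\mathfrak{l}_{\mathbb{R}},\Gamma_\lambda)\in\op{supp}L^2(X_0)\}$ has asymptotic cone of real dimension $\dim_{\mathbb{C}}\mathfrak{a}_X^*\geq 1$ (positive because $\mathfrak{g}\neq\mathfrak{h}$ forces $\mu(T^*X_0)\neq\{0\}$, so $\mathfrak{a}_X^*\neq 0$). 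The existence of a genuine unitary character $\Gamma_\lambda$ on $\widetilde{G_{\mathbb{R}}}(\lambda)$, whose identity component now has compact center, compels $\lambda$ into a discrete translate of the character lattice of $Z(L_{\mathbb{R}})$. Hence $S$ is discrete but unbounded, producing infinitely many distinct $\pi_n:=\pi(\mathfrak{l}_{\mathbb{R}},\Gamma_{\lambda_n})\in\op{supp}L^2(X_0)$ with pairwise distinct regular infinitesimal characters $\chi_{\pi_n}$ represented by $\lambda_n+\rho_{\mathfrak{l}_X}$.

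\textbf{Isolation and conclusion.} I would then show that each $\pi_n$ (for $n$ large) is an isolated point of $\op{supp}L^2(X_0)$ in the Fell topology, so that $m(\{\pi_n\})>0$ and $\Hom_{G_{\mathbb{R}}}(\pi_n,L^2(X_0))\neq 0$. By Theorem~\ref{thm:inf_char}(ii), every $\pi\in\op{supp}L^2(X_0)\setminus\widehat{G}_{\mathbb{R}}^{\mathfrak{l}_X}$ has a representative infinitesimal character at bounded distance from $Z(\mathfrak{l}_X)^*\setminus Z(\mathfrak{l}_X)^*_{\mathrm{reg}}$. Selecting $\lambda_n$ along regular directions of $\sqrt{-1}Z(\mathfrak{l}_{\mathbb{R}})^*$ with $\|\lambda_n\|\to\infty$ pushes $\chi_{\pi_n}$ arbitrarily far from that wall; since infinitesimal character is continuous under Fell convergence, no $\pi$ outside $\widehat{G}_{\mathbb{R}}^{\mathfrak{l}_X}$ can accumulate at $\pi_n$. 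Inside $\widehat{G}_{\mathbb{R}}^{\mathfrak{l}_X}\cap\op{supp}L^2(X_0)$, the parameters form the discrete set $S$, so $\pi_n$ is also separated from the other $\pi_m$. Combining these, $\pi_n$ is isolated in $\op{supp}L^2(X_0)$, hence a discrete series of $X_0$.

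\textbf{Main obstacle.} The subtlest step is the Fell-topology isolation in the last paragraph: one must simultaneously rule out accumulation from the ``singular'' family outside $\widehat{G}_{\mathbb{R}}^{\mathfrak{l}_X}$ (handled by Theorem~\ref{thm:inf_char}(ii) and the continuity of infinitesimal characters) and from continuous deformations inside $\widehat{G}_{\mathbb{R}}^{\mathfrak{l}_X}$ (ruled out by the compactness of $Z(\mathfrak{l}_{\mathbb{R}})$, which makes the permitted $\lambda$ discrete). The structural first step also requires some care to see that open ellipticity in $\mu(T^*X_0)$ really pins $\mathfrak{a}_X^*$ into a single Cartan-involution-fixed subspace rather than merely meeting it.
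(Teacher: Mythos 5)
Your strategy is close to the paper's: produce infinitely many elliptic orbital parameters in $\op{supp}L^2(X_0)$ via the asymptotic-cone theorem and then argue Fell-isolation. However, there are several gaps that the paper's own, more local, proof sidesteps.

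\emph{The structural step is the main problem.} Your assertion that ``the elliptic locus is a finite union of real linear subspaces'' is false as stated: $(\mathfrak{g}_{\mathbb{R}}^*)_{\mathrm{ell}}$ is a $G_{\mathbb{R}}$-invariant cone, not a union of linear subspaces (already for $\mathfrak{sl}(2,\mathbb{R})$). The statement you need is that, for a fixed real Levi $\mathfrak{l}_{\mathbb{R}}$, the set of \emph{regular} elliptic elements in $\sqrt{-1}Z(\mathfrak{l}_{\mathbb{R}})^*_{\mathrm{reg}}$ is the intersection with a single linear subspace $(\sqrt{-1}Z(\mathfrak{l}_{\mathbb{R}})^*)^{\theta}$; this is true but requires an argument (any Cartan involution of $\mathfrak{g}_{\mathbb{R}}$ preserving $\mathfrak{l}_{\mathbb{R}}$ restricts to a Cartan involution of $\mathfrak{l}_{\mathbb{R}}$, and these all act the same way on the center $Z(\mathfrak{l}_{\mathbb{R}})$). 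Moreover, even with this corrected, your conclusion that \emph{all} of $Z(\mathfrak{l}_{\mathbb{R}})$ lies in a compact Cartan is too strong: the hypothesis and the dimension count from \eqref{eq:moment} only force the real form $\mathfrak{a}_X^*\cap\sqrt{-1}Z(\mathfrak{l}_{\mathbb{R}})^*$ of $\mathfrak{a}_X^*$ into the $\theta$-fixed part, not all of $\sqrt{-1}Z(\mathfrak{l}_{\mathbb{R}})^*$, whenever $\mathfrak{a}_X^*\subsetneq Z(\mathfrak{l}_X)^*$. (Your discreteness argument still works with this weaker conclusion, since the parameters $\lambda$ of interest land in $\mathfrak{a}_X^*$ by Theorem~\ref{thm:inf_char}(i); you should only claim this.)

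\emph{The isolation argument has a second gap.} You assert that inside $\widehat{G}_{\mathbb{R}}^{\mathfrak{l}_X}\cap\op{supp}L^2(X_0)$ the parameters form the discrete set $S$. But $\widehat{G}_{\mathbb{R}}^{\mathfrak{l}_X}$ is defined over \emph{all} real forms $\mathfrak{l}'_{\mathbb{R}}$ with $\mathfrak{l}'\sim\mathfrak{l}_X$, whereas your $S$ parametrizes only the $\mathfrak{l}_{\mathbb{R}}$-series. You need to rule out Fell accumulation coming from other real forms. This can be done—a nearby regular $\lambda'$ has stabilizer containing the stabilizer of $\lambda_n$, hence (by regularity) equal to $\mathfrak{l}_{\mathbb{R}}$, so contributions from $\mathfrak{l}'_{\mathbb{R}}\not\sim_{G_{\mathbb{R}}}\mathfrak{l}_{\mathbb{R}}$ cannot be arbitrarily close—but it must be said.

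The paper avoids both of these global structural claims. It picks one regular point $\xi\in\mu(U)$ and a Euclidean neighborhood $V$ in $\mathfrak{g}_{\mathbb{R}}^*$ with $V\cap\mu(U)=V\cap(G\cdot\mathfrak{a}_X^*)\cap\mathfrak{g}_{\mathbb{R}}^*\subset(\mathfrak{g}_{\mathbb{R}}^*)_{\mathrm{ell}}$, passes to the cone $\tilde{V}=\sqrt{-1}\mathbb{R}_{>0}\cdot V$, and applies Theorem~\ref{thm:main2} directly to get $\lambda_j/|\lambda_j|\to\sqrt{-1}\xi/|\xi|$ with $(G\cdot\mathcal{O}_j)\cap\mathfrak{a}_X^*\neq\emptyset$. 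For large $j$ the $\lambda_j$ are in $\tilde{V}$, hence elliptic, so they lie in a lattice and the corresponding $\pi(\mathcal{O}_j,\Gamma_j)$ are isolated in $\{\pi(\mathcal{O},\Gamma)\mid (G\cdot\mathcal{O})\cap\mathfrak{a}_X^*\neq\emptyset\}$. That the isolation in this set suffices is exactly what Theorem~\ref{thm:reduction_semisimple}~(i.e.\ Theorem~\ref{thm:inf_char}) buys—the same use of the inf.\ char.\ bound that you make. In short: the local-cone version buys you ellipticity of the $\lambda_j$ without having to prove that a whole linear space is elliptic, and it handles all real forms at once because the stabilizer of nearby regular $\lambda'$ is pinned down automatically.
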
 

We have $\mu(T^*X_0)=G_{\mathbb{R}}\cdot \mathfrak{h}_{\mathbb{R}}^{\perp}$,
 where $\mathfrak{h}_{\mathbb{R}}^{\perp}
 := (\mathfrak{g}_{\mathbb{R}}/\mathfrak{h}_{\mathbb{R}})^*\subset \mathfrak{g}_{\mathbb{R}}^*$.
Hence the condition of Theorem~\ref{thm:ds} is equivalent to that 
 $\mathfrak{h}_{\mathbb{R}}^{\perp}\cap (\mathfrak{g}_{\mathbb{R}}^*)_{\mathrm{ell}}$
 contains a nonempty open subset of $\mathfrak{h}_{\mathbb{R}}^{\perp}$.

\begin{remark}\label{rem:ds}
Here are some remarks about Theorem~\ref{thm:ds}.
\begin{enumerate}
\item 
It follows from the proof of Theorem~\ref{thm:ds}
 that if the condition of Theorem~\ref{thm:ds} holds, then we can find
 a $\theta$-stable parabolic subalgebra $\mathfrak{q}\subset \mathfrak{g}$ such that 
 $A_{\mathfrak{q}}(\lambda)$ occurs as a discrete spectrum in $L^2(X_0)$
 for infinitely many parameters $\lambda$ in the good range.
 We will find such $\mathfrak{q}$ explicitly
 for an example in \S \ref{subsec:Sp}. 
\item \label{item:sympair}
For symmetric spaces, 
 the existence of discrete series is equivalent to the rank condition
 $\op{rank} G/H=\op{rank} K/(H\cap K)$ by \cite{FJ80}, \cite{MO84}.
 This rank condition is equivalent to the condition in Theorem~\ref{thm:ds} for symmetric spaces.
\item \label{item:spherical}
When $X_0$ is a spherical space, Theorem~\ref{thm:ds} is proved in \cite[\S 12]{DKKS}.
\end{enumerate}
\end{remark}

For some $X$, the Levi subalgebra $\mathfrak{l}_X$ becomes a Cartan subalgebra.
In that case, Theorem~\ref{thm:main2} was proved as \cite[Theorem 1.1]{HW17}.
For a Cartan subalgebra $\mathfrak{j}$, the set $\widehat{G}_{\mathbb{R}}^{\mathfrak{j}}$
 consists of all tempered representations with regular infinitesimal characters.
If we take a closure of $\widehat{G}_{\mathbb{R}}^{\mathfrak{j}}$
 with respect to the Fell topology of $\widehat{G}_{\mathbb{R}}$,
 then we get the set of all tempered representations.

We remark that it may happen that 
 $\widehat{G}_{\mathbb{R}}^{\mathfrak{l}}\cap \widehat{G}_{\mathbb{R}}^{\mathfrak{l}'}
 \neq \emptyset$ even if $\mathfrak{l}$ and $\mathfrak{l}'$ are not conjugate.
When $G_{\mathbb{R}}$ is compact for example,  we have
 $\widehat{G}_{\mathbb{R}}^{\mathfrak{l}}\subset \widehat{G}_{\mathbb{R}}^{\mathfrak{l}'}$
 if $\mathfrak{l}\supset \mathfrak{l}'$ and
 $\widehat{G}_{\mathbb{R}}^{\mathfrak{j}}=\widehat{G}_{\mathbb{R}}$
 for a Cartan subalgebra $\mathfrak{j}$.

Our proof can be divided into two parts:
 the first part (\S \ref{sec:annihilator}, \S \ref{sec:reduction_ss})
 is algebraic  and the second part (\S \ref{sec:WF1}--\S \ref{sec:proof})
 is analytic.

In the first part, we prove Theorem~\ref{thm:inf_char}.
Thanks to the local structure theorem for complex algebraic homogeneous spaces,
 we show that a certain ideal $J_{\mathfrak{a}_X}$ of the enveloping algebra
 $\mathcal{U}(\mathfrak{g})$
 annihilates all functions on $G_{\mathbb{R}}/H_0$.
Hence for $\pi\in \op{supp}L^2(G_{\mathbb{R}}/H_0)$, the annihilator
 of $\pi$ contains $J_{\mathfrak{a}_X}$.
This information together with the unitarity of $\pi$ is enough to get the conclusion
 of Theorem~\ref{thm:inf_char}.
In the course of proof, we utilize the Beilinson-Bernstein localization
 and realize representations as the global sections of twisted $\mathscr{D}$-modules
 on partial flag varieties.

In the second part, the wave front set of representations plays a central role.
Our argument is partly similar to \cite{HHO16, Har18, HW17},
 but requires some new ingredients.
It was proved in \cite[Theorem 2.1]{HW17} that the wave front set of $L^2(G_{\mathbb{R}}/H_0)$
 equals the image of moment map.
By the first part of our proof, we can show that the contribution
 from $\op{supp}L^2(G_{\mathbb{R}}/H_0) \setminus \widehat{G}_{\mathbb{R}}^{\mathfrak{l}_X}$
 to the wave front set is small.
Then we have a relationship between 
 $\op{supp}L^2(G_{\mathbb{R}}/H_0) \cap \widehat{G}_{\mathbb{R}}^{\mathfrak{l}_X}$
 and the image of moment map.
To obtain Theorem~\ref{thm:main}, we need a calculation of the wave front set
 of a direct integral of representations in $\widehat{G}_{\mathbb{R}}^{\mathfrak{l}_X}$
 (Theorem~\ref{thm:direct_integral}).
\S \ref{sec:WF1}--\S \ref{sec:KKSform} is devoted to the proof
 of Theorem~\ref{thm:direct_integral}. 
For this, we use a formula for the distribution character of
 $\pi\in \widehat{G}_{\mathbb{R}}^{\mathfrak{l}}$ in \cite{HO20}.
This formula is a consequence of Schmid-Vilonen's formula \cite{SV98} which gives
 characters of representations in terms of characteristic cycles of
 sheaves on the flag variety.

In the end of introduction we would like to pose some questions concerning theorems above,
 for which the authors do not know the answer.
The first one is about the converse of Theorem~\ref{thm:ds}.

\begin{question}\label{ques:ds_converse}
Assume $H_0$ has only finitely many connected components 
 and $X_0$ has a discrete series. 
Then does $\mu(T^*X_0)\cap (\mathfrak{g}_{\mathbb{R}}^*)_{\mathrm{ell}}$
contain a nonempty open subset of $\mu(T^*X_0)$? 
\end{question}

When $H_0$ is a cocompact discrete subgroup of $G_{\mathbb{R}}$
 and if $G_{\mathbb{R}}$ does not have a discrete series,
 then the statement of Question~\ref{ques:ds_converse}
 does not hold.
Thus, we require the assumption that $H_0$ has finitely many connected components.

When $X_0$ is a symmetric space, 
Question~\ref{ques:ds_converse} is known to be true
 as mentioned in Remark~\ref{rem:ds} \eqref{item:sympair}.

The existence of discrete series for non-symmetric spaces
 was considered in \cite{Ko94, Ko98a}.
The results there are compatible with the statement of Question~\ref{ques:ds_converse}.
For (generalized) Stiefel manifolds, discrete series were studied in
  \cite{Ko92, Li93}.
For spherical spaces, recent results are in \cite[\S 13]{DKKS} and \cite{KKOS}.

To state the second question, we will enlarge the set
 of representations $\widehat{G}_{\mathbb{R}}^{\mathfrak{l}_X}$.
If we drop the condition that $\mathcal{O}$ is in the good range,
 $\pi(\mathcal{O},\Gamma)$ is still unitary, but it may be reducible or zero
 (see Remark~\ref{rem:fair}).
We include all irreducible components of such $\pi(\mathcal{O},\Gamma)$
 and also include limits for these representations with respect to the Fell topology.
Write $\widehat{G}_{\mathbb{R}, \mathrm{e}}^{\mathfrak{l}_X}$ for this
 enlarged set. 

\begin{question}\label{ques:regular}
When $H_0$ has only finitely many connected components, 
 do we have $\op{supp} L^2(X_0)\subset
 \widehat{G}_{\mathbb{R}, \mathrm{e}}^{\mathfrak{l}_X}$?
\end{question}

Again, Question~\ref{ques:regular} does not hold when $H_0$ is an infinite discrete group
 in general.

For symmetric spaces, Question~\ref{ques:regular} is true by the Plancherel formula.
Question~\ref{ques:regular} is also true when $H_0$ is algebraic and 
 $\mathfrak{l}_X$ is a Cartan subalgebra 
 because  in that case $L^2(X_0)$ is tempered
 and $\widehat{G}_{\mathbb{R}, \mathrm{e}}^{\mathfrak{l}_X}$ is the set of
 all irreducible tempered representations.
This follows from Benoist-Kobayashi's results
 \cite[Corollary 5.6 (i)]{BKII} and \cite[Theorem 1.1]{BKIV}.

\bigskip

\noindent
{\bf Acknowledgments}

The authors thank Professor Bernhard Kr\"{o}tz for discussions about the relationship between this paper and Kr\"{o}tz's work on spherical spaces.
They are grateful to Professor Toshiyuki Kobayashi for constant encouragement and kind explanations about his studies which inspires us.
B. Harris was supported by an AMS-Simons Travel Grant during the early part of this work. 
Y. Oshima was partially supported by JSPS KAKENHI Grant Number JP20K14325. 


\section{Quantization of semisimple coadjoint orbits}\label{sec:quantization_semisimple}

In this section we recall from \cite{Du82a}, \cite{Vog00} and \cite[\S 2]{HO20}
 the definition of representations which correspond to semisimple coadjoint orbits,
 or more precisely semisimple orbital parameters $(\mathcal{O},\Gamma)$.
We follow notation and terminology of \cite[\S 2]{HO20}.

Let $(\mathcal{O},\Gamma)$ be a semisimple orbital parameter in the sense of Definition~\ref{def:semisimple_orbital_parameter}.
Fix $\lambda\in \mathcal{O}$ and
 let $L_{\mathbb{R}}:=G_{\mathbb{R}}(\lambda)$ and
 $\mathfrak{l}_{\mathbb{R}}:=\mathfrak{g}_{\mathbb{R}}(\lambda)$.
The Duflo double cover of $L_{\mathbb{R}}$ is defined as
 $\widetilde{L_{\mathbb{R}}}:=
 L_{\mathbb{R}}\times_{\op{Sp}(T_{\lambda}\mathcal{O})} \op{Mp}(T_{\lambda}\mathcal{O})$.
Then 
\[\Gamma_{\lambda}\colon \widetilde{L_{\mathbb{R}}}\rightarrow \mathbb{C}^{\times}\]
is a unitary one-dimensional representation
 satisfying $d\Gamma_{\lambda}=\lambda$. 
Let $\mathfrak{j}_{\mathbb{R}}$ be a Cartan subalgebra of
 $\mathfrak{l}_{\mathbb{R}}$.
We can regard $\lambda\in \sqrt{-1}\mathfrak{j}_{\mathbb{R}}^*$
 by extending $\lambda$ by zero on
 $\mathfrak{j}_{\mathbb{R}}\cap [\mathfrak{l}_{\mathbb{R}},\mathfrak{l}_{\mathbb{R}}]$.

In order to define the representation $\pi(\mathcal{O},\Gamma)$ of $G_{\mathbb{R}}$
 we need to choose a complex parabolic subalgebra $\mathfrak{q}\subset \mathfrak{g}$
 with Levi factor $\mathfrak{l}=\mathfrak{g}(\lambda)$,
 which we call a \emph{polarization} for $\lambda$.
We say a polarization $\mathfrak{q}$ with nilradical $\mathfrak{n}$ is
 \emph{admissible} if 
\[
\langle \lambda,\alpha^\vee \rangle \in \mathbb{R}_{>0}
 \ \Longrightarrow\ 
\alpha \in \Delta(\mathfrak{n},\mathfrak{j}).
\]
Moreover, we say an admissible polarization $\mathfrak{q}$ is \emph{maximally real}
 if $\dim (\mathfrak{q}\cap \sigma(\mathfrak{q}))$ is maximal among
 all admissible polarizations for $\lambda$.

Fix a maximally real, admissible polarization $\mathfrak{q}\subset \mathfrak{g}$
 with nilradical $\mathfrak{n}$. In addition, fix a maximal compact subgroup $K_{\mathbb{R}}\subset G_{\mathbb{R}}$ with Cartan involution $\theta$
 such that $K_{\mathbb{R}}\cap L_{\mathbb{R}}\subset L_{\mathbb{R}}$ is maximal compact. 
We decompose $\lambda=\lambda_c+\lambda_n$
 where $\lambda_c\in (\sqrt{-1}Z(\mathfrak{l}_{\mathbb{R}})^*)^{\theta}$
 and $\lambda_n\in (\sqrt{-1}Z(\mathfrak{l}_{\mathbb{R}})^*)^{-\theta}$. 
Define $\Delta(\mathfrak{n}_{\mathfrak{p}},\mathfrak{j})$ to be the collection of roots
 $\alpha\in \Delta(\mathfrak{n},\mathfrak{j})$
 with $\langle \lambda_n,\alpha^{\vee}\rangle\neq 0$.
As in \cite[\S 2.2]{HO20}, one checks that 
\[\mathfrak{p}=\mathfrak{g}(\lambda_n)+ \mathfrak{n}_{\mathfrak{p}}\ 
 \text{where}\ \mathfrak{n}_{\mathfrak{p}}
 =\sum_{\alpha\in \Delta(\mathfrak{n}_{\mathfrak{p}},\mathfrak{j})}\mathfrak{g}_{\alpha}\]
is a $\sigma$-stable parabolic subalgebra of $\mathfrak{g}$
 with real form $\mathfrak{p}_{\mathbb{R}}$. 
Define $P_{\mathbb{R}}:=N_{G_{\mathbb{R}}}(\mathfrak{p}_{\mathbb{R}})$
 to be the corresponding parabolic subgroup,
 and let $P_{\mathbb{R}}=M_{\mathbb{R}}A_{\mathbb{R}}(N_P)_{\mathbb{R}}$
 be the Langlands decomposition of $P_{\mathbb{R}}$
 with $G_{\mathbb{R}}(\lambda_n)=M_{\mathbb{R}}A_{\mathbb{R}}$.

Following \cite[\S 2.2]{HO20},
 we define an elliptic coadjoint orbit
 $\mathcal{O}^{M_{\mathbb{R}}}:=M_{\mathbb{R}}\cdot\lambda_c$. 
Further, we obtain a genuine, one-dimensional,
 unitary representation $\Gamma^{M_{\mathbb{R}}}_{\lambda_c}$
 of $\widetilde{M_{\mathbb{R}}}(\lambda)$ from $\Gamma_{\lambda}$
 by the formula \cite[(2.13)]{HO20}. 
The coadjoint orbit $\mathcal{O}^{M_{\mathbb{R}}}$ and
 the one-dimensional representation $\Gamma^{M_{\mathbb{R}}}_{\lambda_c}$
 give rise to an elliptic orbital parameter
 $(\mathcal{O}^{M_{\mathbb{R}}},\Gamma^{M_{\mathbb{R}}})$ for $M_{\mathbb{R}}$. 

In \cite[\S 2.3 and \S 2.4]{HO20},
 we give a unitary representation
 $\pi(\mathcal{O}^{M_{\mathbb{R}}},\Gamma^{M_{\mathbb{R}}})$
 of $M_{\mathbb{R}}$ associated to $(\mathcal{O}^{M_{\mathbb{R}}},\Gamma^{M_{\mathbb{R}}})$. 
Then a unitary representation $\pi(\mathcal{O},\Gamma)$
 is defined by the normalized parabolic induction
\[\pi(\mathcal{O},\Gamma)
 :=\op{Ind}^{G_{\mathbb{R}}}_{P_{\mathbb{R}}}
 (\pi(\mathcal{O}^{M_{\mathbb{R}}},\Gamma^{M_{\mathbb{R}}})).\]
We also denote the same representation
 by $\pi(\mathfrak{l}_{\mathbb{R}},\Gamma_{\lambda})$.
This representation does not depend on the choices of
 $\lambda$, $\mathfrak{q}$ or $K_{\mathbb{R}}$.

\begin{remark}\label{rem:fair}
The construction of $\pi(\mathcal{O},\Gamma)$ here can be extended to the
 case where $\mathcal{O}$ is not necessarily in the good range.
The admissibility of the polarization implies the elliptic orbital parameter above
 is in the fair range in the sense of \cite{KV95}. 
In general, we still obtain unitary representations
 but they can be reducible or zero.
In this paper, we only consider $\pi(\mathcal{O},\Gamma)$ for parameters
 in the good range as it is enough for our purpose and it makes our treatment easier.
\end{remark}

In the above construction, 
 $\pi(\mathcal{O}^{M_{\mathbb{R}}},\Gamma^{M_{\mathbb{R}}})$
 can be defined as the cohomological
 induction for a $\theta$-stable parabolic subalgebra
 treated in \cite[Chapter V]{KV95}.
On the $(\mathfrak{g},K)$-module level,
 the induction $\op{Ind}^{G_{\mathbb{R}}}_{P_{\mathbb{R}}}$
 can be also defined in terms of cohomological induction
 for a $\sigma$-stable parabolic subalgebra
  as in \cite[Proposition 11.47]{KV95}.
Following \cite[(11.71)]{KV95},
 we define functors 
$(\uR_{\mathfrak{q},L_\R\cap K_\R}^{\mathfrak{g},K_\R})^j(\cdot)$ and 
$(\uL_{\mathfrak{q},L_\R\cap K_\R}^{\mathfrak{g},K_\R})^j(\cdot)$
 from the category of $(\mathfrak{l},L_\R\cap K_\R)$-modules
 to that of $(\mathfrak{g},K_\R)$-modules
 as 
\begin{align*}
&(\uR_{\mathfrak{q},L_\R\cap K_\R}^{\mathfrak{g},K_\R})^j(Z)
=(\Gamma_{\mathfrak{g},L_\R\cap K_\R}^{\mathfrak{g},K_\R})^j
 \bigl(\Hom_{\mathfrak{q}}(\mathcal{U}(\mathfrak{g}),Z)_{L_\R\cap K_\R}\bigr),\\
&(\uL_{\mathfrak{q},L_\R\cap K_\R}^{\mathfrak{g},K_\R})_j(Z)
=(\Pi_{\mathfrak{g},L_\R\cap K_\R}^{\mathfrak{g},K_\R})_j
 \bigl(\mathcal{U}(\mathfrak{g})\otimes_{\mathcal{U}(\mathfrak{q})} Z \bigr)
\end{align*}
for $j\in \N$.
Here, an $(\mathfrak{l},L_\R\cap K_\R)$-module
 $Z$ is regarded as a $(\mathfrak{q},L_\R\cap K_\R)$-module by the trivial
 $\mathfrak{n}$-action, 
 $(\Gamma_{\mathfrak{g},L_\R\cap K_\R}^{\mathfrak{g},K_\R})^j$
 is the $j$-th derived Zuckerman functor,
 and $(\Pi_{\mathfrak{g},L_\R\cap K_\R}^{\mathfrak{g},K_\R})_j$
 is its dual version.
Then by induction in stages, we have an isomorphism on the $(\mathfrak{g},K)$-module level
\[
\pi(\mathfrak{l}_{\mathbb{R}},\Gamma_{\lambda})
=\pi(\mathcal{O},\Gamma)\simeq 
(\uR_{\mathfrak{q},L_{\mathbb{R}}\cap K_{\mathbb{R}}}^{\mathfrak{g},K_{\mathbb{R}}})^s
(\Gamma_{\lambda}\otimes e^{\rho(\mathfrak{n})}),
\]
where $s=\dim_{\mathbb{C}}(\mathfrak{n}\cap \mathfrak{k})$
 and $e^{\rho(\mathfrak{n})}$ denotes the genuine character of $\widetilde{L}_{\mathbb{R}}$
 associated with the Lagrangian subspace
 $\mathfrak{n}\subset T_{\lambda}{\mathcal{O}}$ (see \cite[Chapitre I]{Du82a} for the definition).
In fact, by \cite[Theorem 5.99 and Proposition 11.52]{KV95},
\begin{align*}
&(\uR_{\mathfrak{q},L_{\mathbb{R}}\cap K_{\mathbb{R}}}^{\mathfrak{p},M_{\mathbb{R}}\cap K_{\mathbb{R}}})^j(\Gamma_{\lambda}\otimes e^{\rho(\mathfrak{n})})=0\ \text{ for $j\neq s$},\\
&(\uR_{\mathfrak{p},M_{\mathbb{R}}\cap K_{\mathbb{R}}}^{\mathfrak{g},K_{\mathbb{R}}})^j=0\ \text{ for $j\neq 0$},\\
&(\uR_{\mathfrak{q},L_{\mathbb{R}}\cap K_{\mathbb{R}}}^{\mathfrak{g}, K_{\mathbb{R}}})^j
 (\Gamma_{\lambda}\otimes e^{\rho(\mathfrak{n})})
\simeq 
\begin{cases} 
 (\uR_{\mathfrak{p},M_{\mathbb{R}}\cap K_{\mathbb{R}}}^{\mathfrak{g},K_{\mathbb{R}}})^0
 (\uR_{\mathfrak{q},L_{\mathbb{R}}\cap K_{\mathbb{R}}}^{\mathfrak{p},M_{\mathbb{R}}\cap K_{\mathbb{R}}})^s(\Gamma_{\lambda}\otimes e^{\rho(\mathfrak{n})})\ \text{ for $j=s$},\\
 0\ \text{ for $j\neq s$}.
\end{cases}
\end{align*}
Note that $\pi(\mathfrak{l}_{\mathbb{R}},\Gamma_{\lambda})$
 has infinitesimal character $\lambda+\rho_{\mathfrak{l}}$, where 
we choose positive roots $\Delta^+(\mathfrak{l},\mathfrak{j})\subset \Delta(\mathfrak{l},\mathfrak{j})$
 and write $\rho_{\mathfrak{l}}=\frac{1}{2}\sum_{\Delta^+(\mathfrak{l},\mathfrak{j})}\alpha$.
By \cite[Theorem 5.99 and Proposition~11.65]{KV95}, 
 $\pi(\mathfrak{l}_{\mathbb{R}},\Gamma_{\lambda})$ can be also constructed by the functor
 $\uL$:
\begin{align*}
&(\uR_{\mathfrak{q},L_\R\cap K_\R}^{\mathfrak{g}, K_\R})^j
 (\Gamma_{\lambda}\otimes e^{\rho(\mathfrak{n})})
\simeq 
(\uL_{\sigma(\mathfrak{q}), L_\R\cap K_\R}^{\mathfrak{g}, K_\R})_j
 (\Gamma_{\lambda}\otimes e^{\rho(\theta(\mathfrak{n}))}).
\end{align*}
Here, $e^{\rho(\theta(\mathfrak{n}))}$ is 
 the character defined in \cite[Chapitre I]{Du82a}
 associated with the Lagrangian subspace
 $\theta(\mathfrak{n})\subset T_{\lambda}{\mathcal{O}}$.

Following \cite[Appendix A]{HO20} (cf.\ also \cite[Theorem 2.2.3]{Ma04}),
 we define a virtual $(\mathfrak{g},K_\R)$-module
\[\pi(\mathcal{O},\Gamma, \mathfrak{q})
 :=\sum_j (-1)^j
 (\uR_{\mathfrak{q},L_{\mathbb{R}}\cap K_{\mathbb{R}}}^{\mathfrak{g},K_{\mathbb{R}}})^{s+j}
 (\Gamma_{\lambda}\otimes e^{\rho(\mathfrak{n})})\]
 for any polarization $\mathfrak{q}$.
Note that the functor $\uR$ here is denoted by $I$ in \cite{HO20}.
Then \cite[Theorem A.1]{HO20} says 
 $\pi(\mathcal{O},\Gamma, \mathfrak{q})$ does not depend on
  the choice of polarization $\mathfrak{q}$
 as long as $\mathfrak{q}$ is admissible.
In the same way, we can prove that a virtual module 
\[\pi'(\mathcal{O},\Gamma, \mathfrak{q})
 :=\sum_j (-1)^j
 (\uL_{\sigma(\mathfrak{q}),L_{\mathbb{R}}\cap K_{\mathbb{R}}}^{\mathfrak{g},K_{\mathbb{R}}})_{s-j}(\Gamma_{\lambda}\otimes e^{\rho(\theta(\mathfrak{n}))})\]
 does not depend on the choice of admissible polarization $\mathfrak{q}$.
Since $\pi(\mathcal{O},\Gamma, \mathfrak{q})=\pi'(\mathcal{O},\Gamma, \mathfrak{q})$
 for a maximally real admissible polarization $\mathfrak{q}$,
 the same is true for any admissible polarization,
 namely we have
\[
\pi(\mathcal{O},\Gamma, \mathfrak{q})=\pi'(\mathcal{O},\Gamma, \mathfrak{q})
=\pi(\mathcal{O},\Gamma)
\]
as a virtual $(\mathfrak{g},K_{\mathbb{R}})$-module
 for any admissible polarization $\mathfrak{q}$.

By the Beilinson-Bernstein localization,
 this representation can be also realized as global sections on the flag variety.
For an admissible polarization $\mathfrak{q}$,
 let $Q$ be the parabolic subgroup of $G$ with Lie algebra $\mathfrak{q}$,
 let $Y:=G/\sigma(Q)$ be the partial flag variety, the collection
 of all parabolic subgroups which are conjugate to $\sigma(Q)$
 and let $S=K/(\sigma(Q)\cap K)$ be the $K$-orbit through the base point in $Y$.
Let $\mathscr{D}_{Y,\lambda}$ be the sheaf of rings of twisted differential operators
 on $Y$ corresponding to the parameter $\lambda$ (see e.g.\ \cite{Bie90}).
Then we have a spectral sequence of $(\mathfrak{g},K_\R)$-modules
 (see e.g.\ \cite[Theorem 5.4]{Kit12}, \cite[(6.3)]{Osh13})
\[
H^{p}(Y,R^qi_+\mathcal{L})\Rightarrow
(\uL_{\sigma(\mathfrak{q}),L_{\mathbb{R}}\cap K_{\mathbb{R}}}^{\mathfrak{g},K_{\mathbb{R}}})_{s-p-q}
(\Gamma_{\lambda}\otimes e^{\rho(\theta(\mathfrak{n}))}).
\]
Here, $i\colon S\to Y$ is the natural immersion.
$\mathcal{L}$ is the $K$-equivariant line bundle (i.e.\ invertible $\mathcal{O}$-module)
  on $S$ given by $K\times_{(\sigma(Q)\cap K)} \tau$
 for an algebraic character $\tau$ of $\sigma(Q)\cap K$
 whose restriction to $L_{\mathbb{R}}\cap K_{\mathbb{R}}$ is 
\[
\Gamma_{\lambda}\otimes e^{-\rho(\theta(\mathfrak{n}))}
\otimes \bigwedge^{\rm top} (\mathfrak{k}/(\mathfrak{l}\cap\mathfrak{k})).
\]
Then $\mathcal{L}$ can be viewed as a twisted $\mathscr{D}$-module on $S$
 and its (higher) direct images $R^qi_+\mathcal{L}$
 are defined as $\mathscr{D}_{Y,\lambda}$-modules.
Our assumption on $\lambda$ implies $Y$ is $\mathscr{D}_{Y,\lambda}$-affine
 so that $H^{p}(Y,R^qi_+\mathcal{L})=0$ for $p>0$.
Hence the above spectral sequence collapses and we have 
\[
\Gamma(Y,R^qi_+\mathcal{L})\simeq
(\uL_{\sigma(\mathfrak{q}),L_{\mathbb{R}}\cap K_{\mathbb{R}}}^{\mathfrak{g},K_{\mathbb{R}}})_{s-q}
(\Gamma_{\lambda}\otimes e^{\rho(\theta(\mathfrak{n}))}).
\]
We therefore have
\begin{equation}\label{eq:localization}
\sum_{q}(-1)^q \Gamma(Y,R^qi_+\mathcal{L})=\pi(\mathcal{O},\Gamma).
\end{equation}

We end this section by giving the Langlands parameter of $\pi(\mathcal{O},\Gamma)$
 when $\mathcal{O}$ is in the good range.
In order to do this, we need to write a one-dimensional representation of $L_{\mathbb{R}}$
 as a quotient of standard module.
Let $J_{\mathbb{R}}$ be the maximally noncompact Cartan subalgebra of $L_{\mathbb{R}}$
 and let $J_{\mathbb{R}}=T_{\mathbb{R}}A^1_{\mathbb{R}}$ be its Cartan decomposition
 with respect to $\theta$,
 namely, $T_{\mathbb{R}}=J_{\mathbb{R}}^{\theta}$
 and $A^1_{\mathbb{R}}$ is the connected subgroup of $L_\R$ with Lie algebra
 $\mathfrak{a}^1_{\mathbb{R}}=\mathfrak{j}_{\mathbb{R}}^{-\theta}$.
Take a Borel subalgebra $\mathfrak{b}_{\mathfrak{l}}$ of $\mathfrak{l}$ such that
 $\mathfrak{b}_{\mathfrak{l}}\supset \mathfrak{j}$ and 
 $\mathfrak{b}_{\mathfrak{l}}+(\mathfrak{l}\cap\mathfrak{k})=\mathfrak{l}$.
Write $\mathfrak{n}_{\mathfrak{l}}$ for the nilradical of $\mathfrak{b}_{\mathfrak{l}}$.
Define a character $e^{2\rho(\mathfrak{n}_{\mathfrak{l}})'}$ of $J_{\mathbb{R}}$ by 
\[e^{2\rho(\mathfrak{n}_{\mathfrak{l}})'}(ta)=
 \det(\op{Ad}(t)|_{\mathfrak{n}_{\mathfrak{l}}\cap\mathfrak{k}})
 \cdot \det(\op{Ad}(a)|_{\mathfrak{n}_{\mathfrak{l}}})\]
for $t\in T_{\mathbb{R}}$ and $a\in A^1_{\mathbb{R}}$,
 which is the same as the character $\C_{2\rho(\mathfrak{n}_{\mathfrak{l}})'}$
 defined in \cite[(11.111)]{KV95}.
The differential of $e^{2\rho(\mathfrak{n}_{\mathfrak{l}})'}$
 equals $2\rho(\mathfrak{n}_{\mathfrak{l}})$, but it may not be equal to 
 $\det(\op{Ad}(ta)|_{\mathfrak{n}_{\mathfrak{l}}})$ when $T_{\R}$ is disconnected.
The trivial representation of $L_{\mathbb{R}}$ is the irreducible quotient
 of the standard module
 $(I^{\mathfrak{l},L_{\mathbb{R}}\cap K_{\mathbb{R}}}_{\mathfrak{b}_{\mathfrak{l}},T_{\mathbb{R}}})^{s_L}
 (e^{2\rho(\mathfrak{n}_{\mathfrak{l}})'})$,
where $s_L:=\dim_{\mathbb{C}}(\mathfrak{n}_{\mathfrak{l}}\cap\mathfrak{k})$.
By induction in stages, it turns out that $\pi(\mathcal{O},\Gamma)$
 is the unique irreducible quotient of 
 the standard module 
\[(\uR^{\mathfrak{g},K_{\mathbb{R}}}_{\mathfrak{b}_{\mathfrak{l}}+\mathfrak{n},T_{\mathbb{R}}})^{s+s_L}
 (\Gamma_{\lambda}\otimes e^{\rho(\mathfrak{n})}\otimes e^{2\rho(\mathfrak{n}_{\mathfrak{l}})'}).\]

In the notation of \cite{ALTV} (cf.\ also \cite[\S XI.9]{KV95}),
 the irreducible admissible representations of $G_{\mathbb{R}}$ are parametrized by
 data $(J_{\mathbb{R}},\gamma,\Delta_{i\mathbb{R}}^+)$, where $J_{\mathbb{R}}\subset G_{\mathbb{R}}$ is a Cartan subgroup with Lie algebra $\mathfrak{j}_{\mathbb{R}}$, $\gamma$ is a level one character of the $\rho_{\text{abs}}$ double cover of $J_{\mathbb{R}}$ (see Section 5 of \cite{ALTV} for an explanation), and $\Delta_{i\mathbb{R}}^+$ is a choice of positive roots among the set of imaginary roots for $\mathfrak{j}_{\mathbb{R}}$ in $\mathfrak{g}_{\mathbb{R}}$ for which $d\gamma\in \mathfrak{j}^*$ is weakly dominant. This triple must satisfy a couple of other technical assumptions
 (see Theorem 6.1 of \cite{ALTV}). 
The above argument shows that 
 the irreducible representation $\pi(\mathcal{O},\Gamma)$ corresponds to
 the parameter $(J_{\mathbb{R}}, \gamma, \Delta_{i\mathbb{R}}^+)$,
 where $\gamma$ is the character of $\rho_{\rm abs}$-cover of $J_{\mathbb{R}}$
 such that
\[\gamma\otimes \rho_{\rm abs} \simeq \Gamma_{\lambda}\otimes e^{\rho(\mathfrak{n})}\otimes e^{2\rho(\mathfrak{n}_{\mathfrak{l}})'}.\]
 $\Delta_{i\mathbb{R}}^+$ and $\rho_{\rm abs}$ are
 defined by the positive system for the Borel subalgebra
 $\mathfrak{b}_{\mathfrak{l}}+ \mathfrak{n}$.


\section{Annihilator ideas of induced representations}\label{sec:annihilator}

In this section we will study annihilator ideals of
 irreducible subrepresentations of $C^{\infty}(G_{\R}/H_0)$.

First, we need the following fact on algebraic subgroups.
See \cite[Theorems 4 and 8]{BBHM63}.

\begin{fact}\label{Fact:observable}
Let $G$ be a complex algebraic group and $H$ an algebraic subgroup.
The following three conditions are equivalent.
\begin{enumerate}
\item $G/H$ is quasi-affine.
\item \label{cond:extend}
 Every finite-dimensional rational $H$-module is a $H$-submodule
 of a finite-dimensional rational $G$-module.
\item \label{cond:stabilizer}
 There exists a vector $w$ in a rational $G$-module
 such that $H$ is the stabilizer subgroup of $w$.
\end{enumerate}
\end{fact}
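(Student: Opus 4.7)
The plan is to establish the three equivalences via the cycle $(3) \Rightarrow (1) \Rightarrow (2) \Rightarrow (3)$.

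The implication $(3) \Rightarrow (1)$ is essentially immediate: if $w$ lies in a rational $G$-module $V$ with $G$-stabilizer $H$, then $G \cdot w \subset V$ is a locally closed subvariety $G$-equivariantly isomorphic to $G/H$, and as a locally closed subset of the affine space $V$ it is quasi-affine.

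For $(1) \Rightarrow (2)$, given a finite-dimensional rational $H$-module $W$, I would consider the space $(\mathbb{C}[G] \otimes W)^H$ of regular sections of the associated vector bundle $G \times_H W \to G/H$, where $H$ acts on $\mathbb{C}[G]$ by right translation and $G$ acts on sections by left translation. Evaluation at the basepoint, $f \mapsto f(e)$, is an $H$-equivariant map to $W$. Quasi-affineness of $G/H$ ensures that global sections of the $G$-equivariant bundle $G \times_H W$ generate all fibers, so this evaluation is surjective; local finiteness of the $G$-action on $\mathbb{C}[G] \otimes W$ then confines sections realizing this surjection to a finite-dimensional $G$-submodule $V$ of sections. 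Dualizing the $H$-equivariant surjection $V \twoheadrightarrow W$ yields an $H$-equivariant embedding $W^* \hookrightarrow V^*$ into a finite-dimensional rational $G$-module, and applying the construction to $W^*$ in place of $W$ produces the desired embedding of $W$ itself.

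For $(2) \Rightarrow (3)$, I would invoke Chevalley's theorem, which guarantees a finite-dimensional rational $G$-module $V$ and a line $L \subset V$ with $\mathrm{Stab}_G(L) = H$. Let $\chi \colon H \to \mathbb{C}^\times$ denote the character describing the $H$-action on $L$. Applying $(2)$ to the one-dimensional $H$-module $\mathbb{C}_{\chi^{-1}}$ produces a finite-dimensional rational $G$-module $V'$ containing a nonzero vector $v'$ on which $H$ acts by $\chi^{-1}$. For any nonzero $\ell \in L$, the vector $w := \ell \otimes v' \in V \otimes V'$ is $H$-fixed, and if $g \cdot w = w$, then the pure-tensor equality $g\ell \otimes gv' = \ell \otimes v'$ forces $g\ell \in L$, whence $g \in \mathrm{Stab}_G(L) = H$. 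Therefore $\mathrm{Stab}_G(w) = H$, which is $(3)$.

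The subtlest step will be $(1) \Rightarrow (2)$: supplying a finite-dimensional $G$-submodule of sections that surjects $H$-equivariantly onto the fiber at the basepoint. Precisely here the geometric hypothesis of quasi-affineness (giving enough sections of $G$-equivariant bundles to generate fibers) combines with local finiteness of the regular $G$-action on $\mathbb{C}[G]$ to deliver such a submodule.
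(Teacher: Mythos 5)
The paper itself does not prove this Fact; it simply cites \cite[Theorems~4 and 8]{BBHM63}, so there is no in-paper argument to compare against. Your proof is correct, and it is in essence the classical argument of Bia{\l}ynicki-Birula--Hochschild--Mostow (see also Grosshans, \emph{Algebraic Homogeneous Spaces and Invariant Theory}, \S 2). A few remarks on the steps: in $(3)\Rightarrow(1)$ you should invoke local finiteness to replace the ambient rational $G$-module by a finite-dimensional one before speaking of $G\cdot w$ as a locally closed subvariety of an affine space; in $(1)\Rightarrow(2)$ the crucial claim that sections of $G\times_H W$ generate every fiber is precisely the fact that every coherent sheaf on a quasi-affine Noetherian scheme is globally generated (extend the sheaf to the affine closure, apply Serre, restrict), and you are right to flag this as the heart of the matter; and in $(2)\Rightarrow(3)$ Chevalley's theorem and the pure-tensor observation work exactly as you say. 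So the blind attempt recovers the standard proof that the paper delegates to the literature.
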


When one (or all) of the conditions in Fact~\ref{Fact:observable}
 is satisfied, $H$ is said to be \emph{observable} in $G$.

Let $G$ be a connected, complex reductive group with real form 
$(G^{\sigma})_e\subset G_{\mathbb{R}}\subset G^{\sigma}$
for an antiholomorphic involution $\sigma$ of $G$. 
Suppose that a connected, complex algebraic subgroup $H$ of $G$
 is defined over $\mathbb{R}$, namely $\sigma(H)=H$.
Write $\mathfrak{h}_\R=\mathfrak{h}^{\sigma}$
 for the real form of $\mathfrak{h}$.
Let $H_0\subset G_{\mathbb{R}}$ be a closed subgroup whose Lie algebra
 $\mathfrak{h}_0$ is equal to the Lie algebra $\mathfrak{h}_{\mathbb{R}}$.
Here, the closedness of $H_0$ in $G_{\mathbb{R}}$ is considered in the classical topology and 
$H_0$ is not necessarily algebraic.
In particular, we allow $H_0$ to have infinitely many connected components.

\begin{lemma}\label{lem:unimodular_observable} 
If $H_0\subset G_{\mathbb{R}}$ is a unimodular subgroup, then $H$ is an observable subgroup of $G$.
\end{lemma}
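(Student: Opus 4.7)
The plan is to convert the analytic unimodularity hypothesis on $H_0$ into the algebraic statement that $\det\op{Ad}|_{\mathfrak{h}}\equiv 1$ on $H$, then to realize $H$ as a normal subgroup of an auxiliary observable subgroup $N\subset G$, and finally to invoke a transitivity principle for observability. Since $H_0$ is unimodular, $\det\op{Ad}(h)|_{\mathfrak{h}_{\mathbb{R}}}=1$ for every $h\in H_0$. Differentiating at the identity yields $\op{tr}\op{ad}(X)|_{\mathfrak{h}_{\mathbb{R}}}=0$ for $X\in \mathfrak{h}_{\mathbb{R}}$, and $\mathbb{C}$-linear extension gives $\op{tr}\op{ad}(X)|_{\mathfrak{h}}=0$ for all $X\in\mathfrak{h}$. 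This is precisely the differential of the algebraic character $\det\op{Ad}|_{\mathfrak{h}}\colon H\to \mathbb{C}^\times$; since $H$ is a connected algebraic group, such a character with vanishing differential is trivial, so $\det\op{Ad}(g)|_{\mathfrak{h}}=1$ for every $g\in H$.

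Next I will construct the auxiliary group $N$. Let $V=\bigwedge^{\dim\mathfrak{h}}\mathfrak{g}$ be the rational $G$-module given by the top exterior power of the adjoint representation, and let $\omega\in V$ be a generator of the one-dimensional subspace $\bigwedge^{\dim\mathfrak{h}}\mathfrak{h}$. A direct computation gives
\[
N:=\{g\in G\mid g\cdot \omega=\omega\}=\{g\in N_G(\mathfrak{h})\mid \det\op{Ad}(g)|_{\mathfrak{h}}=1\}.
\]
By the preceding paragraph together with the obvious inclusion $H\subset N_G(\mathfrak{h})$, we have $H\subset N$. Since $H$ is the unique connected algebraic subgroup of $G$ with Lie algebra $\mathfrak{h}$, every $g\in N\subset N_G(\mathfrak{h})$ satisfies $gHg^{-1}=H$, so $H$ is normal in $N$. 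Being the $G$-stabilizer of a vector in a rational $G$-module, $N$ is observable in $G$ by Fact~\ref{Fact:observable}.

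Finally, transitivity of observability finishes the argument. Because $H$ is a normal algebraic subgroup of $N$, the quotient $N/H$ is an affine algebraic group, in particular quasi-affine, so $H$ is observable in $N$ by Fact~\ref{Fact:observable}. To pass from this to observability of $H$ in $G$, pick $v_1\in V_1$ with $N=\op{Stab}_G(v_1)$ and a rational $N$-module $V_2$ with $v_2\in V_2$ such that $H=\op{Stab}_N(v_2)$, using Fact~\ref{Fact:observable}(3) twice. Applying Fact~\ref{Fact:observable}(2) to the observable inclusion $N\subset G$ then supplies an $N$-linear embedding $\iota\colon V_2\hookrightarrow V_2'$ into a rational $G$-module $V_2'$. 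A direct check yields $\op{Stab}_G(v_1\oplus \iota(v_2))=H$: any stabilizing $g$ must fix $v_1$, forcing $g\in N$, and then by $N$-linearity and injectivity of $\iota$, fixing $\iota(v_2)$ forces $g\cdot v_2=v_2$, i.e.\ $g\in \op{Stab}_N(v_2)=H$. Thus $H$ is realized as the $G$-stabilizer of a vector in a rational $G$-module, so $H$ is observable in $G$ by Fact~\ref{Fact:observable}(3).

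The main technical point to verify carefully is that a connected complex algebraic subgroup is determined by its Lie subalgebra (used both in $H\subset N$ and in the normality of $H$ in $N$); the rest of the proof is a routine assembly using Fact~\ref{Fact:observable} and the transitivity argument in the last paragraph.
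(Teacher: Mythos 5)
Your proof is correct and follows essentially the same route as the paper: you form the stabilizer $N$ of a generator of $\bigwedge^{\dim\mathfrak{h}}\mathfrak{h}$ inside $\bigwedge^{\dim\mathfrak{h}}\mathfrak{g}$ (the paper calls it $S$), observe $H\subset N$ via unimodularity, note $H$ is normal in $N$, and then invoke transitivity of observability. The only cosmetic differences are that the paper cites [BBHM63, Theorem~2] for the step ``$N$ normalizes $H$ implies $H$ observable in $N$'' where you instead argue directly via $N/H$ being affine, and the paper cites transitivity of Fact~\ref{Fact:observable}~\eqref{cond:extend} where you unwind the transitivity argument explicitly via condition~\eqref{cond:stabilizer}.
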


\begin{proof} 
Let $d:=\dim \mathfrak{h}$.
If $H_0\subset G_{\mathbb{R}}$ is a unimodular subgroup of $G_{\mathbb{R}}$, then the identity component $(H_0)_e$ of $H_0$ acts trivially on $\bigwedge^{d}\mathfrak{h}_0$. Since $\mathfrak{h}_0=\mathfrak{h}_{\mathbb{R}}$, the complexification $\mathfrak{h}$ annihilates $\bigwedge^{d}\mathfrak{h}$.
This implies that $H\subset G$ is a unimodular subgroup.

Let  $W:=\bigwedge^{d} \mathfrak{g}$
 with the $G$-action $\bigwedge^{d} \Ad$.
Take a nonzero vector $w$ in $\bigwedge^{d} \mathfrak{h}\subset \bigwedge^{d} \mathfrak{g}$.
Define $S$ to be the stabilizer subgroup of $w$ in $G$. 
By definition of $S$ and Fact~\ref{Fact:observable} \eqref{cond:stabilizer}, 
 $S$ is observable in $G$.
Since $H$ is unimodular, $H\subset S$.
Moreover, $S$ normalizes $H$ and hence $H$ is observable in $S$ by \cite[Theorem 2]{BBHM63}.
The transitivity of the condition \eqref{cond:extend} in Fact~\ref{Fact:observable} implies
 that $H$ is observable in $G$.
\end{proof}

In the following we assume that $H_0$ is unimodular.

We now use the local structure theorem for $X=G/H$ (see \cite[Theorem 2.3, Proposition 2.4, Lemma 3.1]{Kno94}).  
The theorem states that there exist a parabolic subgroup $Q_X$ of $G$ with Levi factor $L_X$
 and an $L_X$-stable subvariety $Z\subset X$ such that
\begin{itemize}
\item  the natural map $Q_X\times^{L_X} Z\to X$
 is an open immersion, and 
\item if $L_X^0$ denotes the kernel of $L_X\to\op{Aut}(Z)$, then $L_X^0$
 contains a commutator subgroup $[L_X,L_X]$.
\end{itemize}
Let $A_X=L_X/L_X^0$ with Lie algebra $\mathfrak{a}_X$, which is a torus. 
It follows from the proof of \cite[Theorem 2.3, Proposition 2.4, Lemma 3.1]{Kno94}
 that $\mathfrak{a}_X^*$ intersects $Z(\mathfrak{l}_X)^*_{\mathrm{reg}}$. 
Hence $\mathfrak{l}_X=\{Y\in\mathfrak{g}\mid \ad^*(Y)(\mathfrak{a}_X^*)=0\}$.

Next, fix a Cartan subgroup $J\subset L_X$
 and a Borel subgroup $B$ of $G$ such that $J\subset B\subset Q_X$.
Note that there are natural inclusions
 $\mathfrak{a}_X^*\subset
 (\mathfrak{l}_X/[\mathfrak{l}_X,\mathfrak{l}_X])^*=Z(\mathfrak{l}_X)^*
 \subset \mathfrak{j}^*$.
Fix a positive system $\Delta^+(\mathfrak{g},\mathfrak{j})$
 as the roots for $B$, and let $F_{\lambda}$ denote the irreducible, finite-dimensional representation of $G$ with highest weight $\lambda\in \mathfrak{j}^*$. Let $R(G/H)$ denote the space of regular functions on $G/H$.

\begin{lemma} \label{lem:orbit_rest} If $\lambda\in \mathfrak{j}^*$
 is a dominant integral weight and $F_{\lambda}$ occurs in
 the irreducible decomposition of $R(G/H)$,
 then $\lambda\in \mathfrak{a}_X^*$. 
\end{lemma}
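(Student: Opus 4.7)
The plan is to apply the local structure theorem to the highest weight vector of $F_\lambda$ viewed as a regular function on $X$, and to read off the constraint on $\lambda$ from its restriction to $Z$.

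By Lemma~\ref{lem:unimodular_observable}, $H$ is observable in $G$, so $G/H$ is quasi-affine and a Peter--Weyl-type decomposition gives $R(G/H)=\bigoplus_\mu F_\mu\otimes (F_\mu^*)^H$ as a $G$-module. By hypothesis the $\lambda$-isotypic component is nonzero, so I obtain a nonzero $B$-eigenfunction $f_\lambda\in R(G/H)$ of weight $\lambda$ for left translation. Being the image of a highest weight vector, $f_\lambda$ is annihilated by the unipotent radical $U(B)$ of $B$; and since $B\subset Q_X$ forces $U(Q_X)\subset U(B)$, it is in particular $U(Q_X)$-invariant.

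Next, I restrict $f_\lambda$ to the nonempty open subvariety $U\cong Q_X\times^{L_X}Z$ of $X$; the restriction is nonzero because $f_\lambda$ is. Using the Levi decomposition $Q_X=U(Q_X)\rtimes L_X$, the map $(u,z)\mapsto [u,z]$ identifies $U(Q_X)\times Z\xrightarrow{\sim} Q_X\times^{L_X}Z$, and $U(Q_X)$-invariance of $f_\lambda$ means $f_\lambda|_U$ depends only on the $Z$-factor, so $f_\lambda|_U(u,z)=g(z)$ for some nonzero $g\in R(Z)$.

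Finally, I compute the $J$-action in these product coordinates. For $t\in J\subset L_X$, left translation reads $t\cdot[u,z]=[tu,z]=[tut^{-1},\,t\cdot z]$, hence $(t\cdot f_\lambda)(u,z)=g(t^{-1}\cdot z)$. The $B$-eigenweight condition on $f_\lambda$ therefore forces $g$ to be a $J$-eigenfunction on $Z$ whose weight equals $\lambda$ (up to the convention-dependent sign inherent to the left-translation action). But $L_X$ acts on $Z$ only through the quotient $L_X\to L_X/L_X^0=A_X$, so this $J$-action factors through $J\to A_X$. Thus $\lambda$ annihilates the kernel $\mathfrak{j}\cap\mathfrak{l}_X^0$ of $\mathfrak{j}\to\mathfrak{a}_X$, which is exactly the condition that $\lambda$ lies in $\mathfrak{a}_X^*\subset Z(\mathfrak{l}_X)^*\subset\mathfrak{j}^*$. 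There is no substantive obstacle; the delicate points are purely bookkeeping, namely the containment $U(Q_X)\subset U(B)$ that yields $U(Q_X)$-invariance and the identification of $\mathfrak{a}_X^*\hookrightarrow\mathfrak{j}^*$ with the annihilator of $\mathfrak{j}\cap\mathfrak{l}_X^0$.
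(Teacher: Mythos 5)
Your proof is correct and takes essentially the same route as the paper: both restrict a $B$-eigenfunction of weight $\lambda$ to the slice $Z$ via the local structure theorem and then read off that $\lambda$ kills $\mathfrak{j}\cap\mathfrak{l}_X^0$ because $L_X^0$ acts trivially on $Z$. The only cosmetic difference is that you trivialize the open subvariety as $N_X\times Z$ and invoke $U(Q_X)$-invariance (inherited from $U(B)$-invariance of the highest weight vector), whereas the paper uses the identification $Q_X\times^{L_X}Z\simeq B\times^{B\cap L_X}Z$ and works directly with the relation $f(b^{-1}x)=b^{\lambda}f(x)$.
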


\begin{proof}
Suppose $F_{\lambda}\subset R(G/H)$.
If $f\in F_{\lambda}\subset R(G/H)$ is a highest weight vector,
 then $f(b^{-1}x)=b^\lambda f(x)$ for $b\in B$, $x\in X$.
Observe that $Q_X\times^{L_X} Z\simeq B\times^{B\cap L_X} Z$, which can be
 identified with an open subvariety of $X$.
Therefore, $f|_Z\not\equiv 0$.  Since $J\cap L_X^0$ acts trivially on $Z$,
 $\lambda=0$ on $\mathfrak{j}\cap \mathfrak{l}_X^0$, namely, $\lambda\in \mathfrak{a}_X^*$.
\end{proof}

Differentiating the action of $G_{\mathbb{R}}$ on $G_{\mathbb{R}}/H_0$ and the action of $G$ on $G/H$ we obtain maps
\[\mathcal{U}(\mathfrak{g})\stackrel{\Phi_0}\longrightarrow \op{Diff}(G_{\mathbb{R}}/H_0),\quad
 \mathcal{U}(\mathfrak{g})\stackrel{\Phi}\longrightarrow \op{Diff}(G/H)\]
of the universal enveloping algebra into the algebras of differential operators.
Here, $\op{Diff}(G_{\mathbb{R}}/H_0)$ (resp.\ $\op{Diff}(G/H)$)
 denotes the algebra of $\C$-valued real analytic differential operators on $G_{\mathbb{R}}/H_0$
 (resp.\ complex algebraic differential operators on $G/H$).
Since the complexificiation of $\mathfrak{h}_0$ is $\mathfrak{h}$, 
 the map $G_{\R}/H_0 \ni g H_0\mapsto gH \in G/H$
 is \emph{locally} well-defined and
 the image of this map is a totally real submanifold of $G/H$. 
The differential operators in $\op{Im}\Phi$ can be viewed
 as holomorphic differential operators on the connected complex manifold $G/H$.
Hence such operators are zero if and only if 
 their restrictions to a totally real submanifold are zero. 
This implies $\op{Ker}\Phi=\op{Ker}\Phi_{0}$.

Finally, we have the composition
\[\mathcal{U}(\mathfrak{g})\stackrel{\Phi}\rightarrow \op{Diff}(G/H)\stackrel{\psi}\rightarrow \op{End}R(G/H).\]
Recall that $H\subset G$ observable means that $G/H$ is quasi-affine, i.e.\ 
 $G/H$ is isomorphic to an open subset of an affine variety. Since no nonzero differential operator on an affine variety annihilates all regular functions on that space, the map $\psi$ is injective. Therefore, $\op{Ker}\Phi=\op{Ker}(\psi\circ \Phi)$. 
Now, we may decompose by the Peter-Weyl theorem 
\[R(G/H)= \bigoplus_{{F_{\lambda}\subset R(G/H)}}F_{\lambda}\otimes (F_{\lambda}^*)^H\]
and we note
\[\op{Ann}_{\mathcal{U}(\mathfrak{g})}R(G/H)=
\bigcap_{F_{\lambda}\subset R(G/H)} \op{Ann}_{\mathcal{U}(\mathfrak{g})}(F_{\lambda}).\]
Therefore, we have
\begin{equation}\label{eq:kernel}
\op{Ker}\Phi_0=\op{Ker}\Phi
=\bigcap_{F_{\lambda}\subset R(G/H)}
  \op{Ann}_{\mathcal{U}(\mathfrak{g})}(F_{\lambda})
\supset \bigcap_{\lambda\in\mathfrak{a}_X^*}
  \op{Ann}_{\mathcal{U}(\mathfrak{g})}(F_{\lambda})
\end{equation} 
where the last inclusion follows from Lemma \ref{lem:orbit_rest}. 
Here and in what follows, we assume $\lambda$ is dominant and integral
 whenever we write $F_{\lambda}$.

The cotangent bundle of $X$ is
 $T^* X \simeq \{(gH, \xi)\mid \xi\in (\mathfrak{g}/\op{Ad}(g)\mathfrak{h})^*\}$
 and the moment map is given by 
\[\mu\colon T^*X\rightarrow \mathfrak{g}^*,\quad (x,\xi)\mapsto \xi\in \mathfrak{g}^*.\]
As we stated in Theorem~\ref{thm:moment_image}, 
 \cite[Lemma 3.1 and Corollary 3.3]{Kno94} give the image of the moment map
 in terms of $\mathfrak{a}_X^*$:
\[\overline{\mu(T^*X)}=\overline{G\cdot \mathfrak{a}_X^*}.\]
In particular, the image of the moment map
 contains a dense subset of semisimple elements.

Let $\mathfrak{q}_X\subset \mathfrak{g}$ be the Lie algebra of $Q_X$
 with Levi decomposition 
\[ Q_X=L_XN_X,\qquad 
\mathfrak{q}_X=\mathfrak{l}_X\oplus \mathfrak{n}_X,\qquad
 \mathfrak{n}_X=\bigoplus_{\alpha\in \Delta(\mathfrak{n}_X,\mathfrak{j})}\mathfrak{g}_{\alpha}.\]
Define
\[
Q_X^0:=L_X^0N_X,\qquad
J_{\mathfrak{a}_X}:=\op{Ker}\bigl(\mathcal{U}(\mathfrak{g})\rightarrow \op{Diff}(G/Q_X^0)\bigr).\]
The following fact is the Corollary on page 453 of \cite{BB82}.

\begin{fact}[Borho-Brylinski] \label{lem:BB}
We have 
\begin{align}\label{eq:annihilator}
J_{\mathfrak{a}_X}=\op{Ann}_{\mathcal{U}(\mathfrak{g})}(\mathcal{U}(\mathfrak{g})\otimes_{\mathcal{U}(\mathfrak{q}_X^0)}\mathbb{C}) 
=\bigcap_{\lambda\in \mathfrak{a}_X^*} \op{Ann}_{\mathcal{U}(\mathfrak{g})}(\mathcal{U}(\mathfrak{g})\otimes_{\mathcal{U}(\mathfrak{q}_X)}\mathbb{C}_{\lambda}).
\end{align}
Here, $\mathbb{C}$ is the trivial $\mathcal{U}(\mathfrak{q}_X^0)$-module,
 and $\mathbb{C}_{\lambda}$ is the one-dimensional $\mathcal{U}(\mathfrak{q}_X)$-module
 on which $Z(\mathfrak{l}_X)$ acts by $\lambda$.
\end{fact}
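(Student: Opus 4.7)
The plan is to treat the two equalities separately. The first equality $J_{\mathfrak{a}_X}=\op{Ann}_{\mathcal{U}(\mathfrak{g})}(M^0)$, with $M^0:=\mathcal{U}(\mathfrak{g})\otimes_{\mathcal{U}(\mathfrak{q}_X^0)}\mathbb{C}$, is the heart of the statement; the second is then a formal algebraic consequence of the structure of $M^0$.

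For the first equality, the guiding idea is that $M^0$ is dual to the infinite jet of regular functions on $G/Q_X^0$ at the identity coset. Regard $f\in R(G/Q_X^0)$ as a right-$Q_X^0$-invariant regular function on $G$ and write $\partial_P$ for the left-invariant differential operator on $G$ associated to $P\in\mathcal{U}(\mathfrak{g})$. There is a natural pairing
\[
M^0 \times R(G/Q_X^0) \rightarrow \mathbb{C}, \quad (P\otimes 1,f)\mapsto (\partial_P f)(e),
\]
which is well-defined on $M^0$ because $\partial_X f=0$ for $X\in\mathfrak{q}_X^0$. Let $\Phi\colon\mathcal{U}(\mathfrak{g})\to\op{Diff}(G/Q_X^0)$ denote the canonical map, so that $J_{\mathfrak{a}_X}=\op{Ker}\Phi$. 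Using that $\Phi$ is an algebra homomorphism and that the value of $\Phi(u)f$ at an arbitrary point $gQ_X^0$ can be recovered from values at the identity coset after translating $f$ by $g$ (with a compensating adjoint twist on $u$), the condition $\Phi(u)f\equiv 0$ for all $f$ unwinds to $\langle u\cdot(P\otimes 1),f\rangle=0$ for all $P\in\mathcal{U}(\mathfrak{g})$ and $f\in R(G/Q_X^0)$. Non-degeneracy of the pairing on the $M^0$ side would then yield $u\cdot M^0=0$, i.e.\ $u\in\op{Ann}(M^0)$, and the reverse inclusion is analogous.

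The main obstacle is this non-degeneracy of the pairing on $M^0$. When $G/Q_X^0$ is quasi-affine, regular functions separate jets at the identity coset and non-degeneracy is immediate. However, $Q_X^0$ need not be observable in $G$: since $L_X^0$ acts on $\mathfrak{n}_X$ with a non-trivial modular character, $Q_X^0$ typically fails to be unimodular, so the observability argument of Section~\ref{sec:annihilator} applied to $H$ does not transfer directly to $Q_X^0$. The Borho--Brylinski approach circumvents this by realizing $M^0$ as a $\mathscr{D}$-module supported appropriately along the identity coset (or through the Beilinson--Bernstein equivalence on a partial flag variety fibering over $G/Q_X^0$), where the analog of the pairing is non-degenerate by construction. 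I would follow this $\mathscr{D}$-module route, which is where the technical effort actually sits.

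For the second equality $\op{Ann}(M^0)=\bigcap_{\lambda\in\mathfrak{a}_X^*}\op{Ann}(V_\lambda)$ with $V_\lambda:=\mathcal{U}(\mathfrak{g})\otimes_{\mathcal{U}(\mathfrak{q}_X)}\mathbb{C}_\lambda$, the decomposition $\mathfrak{q}_X=\mathfrak{q}_X^0\oplus\mathfrak{a}_X$ and PBW yield a vector-space isomorphism $M^0\cong\mathcal{U}(\mathfrak{n}_X^-)\otimes S(\mathfrak{a}_X)$, where $\mathfrak{n}_X^-$ denotes the opposite nilradical of $\mathfrak{q}_X$. Right multiplication by $S(\mathfrak{a}_X)$ then defines $\mathcal{U}(\mathfrak{g})$-module endomorphisms of $M^0$, since $\mathfrak{a}_X\subset Z(\mathfrak{l}_X)$ centralizes $\mathfrak{l}_X^0$ and normalizes $\mathfrak{n}_X$, and $V_\lambda$ is the specialization $M^0\otimes_{S(\mathfrak{a}_X),\lambda}\mathbb{C}$. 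The inclusion $\subset$ is automatic since each $V_\lambda$ is a $\mathcal{U}(\mathfrak{g})$-quotient of $M^0$. For the reverse, take $u$ annihilating every $V_\lambda$ and $m\in M^0$; writing $u\cdot m=\sum_i n_i\otimes a_i$ with $\{n_i\}\subset\mathcal{U}(\mathfrak{n}_X^-)$ linearly independent and $a_i\in S(\mathfrak{a}_X)$, the hypothesis forces $\sum_i\lambda(a_i)n_i=0$ in $V_\lambda$ for every $\lambda$, hence $\lambda(a_i)=0$ for all $i$ and all $\lambda\in\mathfrak{a}_X^*$. Since $\mathfrak{a}_X^*$ is a full complex vector space each $a_i=0$, so $u\cdot m=0$ and $u\in\op{Ann}(M^0)$.
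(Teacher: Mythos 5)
The paper does not prove this statement; it is quoted as a Fact with a reference to Borho--Brylinski \cite{BB82}, so there is no in-paper argument to compare against. Judged on its own terms, your treatment of the second equality is correct and pleasingly self-contained: writing $\mathfrak{l}_X=\mathfrak{l}_X^0\oplus\mathfrak{a}$ with $\mathfrak{a}\subset Z(\mathfrak{l}_X)$ a complement, $M^0\cong\mathcal{U}(\mathfrak{n}_X^-)\otimes S(\mathfrak{a})$ is free as a right $S(\mathfrak{a})$-module, right multiplication by $S(\mathfrak{a})$ commutes with the left $\mathcal{U}(\mathfrak{g})$-action because $\mathfrak{a}$ is central in $\mathfrak{l}_X$ and normalizes $\mathfrak{n}_X$, and the specialization $V_\lambda = M^0\otimes_{S(\mathfrak{a}),\lambda}\mathbb{C}$ together with the Nullstellensatz on $\mathfrak{a}^*$ gives $\op{Ann}(M^0)=\bigcap_\lambda\op{Ann}(V_\lambda)$ cleanly.

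The first equality, which you correctly identify as the real content, is left with a genuine gap, and I want to push back on your diagnosis of where the difficulty lies. You worry that the pairing of $M^0$ with $R(G/Q_X^0)$ may be degenerate because $Q_X^0$ is not observable (which is true: $G/Q_X^0$ is an $A_X$-bundle over the projective variety $G/Q_X$, hence essentially never quasi-affine). But the condition defining $J_{\mathfrak{a}_X}$, namely $\Phi(u)=0$ as an element of $\op{Diff}(G/Q_X^0)$, is \emph{local}, so the correct pairing is between $M^0$ and the local ring $\mathcal{O}_{G/Q_X^0,eQ_X^0}$ (equivalently its formal completion $\mathbb{C}[[\mathfrak{g}/\mathfrak{q}_X^0]]$), or between $M^0$ and regular functions on any open affine neighborhood of $eQ_X^0$ such as the big cell. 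Each element of $M^0$ has bounded filtration degree, so this pairing is automatically non-degenerate with no observability hypothesis whatsoever; the global-sections framing is a red herring. What then remains to be done carefully is the translation step you elide with ``compensating adjoint twist'': one must check that $\Phi(u)=0$ at every point unwinds to $u\mathcal{U}(\mathfrak{g})\subset\mathcal{U}(\mathfrak{g})\mathfrak{q}_X^0$ and not merely its image under the principal anti-automorphism, or else explain why the two coincide here. As written, neither the non-degeneracy nor the adjoint bookkeeping is carried out, so the first equality is not established; a deferred appeal to ``the $\mathscr{D}$-module route'' is not a substitute for the argument.
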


Since each $F_{\lambda}$ for
 $\lambda\in \mathfrak{a}_X^*$
 is a quotient of $\mathcal{U}(\mathfrak{g})\otimes_{\mathcal{U}(\mathfrak{q}_X)}
 \mathbb{C}_{\lambda}$, we deduce 
\[\op{Ann}_{\mathcal{U}(\mathfrak{g})}(\mathcal{U}(\mathfrak{g})
 \otimes_{\mathcal{U}(\mathfrak{q}_X)}\mathbb{C}_{\lambda})
 \subset \op{Ann}_{\mathcal{U}(\mathfrak{g})}F_{\lambda}.\]
Together with \eqref{eq:kernel}, and \eqref{eq:annihilator}, this implies
\begin{equation}\label{eq:contain}
J_{\mathfrak{a}_X}\subset \bigcap_{\lambda\in \mathfrak{a}_X^*} \op{Ann}_{\mathcal{U}(\mathfrak{g})}(F_{\lambda})\subset \op{Ker}\Phi_0.
\end{equation}

The following lemma simplifies the statement of our later result:
\begin{lemma}\label{lem:rho_n}
$\rho(\mathfrak{n}_X)\in \mathfrak{a}_X^*$.
\end{lemma}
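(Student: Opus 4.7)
The plan is to reduce the claim to a trace computation on $\mathfrak{g}$ that exploits the unimodularity of $H$ together with the local structure theorem. Since $\rho(\mathfrak{n}_X)\in Z(\mathfrak{l}_X)^*$ automatically and $\mathfrak{a}_X^*\subset Z(\mathfrak{l}_X)^*$ is cut out by vanishing on $Z(\mathfrak{l}_X)\cap \mathfrak{l}_X^0$, it suffices to prove $\rho(\mathfrak{n}_X)|_{\mathfrak{l}_X^0}=0$.

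I would first pick a point $z_0 \in Z$ whose $L_X$-stabilizer is exactly $L_X^0$; such a point exists on an open dense subset of $Z$ because $L_X$ acts on $Z$ through the torus $A_X = L_X/L_X^0$. Letting $z_0$ also denote its image in $X=G/H$ under the open embedding $Q_X\times^{L_X}Z\hookrightarrow X$, the stabilizer $H':=\op{Stab}_G(z_0)$ is $G$-conjugate to $H$ and hence unimodular. The embedding gives $H'\cap Q_X=L_X^0$: if $n\ell\in N_X\cdot L_X=Q_X$ fixes the class $[e,z_0]$, then $N_X\cap L_X=\{e\}$ forces $n=e$, and then $\ell\in \op{Stab}_{L_X}(z_0)=L_X^0$. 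Openness of $Q_X\cdot z_0$ in $X$ gives also $\mathfrak{h}'+\mathfrak{q}_X=\mathfrak{g}$, so as $\mathfrak{l}_X^0$-modules
\[
\mathfrak{g}/\mathfrak{h}' \simeq \mathfrak{q}_X/\mathfrak{l}_X^0\simeq \mathfrak{n}_X\oplus\mathfrak{a}_X.
\]

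Finally, for $Y\in\mathfrak{l}_X^0$, unimodularity of $H'$ together with $\op{tr}_{\mathfrak{g}}\op{ad} Y=0$ (since $G$ is reductive) yields $\op{tr}_{\mathfrak{g}/\mathfrak{h}'}\op{ad} Y=0$. The inclusion $[\mathfrak{l}_X^0,\mathfrak{l}_X]\subset[\mathfrak{l}_X,\mathfrak{l}_X]\subset\mathfrak{l}_X^0$ shows that $\mathfrak{l}_X^0$ acts trivially on $\mathfrak{a}_X$, so $\op{tr}_{\mathfrak{a}_X}\op{ad} Y=0$ and hence $\op{tr}_{\mathfrak{n}_X}\op{ad} Y=2\rho(\mathfrak{n}_X)(Y)=0$. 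This proves $\rho(\mathfrak{n}_X)\in\mathfrak{a}_X^*$. The main technical step is ensuring the existence of $z_0\in Z$ with $L_X$-stabilizer exactly $L_X^0$; this is a standard consequence of the local structure theorem (since $L_X$ acts on $Z$ through the torus $A_X$ with $\dim Z=\dim A_X$), but should be cited or verified carefully — everything else is routine.
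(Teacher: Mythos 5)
Your proof is correct, and the route is genuinely different from the paper's, though both rest on the same two ingredients (unimodularity of $H$ and the local structure theorem). The paper argues globally: the $G$-invariant top form on $X$ restricts to an $L_X$-invariant section of $\bigwedge^{\dim X}T^*X|_Z\simeq\bigwedge^{\dim Z}T^*Z\otimes\bigwedge^{\dim\mathfrak n_X}T^*_Z(Q_X\times^{L_X}Z)$; since $L_X^0$ acts trivially on the first factor, it must act trivially on the second, whose character along $\mathfrak{l}_X^0$ is $\pm 2\rho(\mathfrak n_X)$. You instead localize at one generic point $z_0$, transport unimodularity to the conjugate stabilizer $H'=\op{Stab}_G(z_0)$, and compute $\op{tr}_{\mathfrak g/\mathfrak h'}\op{ad}Y=0$ for $Y\in\mathfrak l_X^0$, which gives the same vanishing of $\rho(\mathfrak n_X)|_{\mathfrak l_X^0}$ after the $\mathfrak l_X^0$-module identification $\mathfrak g/\mathfrak h'\simeq\mathfrak n_X\oplus\mathfrak a_X$. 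These two arguments are dual formulations of one fact; the paper's is coordinate-free and avoids a choice of base point, while yours is more elementary and explicit.

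The one place you rightly ask for caution is the existence of $z_0$. You actually need two things from $z_0$: (a) $L_X^0\subset H'$, which holds for \emph{every} $z_0\in Z$ since $L_X^0$ fixes $Z$ pointwise, and (b) $Q_X\cdot z_0$ open in $X$, equivalently $A_X\cdot z_0$ open in $Z$ (your $\dim Z=\dim A_X$ then gives $\op{Lie}\op{Stab}_{L_X}(z_0)=\mathfrak l_X^0$, which is all the trace computation needs — the group-level equality $\op{Stab}_{L_X}(z_0)=L_X^0$ is not required). The existence of an open $A_X$-orbit in $Z$ is indeed part of Knop's local structure theorem as used in this paper (it is precisely what makes the open immersion $Q_X\times^{L_X}Z\hookrightarrow X$ compatible with a dense $Q_X$-orbit), but it is not literally printed in the paper's summary of the theorem, so if you use this route you should cite that statement from \cite{Kno94} directly. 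With that reference in place the argument is complete.
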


\begin{proof}
Since $H$ is unimodular, $X=G/H$ has a $G$-invariant differential form of top degree.
By restriction, it gives a $Q_X$-invariant form on $Q_X\times^{L_X} Z$.
Therefore, the line bundle
\[(\bigwedge^{\dim X} T^* X)|_Z\simeq
 \bigwedge^{\dim Z} T^*Z \otimes \bigwedge^{\dim \mathfrak{n}_X} T^*_Z (Q_X\times^{L_X} Z)\]
 has a nonzero $L_X$-invariant section, and hence in particular an $L_X^0$-invariant section.
Recall that $L_X^0$ acts trivially on $Z$ and on $T^*Z$.
On the other hand, the fibers of $T^*_Z (Q_X\times^{L_X} Z)$ are identified with
 $(\mathfrak{q}_X/\mathfrak{l}_X)^*$.
As a result, $L_X^0$ must act trivially on
 $\bigwedge^{\dim \mathfrak{n}_X} (\mathfrak{q}_X/\mathfrak{l}_X)^*$,
 which implies $\rho(\mathfrak{n}_X)$ is zero on $\mathfrak{l}_X^0$
 and $\rho(\mathfrak{n}_X)\in \mathfrak{a}_X^*$. 
\end{proof}

Suppose that $V$ is an irreducible $(\mathfrak{g}, K)$-module
 and suppose there exists an injective linear map 
\[V \hookrightarrow C^{\infty}(G_{\mathbb{R}}/H_0)\]
 which respects actions of $\mathfrak{g}$ and $K_{\R}$.
The enveloping algebra $\mathcal{U}(\mathfrak{g})$ acts on $V$ via the map $\Phi_0$
 together with the restriction of the action of $\op{Diff}(G_{\mathbb{R}}/H_0)$
 on $C^{\infty}(G_{\mathbb{R}}/H_0)$ to $V$. In particular, we have
 $\op{Ann}_{\mathcal{U}(\mathfrak{g})}(V)\supset \op{Ker}\Phi_0$.
By \eqref{eq:contain}, we obtain the following proposition.
\begin{proposition}\label{prop:Ann}
If $V$ is an irreducible $(\mathfrak{g}, K)$-module
 and there exists an injective linear map 
 $V \hookrightarrow C^{\infty}(G_{\mathbb{R}}/H_0)$
 which respects actions of $\mathfrak{g}$ and $K_{\R}$, 
 then 
\begin{equation*}
\op{Ann}_{\mathcal{U}(\mathfrak{g})}(V)\supset J_{\mathfrak{a}_X}.
\end{equation*}
\end{proposition}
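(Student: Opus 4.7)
The plan is short because the heavy lifting has already been done in the discussion leading up to the statement. The strategy is to observe that the action of $\mathcal{U}(\mathfrak{g})$ on any submodule of $C^{\infty}(G_{\mathbb{R}}/H_0)$ is inherited from the action of differential operators, so the kernel of $\Phi_0$ sits inside the annihilator of $V$; combined with $J_{\mathfrak{a}_X}\subset\op{Ker}\Phi_0$ from \eqref{eq:contain}, this gives the conclusion immediately.

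More concretely, first I would unpack the hypothesis. The inclusion $V\hookrightarrow C^{\infty}(G_{\mathbb{R}}/H_0)$ is by assumption $\mathfrak{g}$-equivariant, where $\mathfrak{g}$ acts on the right-hand side by differentiating the left regular $G_{\mathbb{R}}$-action (and extending $\C$-linearly from $\mathfrak{g}_{\R}$ to $\mathfrak{g}$). This action is precisely the composition of $\Phi_0\colon\mathcal{U}(\mathfrak{g})\to \op{Diff}(G_{\mathbb{R}}/H_0)$ with evaluation on functions. Therefore, for any $u\in\op{Ker}\Phi_0$ and any $v\in V$, the element $u\cdot v$ equals $\Phi_0(u)$ applied to $v$ viewed as a smooth function, which is zero. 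This gives the inclusion $\op{Ker}\Phi_0\subset \op{Ann}_{\mathcal{U}(\mathfrak{g})}(V)$; note that irreducibility of $V$ and compatibility with the $K$-action play no role here.

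Next I would invoke \eqref{eq:contain}, which was established using Lemma~\ref{lem:unimodular_observable} (so that $G/H$ is quasi-affine and $\psi$ is injective), Lemma~\ref{lem:orbit_rest} (cutting down the relevant highest weights to $\mathfrak{a}_X^*$), and Borho--Brylinski's identification of $J_{\mathfrak{a}_X}$ in Fact~\ref{lem:BB}. That chain of inclusions reads $J_{\mathfrak{a}_X}\subset \bigcap_{\lambda\in\mathfrak{a}_X^*}\op{Ann}_{\mathcal{U}(\mathfrak{g})}(F_\lambda)\subset \op{Ker}\Phi_0$. Concatenating with the previous step yields $J_{\mathfrak{a}_X}\subset \op{Ann}_{\mathcal{U}(\mathfrak{g})}(V)$, as desired.

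There is essentially no obstacle at the level of this proposition itself; the only thing to be careful about is that $\Phi_0$ is defined using the real group $G_{\R}$ on a real-analytic manifold while $J_{\mathfrak{a}_X}$ is defined via the complex algebraic map $\Phi$. This was already addressed in the preceding discussion by the observation that $\op{Ker}\Phi=\op{Ker}\Phi_0$, which follows from the fact that holomorphic differential operators on the complex manifold $G/H$ vanish as soon as they vanish on a totally real submanifold such as the image of $G_{\R}/H_0$. Once this identification is in hand, the proposition is formal.
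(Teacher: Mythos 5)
Your proof is correct and follows essentially the same route as the paper: the action of $\mathcal{U}(\mathfrak{g})$ on $V$ factors through $\Phi_0$, so $\op{Ker}\Phi_0\subset\op{Ann}_{\mathcal{U}(\mathfrak{g})}(V)$, and then \eqref{eq:contain} gives $J_{\mathfrak{a}_X}\subset\op{Ker}\Phi_0$. Your remark that irreducibility and $K$-compatibility are not actually used here is accurate, and your aside about $\op{Ker}\Phi=\op{Ker}\Phi_0$ matches the paper's own justification.
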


For an infinitesimal character
 $\xi\colon Z(\mathcal{U}(\mathfrak{g}))\rightarrow \mathbb{C}$,
 define
\[I_{\xi}:=\mathcal{U}(\mathfrak{g})\cdot \op{Ker}(Z(\mathcal{U}(\mathfrak{g}))\stackrel{\xi}\rightarrow \mathbb{C}).\]
Let $W$ be the Weyl group for $\Delta(\mathfrak{g},\mathfrak{j})$.
Recall that there exists a natural algebra isomorphism (so-called the Harish-Chandra isomorphism)
 $\gamma\colon Z(\mathcal{U}(\mathfrak{g}))\simeq S(\mathfrak{j})^W$.
If $\xi\colon Z(\mathcal{U}(\mathfrak{g}))\rightarrow \mathbb{C}$
 is the infinitesimal character of $V$,
 then we may compose with $\gamma^{-1}$
 to give an element of $\mathfrak{j}^*/W$
 or a representative $\xi\in \mathfrak{j}^*$.

\begin{lemma}\label{lem:inf_character} 
Suppose that $V$ is an irreducible $(\mathfrak{g}, K)$-module
 with infinitesimal character $\xi\in \mathfrak{j}^*$ and
  $\op{Ann}_{\mathcal{U}(\mathfrak{g})}(V)\supset J_{\mathfrak{a}_X}$.
Then
\[(W\cdot \xi)\cap (\mathfrak{a}_X^*+\rho_{\mathfrak{l}_X})\neq \emptyset,\]
where we put
\[\rho_{\mathfrak{l}_X}:=
\frac{1}{2}
 \sum_{\alpha\in \Delta(\mathfrak{l}_X,\mathfrak{j})\cap \Delta^+(\mathfrak{g},\mathfrak{j})} \alpha.\]
\end{lemma}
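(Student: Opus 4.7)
The plan is to convert the hypothesis on annihilators into a statement about infinitesimal characters via Fact~\ref{lem:BB} and the Harish-Chandra isomorphism. Set $M(\lambda):=\mathcal{U}(\mathfrak{g})\otimes_{\mathcal{U}(\mathfrak{q}_X)}\mathbb{C}_{\lambda}$ for $\lambda\in\mathfrak{a}_X^*$, so that by Fact~\ref{lem:BB} we have
\[
J_{\mathfrak{a}_X}=\bigcap_{\lambda\in\mathfrak{a}_X^*}\op{Ann}_{\mathcal{U}(\mathfrak{g})}(M(\lambda)).
\]
Choose a Borel $\mathfrak{b}\subset\mathfrak{q}_X$ containing $\mathfrak{j}$, let $\rho$ be the corresponding half-sum of positive roots of $\mathfrak{g}$, and decompose $\rho=\rho_{\mathfrak{l}_X}+\rho(\mathfrak{n}_X)$.

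First I would compute the infinitesimal character of $M(\lambda)$. Since $\mathbb{C}_{\lambda}$ is trivial on $[\mathfrak{l}_X,\mathfrak{l}_X]$ and on $\mathfrak{n}_X$, the generator $1\otimes 1$ is a highest weight vector for $\mathfrak{b}$ of weight $\lambda$ (viewed in $\mathfrak{j}^*$ by extending by zero on $\mathfrak{j}\cap[\mathfrak{l}_X,\mathfrak{l}_X]$). Hence $M(\lambda)$ is a quotient of the standard Verma module of highest weight $\lambda$, and its Harish-Chandra parameter is $\lambda+\rho$. By Lemma~\ref{lem:rho_n}, $\rho(\mathfrak{n}_X)\in\mathfrak{a}_X^*$, so
\[
\lambda+\rho=(\lambda+\rho(\mathfrak{n}_X))+\rho_{\mathfrak{l}_X}\in \mathfrak{a}_X^*+\rho_{\mathfrak{l}_X},
\]
and as $\lambda$ varies over $\mathfrak{a}_X^*$ these parameters sweep out all of $\mathfrak{a}_X^*+\rho_{\mathfrak{l}_X}$.

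Next I would intersect with the center. Since $M(\lambda)$ has infinitesimal character $\lambda+\rho$, we have $\op{Ann}(M(\lambda))\cap Z(\mathcal{U}(\mathfrak{g}))=\ker(\chi_{\lambda+\rho})$, and therefore
\[
J_{\mathfrak{a}_X}\cap Z(\mathcal{U}(\mathfrak{g}))
=\bigcap_{\lambda\in\mathfrak{a}_X^*}\ker(\chi_{\lambda+\rho}).
\]
Transporting through the Harish-Chandra isomorphism $Z(\mathcal{U}(\mathfrak{g}))\simeq S(\mathfrak{j})^W$, the right-hand side becomes the ideal of $W$-invariant polynomials on $\mathfrak{j}^*$ vanishing on the $W$-saturated set $W\cdot(\mathfrak{a}_X^*+\rho_{\mathfrak{l}_X})$.

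Finally, the hypothesis $\op{Ann}(V)\supset J_{\mathfrak{a}_X}$ and the assumption that $V$ has infinitesimal character $\xi$ give
\[
\ker(\chi_{\xi})\supset J_{\mathfrak{a}_X}\cap Z(\mathcal{U}(\mathfrak{g})).
\]
Under Harish-Chandra this is the containment of the maximal ideal at $W\cdot\xi$ over the vanishing ideal of $W\cdot(\mathfrak{a}_X^*+\rho_{\mathfrak{l}_X})$; by the Nullstellensatz (and using that $\mathfrak{a}_X^*+\rho_{\mathfrak{l}_X}$ is a Zariski-closed affine subspace of $\mathfrak{j}^*$, so its $W$-translate is closed), the point $W\cdot\xi$ lies in $W\cdot(\mathfrak{a}_X^*+\rho_{\mathfrak{l}_X})$, i.e.\ $(W\cdot\xi)\cap(\mathfrak{a}_X^*+\rho_{\mathfrak{l}_X})\neq\emptyset$.

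The only delicate point is the $\rho$-shift bookkeeping: one must check that $\rho_{\mathfrak{l}_X}$ and $\rho(\mathfrak{n}_X)$ split off cleanly and that $\rho(\mathfrak{n}_X)$ lies in $\mathfrak{a}_X^*$, which is exactly what Lemma~\ref{lem:rho_n} delivers; everything else is formal manipulation with Harish-Chandra characters.
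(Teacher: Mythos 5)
Your proof is correct and follows essentially the same route as the paper: both reduce the hypothesis to the statement that every element of $Z(\mathcal{U}(\mathfrak{g}))$ whose Harish--Chandra image vanishes on $\mathfrak{a}_X^*+\rho_{\mathfrak{l}_X}$ lies in $J_{\mathfrak{a}_X}$ (the paper shows this via the modules $F_\lambda$, you via the generalized Verma modules of Fact~\ref{lem:BB}; by Lemma~\ref{lem:rho_n} the two families of infinitesimal characters coincide), and then conclude. The only cosmetic difference is that the paper packages the final step as an explicit contradiction with a single test polynomial, where you invoke the Nullstellensatz together with the closedness of $W\cdot(\mathfrak{a}_X^*+\rho_{\mathfrak{l}_X})$; these are the same argument.
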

\begin{proof}
Suppose $z\in Z(\mathcal{U}(\mathfrak{g}))$ with 
\[\gamma(z)|_{\mathfrak{a}_X^*+\rho_{\mathfrak{l}_X}}=0.\]
Recall that $F_{\lambda}$ has infinitesimal character
 $\lambda+\rho=\lambda+\rho_{\mathfrak{l}_X}+\rho(\mathfrak{n}_X)$.
In view of Lemma~\ref{lem:rho_n},
 $z\in \op{Ann}_{\mathcal{U}(\mathfrak{g})}(F_{\lambda})$
 for all $\lambda\in \mathfrak{a}_X^*$, and by \eqref{eq:contain},
 $z\in J_{\mathfrak{a}_X}$. 

Now, assume that the conclusion of Lemma \ref{lem:inf_character} is false. 
That is, assume that
 $(W\cdot \xi)\cap (\mathfrak{a}_X^*+\rho_{\mathfrak{l}_X})=\emptyset$. 
Then we may choose a polynomial $p\in \op{Pol}(\mathfrak{j}^*)^W$
 such that $p(w\cdot \xi)\neq 0$ for all $w\in W$ but 
\[p|_{\mathfrak{a}_X^*+\rho_{\mathfrak{l}_X}}=0.\]
Identify $\op{Pol}(\mathfrak{j}^*)^W\simeq S(\mathfrak{j})^W$ in the usual way and write
 $z=\gamma^{-1}(p) \in Z(\mathcal{U}(\mathfrak{g}))$. 
Then $z\in J_{\mathfrak{a}_X}$ by the above argument.
Since $z-\gamma(z)(\xi)\in I_{\xi}$ by the definition of $I_{\xi}$,  
\[\gamma(z)(\xi)=z-(z-\gamma(z)(\xi))\in J_{\mathfrak{a}_X}+I_{\xi}.\]
But, then $\gamma(z)(\xi)\neq 0$ implies $1\in J_{\mathfrak{a}_X}+I_{\xi}$.
On the other hand, 
 $\op{Ann}_{\mathcal{U}(\mathfrak{g})}(V)\supset J_{\mathfrak{a}_X}+I_{\xi}$
 by our assumption.
Hence we must have $V=0$, which is a contradiction.
\end{proof}

For $\lambda\in Z(\mathfrak{l}_X)^*$ define the two-sided ideal
\begin{equation*}
J_{\lambda}:=
\op{Ann}_{\mathcal{U}(\mathfrak{g})}
(\mathcal{U}(\mathfrak{g})\otimes_{\mathcal{U}(\mathfrak{q}_X)}
 \mathbb{C}_{\lambda-\rho(\mathfrak{n}_X)}).
\end{equation*}
Note that $J_{\lambda}\supset I_{\lambda+\rho_{\mathfrak{l}_X}}$,
 or equivalently, the generalized Verma module
 $\mathcal{U}(\mathfrak{g})\otimes_{\mathcal{U}(\mathfrak{q}_X)}
 \mathbb{C}_{\lambda-\rho(\mathfrak{n}_X)}$
 has the infinitesimal character $\lambda+\rho_{\mathfrak{l}_X}$.

\begin{lemma}\label{lem:Ann}
Suppose that $V$ is an irreducible $(\mathfrak{g}, K_{\mathbb{R}})$-module
 and $\op{Ann}_{\mathcal{U}(\mathfrak{g})}(V)\supset J_{\mathfrak{a}_X}$.
Then there exists $\lambda\in \mathfrak{a}_X^*$
 such that 
\[\op{Ann}_{\mathcal{U}(\mathfrak{g})}(V)\supset J_{\lambda}.\]
\end{lemma}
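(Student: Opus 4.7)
The plan is to pin down the infinitesimal character of $V$ via Lemma~\ref{lem:inf_character}, analyze the ``family module'' $A = \mathcal{U}(\mathfrak{g}) \otimes_{\mathcal{U}(\mathfrak{q}_X^0)} \mathbb{C}$ from Fact~\ref{lem:BB} at this central character, and conclude by primality of the annihilator of $V$.

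First, since $V$ is irreducible, Dixmier's version of Schur's lemma produces an infinitesimal character $\chi \in \mathfrak{j}^*/W$ for $V$, so $I_{\chi}\subset \op{Ann}_{\mathcal{U}(\mathfrak{g})}(V)$; moreover $\op{Ann}_{\mathcal{U}(\mathfrak{g})}(V)$ is primitive, hence prime as a two-sided ideal. Lemma~\ref{lem:inf_character} forces $\chi$ to have a representative in $\mathfrak{a}_X^*+\rho_{\mathfrak{l}_X}$, and the set
\[
S(\chi):=\{\lambda\in\mathfrak{a}_X^*\mid \lambda+\rho_{\mathfrak{l}_X}\in W\cdot\chi\}
\]
is thereby finite and nonempty. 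The generalized Verma $\mathcal{U}(\mathfrak{g})\otimes_{\mathcal{U}(\mathfrak{q}_X)}\mathbb{C}_{\lambda-\rho(\mathfrak{n}_X)}$ has central character $\lambda+\rho_{\mathfrak{l}_X}$, so $I_{\chi}\subset J_\lambda$ for every $\lambda\in S(\chi)$. The target is to show $\op{Ann}_{\mathcal{U}(\mathfrak{g})}(V) \supset J_\lambda$ for \emph{some} $\lambda\in S(\chi)$.

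Second, I would work with $A$. Fact~\ref{lem:BB} gives $\op{Ann}_{\mathcal{U}(\mathfrak{g})}(A) = J_{\mathfrak{a}_X}$, and a PBW computation using $\mathfrak{g}=\mathfrak{n}_X^-\oplus\mathfrak{q}_X$ with $\mathfrak{q}_X/\mathfrak{q}_X^0=\mathfrak{a}_X$ identifies $A$ with $\mathcal{U}(\mathfrak{n}_X^-)\otimes_{\mathbb{C}}S(\mathfrak{a}_X)$ as a right $S(\mathfrak{a}_X)=\mathcal{U}(\mathfrak{a}_X)$-module; in particular $A$ is free, hence flat, over $S(\mathfrak{a}_X)$, and this right action commutes with the left $\mathcal{U}(\mathfrak{g})$-action. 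The left action of $Z(\mathfrak{g})$ on $A$ coincides with right multiplication and therefore factors through a Harish-Chandra-type homomorphism $\gamma_X\colon Z(\mathfrak{g})\to S(\mathfrak{a}_X)$; specialization of $A$ at any closed point $\nu\in\mathfrak{a}_X^*$ recovers $\mathcal{U}(\mathfrak{g})\otimes_{\mathcal{U}(\mathfrak{q}_X)}\mathbb{C}_\nu$. Passing to the quotient $\widetilde A:=A/I_{\chi}A$: the right $S(\mathfrak{a}_X)$-support is precisely the finite set $\{\lambda-\rho(\mathfrak{n}_X)\mid \lambda\in S(\chi)\}$, and flatness of $A$ endows $\widetilde A$ with a finite filtration whose subquotients are the generalized Verma modules $\mathcal{U}(\mathfrak{g})\otimes_{\mathcal{U}(\mathfrak{q}_X)}\mathbb{C}_{\lambda-\rho(\mathfrak{n}_X)}$ for $\lambda\in S(\chi)$, with multiplicities dictated by $S(\mathfrak{a}_X)/\gamma_X(I_{\chi})S(\mathfrak{a}_X)$.

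Third, applying the corresponding $J_\lambda$'s successively along this filtration yields an ordered product $\prod_{\lambda\in S(\chi)}J_\lambda^{n_\lambda}\subset \op{Ann}_{\mathcal{U}(\mathfrak{g})}(\widetilde A)$. A faithfulness argument, using that $A$ is free over $S(\mathfrak{a}_X)$ together with the embedding $\mathcal{U}(\mathfrak{g})/J_{\mathfrak{a}_X}\hookrightarrow \op{End}_{S(\mathfrak{a}_X)}(A)$, identifies $\op{Ann}_{\mathcal{U}(\mathfrak{g})}(\widetilde A)=J_{\mathfrak{a}_X}+I_{\chi}$, which is contained in $\op{Ann}_{\mathcal{U}(\mathfrak{g})}(V)$ by hypothesis and Step~1. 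Hence the prime ideal $\op{Ann}_{\mathcal{U}(\mathfrak{g})}(V)$ contains the finite product $\prod_{\lambda\in S(\chi)}J_\lambda^{n_\lambda}$, and primality selects some $\lambda\in S(\chi)$ with $\op{Ann}_{\mathcal{U}(\mathfrak{g})}(V)\supset J_\lambda$. The main technical obstacle is the equality $\op{Ann}_{\mathcal{U}(\mathfrak{g})}(\widetilde A)=J_{\mathfrak{a}_X}+I_{\chi}$: the inclusion $\supset$ is immediate, while $\subset$ requires showing that an element of $\mathcal{U}(\mathfrak{g})$ whose image in $\op{End}_{S(\mathfrak{a}_X)}(A)$ factors through right multiplication by $\gamma_X(I_{\chi})S(\mathfrak{a}_X)$ already lies in $J_{\mathfrak{a}_X}+I_{\chi}$ — a density-type statement for the $\mathcal{U}(\mathfrak{g})$-action on $A$ over the commutative ring $S(\mathfrak{a}_X)$. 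Once this is in place the remainder of the argument is formal.
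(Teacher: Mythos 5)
Your Steps 1 and 2 are sound, and so is the first half of Step 3: the identification of $A=\mathcal{U}(\mathfrak{g})\otimes_{\mathcal{U}(\mathfrak{q}_X^0)}\mathbb{C}$ with $\mathcal{U}(\mathfrak{n}_X^-)\otimes S(\mathfrak{a}_X)$ as a free right $S(\mathfrak{a}_X)$-module, the commuting $Z(\mathcal{U}(\mathfrak{g}))$-action through $\gamma_X$, the finite-dimensionality of $R:=S(\mathfrak{a}_X)/\gamma_X(\ker\chi)S(\mathfrak{a}_X)$ (which does require noting that $\mathfrak{a}_X^*\to\mathfrak{j}^*/W$ is finite), the resulting filtration of $\widetilde A:=A/I_\chi A$ by generalized Verma modules, and the conclusion $\prod_{\lambda\in S(\chi)}J_\lambda^{n_\lambda}\subset\op{Ann}_{\mathcal{U}(\mathfrak{g})}(\widetilde A)$. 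This is genuinely different packaging from the paper, which goes straight for $\prod_{i}J_{\lambda_i}^N\subset J_{\mathfrak{a}_X}+I_\xi$ by ``an argument similar to the proof of \cite[Theorem~25]{Soe89}'' without introducing $\widetilde A$. What your organization buys is a clean separation of the elementary commutative-algebra part (the filtration) from the deep part.

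But the deep part is precisely where your proposal stops. The inclusion you flag as the ``main technical obstacle,'' $\op{Ann}_{\mathcal{U}(\mathfrak{g})}(\widetilde A)\subset J_{\mathfrak{a}_X}+I_\chi$, is the entire content of the lemma, and ``a density-type statement for the $\mathcal{U}(\mathfrak{g})$-action on $A$ over $S(\mathfrak{a}_X)$'' will not deliver it for free. Concretely: $\op{Ann}(\widetilde A)$ is the largest two-sided ideal contained in the left ideal $\mathcal{U}(\mathfrak{g})\mathfrak{q}_X^0+I_\chi$, and the claim that this equals $J_{\mathfrak{a}_X}+I_\chi$ is a \emph{family version of Duflo's theorem} on annihilators of Verma modules, a hard theorem even at a single point. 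The embedding $\mathcal{U}(\mathfrak{g})/J_{\mathfrak{a}_X}\hookrightarrow\op{End}_{S(\mathfrak{a}_X)}(A)$ is proper, so an element $b$ with $bA\subset A\cdot\gamma_X(\ker\chi)S(\mathfrak{a}_X)$ can factor as $\psi\cdot\gamma_X(z)$ only with $\psi\in\op{End}_{S(\mathfrak{a}_X)}(A)$, not visibly with $\psi$ in the image of $\mathcal{U}(\mathfrak{g})$ --- that is exactly the obstruction, and freeness of $A$ alone does not dispose of it. You should also notice that you do not actually need the equality, nor even the inclusion $\op{Ann}(\widetilde A)\subset J_{\mathfrak{a}_X}+I_\chi$: it suffices to show $\prod_\lambda J_\lambda^{n_\lambda}\subset J_{\mathfrak{a}_X}+I_\chi$, which is weaker, and which is precisely what the Soergel-type argument produces (allowing a uniform exponent $N$). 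As it stands your chain of inclusions passes through an unproven and possibly stronger statement; until you either prove the family Duflo statement or replace it with a direct proof of the nilpotence $\prod_\lambda J_\lambda^{n_\lambda}\subset J_{\mathfrak{a}_X}+I_\chi$, the argument has a genuine gap at its core.
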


\begin{proof}
Let $\xi\in \mathfrak{j}^*$ be the infinitesimal character of $V$.
By Lemma~\ref{lem:inf_character},
 there exists a finite, nonempty collection 
 $\{\lambda_1,\ldots,\lambda_m\}\subset \mathfrak{a}_X^*$ for which 
\begin{equation*}
(W\cdot \xi)\cap (\mathfrak{a}_X^*+\rho_{\mathfrak{l}_X})
=\{\lambda_1+\rho_{\mathfrak{l}_X},\ldots,\lambda_m+\rho_{\mathfrak{l}_X}\}.
\end{equation*}
By an argument similar to the proof of \cite[Theorem 25]{Soe89}, we obtain
\begin{equation*}
\prod_{i=1}^m J_{\lambda_i}^N\subset J_{\mathfrak{a}_X}+I_{\xi}
\end{equation*}
for some large integer $N$.
Since $V$ is irreducible, our assumption
  $\op{Ann}_{\mathcal{U}(\mathfrak{g})}(V)\supset J_{\mathfrak{a}_X}+I_{\xi}$
 implies that
\begin{equation*}
\op{Ann}_{\mathcal{U}(\mathfrak{g})}(V)\supset J_{\lambda_i}
\end{equation*}
for some $i\in\{1,\dots,m\}$.
\end{proof}


\section{Reduction to quantizations of semisimple orbits}\label{sec:reduction_ss}

In the previous section we saw that 
 the annihilators of irreducible subrepresentations of $C^{\infty}(G_{\R}/H_0)$
 contain $J_{\lambda}$, the annihilator of a generalized Verma module.

We will show in Proposition~\ref{prop:partial_flag} that
 this statement of annihilators 
 implies that representations are
 realized as global sections of $\mathscr{D}$-modules on a partial flag variety
 unless the infinitesimal character is close to certain root hyperplanes.

Fix a holomorphic involution $\theta$ of $G$ that commutes with $\sigma$
 and restricts to a Cartan involution on $G_{\mathbb{R}}$. Let $K=G^{\theta}$.

If $\mathfrak{q}=\mathfrak{l}+\mathfrak{n}$ is a parabolic subalgebra
 of $\mathfrak{g}$ and $Y:=G/Q$ is the corresponding partial flag variety,
 we write $\mathscr{D}_{Y,\lambda}$ for the sheaf of
 twisted differential operators on $Y$ with parameter $\lambda\in Z(\mathfrak{l})^*$
 (see e.g.\ \cite{Bie90}).
Our normalization is that $\lambda=\rho(\mathfrak{n})$ corresponds to 
 ordinary (untwisted) differential operators.

We retain the notation of the previous section.
Recall that we defined 
 a Levi subalgebra $\mathfrak{l}_X$ and an ideal
 $J_{\mathfrak{a}_X}\subset \mathcal{U}(\mathfrak{g})$
 for a homogeneous space $X=G/H$.

\begin{proposition}\label{prop:partial_flag}
There exists a constant $d>0$ which depends only on $G$
 such that if $V$ is an irreducible $(\mathfrak{g}, K)$-module
 and if $\op{Ann}_{\mathcal{U}(\mathfrak{g})}(V)\supset J_{\mathfrak{a}_X}$,
 then at least one of the following holds:
\begin{enumerate}
\item \label{realization.D-mod}
There exist
 a parabolic subalgebra $\mathfrak{q}=\mathfrak{l}_X+\mathfrak{n}$,
 a parameter $\lambda\in \mathfrak{a}_X^*$ in the good range, and
 a $K$-equivariant $\mathscr{D}_{Y,\lambda}$-module $\mathcal{M}$ on $Y:=G/Q$
 such that $V\simeq \Gamma(Y,\mathcal{M})$.
Here, we say $\lambda$ is in the good range if
 $\langle\lambda+\rho_{\mathfrak{l}_X},\alpha^{\vee}\rangle\not\in \mathbb{R}_{\geq 0}$
 for every $\alpha\in \Delta(\mathfrak{n},\mathfrak{j})$.
\item \label{singular_inf.char.2}
There exist a representative $\xi\in \mathfrak{a}_X^*+\rho_{\mathfrak{l}_X}$
 of the infinitesimal character of $V$ and a root
 $\alpha\in\Delta(\mathfrak{g},\mathfrak{j})
 \setminus\Delta(\mathfrak{l}_X,\mathfrak{j})$ such that
 $|\langle\xi,\alpha^\vee\rangle| < d$.
\end{enumerate}
\end{proposition}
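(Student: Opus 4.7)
The proof will start from Lemma~\ref{lem:Ann}, which supplies $\lambda \in \mathfrak{a}_X^*$ with $\op{Ann}_{\mathcal{U}(\mathfrak{g})}(V) \supset J_\lambda$; in particular $V$ has infinitesimal character represented by $\xi := \lambda + \rho_{\mathfrak{l}_X} \in \mathfrak{a}_X^* + \rho_{\mathfrak{l}_X}$. The dichotomy in the proposition comes from whether $\xi$ is uniformly far from every coroot hyperplane $\{\mu : \langle \mu, \alpha^\vee\rangle = 0\}$ coming from an $\alpha$ outside $\mathfrak{l}_X$: either such a bound holds, in which case I will find a good-range parabolic and realize $V$ via Beilinson--Bernstein, or else conclusion~(2) holds.

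Fix a constant $d > 0$ depending only on the root system of $\mathfrak{g}$. If some $\alpha \in \Delta(\mathfrak{g},\mathfrak{j}) \setminus \Delta(\mathfrak{l}_X,\mathfrak{j})$ satisfies $|\langle \xi, \alpha^\vee\rangle| < d$, then $\xi$ itself witnesses case~(2). Otherwise, choose a small generic $\epsilon > 0$ and define the real linear functional on the coroot lattice $\phi(\alpha^\vee) := \op{Re}\langle \xi, \alpha^\vee\rangle + \epsilon\,\op{Im}\langle \xi, \alpha^\vee\rangle$. The hypothesis $|\langle \xi, \alpha^\vee\rangle| \geq d$ together with a generic choice of $\epsilon$ forces $\phi(\alpha^\vee) \neq 0$ for every coroot outside $\mathfrak{l}_X$, so $\phi$ lies in a well-defined open chamber. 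Let $\mathfrak{n}$ be the parabolic nilradical with $\Delta(\mathfrak{n},\mathfrak{j}) = \{\alpha : \phi(\alpha^\vee) < 0\}$; for every $\alpha \in \Delta(\mathfrak{n},\mathfrak{j})$ the pairing $\langle \xi, \alpha^\vee\rangle$ either has strictly negative real part or has nonzero imaginary part, and in either case it does not lie in $\mathbb{R}_{\geq 0}$. Thus $\lambda$ is in the good range for the parabolic $\mathfrak{q} := \mathfrak{l}_X + \mathfrak{n}$.

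Next, apply Beilinson--Bernstein localization on the partial flag variety $Y := G/Q$. The good-range assumption implies $Y$ is $\mathscr{D}_{Y,\lambda}$-affine, and the global sections functor $\Gamma(Y, -)$ gives an equivalence between $K$-equivariant $\mathscr{D}_{Y,\lambda}$-modules and $(\mathfrak{g}, K)$-modules annihilated by the annihilator of the generalized Verma module $\mathcal{U}(\mathfrak{g}) \otimes_{\mathcal{U}(\mathfrak{q})} \mathbb{C}_{\lambda - \rho(\mathfrak{n})}$. For $\lambda$ in the good range, this annihilator coincides with $J_\lambda$: both are primitive ideals attached to the Levi $\mathfrak{l}_X$ with central character $\xi$, and the good-range hypothesis forces both the new Verma module and $M_{\mathfrak{q}_X}(\lambda - \rho(\mathfrak{n}_X))$ to be irreducible, so the two ideals must agree. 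Since $\op{Ann}_{\mathcal{U}(\mathfrak{g})}(V) \supset J_\lambda$, $V$ lies in the essential image of $\Gamma(Y, -)$, and we obtain $V \simeq \Gamma(Y, \mathcal{M})$ for some $K$-equivariant $\mathscr{D}_{Y,\lambda}$-module $\mathcal{M}$, which is case~(1).

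The main obstacle will be the identification, under the good-range hypothesis, of $J_\lambda$ (defined via the reference parabolic $\mathfrak{q}_X$) with the annihilator of the generalized Verma module built from the newly chosen $\mathfrak{q}$. Both two-sided ideals have central character $\xi$ and are attached to the Levi $\mathfrak{l}_X$, but their coincidence relies on irreducibility of the relevant Verma modules and on the general theory of primitive ideals associated to a Levi subalgebra; this is where the good-range hypothesis on the chosen $\mathfrak{n}$ enters in an essential way. A subsidiary technical point is quantifying $d$ so that the perturbation $\phi$ is automatically nonzero on all coroots outside $\mathfrak{l}_X$, but this is elementary combinatorics of the coroot system of $\mathfrak{g}$ and yields a $d$ depending only on $G$.
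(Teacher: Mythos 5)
Your high-level strategy is the same as the paper's: invoke Lemma~\ref{lem:Ann} to get $\lambda\in\mathfrak{a}_X^*$ with $\op{Ann}(V)\supset J_\lambda$, choose a parabolic $\mathfrak{q}=\mathfrak{l}_X+\mathfrak{n}$ for which $\lambda$ is in the good range, then localize. However, there are two genuine gaps in the execution.

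\textbf{The functional $\phi$ must be built from $\lambda$, not $\xi$.} A parabolic subalgebra with Levi $\mathfrak{l}_X$ corresponds to a functional that vanishes on $\Delta(\mathfrak{l}_X,\mathfrak{j})$ and is nonzero elsewhere, with $\Delta(\mathfrak{n},\mathfrak{j})$ the strictly negative part. Your $\phi(\alpha^\vee)=\RE\langle\xi,\alpha^\vee\rangle+\epsilon\IM\langle\xi,\alpha^\vee\rangle$ does \emph{not} vanish on $\mathfrak{l}_X$-coroots, since for $\alpha\in\Delta(\mathfrak{l}_X,\mathfrak{j})$ one has $\langle\xi,\alpha^\vee\rangle=\langle\rho_{\mathfrak{l}_X},\alpha^\vee\rangle\neq 0$. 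Thus $\{\alpha:\phi(\alpha^\vee)<0\}$ includes some $\mathfrak{l}_X$-roots, so the associated $\mathfrak{n}$ is not the nilradical of a parabolic with Levi $\mathfrak{l}_X$. Replacing $\xi$ by $\lambda$ (which lies in $Z(\mathfrak{l}_X)^*$, hence vanishes on $\mathfrak{l}_X$-coroots) fixes this, and the paper's explicit choice of $\Delta(\mathfrak{n},\mathfrak{j})$ via $\RE\langle\lambda,\alpha^\vee\rangle$ with $\Delta(\mathfrak{n}_X,\mathfrak{j})$ as tie-breaker accomplishes the same thing as your generic imaginary perturbation. Once $\phi$ is based on $\lambda$, the constant $d$ must be taken larger than $\max_\alpha|\langle\rho_{\mathfrak{l}_X},\alpha^\vee\rangle|$: the point is that $\langle\lambda,\alpha^\vee\rangle\not\in\mathbb{R}_{>0}$ together with $|\langle\lambda+\rho_{\mathfrak{l}_X},\alpha^\vee\rangle|\geq d$ forces $\langle\lambda+\rho_{\mathfrak{l}_X},\alpha^\vee\rangle\not\in\mathbb{R}_{\geq 0}$, i.e.\ the $\rho_{\mathfrak{l}_X}$-shift is absorbed. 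This is the actual role of $d$; your argument never uses it to buffer the $\rho$-shift.

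\textbf{The identification of $J_\lambda$ with the annihilator of the new generalized Verma module requires Jantzen's polarization-independence theorem, not the irreducibility argument you sketch.} You claim both $\mathcal{U}(\mathfrak{g})\otimes_{\mathcal{U}(\mathfrak{q})}\mathbb{C}_{\lambda-\rho(\mathfrak{n})}$ and $\mathcal{U}(\mathfrak{g})\otimes_{\mathcal{U}(\mathfrak{q}_X)}\mathbb{C}_{\lambda-\rho(\mathfrak{n}_X)}$ are irreducible. The good-range hypothesis gives irreducibility of the \emph{new} Verma module (via the classical antidominance criterion applied to $\xi$), but it says nothing about the reference module built from $\mathfrak{q}_X$, which may well be reducible for the given $\lambda\in\mathfrak{a}_X^*$. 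Moreover, even two irreducible generalized Verma modules with the same infinitesimal character need not have equal annihilators. The correct tool is the theorem of Jantzen (\cite[Corollar 15.27]{Jan83}, cited as Fact~\ref{fact:ann_polarization} in the paper): the annihilator $\op{Ann}(\mathcal{U}(\mathfrak{g})\otimes_{\mathcal{U}(\mathfrak{q})}\mathbb{C}_{\lambda-\rho(\mathfrak{n})})$ depends only on the Levi $\mathfrak{l}_X$ and on $\lambda$, not on the choice of polarization. This yields the identification directly, without any irreducibility hypothesis.
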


\begin{proof}
Take $d$ such that
 $d>\max_{\alpha\in\Delta(\mathfrak{g},\mathfrak{j})}
 |\langle\rho_{\mathfrak{l}_X},\alpha^\vee\rangle|$
 and suppose the condition \eqref{singular_inf.char.2} in Proposition~\ref{prop:partial_flag}
 does not hold.

By Lemma~\ref{lem:Ann}, there exists $\lambda\in\mathfrak{a}_X^*$
 such that $\op{Ann}_{\mathcal{U}(\mathfrak{g})}(V)\supset J_{\lambda}$.
Then, take a parabolic subalgebra $\mathfrak{q}=\mathfrak{l}_X+\mathfrak{n}$
 of $\mathfrak{g}$ such that
 $\langle\lambda,\alpha^{\vee}\rangle \not\in \mathbb{R}_{>0}$
 for $\alpha\in \Delta(\mathfrak{n},\mathfrak{j})$. 
For example, we may choose
\begin{equation*}
\Delta(\mathfrak{n},\mathfrak{j})
= \{\alpha\in \Delta(\mathfrak{g},\mathfrak{j})
\mid \RE\langle \lambda, \alpha^\vee \rangle < 0 \}
\cup
\{\alpha \in \Delta(\mathfrak{n}_X,\mathfrak{j})
  \mid \RE\langle \lambda, \alpha^\vee \rangle = 0 \}.
\end{equation*}
As we assumed that \eqref{singular_inf.char.2} does not hold, 
 $\langle\lambda,\alpha^{\vee}\rangle \not\in \mathbb{R}_{> -d}$
 for $\alpha\in \Delta(\mathfrak{n},\mathfrak{j})$
 and then our choice of $d$ shows 
 $\langle\lambda+\rho_{\mathfrak{l}_X},\alpha^{\vee}\rangle \not\in \mathbb{R}_{\geq 0}$
 namely, $\lambda$ is in the good range with respect to $\mathfrak{q}$.

We require the following fact which tells that annihilators of generalized Verma modules
 do not depend on the choice of polarizations.
\begin{fact}[{\cite[Corollar 15.27]{Jan83}}]\label{fact:ann_polarization}
Let $\mathfrak{q}=\mathfrak{l}+\mathfrak{n}$
 and $\mathfrak{q}'=\mathfrak{l}+\mathfrak{n}'$
 be two parabolic subalgebras with the same Levi factor.
Then we have
\[\op{Ann}(\mathcal{U}(\mathfrak{g})\otimes_{\mathcal{U}(\mathfrak{q})}
 \mathbb{C}_{\lambda-\rho(\mathfrak{n})})
=\op{Ann}(\mathcal{U}(\mathfrak{g})\otimes_{\mathcal{U}(\mathfrak{q}')}
 \mathbb{C}_{\lambda-\rho(\mathfrak{n}')})\]
for $\lambda\in Z(\mathfrak{l})^*$. 
\end{fact}

Let $Y:=G/Q$ and let $\mathscr{D}_{Y,\lambda}$ be the ring of
 twisted differential operators.
We have a natural homomorphism
\begin{equation*}
\phi\colon \mathcal{U}(\mathfrak{g})\to \Gamma(Y, \mathscr{D}_{Y,\lambda}).
\end{equation*}
The kernel of $\phi$ is 
 $\op{Ann}(\mathcal{U}(\mathfrak{g})\otimes_{\mathcal{U}(\mathfrak{q})}
 \mathbb{C}_{\lambda-\rho(\mathfrak{n})})$
 (see \cite[\S3.6, Corollary]{BB82} or \cite[Corollar 7]{Soe89}),
 which also equals $J_\lambda$
 by Fact~\ref{fact:ann_polarization} above.
Since $\lambda$ is in the good range, $\phi$ is surjective
 (see \cite[I.5.6 Proposition]{Bie90} for a proof).
Hence $\phi$ induces an isomorphism of algebras
\[\mathcal{U}(\mathfrak{g})/J_{\lambda}\simeq \Gamma(Y,\mathscr{D}_{Y,\lambda}).\]
Moreover, by \cite[I.6.3 Theorem]{Bie90},
\[V\mapsto V\otimes_{\mathcal{U}(\mathfrak{g})/J_{\lambda}}\mathscr{D}_{Y,\lambda}\]
 gives an equivalence of categories between
 $(\mathcal{U}(\mathfrak{g})/J_{\lambda})$-modules and $\mathscr{D}_{Y,\lambda}$-modules,
 whose inverse is given by taking the space of global sections.
Therefore,
 $\mathcal{M}:=V\otimes_{\mathcal{U}(\mathfrak{g})/J_{\lambda}}\mathscr{D}_{Y,\lambda}$
 satisfies the condition \eqref{realization.D-mod} of Proposition~\ref{prop:partial_flag}.

For $d$ to be independent of $V$ or $H_0$, 
 we may take the maximum of the above definition of $d$
 for $\mathfrak{l}_X$ running over all Levi subalgebras  of $\mathfrak{g}$.
\end{proof}

Let $\widehat{G}_{\mathbb{R}}$ denote the set consisting of irreducible, unitary representations of $G_{\mathbb{R}}$. 
Let $X_0=G_{\R}/H_0$.
Recall that we defined $\op{supp} L^2(X_0)$ to be the support of
 the Plancherel measure.
Then by \cite[\S 2.3]{Ber88}, for almost every
 $(\pi,V_{\pi})$ in $\op{supp}L^2(X_0)$,
 there exists an injective map 
\[(V_{\pi})_K\hookrightarrow C^{\infty}(X_0)\]
which respects actions of $\mathfrak{g}$ and $K_{\mathbb{R}}$.
Here, $(V_{\pi})_K$ denotes the underlying $(\mathfrak{g},K)$-module of $V_{\pi}$.
Then by Proposition~\ref{prop:Ann},
 we have $\op{Ann}_{\mathcal{U}(\mathfrak{g})}\bigl((V_{\pi})_K\bigr)\supset J_{\mathfrak{a}_X}$.

In a way similar to \cite[Th\'{e}or\`{e}m 1]{BD60}, we can show that
 the set of irreducible unitarizable $(\mathfrak{g},K)$-modules $V$ satisfying 
\begin{equation}\label{cond:AnnIdeal}
\op{Ann}_{\mathcal{U}(\mathfrak{g})}(V)\supset J_{\mathfrak{a}_X}
\end{equation}
 is closed in $\widehat{G}_{\mathbb{R}}$.
That is, \eqref{cond:AnnIdeal}
 is a closed condition in $\widehat{G}_{\mathbb{R}}$.
Therefore, \eqref{cond:AnnIdeal} is satisfied for every
 irreducible representation in $\op{supp} L^2(X_0)$.

Here is the main theorem in this section.

\begin{theorem}\label{thm:reduction_semisimple}
There exists a constant $d>0$ which depends only on $G$
 such that
 if $(\pi,V_{\pi})\in \op{supp}L^2(X_0)$,
 then at least one of the following holds:
\begin{enumerate}
\item 
There exist $\mathfrak{l}_{\mathbb{R}}\subset \mathfrak{g}_{\mathbb{R}}$
 and $(\mathcal{O}, \Gamma)$ such that 
 \begin{itemize}
 \item \label{item:regular}
 The complexification $\mathfrak{l}$ of $\mathfrak{l}_{\mathbb{R}}$ is
 $G$-conjugate to $\mathfrak{l}_X$,
 \item $(\mathcal{O}, \Gamma)$ is a semisimple orbital parameter
 such that $\pi\simeq \pi(\mathcal{O}, \Gamma)$, and 
 \item
 $\mathcal{O}$ intersects $\mathfrak{a}_X^*\cap
 \sqrt{-1}Z(\mathfrak{l}_{\mathbb{R}})^*_{\rm gr}$.
 \end{itemize}
\item \label{item:singular}
 We can take a representative $\xi\in\mathfrak{a}_X^*+\rho_{\mathfrak{l}}$
 of the infinitesimal character of $\pi$ and a root
 $\alpha\in\Delta(\mathfrak{g},\mathfrak{j})
 \setminus\Delta(\mathfrak{l},\mathfrak{j})$ such that
 $|\langle\xi,\alpha^\vee\rangle|<d$.
\end{enumerate}
\end{theorem}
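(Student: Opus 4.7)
The plan is to apply the algebraic reduction of Proposition~\ref{prop:partial_flag} and then identify the resulting $\mathscr{D}$-module realization with a quantization $\pi(\mathcal{O},\Gamma)$.

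First, the closedness of the annihilator condition in $\widehat{G}_{\mathbb{R}}$ (noted just before the statement of Theorem~\ref{thm:reduction_semisimple}) gives that every $\pi\in \op{supp}L^2(X_0)$ satisfies $\op{Ann}_{\mathcal{U}(\mathfrak{g})}((V_\pi)_K)\supset J_{\mathfrak{a}_X}$. I apply Proposition~\ref{prop:partial_flag} with its constant $d$ to $V:=(V_\pi)_K$. If case~(2) of Proposition~\ref{prop:partial_flag} occurs, then (identifying $\mathfrak{l}_X$ with the $\mathfrak{l}$ appearing in the statement) we are immediately in case~(2) of Theorem~\ref{thm:reduction_semisimple}. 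So assume case~(1) of Proposition~\ref{prop:partial_flag}: there exist a parabolic $\mathfrak{q}=\mathfrak{l}_X+\mathfrak{n}$, a parameter $\lambda\in \mathfrak{a}_X^*$ in the good range with respect to $\mathfrak{q}$, and a $K$-equivariant $\mathscr{D}_{Y,\lambda}$-module $\mathcal{M}$ on $Y=G/Q$ with $V\simeq \Gamma(Y,\mathcal{M})$.

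Because $\lambda$ is in the good range, $Y$ is $\mathscr{D}_{Y,\lambda}$-affine and $\Gamma(Y,-)$ is an equivalence of categories, so $\mathcal{M}$ is an irreducible $K$-equivariant $\mathscr{D}_{Y,\lambda}$-module. By the Beilinson--Bernstein--Kashiwara classification, $\mathcal{M}\simeq i_{!+}\mathcal{L}$ where $i\colon S\hookrightarrow Y$ is the inclusion of a $K$-orbit and $\mathcal{L}$ is an irreducible $K$-equivariant twisted $\mathscr{O}$-coherent $\mathscr{D}$-module on $S$; since $S$ is $K$-homogeneous, $\mathcal{L}$ is a twisted line bundle. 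By Matsuki duality I choose a base point $y_0\in S$ whose stabilizer $Q'$ (a parabolic $G$-conjugate to $Q$) admits a $\theta$-stable and $\sigma$-stable Levi factor $L'$. Setting $\mathfrak{l}_{\mathbb{R}}:=\op{Lie}(L'\cap G_{\mathbb{R}})$, the complexification $\mathfrak{l}'$ of $\mathfrak{l}_{\mathbb{R}}$ is $G$-conjugate to $\mathfrak{l}_X$, supplying the required $\mathfrak{l}_{\mathbb{R}}$.

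Transport $\lambda$ to $Z(\mathfrak{l}')^*$ via the same conjugation. Using the unitarity of $\pi$ (together with Lemma~\ref{lem:inf_character}, which keeps the infinitesimal character inside $\mathfrak{a}_X^*+\rho_{\mathfrak{l}_X}$), I argue that after possibly replacing $\lambda$ by a Weyl-group conjugate within its infinitesimal-character orbit, the transported parameter lies in $\sqrt{-1}Z(\mathfrak{l}_{\mathbb{R}})^*$; the good-range condition with respect to $\mathfrak{q}$ then places $\lambda$ in $\sqrt{-1}Z(\mathfrak{l}_{\mathbb{R}})^*_{\mathrm{gr}}$. Setting $\mathcal{O}:=G_{\mathbb{R}}\cdot\lambda\subset \sqrt{-1}\mathfrak{g}_{\mathbb{R}}^*$ and extracting from $\mathcal{L}$, after the $\rho$-shifts and Duflo/metaplectic correction of Section~\ref{sec:quantization_semisimple}, the genuine one-dimensional character $\Gamma_\lambda$ of $\widetilde{L'_{\mathbb{R}}}(\lambda)$ gives a semisimple orbital parameter $(\mathcal{O},\Gamma)$ with $\lambda\in \mathcal{O}\cap \mathfrak{a}_X^*\cap \sqrt{-1}Z(\mathfrak{l}_{\mathbb{R}})^*_{\mathrm{gr}}$. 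Comparing $V=\Gamma(Y,i_{!+}\mathcal{L})$ with the localization formula~\eqref{eq:localization} on a partial flag variety of the same $G$-conjugacy type and invoking the Langlands-parameter classification at the end of Section~\ref{sec:quantization_semisimple} yields $V\simeq \pi(\mathcal{O},\Gamma)$.

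The main technical obstacle is the last step of matching $V$ with $\pi(\mathcal{O},\Gamma)$ on the nose: one has to keep careful track of the $\rho$-shifts, of the exchange between $\mathfrak{q}$ and $\sigma(\mathfrak{q})$ (and correspondingly between the $\uR$ and $\uL$ realizations of $\pi(\mathcal{O},\Gamma)$), and of the Duflo double-cover characters, in order to identify the $K$-equivariant line bundle produced by Beilinson--Bernstein with the line bundle feeding into the cohomological induction of Section~\ref{sec:quantization_semisimple}. Equally delicate is the Weyl-group maneuver needed to realize $\lambda$ inside $\sqrt{-1}Z(\mathfrak{l}_{\mathbb{R}})^*$, which relies on the reality constraints imposed by the unitarity of $\pi$. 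Both steps are essentially dictated by the explicit Langlands-parameter computation at the end of Section~\ref{sec:quantization_semisimple}, but their verification amounts to a nontrivial bookkeeping exercise.
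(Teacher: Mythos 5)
Your outline tracks the paper's own proof quite closely: apply Proposition~\ref{prop:partial_flag}, land in the $\mathscr{D}$-module case, choose a $\theta$- and $\sigma$-stable Cartan and Levi at the base point of the relevant $K$-orbit, and match $\Gamma(Y,\mathcal{M})$ with $\pi(\mathcal{O},\Gamma)$ via the localization formula~\eqref{eq:localization}. The differences in how you set things up (working directly on $G/Q$ rather than first pulling back to the full flag variety to apply the Beilinson--Bernstein classification, as the paper does) are expository rather than substantive.

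There is, however, a genuine gap at the step where you conclude $\lambda\in\sqrt{-1}Z(\mathfrak{l}_{\mathbb{R}})^*$. You attribute this to ``reality constraints imposed by the unitarity of $\pi$'' and frame the rest as bookkeeping, but unitarity alone is \emph{not} sufficient. Unitarity only forces $d\gamma$ and its Hermitian dual $\overline{d\gamma}^h$ to lie in the same Weyl orbit and bounds $\RE(d\gamma)$; it does not by itself force $\RE\lambda=0$. To draw that conclusion one must \emph{additionally} assume that condition~(2) of Theorem~\ref{thm:reduction_semisimple} fails for a constant $d$ chosen larger than the one coming out of Proposition~\ref{prop:partial_flag}. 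That failure supplies the lower bound $|\langle d\gamma,\alpha^\vee\rangle|\geq d$ for $\alpha\in\Delta(\mathfrak{g},\mathfrak{j})\setminus\Delta(\mathfrak{l},\mathfrak{j})$, and it is the interplay of this lower bound with the upper bound on $\RE(d\gamma)$ from unitarity that forces the Weyl-group element $w$ relating $d\gamma$ to $\overline{d\gamma}^h$ to lie in $W(\Delta(\mathfrak{l}))$; only then does the decomposition $\RE(d\gamma)=\RE(\lambda)+\RE(\rho_{\mathfrak{l}})\in Z(\mathfrak{l})^*\oplus([\mathfrak{l},\mathfrak{l}]\cap\mathfrak{j})^*$ yield $\RE\lambda=0$. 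Your proposal never invokes the failure of condition~(2) in this step, so as written it would, e.g., apply to a unitary $\pi$ whose infinitesimal-character parameter has a bounded but nonzero real part (the type of parameter condition~(2) is designed to absorb) and reach a false conclusion. Relatedly, the paper does not ``replace $\lambda$ by a Weyl-group conjugate''; the $w$ that appears is the element relating $d\gamma$ to $\overline{d\gamma}^h$, and the proof shows $\RE\lambda=0$ for the original $\lambda$, not for a conjugate of it. You need to restructure this portion of the argument around the dichotomy between cases (1) and (2), increasing the constant $d$ as necessary, before the identification $V\simeq\pi(\mathcal{O},\Gamma)$ is available.
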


\begin{proof}
As mentioned above, we have
 $\op{Ann}_{\mathcal{U}(\mathfrak{g})}\bigl((V_{\pi})_K\bigr)\supset J_{\mathfrak{a}_X}$.
By Proposition~\ref{prop:partial_flag}, 
 we may assume that Proposition~\ref{prop:partial_flag} \eqref{realization.D-mod} holds, namely,
there exist
 a parabolic subalgebra $\mathfrak{q}=\mathfrak{l}_X+\mathfrak{n}$,
 a parameter $\lambda\in \mathfrak{a}_X^*$ in the good range, and
 a $K$-equivariant $\mathscr{D}_{Y,\lambda}$-module $\mathcal{M}$ on $Y:=G/Q$
 such that $(V_{\pi})_K\simeq \Gamma(Y, \mathcal{M})$.

Let $\tilde{Y}$ be the complete flag variety for $G$
 and let $p\colon \tilde{Y}\twoheadrightarrow Y$
 be the natural projection.
Then we have natural isomorphisms
\begin{equation*}
p_* p^*\mathcal{M} \simeq p_*p^*\mathcal{O}_Y\otimes_{\mathcal{O}_Y} \mathcal{M}
\simeq \mathcal{M},
\end{equation*}
where $p_*$ denotes the direct image of $\mathcal{O}$-modules.
Hence \[(V_{\pi})_K\simeq \Gamma(\tilde{Y}, p^*\mathcal{M}).\]
It is easy to see that this isomorphism respects $(\mathfrak{g},K)$-actions.

The pull-back $p^*\mathcal{M}$ is a twisted $\mathscr{D}_{\tilde{Y}}$-module.
More precisely, it is a $K$-equivariant $\mathscr{D}_{\tilde{Y},\lambda+\rho_{\mathfrak{l}}}$-module.
Let $\tilde{S}$ be a dense $K$-orbit in
 $p^{-1}(\op{supp}\mathcal{M})=\op{supp}p^*\mathcal{M}$.
Since $\tilde{Y}$ is $\mathscr{D}_{\tilde{Y},\lambda+\rho_{\mathfrak{l}}}$-affine
 and $\Gamma(\tilde{Y}, p^*\mathcal{M})$ is an irreducible $(\mathfrak{g},K)$-module,
 $p^*\mathcal{M}$ is a minimal extension of
 a $K$-equivariant line bundle with connection $\tilde{\mathcal{L}}$
 on $\tilde{S}$.
Fix a point $o\in \tilde{S}$ and let $B$ be the stabilizer of $o$ in $G$.
We may assume that $B$ contains a $\theta$-stable and $\sigma$-stable Cartan subgroup $J$.
Write $J_\mathbb{R}=T_\mathbb{R}A_\mathbb{R}$ for the Cartan decomposition of the real form of $J$.
By replacing $Q$ with its $G$-conjugate, we may assume that $Q$
 is the stabilizer of the point $p(o)\in Y$.
Let $Q=LN$ be the Levi decomposition such that $L\supset J$.
Note that $L$ is $G$-conjugate to $L_X$.

Then by the correspondence between the Langlands classification
 and the Beilinson-Bernstein classification of $(\mathfrak{g},K)$-modules
 (see \cite{Sch91}, \cite[Chapter XI]{KV95}),
 the Langlands parameter of $(V_{\pi})_K$ is given as
 $(J_{\mathbb{R}}, \gamma, \Delta_{i\mathbb{R}}^+)$ in the notation of \cite{ALTV} such that
 $d\gamma=\lambda+\rho_{\mathfrak{l}}(\in \mathfrak{j}^*)$ and 
 $\Delta_{i\mathbb{R}}^+$ is the set of imaginary roots which are not the roots in $\mathfrak{b}$.
We note that $\lambda+\rho_{\mathfrak{l}}$ is regular as we assumed Proposition~\ref{prop:partial_flag} \eqref{realization.D-mod}.

Write $d\gamma=\RE(d\gamma)+\sqrt{-1}\IM(d\gamma)$,
 where $\RE(d\gamma), \IM(d\gamma)\in
 \Hom(\mathfrak{j}_{\mathbb{R}},\mathbb{R})\subset \mathfrak{j}^*$
 and write 
 $\overline{d\gamma}^{h}=-\RE(d\gamma)+\sqrt{-1}\IM(d\gamma)$
 for the Hermitian dual.
We use similar notation for any vector in $\mathfrak{j}^*$.

We want to prove that $\RE (\lambda)=0$.
Since the $\rho$-shift of $d\gamma|_{\mathfrak{t}_{\mathbb{R}}}$ is a differential of a character of $T_{\mathbb{R}}$,
 the compact part of the Cartan subgroup, we have $\RE(d\gamma)|_{\mathfrak{t}}=0$.
Moreover, since $(\pi, V_{\pi})$ is unitary, $\pi$ is isomorphic to its Hermitian dual.
Hence by uniqueness in the Langlands classification,
 $d\gamma$ and $\overline{d\gamma}^{h}$ lie in the same Weyl group orbit.
Let $w \in W(\Delta(\mathfrak{g},\mathfrak{j}))$
 such that $w\cdot d\gamma = \overline{d\gamma}^{h}$.
The Weyl group 
$W(\Delta(\mathfrak{g},\mathfrak{j}))$ preserves 
 the real span of roots so it preserves
 $\sqrt{-1}\mathfrak{t}_{\mathbb{R}}\oplus \mathfrak{a}_{\mathbb{R}}$.
Hence
 $w\cdot \IM(d\gamma)|_{\mathfrak{a}}
 =\IM(d\gamma)|_{\mathfrak{a}}$.
Put 
\begin{align*}
\Delta_1:=\{\alpha\in \Delta(\mathfrak{g},\mathfrak{j})\mid
 \langle\IM(d\gamma)|_{\mathfrak{a}},\alpha^{\vee}\rangle=0\}.
\end{align*}
Then $w\in W(\Delta_1)$.

We note that
\begin{align*}
&d\gamma=\bigl(\RE(d\gamma)|_{\mathfrak{a}}
 +\sqrt{-1}\IM(d\gamma)|_{\mathfrak{t}}\bigr)
+\sqrt{-1}\IM(d\gamma)|_{\mathfrak{a}},\\
&\lambda=\bigl(\RE(d\gamma)|_{\mathfrak{a}}
 +\sqrt{-1}\IM(d\gamma)|_{\mathfrak{t}}
 -\rho_{\mathfrak{l}}\bigr)
+\sqrt{-1}\IM(d\gamma)|_{\mathfrak{a}},
\end{align*}
and we have
\begin{align*}
&\langle\RE(d\gamma)|_{\mathfrak{a}}
 +\sqrt{-1}\IM(d\gamma)|_{\mathfrak{t}},\,\alpha^{\vee}\rangle
\in \mathbb{R},\quad 
\langle\rho_{\mathfrak{l}},\alpha^{\vee}\rangle\in \mathbb{R},\\ 
&\langle\sqrt{-1}\IM(d\gamma)|_{\mathfrak{a}},\,\alpha^{\vee}\rangle\in \sqrt{-1}\mathbb{R}.
\end{align*}
Hence for $\alpha\in\Delta(\mathfrak{g},\mathfrak{j})$,
\[
\alpha\in \Delta_1 \Leftrightarrow
 \langle d\gamma,\alpha^{\vee}\rangle\in \mathbb{R}
\Leftrightarrow
 \langle \lambda,\alpha^{\vee}\rangle\in \mathbb{R}.
\]
Since $\langle\lambda,\alpha^{\vee}\rangle=0$ for
 $\alpha\in\Delta(\mathfrak{l})$,
 we have $\Delta(\mathfrak{l})\subset \Delta_1$.
As we assumed Proposition~\ref{prop:partial_flag} \eqref{realization.D-mod},
  $\lambda$ is in the good range. 
Hence for $\alpha\in \Delta_1$, 
\begin{equation}\label{eq:ln.root.cond}
\langle \lambda, \alpha^\vee\rangle < 0
 \ (\text{resp.}\ =0,\ >0) \text{ if }
\alpha\in \Delta(\mathfrak{n})
 \ (\text{resp.}\ \alpha\in \Delta(\mathfrak{l}),\ -\Delta(\mathfrak{n})).
\end{equation}

Since $(\pi, V_{\pi})$ is unitary,
 $\RE(d\gamma)$ lies in a certain bounded region
 (see \cite[Chapter XVI, \S 5]{Kna86}).
Suppose that the condition \eqref{item:singular} of Theorem~\ref{thm:reduction_semisimple}
 does not hold for the constant $d$ greater than
\[\max\{\langle\rho_{\mathfrak{l}},\beta^{\vee}\rangle
  \mid\beta\in \Delta_1\}
+\max\{\langle 2\RE(d\gamma),\beta^{\vee}\rangle
  \mid\beta\in \Delta_1\}.\]
Combining with \eqref{eq:ln.root.cond},
 we have for $\alpha\in \Delta_1$,
\begin{equation}\label{eq:l.root.cond} 
\begin{split}
&\alpha\in \Delta(\mathfrak{n})\Leftrightarrow
 \langle d\gamma,\alpha^{\vee}\rangle\leq -d, \qquad
\alpha\in -\Delta(\mathfrak{n})\Leftrightarrow
 \langle d\gamma,\alpha^{\vee}\rangle\geq d, \\
&\alpha\in \Delta(\mathfrak{l})\Leftrightarrow
 |\langle d\gamma,\alpha^{\vee}\rangle|
  \leq \max\{\langle\rho_{\mathfrak{l}},\beta^{\vee}\rangle
  \mid\beta\in \Delta_1\}.
\end{split}
\end{equation}
If $\alpha\in \Delta_1\cap \Delta(\mathfrak{n})$, then
\begin{align*}
\langle d\gamma,w^{-1}\cdot\alpha^{\vee}\rangle
&=\langle w\cdot d\gamma,\alpha^{\vee}\rangle\\
&=\langle \overline{d\gamma}^h,\alpha^{\vee}\rangle\\
&=\langle  d\gamma-2\RE(d\gamma),\alpha^{\vee}\rangle\\
&<-\max\{\langle\rho_{\mathfrak{l}},\beta^{\vee}\rangle
  \mid\beta\in \Delta_1\},
\end{align*}
where the last inequality follows from our choice of $d$
 and $\langle d\gamma,\alpha^{\vee}\rangle\leq -d$.
Therefore, $w\cdot (\Delta_1\cap \Delta(\mathfrak{n}))
 = \Delta_1\cap \Delta(\mathfrak{n})$ by \eqref{eq:l.root.cond}
 and hence $w\in W(\Delta(\mathfrak{l}))$.
If $\alpha\in\Delta(\mathfrak{l})$, then
\begin{equation*}
|\langle  d\gamma,(\overline{\alpha}^h)^\vee \rangle|
 =|\langle \overline{ d\gamma}^h,\alpha^\vee \rangle|
 =|\langle w\cdot  d\gamma, \alpha^\vee\rangle|
 =|\langle  d\gamma, (w^{-1}\cdot \alpha)^\vee\rangle|
 < d
\end{equation*}
and hence $\overline{\alpha}^h\in\Delta(\mathfrak{l})$,
 namely, $\Delta(\mathfrak{l})$ is preserved by the Hermitian dual.
In addition, $w\in \Delta(\mathfrak{l})$ implies that
 $\RE(d\gamma)=\frac{1}{2}(d\gamma-\overline{d\gamma}^h)
=\frac{1}{2}(d\gamma-w\cdot d\gamma)$
is a linear combination of $\Delta(\mathfrak{l})$.
Therefore, in view of the decomposition
\begin{equation*}
\RE(d\gamma)=\RE(\lambda)+\RE(\rho_{\mathfrak{l}})
 \in Z(\mathfrak{l})^* \oplus ([\mathfrak{l},\mathfrak{l}]\cap \mathfrak{j})^*,
\end{equation*}
 we obtain $\RE(\lambda)=0$.
Since $\mathfrak{l}_{\mathbb{R}}:=\mathfrak{l}\cap\mathfrak{g}_\mathbb{R}$ is
 a real form of $\mathfrak{l}$,
 we have proved that $\lambda\in \sqrt{-1}Z(\mathfrak{l}_\mathbb{R})^*$.
As we assumed \eqref{realization.D-mod} of Proposition~\ref{prop:partial_flag}, we have
 $\lambda\in
 \mathfrak{a}_X^*\cap \sqrt{-1}Z(\mathfrak{l}_\mathbb{R})^*_{\rm gr}$.

Recall that $\mathcal{M}$ is an irreducible
 $K$-equivariant $\mathscr{D}_{Y,\lambda}$-module on $Y=G/Q$
 such that $(V_{\pi})_K\simeq \Gamma(Y, \mathcal{M})$.
Let $S$ be the $K$-orbit in $Y$ containing $p(o)$.
Then $S=p(\tilde{S})$ and $\op{supp} \mathcal{M}=\overline{S}$, the closure of $S$.
Let $i\colon S \hookrightarrow Y$ and $\tilde{i}\colon  p^{-1}(S) \hookrightarrow \tilde{Y}$
 denote the natural inclusion maps.
We have the following commutative diagram:
\begin{align*}
\xymatrix{
\tilde{S} \ar[r]  & p^{-1}(S) \ar[r]^{\tilde i} \ar[d]_{p} &  \tilde{Y} \ar[d]_{p}
\\
& S \ar[r]^{i} & Y
}
\end{align*}
Let $i^\dagger:=Li^*[\dim S-\dim Y]$ denotes the shifted inverse image functor
 for $\mathscr{D}$-modules as in \cite{HTT08}.
Since $\op{supp}\mathcal{M}=\overline{S}$, the complex
 $i^{\dagger}\mathcal{M}$ is concentrated in one degree, namely,
 $H^q(i^{\dagger}\mathcal{M})=0$ for $q\neq 0$.
Let $\mathcal{L}:=H^0(i^{\dagger}\mathcal{M})$, which is a $K$-equivariant
 twisted $\mathscr{D}$-module on $S$.
By an isomorphism $p^* i^{\dagger}\mathcal{M}\simeq \tilde{i}^{\dagger}p^* \mathcal{M}$,
 we have $p^*\mathcal{L}|_{\tilde{S}}\simeq \tilde{\mathcal{L}}$.
Hence $\mathcal{L}$ must be a $K$-equivariant line bundle.

Next, decompose the map $i$ into $i=j\circ k$:
\[
S\xrightarrow{k} Y\setminus(\overline{S}\setminus S)
\xrightarrow{j} Y
\]
so $j$ is an open immersion and $k$ is a closed immersion.
By the definition of $\mathcal{L}$, we have
 $k^{\dagger}(j^{-1}\mathcal{M})\simeq \mathcal{L}$.
Since $j^{-1}\mathcal{M}$ is supported on $S$, 
 there is an isomorphism $j^{-1}\mathcal{M}\simeq k_+\mathcal{L}$
 by Kashiwara's equivalence.
Then we get a nonzero element in 
\[\Hom(j^{-1}\mathcal{M}, k_+\mathcal{L})
\simeq \Hom(\mathcal{M}, j_*k_+\mathcal{L}).\]
Hence $\Hom(\mathcal{M}, i_+\mathcal{L})\neq 0$.
Write the $K$-equivariant line bundle $\mathcal{L}$ on $S$ as
 $\mathcal{L}=K\times_{(Q\cap K)} \tau$ 
 for a character of $Q\cap K$.
As in Section~\ref{sec:quantization_semisimple},
 we define a unitary character $\Gamma_{\lambda}$ of $\widetilde{L_{\mathbb{R}}}$
 such that $d\Gamma_{\lambda}=\lambda\in\sqrt{-1}Z(\mathfrak{l}_{\mathbb{R}})^*$ and 
\[
(\Gamma_{\lambda}
 \otimes e^{-\rho(\theta\sigma(\mathfrak{n}))})|_{L_{\mathbb{R}}\cap K_{\mathbb{R}}}
\otimes
 \wedge^{\rm top}(\mathfrak{k}/(\mathfrak{l}\cap \mathfrak{k}))
\simeq \tau|_{L_{\mathbb{R}}\cap K_{\mathbb{R}}}.
\]
Notice that the roles of $Q$ and $\sigma(Q)$ here are interchanged
 from Section~\ref{sec:quantization_semisimple}.
Then by \eqref{eq:localization} 
\[
\sum_q(-1)^q \Gamma(Y, R^qi_+\mathcal{L})=\pi(\mathcal{O},\Gamma).
\]
We saw above that $\Hom(\mathcal{M}, R^0i_+\mathcal{L})\neq 0$
 and hence $\Hom_{\mathfrak{g},K}((V_{\pi})_K, \Gamma(Y,R^0i_+\mathcal{L}))\neq 0$.
On the other hand, $R^qi_+\mathcal{L}$ for $q>0$
 is supported on $\overline{S}\setminus S$.
Hence the irreducible $(\mathfrak{g},K)$-module $(V_{\pi})_K$
 does not appear in the composition series of $\Gamma(Y,R^qi_+\mathcal{L})$
 for $q>0$.
Since $\pi(\mathcal{O},\Gamma)$ is irreducible,
 we conclude that $\pi(\mathcal{O},\Gamma)\simeq (V_{\pi})_K$.
Thus, the condition \eqref{item:regular} in Theorem~\ref{thm:reduction_semisimple} holds.
\end{proof}

Note that if $\pi$ satisfies \eqref{item:regular} in Theorem~\ref{thm:reduction_semisimple},
 then $\pi\in \widehat{G}_{\mathbb{R}}^{\mathfrak{l}_X}$
 in the notation of Section~\ref{sec:introduction}.

Now we prove Corollary~\ref{cor:singular_representation}.

\begin{proof}[Proof of Corollary~\ref{cor:singular_representation}]
\eqref{AC.regular} is a direct consequence of Theorem~\ref{thm:main},
 which will be proved in Section~\ref{sec:proof}.

To prove \eqref{AC.singular}, recall 
 the Langlands classification of irreducible admissible representations
 of $G_{\mathbb{R}}$. 
In the notation of \cite{ALTV}, they are parametrized by
 triples $(J_{\mathbb{R}},\gamma,\Delta_{i\mathbb{R}}^+)$.
Write $\pi(J_{\mathbb{R}},\gamma,\Delta_{i\mathbb{R}}^+)$ for the irreducible representation
 of $G_{\mathbb{R}}$ corresponding to $(J_{\mathbb{R}},\gamma,\Delta_{i\mathbb{R}}^+)$.
Then the infinitesimal character of $\pi(J_{\mathbb{R}},\gamma,\Delta_{i\mathbb{R}}^+)$
 is given by the $W$-orbit through $d\gamma$.

Since there are finitely many Cartan subgroups $J_{\mathbb{R}}$ up to conjugation
 and the asymptotic cone commutes with finite union, 
 we may fix $J_{\mathbb{R}}$ and treat only representations
 of the form $\pi(J_{\mathbb{R}},\gamma,\Delta_{i\mathbb{R}}^+)$.
By replacing $\mathfrak{j}$ in the statement
 of Corollary~\ref{cor:singular_representation} with its conjugation,
 we may moreover assume that the complexified Lie algebra of our fixed $J_{\mathbb{R}}$
 is the same as $\mathfrak{j}$ in the statement.

Suppose that
 $\pi(J_{\mathbb{R}},\gamma,\Delta_{i\mathbb{R}}^+)\in
 \op{supp}L^2(X_0)\setminus \widehat{G}_{\mathbb{R}}^{\mathfrak{l}_X}$.
Then by Theorem~\ref{thm:reduction_semisimple},
 it satisfies \eqref{item:singular} in the theorem.
Hence there exist $w\in W$ and $\xi\in \mathfrak{j}^*$
 such that $d\gamma = w\cdot \xi$, 
 $\xi \in \mathfrak{a}_X^*+\rho_{\mathfrak{l}_X}$
 and $|\langle \xi,\alpha^\vee\rangle|< d$
 for some $\alpha\in\Delta(\mathfrak{g},\mathfrak{j})\setminus\Delta(\mathfrak{l}_X,\mathfrak{j})$.
Therefore,
\begin{equation}\label{eq:ac_inf.char.}
\begin{split}
&\op{AC}\bigl(
 \bigl\{d\gamma\mid \pi(J_{\mathbb{R}},\gamma,\Delta_{i\mathbb{R}}^+)\in \op{supp}L^2(X_0)
 \setminus\widehat{G}_{\mathbb{R}}^{\mathfrak{l}_X}\bigr\}\bigr) \\
&\subset 
\bigcup_{w\in W}
\bigcup_{\alpha\in\Delta(\mathfrak{g},\mathfrak{j})\setminus\Delta(\mathfrak{l}_X,\mathfrak{j})}
w\cdot \op{AC}(\{\xi\in \mathfrak{a}_X^*+\rho_{\mathfrak{l}_X}:|\langle \xi, \alpha^\vee\rangle|< d\})
 \\ 
&=\bigcup_{w\in W}
\bigcup_{\alpha\in\Delta(\mathfrak{g},\mathfrak{j})\setminus\Delta(\mathfrak{l}_X,\mathfrak{j})}
w\cdot ( \mathfrak{a}_X^*\cap\alpha^\perp).
\end{split}
\end{equation}

Consider the decomposition
 $d\gamma=\RE(d\gamma)+\sqrt{-1}\IM(d\gamma)$
 with $\RE(d\gamma), \IM(d\gamma)\in \mathfrak{j}_{\mathbb{R}}^*$.
If $\pi(J_{\mathbb{R}},\gamma,\Delta_{i\mathbb{R}}^+)$ is unitary, then $\RE(d\gamma)$ is bounded.
Hence the left hand side of \eqref{eq:ac_inf.char.}
 is contained in $\sqrt{-1}\mathfrak{j}_{\mathbb{R}}^*$.
Define
\[\sqrt{-1}\mathfrak{j}_{\mathbb{R},X,\mathrm{sing}}^*
:=\sqrt{-1}\mathfrak{j}_{\mathbb{R}}^*\cap
\bigcup_{w\in W}
\bigcup_{\alpha\in\Delta(\mathfrak{g},\mathfrak{j})\setminus\Delta(\mathfrak{l}_X,\mathfrak{j})}
w\cdot (\mathfrak{a}_X^*\cap\alpha^\perp).
\]
Then
\[\op{AC}\bigl(
 \bigl\{d\gamma\mid \pi(J_{\mathbb{R}},\gamma,\Delta_{i\mathbb{R}}^+)\in \op{supp}L^2(X_0)
 \setminus\widehat{G}_{\mathbb{R}}^{\mathfrak{l}_X}\bigr\}\bigr) 
\subset \sqrt{-1}\mathfrak{j}_{\mathbb{R},X,\mathrm{sing}}^*\]
and it is easy to see that
\[\dim_{\mathbb{R}} \sqrt{-1}\mathfrak{j}_{\mathbb{R},X,\mathrm{sing}}^*
< \dim_{\mathbb{C}}\mathfrak{a}_X.\]
Since
\begin{align*}
&\op{AC}\Bigl(
\bigcup_{\pi\in \op{supp}L^2(G_{\mathbb{R}}/H_0) \setminus
 \widehat{G}_{\mathbb{R}}^{\mathfrak{l}_X}}
 \chi_{\pi}\Bigr) \\
&= 
W\cdot \op{AC}\bigl(
 \bigl\{d\gamma\mid \pi(J_{\mathbb{R}},\gamma,\Delta_{i\mathbb{R}}^+)\in \op{supp}L^2(X_0)
 \setminus\widehat{G}_{\mathbb{R}}^{\mathfrak{l}_X}\bigr\}\bigr),
\end{align*}
Corollary~\ref{cor:singular_representation} \eqref{AC.singular} is proved.
\end{proof}

To describe representations of type
 \eqref{item:singular} in Theorem~\ref{thm:reduction_semisimple},
 we introduce some notation.
For a Levi subalgebra $\mathfrak{l}\subset\mathfrak{g}$,
 its Cartan subalgebra $\mathfrak{j}\subset \mathfrak{l}$
 and a constant $d>0$, 
 define subsets $\Xi(\mathfrak{l},d)\subset \mathfrak{j}^*$
 and $\widehat{G}_{\mathbb{R}}(\mathfrak{l},d)\subset \widehat{G}_{\mathbb{R}}$ by
\begin{align*}
&\Xi(\mathfrak{l},d):=
\{\xi\in Z(\mathfrak{l})^*+\rho_{\mathfrak{l}}\mid
 \exists\alpha\in\Delta(\mathfrak{g},\mathfrak{j})
 \setminus\Delta(\mathfrak{l},\mathfrak{j}) \text{ such that }
 |\langle\xi,\alpha^\vee\rangle|< d \},\\
&\widehat{G}_{\mathbb{R}}(\mathfrak{l},d):=
\{\pi\in \widehat{G}_{\mathbb{R}}\mid
 \text{The infinitesimal character of $\pi$
 has a representative in $\Xi(\mathfrak{l},d)$}\}.
\end{align*}

For the proof of main theorems in \S\ref{sec:proof}, 
 we need Lemma~\ref{lem:singular_spectrum},
 which states that the contribution to singular spectrum
 from representations of type \eqref{item:singular}
 in Theorem~\ref{thm:reduction_semisimple} is small.

For a unitary representation $(\Pi, V_{\Pi})$ of $G_{\mathbb{R}}$,
 define the \emph{wave front set} 
 and the \emph{singular spectrum} of $\Pi$ by
\[
\op{WF}(\Pi)
=\overline{\bigcup_{u,v\in V_{\Pi}}\op{WF}_e(\pi(g)u,v)},
\quad 
\op{SS}(\Pi)
=\overline{\bigcup_{u,v\in V_{\Pi}}\op{SS}_e(\pi(g)u,v)}.
\]
Here, $\op{WF}_e(\Pi(g)u,v)$ is the wave front set of the
 matrix coefficient function  $(\Pi(g)u,v)$ at $e\in G$,
Similarly, $\op{SS}_e(\Pi(g)u,v)$ is
 the singular spectrum (or the analytic wave front set)
 of $(\Pi(g)u,v)$ at $e$.
Both $\op{WF}(\Pi)$ and $\op{SS}(\Pi)$ are closed
 $G$-invariant subset of $\mathfrak{g}^*(\simeq T^*_e G)$.
We always have $\op{WF}(\Pi)\subset \op{SS}(\Pi)$.
See \cite{HHO16} for the equivalence with Howe's original definition \cite{How81}
 of the wave front set.
We note that a relationship between
 the singular spectrum of functions and the spectrum of representations
 was studied in Kashiwara-Vergne~\cite{KV79}.
Such a microlocal point of view also appeared in 
 Kobayashi's theory~\cite{Ko98b, Ko98c} on the admissibility of restrictions of representations.

\begin{lemma}\label{lem:singular_spectrum}
Let $\Pi$ be a unitary representation of $G_{\mathbb{R}}$
 and $\op{supp} \Pi \subset \widehat{G}_{\mathbb{R}}(\mathfrak{l},d)$.
Then $\op{WF}(\Pi)\cap (G\cdot Z(\mathfrak{l})^*_{\mathrm{reg}})=\op{SS}(\Pi)\cap (G\cdot Z(\mathfrak{l})^*_{\mathrm{reg}})=\emptyset$.
\end{lemma}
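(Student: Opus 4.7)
The plan is to exploit the action of the center $Z(\mathcal{U}(\mathfrak{g}))$ on matrix coefficients of $\Pi$ via the Harish--Chandra and Chevalley isomorphisms, combined with microlocal ellipticity.

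First I compute the asymptotic cone $\op{AC}(\Xi(\mathfrak{l},d))\subset \mathfrak{j}^*$. If $\lambda_n\in \Xi(\mathfrak{l},d)$ with $|\lambda_n|\to\infty$ and $\lambda_n/|\lambda_n|\to \eta$, the containment $\lambda_n\in Z(\mathfrak{l})^*+\rho_{\mathfrak{l}}$ forces $\eta\in Z(\mathfrak{l})^*$; because $\Delta(\mathfrak{g},\mathfrak{j})\setminus\Delta(\mathfrak{l},\mathfrak{j})$ is finite, by pigeonhole a fixed root $\alpha$ witnesses $|\langle\lambda_n,\alpha^{\vee}\rangle|<d$ for infinitely many $n$, yielding $\langle\eta,\alpha^{\vee}\rangle=0$. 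Hence
\[
\op{AC}(\Xi(\mathfrak{l},d))\ \subset\ Z(\mathfrak{l})^*\cap \bigcup_{\alpha\in\Delta(\mathfrak{g},\mathfrak{j})\setminus\Delta(\mathfrak{l},\mathfrak{j})}\alpha^{\perp},
\]
which is disjoint from $Z(\mathfrak{l})^*_{\mathrm{reg}}$. A normalizer-in-$W$ argument (using that $\mathfrak{g}(\eta)\supsetneq\mathfrak{l}$ for $\eta$ in the right-hand side, while $\mathfrak{g}(\xi_0)=\mathfrak{l}$ for $\xi_0\in Z(\mathfrak{l})^*_{\mathrm{reg}}$) shows that $W\cdot\op{AC}(\Xi(\mathfrak{l},d))$ also misses $Z(\mathfrak{l})^*_{\mathrm{reg}}$.

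Next, fix $\xi=g\cdot \xi_0\in G\cdot Z(\mathfrak{l})^*_{\mathrm{reg}}$. Since $\xi_0$ lies outside the Zariski-closed, $W$-invariant cone $W\cdot\op{AC}(\Xi(\mathfrak{l},d))$, I can find a homogeneous $W$-invariant polynomial $p\in S(\mathfrak{j})^W$ vanishing on this cone with $p(\xi_0)\neq 0$. Lifting $p$ via the Harish--Chandra isomorphism yields $z\in Z(\mathcal{U}(\mathfrak{g}))$; the associated graded is the Chevalley restriction $S(\mathfrak{g})^G\simeq S(\mathfrak{j})^W$, so the principal symbol of $z$ (as a bi-invariant differential operator on $G$) is the $G$-invariant polynomial $\hat p\in S(\mathfrak{g})^G$ with $\hat p|_{\mathfrak{j}^*}=p$. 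In particular $\hat p(\xi)=p(\xi_0)\neq 0$, i.e.\ $z$ is elliptic in the codirection $\xi$ at $e$.

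The main and hardest step is to convert this ellipticity into microlocal regularity of matrix coefficients of $\Pi$. The element $z$ acts on $V_{\Pi}$ by spectral multiplication --- as the scalar $p(\lambda_{\pi})$ on the $\pi$-component, where $\lambda_{\pi}\in W\cdot\Xi(\mathfrak{l},d)$ is any representative of $\chi_{\pi}$ (well-defined by $W$-invariance of $p$). Because $p$ vanishes on $\op{AC}(W\cdot\Xi(\mathfrak{l},d))$, a Taylor expansion around the nearest point of the cone gives the key bound
\[
|p(\lambda_{\pi})|\ \leq\ C\bigl(1+|\lambda_{\pi}|\bigr)^{\deg p-1}\qquad(\pi\in\op{supp}\Pi),
\]
so $z$ gains one full order over what its principal symbol would predict. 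The plan is then to combine this gain with ellipticity via a spectral cutoff: split any $u\in V_{\Pi}$ into a low-energy piece $u_{\leq R}$ (supported on $|\lambda_{\pi}|\leq R$), whose matrix coefficient against any $v$ is real-analytic on $G_{\mathbb{R}}$ by a compactly supported spectral integral, and a high-energy complement; iteratively applying $z$ to the high-energy matrix coefficient and invoking elliptic regularity at $(e,\xi)$ improves the microlocal Sobolev order by one at each step, eventually yielding $C^{\infty}$ and (via analytic vectors and factorial bounds) analytic microlocal regularity at $(e,\xi)$. Letting $R\to\infty$ then gives $\xi\notin\op{WF}_e(\Pi(g)u,v)$ and $\xi\notin\op{SS}_e(\Pi(g)u,v)$ for all $u,v\in V_{\Pi}$, hence $\xi\notin\op{WF}(\Pi),\op{SS}(\Pi)$. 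The principal obstacle is the rigorous justification of this iterated elliptic estimate for a direct integral, which I expect will require the microlocal framework developed in \cite{HHO16} together with density of analytic vectors to propagate the analytic wave-front estimate.
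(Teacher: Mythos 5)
Your first step — computing $\op{AC}(\Xi(\mathfrak{l},d))\subset Z(\mathfrak{l})^*\cap\bigcup_{\alpha\notin\Delta(\mathfrak{l},\mathfrak{j})}\alpha^{\perp}$ and showing that $W\cdot\op{AC}(\Xi(\mathfrak{l},d))$ misses $Z(\mathfrak{l})^*_{\mathrm{reg}}$ because $\mathfrak{g}(\eta)\supsetneq\mathfrak{l}$ for $\eta$ in that cone — is correct and is exactly the computation the paper performs. The real question is how one converts this into a wave-front-set statement, and here you and the paper part ways.

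The paper avoids all microlocal hard analysis at this point: by \cite[Lemma 3.4, Lemma 3.5]{Har18}, it suffices to show $\op{AC}\bigl(\bigcup_{J(\Gamma)}C(\Gamma)\bigr)\cap(G\cdot Z(\mathfrak{l})^*_{\mathrm{reg}})=\emptyset$, where $C(\Gamma)\subset G\cdot\chi_{J(\Gamma)}$ is the contour attached to a Langlands parameter; the asymptotic-cone computation then finishes the proof in a few lines. Your route instead tries to establish the microlocal estimate directly by finding $z\in Z(\mathcal{U}(\mathfrak{g}))$ with $\hat p(\xi)\neq 0$ and a spectral bound $|p(\lambda_\pi)|\lesssim(1+|\lambda_\pi|)^{\deg p-1}$ — a genuinely different approach, essentially attempting to reprove (a version of) the cited lemmas from \cite{Har18}.

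That approach has a gap, and you flag it yourself: the conversion of the spectral gain into microlocal regularity of matrix coefficients is not justified. The difficulties are real. The spectral bound on $\Pi(z)$ is measured against the infinitesimal character, but the Sobolev scale entering a microlocal elliptic estimate on $G_{\mathbb{R}}$ is controlled by the $K$-Casimir or a fixed elliptic operator, and the $K$-type spectrum of a representation is essentially independent of its infinitesimal character; nothing in the sketch bridges this. The low/high-energy split $u=u_{\le R}+u_{>R}$ also does not obviously help: $u_{>R}\to 0$ only in norm, which gives $C^0$ convergence of matrix coefficients, not microlocal $H^s$ convergence, so the claimed regularity does not pass to the limit $R\to\infty$ without a uniform estimate you have not supplied; nor is it clear that the low-energy matrix coefficient is analytic before the very argument you are trying to run. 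Finally, for the analytic statement $\op{SS}(\Pi)$ one needs factorial control in the iteration, which is considerably stronger than the order-by-order $C^\infty$ gain you describe. These are precisely the issues handled in \cite{Har18} via the Rossmann--Schmid--Vilonen character contour $C(\Gamma)$, and quoting that work is what makes the paper's proof short. As written, the ``main and hardest step'' in your proposal remains open.
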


\begin{proof}
The proof follows the same line of arguments as in the proof of \cite[Theorem 1.1]{Har18}. 

To each Langlands parameter $\Gamma$,
 one defines the Langlands quotient $J(\Gamma)$,
 which is an irreducible representation of $G_{\mathbb{R}}$.
In \cite[Section 2]{Har18}, we associate a contour $C(\Gamma)\subset \mathfrak{g}^*$.
By \cite[Lemma 3.4, Lemma 3.5]{Har18}, it is enough to show:
\begin{equation*}
\op{AC}\Bigl( \bigcup_{J(\Gamma)\in \widehat{G}_{\mathbb{R}}(\mathfrak{l},d)} C(\Gamma) \Bigr)
\cap (G\cdot Z(\mathfrak{l})^*_{\mathrm{reg}})=\emptyset.
\end{equation*}
If $J(\Gamma)$ has infinitesimal character $\xi\in\mathfrak{j}^*$,
 then $C(\Gamma)\subset G \cdot \xi$ by the definition of $C(\Gamma)$.
Hence
\begin{equation*}
\op{AC}\Bigl( \bigcup_{J(\Gamma)\in \widehat{G}_{\mathbb{R}}(\mathfrak{l},d)} C(\Gamma) \Bigr)
 \subset 
\op{AC} ( G\cdot \Xi (\mathfrak{l},d)).
\end{equation*}
Since $\op{AC} ( G\cdot \Xi (\mathfrak{l},d))$ is $G$-stable, it is enough to show that
\begin{equation}\label{eq:asymp_Xi}
\op{AC} ( G\cdot \Xi (\mathfrak{l},d)) \cap  Z(\mathfrak{l})^*_{\mathrm{reg}}=\emptyset.
\end{equation}

Let $W=W(\mathfrak{g},\mathfrak{j})$ be the Weyl group which acts on $\mathfrak{j}^*$.
We claim that
\begin{align}\label{eq:asymp_cone_complex}
\op{AC}(G\cdot S) \cap \mathfrak{j}^*
= W\cdot \op{AC}(S)
\end{align}
for any subset $S\subset \mathfrak{j}^*$.
Indeed, we have
\begin{align*}
\op{AC}(G\cdot S) 
\supset \op{AC}(W\cdot S)
 = W \cdot \op{AC}(S).
\end{align*}
For the other inclusion, let $\xi\in \op{AC}(G\cdot S) \cap \mathfrak{j}^*$.
Then there exist $g_i\in G$, $s_i\in S$, and $t_i\in \R_{>0}$
 for $i\in \mathbb{N}$
 such that $t_i\to +\infty$ and
 $t_i^{-1} (g_i\cdot s_i)\to \xi$ when $i\to \infty$.
Let $p\colon \mathfrak{g}^*\to \mathfrak{j}^*/W$ be the map induced from
 the isomorphism $S(\mathfrak{g})^G \simeq S(\mathfrak{j})^W$.
By applying $p$ to the convergent sequence, 
 we obtain $t_i^{-1}p(s_i)\to \xi$ in $\mathfrak{j}^*/W$,
 which implies $\xi\in \op{AC}(W\cdot S)$.

Plugging $S=\Xi(\mathfrak{l},d)$
 into \eqref{eq:asymp_cone_complex}, we get
\[
 \op{AC}(G\cdot\Xi(\mathfrak{l},d))
 \cap Z(\mathfrak{l})^*_{\mathrm{reg}}
=\bigl(W\cdot \op{AC}(\Xi(\mathfrak{l},d))\bigr)
 \cap Z(\mathfrak{l})^*_{\mathrm{reg}}.
\]
If $\lambda\in \op{AC}(\Xi(\mathfrak{l},d))$,
 then $\mathfrak{g}(\lambda)\supsetneq\mathfrak{l}$.
Hence $\lambda\in W\cdot \op{AC}(\Xi(\mathfrak{l},d))$
 implies $\dim \mathfrak{g}(\lambda)> \dim \mathfrak{l}$.
Therefore,
 $\bigl(W\cdot \op{AC}(\Xi(\mathfrak{l},d))\bigr)
 \cap Z(\mathfrak{l})^*_{\mathrm{reg}}=\emptyset$
 and \eqref{eq:asymp_Xi} is proved.
\end{proof}


\section{Wave front sets of direct integrals for a Levi, part 1}\label{sec:WF1}

Let $\mathfrak{l}_{\mathbb{R}}$ be a Levi subalgebra of $\mathfrak{g}_{\mathbb{R}}$.
Define a subset
 $\widehat{G}_{\mathbb{R}}^{\mathfrak{l}_{\mathbb{R}}}
 \subset\widehat{G}_{\mathbb{R}}$ as
\[
\widehat{G}_{\mathbb{R}}^{\mathfrak{l}_{\mathbb{R}}}
=\{\pi\in \widehat{G}_{\mathbb{R}}\mid
 \exists \Gamma_{\lambda} \text{ such that }
 \pi\simeq \pi(\mathfrak{l}_{\mathbb{R}},\Gamma_{\lambda})
 \text{ and } \lambda\in \sqrt{-1}Z(\mathfrak{l}_\mathbb{R})^*_{\text{gr}}\}.
\]
The definition of $\pi(\mathfrak{l}_{\mathbb{R}},\Gamma_{\lambda})$
 was given in \S \ref{sec:quantization_semisimple}.
For a complex Levi subalgebra $\mathfrak{l}'\subset \mathfrak{g}$,
 we defined $\widehat{G}_{\mathbb{R}}^{\mathfrak{l}'}$
 in Section~\ref{sec:introduction}. By these definitions,
\[
\widehat{G}_{\mathbb{R}}^{\mathfrak{l}'}
=\bigcup_{\mathfrak{l}_{\mathbb{R}}}
 \widehat{G}_{\mathbb{R}}^{\mathfrak{l}_{\mathbb{R}}},
\]
where $\mathfrak{l}_{\mathbb{R}}$ runs over all Levi subalgebras
 of $\mathfrak{g}_{\mathbb{R}}$
 such that $\mathfrak{l}\sim \mathfrak{l}'$.

We want to prove the following theorem on
 the wave front set and the singular spectrum:
\begin{theorem}\label{thm:direct_integral} 
Let $\mathfrak{l}_{\mathbb{R}}$ be
 a Levi subalgebra of $\mathfrak{g}_{\mathbb{R}}$.
Suppose that $(\Pi,V_{\Pi})$
 is a unitary representation of $G_{\mathbb{R}}$
 which is isomorphic to a direct integral
 of representations in $\widehat{G}_{\mathbb{R}}^{\mathfrak{l}_{\mathbb{R}}}$:
\[\Pi\simeq\int^{\oplus}_{\pi\in \widehat{G}_{\mathbb{R}}^{\mathfrak{l}_{\mathbb{R}}}}
\pi^{\oplus n(\pi)} dm_{\Pi}.\]
Then 
\begin{align*}
\op{WF}(\Pi)\cap (G\cdot Z(\mathfrak{l})^*_{\rm reg})
&=
\op{SS}(\Pi)\cap (G\cdot Z(\mathfrak{l})^*_{\rm reg})\\
&= \op{AC}
\Biggl(\bigcup_{\pi(\mathfrak{l}_{\mathbb{R}},\Gamma_{\lambda})
\in \op{supp}(m_{\Pi})}G_{\mathbb{R}}\cdot \lambda \Biggr)
\cap (G\cdot Z(\mathfrak{l})^*_{\rm reg}).
\end{align*}
\end{theorem}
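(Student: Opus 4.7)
The plan is to combine, for each individual constituent $\pi(\mathfrak{l}_{\R},\Gamma_\lambda)$ with $\lambda\in\sqrt{-1}Z(\mathfrak{l}_\R)^*_{\rm gr}$, an explicit presentation of the distribution character as an oscillatory integral with a microlocal analysis of the direct integral $\int\cdots dm_\Pi$. Fixing a maximally real admissible polarization $\mathfrak{q}$ for $\lambda$ and using the realization \eqref{eq:localization} of $\pi(\mathfrak{l}_\R,\Gamma_\lambda)$ as global sections of a twisted $\mathscr{D}$-module on $Y=G/\sigma(Q)$, the Schmid--Vilonen character formula \cite{SV98}, in the form given in \cite{HO20}, expresses the character near $e$ as an oscillatory integral over a Lagrangian cycle whose moment-map image asymptotically reproduces $G_\R\cdot\lambda$. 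Together with Rossmann's integral formula this yields, for every such $\lambda$,
\[
\op{WF}\bigl(\pi(\mathfrak{l}_\R,\Gamma_\lambda)\bigr)\cap(G\cdot Z(\mathfrak{l})^*_{\rm reg})
=\op{SS}\bigl(\pi(\mathfrak{l}_\R,\Gamma_\lambda)\bigr)\cap(G\cdot Z(\mathfrak{l})^*_{\rm reg})
=\op{AC}(G_\R\cdot\lambda)\cap(G\cdot Z(\mathfrak{l})^*_{\rm reg});
\]
on the regular locus only the semisimple part of the character survives, so nilpotent-orbit contributions are invisible and the formula is sharp at the level of a single representation.

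For the inclusion $\supseteq$, take $\xi$ in the right-hand asymptotic cone with $\xi\in G\cdot Z(\mathfrak{l})^*_{\rm reg}$, and pick $\pi_i=\pi(\mathfrak{l}_\R,\Gamma_{\lambda_i})\in\op{supp}(m_\Pi)$, $t_i\to +\infty$ and $g_i\in G_\R$ with $t_i^{-1}(g_i\cdot\lambda_i)\to\xi$. Using approximate spectral projectors onto small measurable neighborhoods of $\pi_i$ in $\widehat{G}_\R$ (in the spirit of \cite{HHO16, Har18, HW17}), I would build matrix coefficients of $\Pi$ whose local Fourier transforms concentrate on $G_\R\cdot\lambda_i$ via the oscillatory formula above; passing to a limit in $i$ then produces a matrix coefficient of $\Pi$ whose singular spectrum at $e$ contains $\xi$, forcing $\xi\in\op{WF}(\Pi)\cap(G\cdot Z(\mathfrak{l})^*_{\rm reg})$.

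For the reverse inclusion $\subseteq$, I would write a general matrix coefficient $(\Pi(g)u,v)$ as the fiber integral $\int(\pi(g)u_\pi,v_\pi)\,dm_\Pi(\pi)$, plug in the Schmid--Vilonen oscillatory formula for each $\pi$, and interchange orders of integration. For any $\xi\in G\cdot Z(\mathfrak{l})^*_{\rm reg}$ lying outside $\op{AC}\bigl(\bigcup_{\pi\in\op{supp}(m_\Pi)} G_\R\cdot\lambda\bigr)$ some open conic neighborhood of $\xi$ meets the union of orbits in a bounded set, so the phase in the resulting double oscillatory integral has no critical points in that neighborhood; a stationary-phase analysis then gives the analytic (respectively $C^\infty$) decay needed to exclude $\xi$ from $\op{SS}(\Pi)$ (respectively $\op{WF}(\Pi)$). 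The restriction to the regular locus $G\cdot Z(\mathfrak{l})^*_{\rm reg}$ is essential: away from it the Kirillov--Kostant--Souriau integrand degenerates and the character-geometry correspondence breaks down.

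The main obstacle I expect is the uniformity of the Schmid--Vilonen presentation as $\lambda$ ranges over $\sqrt{-1}Z(\mathfrak{l}_\R)^*_{\rm gr}$: both directions for the direct integral require measurable, locally uniform bounds on the oscillatory kernels together with a precise description of the Kirillov--Kostant--Souriau symplectic form on each orbit $G_\R\cdot\lambda$ and of its dependence on $\lambda$. This is exactly the technical content that the authors separate out into \S \ref{sec:KKSform}. Once that uniformity is in place, the stationary-phase and approximate-projector arguments above close the loop and deliver the stated equality of wave front set, singular spectrum, and asymptotic cone on the regular locus.
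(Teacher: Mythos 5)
Your outline tracks the paper's proof closely in its overall route: both directions hinge on the contour-integral character formula \eqref{eq:HO_paper} from \cite{HO20}, with Paley--Wiener-type decay giving the inclusion $\op{SS}(\Pi)\subset\op{AC}(\cdots)$ (Lemma~\ref{lem:inclusion1}) and an FBI-transform lower bound combined with Howe's wave-front criterion giving $\op{WF}(\Pi)\supset\op{AC}(\cdots)$ (Lemma~\ref{lem:inclusion2}); and the ``uniformity in $\lambda$'' you single out as the main obstacle is exactly the content of \S\ref{sec:KKSform}.

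Two points in your sketch need reshaping to become the argument that actually works. The per-representation equality you assert is vacuous on $G\cdot Z(\mathfrak{l})^*_{\rm reg}$: for a single $\pi(\mathfrak{l}_{\mathbb{R}},\Gamma_\lambda)$ both $\op{WF}(\pi)$ and $\op{AC}(G_{\mathbb{R}}\cdot\lambda)$ are nilpotent, so they meet the regular semisimple locus only in $\{0\}$, and there is no usable per-orbit building block to integrate. All the content lives in the aggregated integral over $\op{supp}(m_\Pi)$, which is why the polynomial-growth hypothesis on the measure, the semi-algebraic contour volume estimates (Lemma~\ref{lem:integral_semialgebraic}), and the reduction to a single $\widehat{G}_{\mathbb{R}}^{(\mathfrak{l}_{\mathbb{R}},\mathfrak{q})}$ away from $\widehat{G}_{\mathbb{R}}(\mathfrak{l},d)$ all have to be set up first. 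Likewise, for the inclusion $\supseteq$ one does not ``pass to a limit of matrix coefficients'': the paper fixes a single operator $P$ built from $K$-type bases $e_{\sigma,j}(\mathcal{O}_0,\Gamma_0)$ chosen consistently across $\lambda$ via the compact picture, proves $(I+\Omega_K)^{-N}P$ is trace class by lattice-sum bounds, applies Howe's criterion to $\op{Tr}(\Pi(g)(I+\Omega_K)^{-N}P)$, and then tests the fixed distribution $\theta(m)\cdot\mathcal{G}$ with the FBI criterion of Lemma~\ref{lem:FBI} along the sequence $t_i\zeta_i$ --- after renormalizing $m$ so that $p_*m(B_1(t_i\zeta_i))\geq 2^{-i-1}$. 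That renormalization, the positivity of the approximating form $\nu^{\op{O}}$ (Lemmas~\ref{lem:diff_form1} and~\ref{lem:form_lower_bound}), and the trace-class step are the pieces your concentration heuristic elides; without them the limit does not close.
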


In this section, we prove the following inclusion.

\begin{lemma}\label{lem:inclusion1} 
In the setting of Theorem \ref{thm:direct_integral},
\begin{equation*}
\op{SS}(\Pi)\cap (G\cdot Z(\mathfrak{l})^*_{\rm reg})
\subset \op{AC}
\Biggl(\bigcup_{\pi(\mathfrak{l}_{\mathbb{R}},\Gamma_{\lambda})
\in \op{supp}(m_{\Pi})}G_{\mathbb{R}}\cdot\lambda \Biggr).
\end{equation*}
\end{lemma}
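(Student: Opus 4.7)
The plan is to bound $\op{SS}_e$ of matrix coefficients of $\Pi$ by combining the direct integral structure with the explicit Schmid--Vilonen character formula underlying the construction of $\pi(\mathfrak{l}_{\mathbb{R}},\Gamma_{\lambda})$. First, I would fix $\xi \in \op{SS}(\Pi) \cap (G \cdot Z(\mathfrak{l})^*_{\rm reg})$ and choose vectors $u,v$ in the direct integral so that $\xi \in \op{SS}_e((\Pi(g)u,v))$; writing $u = \int u_{\pi} dm_\Pi$, $v = \int v_{\pi} dm_\Pi$, the matrix coefficient becomes
\[
(\Pi(g)u,v) = \int_{\widehat{G}_{\mathbb{R}}^{\mathfrak{l}_{\mathbb{R}}}} (\pi(g)u_\pi,v_\pi)\, dm_\Pi(\pi),
\]
so it suffices to bound the singular spectrum at $e$ of this direct integral of matrix coefficients.

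Next, I would use the realization \eqref{eq:localization} of $\pi(\mathfrak{l}_{\mathbb{R}},\Gamma_{\lambda})$ as an alternating sum of global sections on the partial flag variety $Y$, together with the Schmid--Vilonen fixed point formula \cite{SV98}, as applied in \cite{HO20}, to express the distribution character $\Theta_{\pi(\mathfrak{l}_{\mathbb{R}},\Gamma_\lambda)}$ near $e$ as the Fourier transform of a Liouville-type cycle on $\mathfrak{g}^*$ whose image under the moment map is essentially $G_{\mathbb{R}}\cdot \lambda$. Pairing $(\pi(g)u_\pi,v_\pi)$ with a test function supported near $e$ and integrating against $m_\Pi$ produces a distribution whose analytic singular directions at the identity lie, up to passage to closure and asymptotic cones, inside
\[
\op{AC}\Biggl(\bigcup_{\pi(\mathfrak{l}_{\mathbb{R}},\Gamma_{\lambda}) \in \op{supp}(m_\Pi)} G_{\mathbb{R}}\cdot \lambda\Biggr).
\]
Restricting to the stratum $G \cdot Z(\mathfrak{l})^*_{\rm reg}$ is what makes the bound clean: since the stabilizer of any point of $Z(\mathfrak{l})^*_{\rm reg}$ is exactly the chosen Levi $L$, the regular semisimple orbits do not degenerate, and nilpotent or smaller-orbit contributions that a priori could appear in the closure of $G_{\mathbb{R}}\cdot \lambda$ are automatically excluded.

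The main obstacle is that we require the \emph{analytic} wave front set $\op{SS}$, not merely the smooth Howe wave front set $\op{WF}$. For $\op{WF}$ alone, additivity under direct integrals and the standard orbit description of $\op{WF}(\pi(\mathfrak{l}_{\mathbb{R}},\Gamma_\lambda))$ would give the inclusion almost immediately. To control $\op{SS}_e$ one must exploit that the Schmid--Vilonen character representation is a genuinely real-analytic oscillatory integral in $g$, with the phase and the characteristic cycle depending real-analytically on $\lambda$; this lets one run an analytic stationary-phase/Fourier integral operator argument uniformly in $\pi \in \op{supp}(m_\Pi)$. Combining these ingredients with the direct-integral decomposition then yields the claimed inclusion.
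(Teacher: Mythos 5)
Your plan has a genuine gap: the passage from characters to matrix coefficients is not addressed, and this is precisely the step that cannot be made ``almost immediate'' by decomposing $(\Pi(g)u,v)$ as a direct integral of $(\pi(g)u_\pi,v_\pi)$. The Schmid--Vilonen / \cite{HO20} formula \eqref{eq:HO_paper} expresses the \emph{character} $\Theta_{\pi(\mathcal{O},\Gamma)}$ as a contour integral whose support is tied to $\mathcal{O}$; it says nothing directly about the singular spectrum of a single matrix coefficient $(\pi(g)u_\pi,v_\pi)$, which for an irreducible unitary representation is not controlled by the orbit in any simple way. The paper avoids your route entirely: it first proves the estimate \eqref{eq:SS}, namely $\op{SS}_0(\theta(m)) \subset \op{AC}\bigl(\bigcup \mathcal{O}\bigr)$ for the \emph{integrated character} $\theta(m)$, using Lemma~\ref{lem:SS} (with the Paley--Wiener theorem, the Hörmander cutoff sequence $\{\varphi_{N,\mathcal{U}_1,\mathcal{U}_2}\}$ with geometric derivative growth, and the semialgebraic volume bounds of Lemma~\ref{lem:integral_semialgebraic}), and then invokes the argument of \cite[Proposition~7.1]{HHO16} to transfer that character-level bound to $\op{SS}(\Pi)$. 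That transfer argument is a nontrivial ingredient you have not identified or reproduced.

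A second concrete error: your claim that ``for $\op{WF}$ alone, additivity under direct integrals and the standard orbit description of $\op{WF}(\pi(\mathfrak{l}_{\mathbb{R}},\Gamma_\lambda))$ would give the inclusion almost immediately'' is false. For a single $\pi(\mathfrak{l}_{\mathbb{R}},\Gamma_\lambda)$ one has $\op{WF}(\pi)$ roughly equal to $\op{AC}(G_{\mathbb{R}}\cdot\lambda)$, but $\bigcup_\lambda \op{AC}(G_{\mathbb{R}}\cdot\lambda)$ is in general strictly smaller than $\op{AC}\bigl(\bigcup_\lambda G_{\mathbb{R}}\cdot\lambda\bigr)$ when the support $\{\lambda\}$ is unbounded; the regular representation of $\mathbb{R}$, where each $\op{WF}(\chi_\lambda)=\{0\}$ yet $\op{WF}(L^2(\mathbb{R}))=\mathbb{R}$, already shows this. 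Capturing the enlargement coming from parameters escaping to infinity is exactly what requires the uniform-in-$\lambda$ estimates in Lemma~\ref{lem:SS} together with the polynomial-growth hypothesis on the measure. You also omit the preliminary reductions in the paper (decomposition into finitely many $\widehat{G}_{\mathbb{R}}^{(\mathfrak{l}_{\mathbb{R}},\mathfrak{q})}$, and splitting off the part near $\widehat{G}_{\mathbb{R}}(\mathfrak{l},d)$ handled by Lemma~\ref{lem:singular_spectrum}), which are what keep the asymptotic cone of the orbit family from picking up singular or nilpotent degenerations; that these reductions are needed is not addressed by your appeal to the regularity of the stratum $G\cdot Z(\mathfrak{l})^*_{\rm reg}$ alone.
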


The proof of Theorem~\ref{thm:direct_integral}
 will be completed in the subsequent two sections.

Before starting the proof of Lemma~\ref{lem:inclusion1}, we see that
 $\widehat{G}_{\mathbb{R}}^{\mathfrak{l}_{\mathbb{R}}}$
 is a locally closed subset of $\widehat{G}_{\mathbb{R}}$
 with respect to the Fell topology.
Let $\{\pi^j\}_j$ be a sequence in $\widehat{G}_{\mathbb{R}}^{\mathfrak{l}_{\mathbb{R}}}$
 which converges to $\pi\in \widehat{G}_{\mathbb{R}}$.
Let $\pi^j=\pi(\mathcal{O}^j,\Gamma^j)$ and
 $\mathcal{O}^j=G_{\mathbb{R}}\cdot \lambda^j$.
Recall from Section~\ref{sec:quantization_semisimple} and \cite[\S 2]{HO20}
 that $\pi(\mathcal{O}^j,\Gamma^j)$ is defined as a unitary parabolic induction
 for a parabolic subgroup $P_{\mathbb{R}}=M_{\mathbb{R}}A_{\mathbb{R}}(N_P)_{\mathbb{R}}$. 
Since there are only finitely many possibilities for $P_{\mathbb{R}}$,
 we may assume that $P_{\mathbb{R}}$ does not depend on $j$ by passing to a subsequence.
We have a decomposition $\lambda^j=\lambda^j_c+\lambda^j_n$ and
 let $(\mathcal{O}^j)^{M_{\mathbb{R}}}=M_{\mathbb{R}}\cdot \lambda^j_c$.
Then we can define a semisimple orbital parameter 
 $\bigl((\mathcal{O}^j)^{M_{\mathbb{R}}},(\Gamma^j)^{M_{\mathbb{R}}}\bigr)$
 for $M_{\mathbb{R}}$ such that
 $\pi(\mathcal{O}^j,\Gamma^j)$ is induced from
 $\pi((\mathcal{O}^j)^{M_{\mathbb{R}}},(\Gamma^j)^{M_{\mathbb{R}}})$.
By \cite{BD60}, the map $\widehat{G}_{\mathbb{R}}\to \mathfrak{j}^*/W$
 sending an irreducible unitary representation to its infinitesimal character is continuous.
Therefore, the infinitesimal character of $\pi^j$ converges to that of $\pi$.
This implies that $\lambda_c^j$ is bounded and hence
 there are only finitely many possibilities for
 $\bigl((\mathcal{O}^j)^{M_{\mathbb{R}}},(\Gamma^j)^{M_{\mathbb{R}}}\bigr)$.
Passing to a subsequence, we may assume all parameters 
 $\bigl((\mathcal{O}^j)^{M_{\mathbb{R}}},(\Gamma^j)^{M_{\mathbb{R}}}\bigr)$
 are the same so let
 $(\mathcal{O}^{M_{\mathbb{R}}},\Gamma^{M_{\mathbb{R}}})=
 \bigl((\mathcal{O}^j)^{M_{\mathbb{R}}},(\Gamma^j)^{M_{\mathbb{R}}}\bigr)$
 and $\lambda_c=\lambda^j_c$.
We may also assume that $\lambda_n^j$ converges to
 $\lambda_n\in\sqrt{-1}\mathfrak{a}_{\mathbb{R}}^*$.
Then as noted in the proof of \cite[Corollary 8.9]{SRV98},
 $\pi$ is isomorphic to an irreducible constituent
 of $\op{Ind}_{P_{\mathbb{R}}}^{G_{\mathbb{R}}}
 \bigl((\mathcal{O}^{M_{\mathbb{R}}},\Gamma^{M_{\mathbb{R}}})
 \boxtimes e^{\lambda_n}\bigr)$.
If $\lambda_c+\lambda_n$ is in the good range, then
 the induced representation is irreducible and 
 $\pi\in \widehat{G}_{\mathbb{R}}^{\mathfrak{l}_{\mathbb{R}}}$.
Otherwise, $\lambda_c+\lambda_n+\rho_{\mathfrak{l}}$ is singular
 and $\pi$ has the singular infinitesimal character.
Since the set of representations with singular infinitesimal characters
 is closed in $\widehat{G}_{\mathbb{R}}$, 
 the above argument proves that
 $\widehat{G}_{\mathbb{R}}^{\mathfrak{l}_{\mathbb{R}}}$ is locally closed.

Let $\mathfrak{q}\subset \mathfrak{g}$ be a parabolic subalgebra with Levi factor $\mathfrak{l}$ and nilradical $\mathfrak{n}$. We may define $\sqrt{-1}Z(\mathfrak{l}_{\mathbb{R}})^{*,\mathfrak{q}}$ to be the subset of $\lambda\in \sqrt{-1}Z(\mathfrak{l}_{\mathbb{R}})_{\text{reg}}^*$ such that for all $\alpha\in \Delta(\mathfrak{n},\mathfrak{j})$, either
\[\operatorname{Im}\langle \lambda, \alpha^{\vee}\rangle>0 \]
or
\[\operatorname{Im}\langle \lambda, \alpha^{\vee}\rangle=0\ \text{and}\ \operatorname{Re}\langle \lambda,\alpha^{\vee}\rangle>0.\]
As noted in \cite{HO20}, in this case, $\mathfrak{q}$ defines a maximally real, admissible polarization of the coadjoint orbit $\mathcal{O}_{\lambda}:=G_{\mathbb{R}}\cdot \lambda$. Although this assignment of $\mathfrak{q}$ to $\lambda$ is not canonical, it is convenient for our argument to make such an assignment. 

Since there are finitely many parabolic subalgebras $\mathfrak{q}\subset \mathfrak{g}$ with Levi factor $\mathfrak{l}$,
 we have a finite disjoint union
\[\sqrt{-1}Z(\mathfrak{l}_{\mathbb{R}})^*_{\text{reg}}
=\bigsqcup_{\mathfrak{q}\subset \mathfrak{g}}
\sqrt{-1}Z(\mathfrak{l}_{\mathbb{R}})^{*,\mathfrak{q}},\quad
\sqrt{-1}Z(\mathfrak{l}_{\mathbb{R}})^*_{\text{gr}}
=\bigsqcup_{\mathfrak{q}\subset \mathfrak{g}}
\sqrt{-1}Z(\mathfrak{l}_{\mathbb{R}})^{*,\mathfrak{q}}_{\text{gr}},\]
where 
$\sqrt{-1}Z(\mathfrak{l}_{\mathbb{R}})^{*,\mathfrak{q}}_{\text{gr}}
:=\sqrt{-1}Z(\mathfrak{l}_{\mathbb{R}})^{*,\mathfrak{q}}
 \cap \sqrt{-1}Z(\mathfrak{l}_{\mathbb{R}})^*_{\text{gr}}$.

Next, let $\widehat{G}_{\mathbb{R}}^{(\mathfrak{l}_{\mathbb{R}},\mathfrak{q})}$
 denote the collection of representations
 $\pi(\mathfrak{l}_{\mathbb{R}},\Gamma_{\lambda})$
 such that $\lambda\in \sqrt{-1}Z(\mathfrak{l}_{\mathbb{R}})^{*,\mathfrak{q}}_{\rm gr}$.
Equivalently, $\widehat{G}_{\mathbb{R}}^{(\mathfrak{l}_{\mathbb{R}},\mathfrak{q})}$
 consists of $\pi(\mathcal{O}, \Gamma)$ such that
 $\mathcal{O}\cap \sqrt{-1}Z(\mathfrak{l}_{\mathbb{R}})^{*,\mathfrak{q}}_{\rm gr}\neq \emptyset$.
Therefore, we have a finite union
\begin{equation}\label{eq:union}
\widehat{G}_{\mathbb{R}}^{\mathfrak{l}_{\mathbb{R}}}
=\bigcup_{\mathfrak{q}\subset \mathfrak{g}}
 \widehat{G}_{\mathbb{R}}^{(\mathfrak{l}_{\mathbb{R}},\mathfrak{q})}.
\end{equation}
Note that the right hand side of \eqref{eq:union} may not be disjoint.
In the same way as above, we can show that
  $\widehat{G}_{\mathbb{R}}^{(\mathfrak{l}_{\mathbb{R}},\mathfrak{q})}$
 is a locally closed subset of $\widehat{G}_{\mathbb{R}}$.

The set $\widehat{G}_{\mathbb{R}}^{(\mathfrak{l}_{\mathbb{R}},\mathfrak{q})}$
 can be identified with the collection of $\Gamma_{\lambda}$
 with $\lambda\in \sqrt{-1}Z(\mathfrak{l}_{\mathbb{R}})^{*,\mathfrak{q}}_{\rm gr}$.
To see this, suppose that
 $\pi(\mathfrak{l}_{\mathbb{R}},\Gamma_{\lambda})
 \simeq \pi(\mathfrak{l}_{\mathbb{R}},\Gamma'_{\lambda'})$
 for $\lambda,\lambda'\in \sqrt{-1}Z(\mathfrak{l}_{\mathbb{R}})^{*,\mathfrak{q}}_{\rm gr}$,
Then by comparing the infinitesimal characters,
 $\lambda+\rho_{\mathfrak{l}}$ and $\lambda'+\rho_{\mathfrak{l}}$
 lie in the same Weyl group orbit.
By our assumption,
 $\lambda+\rho_{\mathfrak{l}}$ and $\lambda'+\rho_{\mathfrak{l}}$
 satisfy the same dominance condition imposed by $\mathfrak{q}$ and hence $\lambda=\lambda'$.
In view of the Langlands parameters of two representations
 (see the discussion at the end of Section~\ref{sec:quantization_semisimple}),
 we have $\Gamma=\Gamma'$.
Therefore,
 $\widehat{G}_{\mathbb{R}}^{(\mathfrak{l}_{\mathbb{R}},\mathfrak{q})}$
 is identified with the set of $\Gamma_{\lambda}$, or equivalently,
 the map
\[\bigl\{\text{$(\mathcal{O},\Gamma)$ : a semisimple orbital parameter}
 \mid \mathcal{O}\cap \sqrt{-1}Z(\mathfrak{l}_{\mathbb{R}})^{*,\mathfrak{q}}_{\rm gr}
 \neq \emptyset\bigr\}
\to \widehat{G}_{\mathbb{R}}^{(\mathfrak{l}_{\mathbb{R}},\mathfrak{q})}
\] 
given by $(\mathcal{O},\Gamma)\mapsto \pi(\mathcal{O},\Gamma)$ is bijective.

By writing the measure $m_\Pi$ as a finite sum of
 measures supported on
 $\widehat{G}_{\mathbb{R}}^{(\mathfrak{l}_{\mathbb{R}},\mathfrak{q})}$
 for various $\mathfrak{q}$,
 it is enough to prove Lemma~\ref{lem:inclusion1} when
 $m_\Pi$ is a measure on $\widehat{G}_{\mathbb{R}}^{(\mathfrak{l}_{\mathbb{R}},\mathfrak{q})}$
 for one parabolic subalgebra $\mathfrak{q}$.
We thus fix $\mathfrak{q}$ and suppose
 $\Pi$ is a direct integral of 
 representations in $\widehat{G}_{\mathbb{R}}^{(\mathfrak{l}_{\mathbb{R}},\mathfrak{q})}$
 in the rest of this section.

Next, we need to define what it means for a measure $m$ on
 $\widehat{G}_{\mathbb{R}}^{(\mathfrak{l}_{\mathbb{R}},\mathfrak{q})}$
 to be of at most polynomial growth. 
Observe that we have a finite to one map 
\begin{align*}
p\colon \widehat{G}_{\mathbb{R}}^{(\mathfrak{l}_{\mathbb{R}},\mathfrak{q})}
\ni \pi(\mathfrak{l}_{\mathbb{R}},\Gamma_{\lambda})
\mapsto \lambda \in \sqrt{-1}Z(\mathfrak{l}_{\mathbb{R}})^{*,\mathfrak{q}}_{\rm gr}.
\end{align*}
For a Borel measure $m$
 on $\widehat{G}_{\mathbb{R}}^{(\mathfrak{l}_{\mathbb{R}},\mathfrak{q})}$,
 let $p_*m$ denote the pushforward of $m$ under the above map. 
Fix a norm $|\cdot|$ on $\sqrt{-1}Z(\mathfrak{l}_{\mathbb{R}})^*$.  
We say that $m$ is of \emph{at most polynomial growth}
 if there exist a constant $M_0>0$ and a finite measure $m_f$ on
 $\sqrt{-1}Z(\mathfrak{l}_{\mathbb{R}})^{*,\mathfrak{q}}_{\rm gr}$ such that
\begin{equation}\label{eq:at_most_polynomial}
p_*m \leq (1+|\lambda|^2)^{M_0/2}m_f.
\end{equation}
Here, $m\leq m'$ for measures $m$ and $m'$ means that $m(E)\leq m'(E)$ for all measurable sets $E$.

Our proof of Lemma~\ref{lem:inclusion1} involves the Harish-Chandra distribution character of $\pi(\mathcal{O},\Gamma)$.
Let $\Theta(\mathcal{O},\Gamma)$ denote the Harish-Chandra character of the representation $\pi(\mathcal{O},\Gamma)$. Define  the analytic function $j_{G_{\mathbb{R}}}$ utilizing the relation
\[\exp^* (dg)=j_{G_{\mathbb{R}}}(X)dX\]
where $dg$ denotes a nonzero $G_{\mathbb{R}}$-invariant density on $G_{\mathbb{R}}$ and $dX$ denotes a nonzero translation invariant density on $\mathfrak{g}_{\mathbb{R}}$. Normalize $dg$ and $dX$ so that $j_{G_{\mathbb{R}}}(0)=1$, and let $j_{G_{\mathbb{R}}}^{1/2}$ be the unique analytic square root of $j_{G_{\mathbb{R}}}$ with $j_{G_{\mathbb{R}}}^{1/2}(0)=1$. Since $\Theta(\mathcal{O},\Gamma)$ is an analytic function on the subset of regular, semisimple elements in $G_{\mathbb{R}}$, we may define
\[\theta(\mathcal{O},\Gamma):=j_{G_{\mathbb{R}}}^{1/2}(X)\cdot \exp^*\Theta(\mathcal{O},\Gamma)\]
to be the Lie algebra analogue of the character of $\pi(\mathcal{O},\Gamma)$. Note $\theta(\mathcal{O},\Gamma)$ is an analytic function on the collection of regular, semisimple elements
 in $\mathfrak{g}_{\mathbb{R}}$.

Fix a choice of positive roots $\Delta^+(\mathfrak{l},\mathfrak{j})\subset \Delta(\mathfrak{l},\mathfrak{j})$, and define
$\rho_{\mathfrak{l}}:=\frac{1}{2}\sum_{\alpha\in \Delta^+(\mathfrak{l},\mathfrak{j})}\alpha$.
Given a semisimple orbital parameter $(\mathcal{O},\Gamma)$ with 
 $\lambda\in \mathcal{O}\cap \sqrt{-1}Z(\mathfrak{l}_{\mathbb{R}})^{*,\mathfrak{q}}_{\rm gr}\neq \emptyset$, we define a contour
\begin{equation*}
\mathcal{C}(\mathcal{O},\mathfrak{q}):=\left\{g\cdot \lambda+u\cdot \rho_{\mathfrak{l}}\mid g\in G_{\mathbb{R}},\ u\in U,\ \op{Ad}(g)\cdot \mathfrak{q}=\op{Ad}(u)\cdot \mathfrak{q}\right\}
\end{equation*}
in $\mathfrak{g}^*$. 
Here, $\sigma_c$ is an anti-holomorphic involution on $G$
 which commutes with $\sigma$
 such that $U:= G^{\sigma_c}$ is a compact real form of $G$.
For a coadjoint $G$-orbit $\Omega\subset \mathfrak{g}^*$, the Kirillov-Kostant-Souriau $G$-invariant, holomorphic $2$-form $\omega$ on $\Omega$ is defined by
\[\omega_{\xi}(\op{ad}^*(X)(\xi),\op{ad}^*(Y)(\xi)):=\xi([X,Y]).\]
Suppose that $\Omega$ is the regular, coadjoint $G$-orbit through
 $\xi=\lambda+\rho_{\mathfrak{l}}\in \mathfrak{j}^*\subset \mathfrak{g}^*$
 and put $n:=\frac{1}{2}\dim_{\mathbb{C}} \Omega$.
Then $\mathcal{C}(\mathcal{O},\mathfrak{q})$ is a real $2n$-dimensional closed
 submanifold of $\Omega$ (see \cite{HO20}).
Define the $2n$-form
\[
\nu:=\frac{\omega^{\wedge n}}{(2\pi \sqrt{-1})^n n!}.
\]
For a function $\varphi$ on $\mathfrak{g}_{\mathbb{R}}$,
 we define the (inverse) Fourier transform as the following functions
 on $\sqrt{-1}\mathfrak{g}_{\mathbb{R}}^*$:
\begin{align*}
\Hat{\varphi}(\eta):=\int_{\mathfrak{g}_{\mathbb{R}}}
 e^{-\langle \eta, X\rangle} \varphi(X) dX, \quad
\Check{\varphi}(\eta):=\int_{\mathfrak{g}_{\mathbb{R}}}
 e^{\langle \eta, X\rangle} \varphi(X) dX.
\end{align*}
The main result of \cite{HO20} is
\begin{equation}\label{eq:HO_paper}
\langle \theta(\mathcal{O},\Gamma), \varphi\rangle=
 \int_{\mathcal{C}(\mathcal{O},\mathfrak{q})} \Check{\varphi}\, \nu,
\end{equation}
where $\varphi\in C_c^{\infty}(\mathfrak{g}_{\mathbb{R}})$
 is a smooth, compactly supported function on $\mathfrak{g}_{\mathbb{R}}$.
Observe that $\Check{\varphi}$ extends to a holomorphic function
 on $\mathfrak{g}^*$. 
We remark that for any semisimple orbit $\mathcal{O}$ with
 $\mathcal{O}\cap \sqrt{-1}Z(\mathfrak{l}_{\mathbb{R}})^*_{\rm gr} \neq \emptyset$, 
 the contour $\mathcal{C}(\mathcal{O},\mathfrak{q})$
 and the forms $\omega, \nu$ are defined in the same way,
 even if it does not come from a semisimple orbital parameter
 $(\mathcal{O},\Gamma)$.

Fix a $K_{\mathbb{R}}$-invariant norm $|\cdot|$ on $\mathfrak{g}_{\mathbb{R}}^*:=\Hom_{\mathbb{R}}(\mathfrak{g}_{\mathbb{R}},\mathbb{R})$. If $\eta\in \mathfrak{g}^*:=\Hom_{\mathbb{C}}(\mathfrak{g},\mathbb{C})$, write 
\[\eta=\RE\eta+\sqrt{-1}\;\! \IM\eta\]
where $\RE\eta,\, \IM\eta \in \mathfrak{g}_{\mathbb{R}}^*$.
Extend $|\cdot|$ to a norm on $\mathfrak{g}^*$ by defining $|\eta|^2=|\RE \eta|^2+|\IM \eta|^2$.

Fix $d\in \mathbb{R}$ such that 
 $d>\max_{\alpha\in\Delta(\mathfrak{g},\mathfrak{j})}|
 \langle\rho_{\mathfrak{l}},\alpha^\vee\rangle|$.
Writing $m_{\Pi}$ as a sum of two measures according to the decomposition
\[\widehat{G}_{\mathbb{R}}^{(\mathfrak{l}_{\mathbb{R}},\mathfrak{q})}
=\bigl(\widehat{G}_{\mathbb{R}}^{(\mathfrak{l}_{\mathbb{R}},\mathfrak{q})}
 \cap\widehat{G}_{\mathbb{R}}(\mathfrak{l},d)\bigr) \cup
\bigl(\widehat{G}_{\mathbb{R}}^{(\mathfrak{l}_{\mathbb{R}},\mathfrak{q})}
 \setminus\widehat{G}_{\mathbb{R}}(\mathfrak{l},d)\bigr)
\]
and using Lemma~\ref{lem:singular_spectrum},
 it is enough to show Lemma~\ref{lem:inclusion1} when
 $\op{supp} m_\Pi \cap\widehat{G}_{\mathbb{R}}(\mathfrak{l},d)=\emptyset$.
This assumption makes it easier for us to estimate the integral \eqref{eq:HO_paper}
 as we see below.

\begin{lemma}\label{lem:SS}
Suppose that $m$ is a measure on
 $\widehat{G}_{\mathbb{R}}^{(\mathfrak{l}_{\mathbb{R}},\mathfrak{q})}$
 with at most polynomial growth
 and $\op{supp} m \cap\widehat{G}_{\mathbb{R}}(\mathfrak{l},d)=\emptyset$.
\begin{enumerate}[{\rm (i)}]
\item \label{SSpart1} 
Let $\alpha$ be a function on $\mathfrak{g}^*$, and assume $\alpha|_{\mathcal{C}(\mathcal{O},\mathfrak{q})}$ is measurable for all coadjoint orbits $\mathcal{O}$ with
 $\mathcal{O}\cap \sqrt{-1}Z(\mathfrak{l}_{\mathbb{R}})_{\rm gr}^{*,\mathfrak{q}}
 \neq \emptyset$. 
Assume that for every $N\in \mathbb{N}$ and every $b>0$ there exist constants $C_{N,b}>0$ such that
\begin{equation}\label{eq:alpha_bound}
\sup_{\substack{\eta\in \mathfrak{g}^*\\ |\RE\eta|\leq b}}(1+|\IM\eta|^2)^{N/2}|\alpha(\eta)|\leq C_{N,b}.
\end{equation}
Then the integral 
\begin{equation}\label{eq:integral}
\langle C(m),\alpha\rangle:=
 \int_{\pi(\mathcal{O},\Gamma)
 \in \widehat{G}_{\mathbb{R}}^{(\mathfrak{l}_{\mathbb{R}},\mathfrak{q})}}
 \left(\int_{\mathcal{C}(\mathcal{O},\mathfrak{q})}\alpha(\eta)\nu\right)dm
\end{equation}
converges absolutely.
\item \label{SSpart2}
If $\varphi\in C_c^{\infty}(\mathfrak{g}_{\mathbb{R}})$,
 then the integral
\begin{equation}\label{eq:Fourier_integral}
\langle C(m), \Check{\varphi} \rangle
 :=\int_{\pi(\mathcal{O},\Gamma)
 \in \widehat{G}_{\mathbb{R}}^{(\mathfrak{l}_{\mathbb{R}},\mathfrak{q})}}
 \left(\int_{\mathcal{C}(\mathcal{O},\mathfrak{q})}\Check{\varphi}\,\nu\right)dm
\end{equation}
converges absolutely.
The functional $\varphi\mapsto \langle C(m), \Check{\varphi} \rangle$
 is a well-defined distribution on $\mathfrak{g}_{\mathbb{R}}$,
 which is the integral  
\[\theta(m):=\int_{\pi(\mathcal{O},\Gamma)
 \in \widehat{G}_{\mathbb{R}}^{(\mathfrak{l}_{\mathbb{R}},\mathfrak{q})}}
\theta(\mathcal{O},\Gamma)dm.\]
\item \label{SSpart3}
For $\varphi\in C_c^{\infty}(\mathfrak{g}_{\mathbb{R}})$,
 the Fourier transform of $\theta(m)\varphi$ is given by 
\begin{equation*}
(\theta(m)\varphi)\sphat\, (\xi)
 = \langle C(m)_{\eta}, \Check{\varphi}(\eta-\xi)\rangle.
\end{equation*}
It is a smooth, polynomially bounded function
 on $\sqrt{-1}\mathfrak{g}_{\mathbb{R}}^*$.
\item \label{SSpart4}
We have
\begin{equation}\label{eq:SS}
\op{SS}_0\bigl(\theta(m)\bigr)\subset \op{AC}\Biggl(\bigcup_{\pi(\mathcal{O},\Gamma)\in \op{supp}m} \mathcal{O}\Biggr).
\end{equation}
\end{enumerate}
\end{lemma}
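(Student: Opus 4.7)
The plan is to prove (i)--(iv) in order, treating (i)--(iii) as preparatory Fourier-analytic estimates built on the Harris--Oshima character formula \eqref{eq:HO_paper}, and (iv) as the main analytic obstacle, handled via a Paley--Wiener analysis of the analytic wave front set.

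For (i), the key geometric observation is that on any contour $\mathcal{C}(\mathcal{O},\mathfrak{q})$ one has $g\cdot\lambda \in \sqrt{-1}\mathfrak{g}_\mathbb{R}^*$ while $u\cdot\rho_{\mathfrak{l}}$ ranges over a compact set independent of $\lambda$; hence $|\RE\eta|$ is bounded by a uniform constant $b_0$. Then \eqref{eq:alpha_bound} yields $|\alpha(\eta)| \leq C_{N,b_0}(1+|\IM\eta|^2)^{-N/2}$, and a direct computation of the volume form gives
\[
\int_{\mathcal{C}(\mathcal{O},\mathfrak{q})}(1+|\IM\eta|^2)^{-N/2}|\nu| \leq C_N'(1+|\lambda|)^{D_0-N}
\]
for a dimensional constant $D_0$, uniformly over $\pi(\mathcal{O},\Gamma) \in \op{supp} m$: the standing assumption $\op{supp} m \cap \widehat{G}_{\mathbb{R}}(\mathfrak{l},d) = \emptyset$ keeps $\lambda$ away from walls, so the orbit geometry is uniform. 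Combined with \eqref{eq:at_most_polynomial} and $N$ sufficiently large, the double integral \eqref{eq:integral} converges absolutely.

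Parts (ii) and (iii) will then follow from (i). For (ii), the Paley--Wiener bound $|\Check{\varphi}(\eta)| \leq C_N(1+|\eta|)^{-N}e^{r|\RE\eta|}$ for $\varphi \in C_c^\infty(\mathfrak{g}_\mathbb{R})$ of support radius $r$ verifies \eqref{eq:alpha_bound}; Fubini and \eqref{eq:HO_paper} then give $\langle C(m),\Check{\varphi}\rangle = \int\langle \theta(\mathcal{O},\Gamma),\varphi\rangle\, dm = \langle \theta(m),\varphi\rangle$, and continuity in $\varphi$ follows from the same bounds, so $\theta(m)$ is a tempered distribution. For (iii), the computation $(\theta(m)\varphi)\sphat\,(\xi) = \langle\theta(m),e^{-\langle\xi,\cdot\rangle}\varphi\rangle = \langle C(m)_\eta,\Check{\varphi}(\eta-\xi)\rangle$ uses that the inverse Fourier transform of $X\mapsto e^{-\langle\xi,X\rangle}\varphi(X)$ at $\eta$ equals $\Check{\varphi}(\eta-\xi)$; smoothness in $\xi$ follows by differentiating under the double integral (justified by (i) applied to partial derivatives of $\Check{\varphi}(\eta-\xi)$ in $\xi$), and polynomial boundedness uses $|\RE(\eta-\xi)| = |\RE\eta| \leq b_0$ for $\xi \in \sqrt{-1}\mathfrak{g}_\mathbb{R}^*$ together with the rapid decay in $|\IM(\eta-\xi)|$ integrated against the polynomially bounded measure.

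Part (iv) is the main obstacle. Given $\xi_0 \notin \op{AC}(\bigcup_{\pi(\mathcal{O},\Gamma)\in\op{supp} m}\mathcal{O})$, I would choose an open conic neighborhood $V$ of $\xi_0$ with $V \cap \bigcup\mathcal{O}$ bounded, and then verify the analytic wave front set criterion: construct a family of cutoffs $\varphi_k \in C_c^\infty(\mathfrak{g}_\mathbb{R})$ supported in a fixed neighborhood of $0$ with $\varphi_k(0)=1$ and $\|\partial^\alpha\varphi_k\|_\infty \leq (Ck)^{|\alpha|}$ for $|\alpha|\leq k$, and prove $|(\varphi_k\theta(m))\sphat\,(\xi)| \leq C(Ck/|\xi|)^k$ for $\xi\in V$ with $|\xi|\geq 1$. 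By (iii) this Fourier transform equals $\langle C(m)_\eta,\Check{\varphi}_k(\eta-\xi)\rangle$, and the key geometric ingredient I would establish is a uniform conic separation
\[
|\IM(\eta-\xi)| \geq c\bigl(|\lambda|+|\xi|\bigr),\qquad \eta \in \mathcal{C}(\mathcal{O},\mathfrak{q}),\ \pi(\mathcal{O},\Gamma)\in\op{supp} m,\ \xi \in V,\ |\xi|\geq R_0,
\]
for some constants $c, R_0>0$, coming from the fact that $\IM\eta$ is $O(1)$-close to $(-\sqrt{-1})G_\mathbb{R}\cdot\lambda \subset \mathfrak{g}_\mathbb{R}^*$, which lies inside $\bigcup\mathcal{O}$ and is therefore conically separated from $V$ outside a bounded set. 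Feeding this into the refined Paley--Wiener bound $|\Check{\varphi}_k(\zeta)| \leq C(Ck/(1+|\IM\zeta|))^k e^{r_0|\RE\zeta|}$ (coming from the derivative bounds on $\varphi_k$ via integration by parts) and integrating against the polynomially bounded $\nu$ on the contour and against $m$ will yield the desired decay. The main difficulty is proving the uniform conic separation above in the presence of the bounded real perturbation by $u\cdot\rho_\mathfrak{l}$, and then balancing the polynomial growth of the contour volume and of $m$ against the $(Ck)^{-k}$ factor, which requires a careful asymptotic cone analysis of how $\mathcal{C}(\mathcal{O},\mathfrak{q})$ propagates into $\mathfrak{g}^*$ as $|\lambda|\to\infty$.
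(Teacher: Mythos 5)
Your strategy matches the paper's almost step by step: (i) reduces to a weighted volume estimate on the contour $\mathcal{C}(\mathcal{O},\mathfrak{q})$ obtained from the uniform boundedness of $\RE\eta$; (ii)--(iii) are routine Fubini/Paley--Wiener consequences; and (iv) uses Hörmander-style cutoffs $\varphi_k$ together with a conic separation estimate coming from the hypothesis $\xi_0\notin\op{AC}(\bigcup\mathcal{O})$, plus the volume estimate to control the sum over $m$. These are exactly the ingredients the paper assembles (the cutoffs from Hörmander's book, the conic separation in Lemma~\ref{lem:cone}, and the contour volume bound in Lemma~\ref{lem:integral_semialgebraic}).

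The one place where you are glossing over a genuine difficulty is the contour volume estimate in (i). You call it a ``direct computation'' and write it in the form
\[
\int_{\mathcal{C}(\mathcal{O}_\lambda,\mathfrak{q})}(1+|\IM\eta|^2)^{-N/2}\,|\nu| \leq C_N'(1+|\lambda|)^{D_0-N}
\]
with a single dimensional constant $D_0$, i.e.\ a unit exchange rate between the $\IM\eta$-decay exponent and the $|\lambda|$-decay exponent. This is not obvious: the form $\nu$ is the Liouville measure of the KKS symplectic form on the regular complex orbit, and its size relative to Euclidean volume on the contour is a nontrivial (semialgebraic) function of $\lambda$ and $\eta$. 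The paper's Lemma~\ref{lem:integral_semialgebraic} establishes the weaker but sufficient statement ``for each $M$ there exist $k_M$, $C_M$ so that $\IM\eta$-decay of order $k_M$ yields $|\lambda|$-decay of order $M$,'' and the proof is a genuine argument: a semialgebraic compactification of the contour, a comparison $|\nu|/\nu_E$ controlled via Hörmander's Theorem~A.2.5, and a lower bound on $1+|\lambda|^2$ in terms of $1+|\IM\eta|^2$ valid on the set $\Lambda$ away from walls. If you insist on your sharper form, you would have to additionally prove that the minimal norm $\min_{g\in G_\mathbb{R}}|g\cdot\lambda|$ scales like $|\lambda|$ uniformly over $\lambda\in\Lambda$ (which is true by homogeneity and compactness of $\Lambda$ on the unit sphere, but must be said) \emph{and} that the Jacobian $|\nu|/\nu_E$ grows at most polynomially of fixed degree; that second point is precisely the content of the semialgebraic estimate and is not free.

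A smaller remark on (iv): your combined conic bound $|\IM(\eta-\xi)|\geq c(|\lambda|+|\xi|)$ again implicitly uses the linear lower bound $|\IM\eta|\gtrsim|\lambda|$ discussed above. The paper avoids needing this by keeping the two estimates separate: the $t$-decay (your $|\xi|$-decay) comes from the conic separation, while the $|\lambda|$-decay is supplied entirely by Lemma~\ref{lem:integral_semialgebraic}, fed in through the replacement $\varphi_N\mapsto\varphi_{N+k_M}$. Either bookkeeping works once Lemma~\ref{lem:integral_semialgebraic} (or your sharper substitute) is in hand, but you should be aware you are leaning on that scaling fact in both places.
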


To prove part \eqref{SSpart1}, we need another lemma.
Define
\[\Lambda:=\bigl\{\lambda\in \sqrt{-1}Z(\mathfrak{l}_{\mathbb{R}})^{*} :
 |\langle \lambda+\rho_{\mathfrak{l}}, \alpha^{\vee} \rangle| \geq  d
 \ \bigl(  \forall \alpha\in
 \Delta(\mathfrak{g},\mathfrak{j})\setminus \Delta(\mathfrak{l},\mathfrak{j})\bigr)\bigr\}. \]
Then $\Lambda$ is a closed subset of
 $\sqrt{-1}Z(\mathfrak{l}_{\mathbb{R}})^{*}_{\rm gr}$.

\begin{lemma}\label{lem:integral_semialgebraic}
For any $M>0$, 
 there exist constants $k_M, C_M>0$ such that
\[
\int_{\eta\in\mathcal{C}(\mathcal{O}_{\lambda},\mathfrak{q})}
 (1+|\IM\eta|^2)^{-k_M/2} |\nu|
\leq C_M (1+|\lambda|^2)^{-M/2}
\]
for $\lambda\in\Lambda$.
Here, we write
 $\mathcal{O}_{\lambda}:= G_{\mathbb{R}}\cdot \lambda$.
\end{lemma}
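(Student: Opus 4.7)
The strategy is to parametrize the contour explicitly, decouple its compact and non-compact directions, and then play the decay of the integrand against a polynomially bounded volume density. The constraint $\op{Ad}(g)\mathfrak{q}=\op{Ad}(u)\mathfrak{q}$ is equivalent to $g^{-1}u\in Q$, so $\mathcal{C}(\mathcal{O}_{\lambda},\mathfrak{q})$ is the image of the semi-algebraic map
\[
\phi\colon \bigl\{(g,u)\in G_{\mathbb{R}}\times U \mid g^{-1}u\in Q\bigr\}\longrightarrow \mathfrak{g}^*,
\quad (g,u)\mapsto g\cdot\lambda+u\cdot\rho_{\mathfrak{l}}.
\]
Since $\lambda$ is fixed by $L$ and $\rho_{\mathfrak{l}}$ by a Cartan subgroup of $L$, one can quotient the source of $\phi$ on the right by $L_{\mathbb{R}}$ acting on $g$ and by $U\cap J$ acting on $u$ to obtain a semi-algebraic parameter space of real dimension $2n$.

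Because $\lambda\in\sqrt{-1}\mathfrak{g}_{\mathbb{R}}^*$ and $g\in G_{\mathbb{R}}$, the element $g\cdot\lambda$ stays in $\sqrt{-1}\mathfrak{g}_{\mathbb{R}}^*$, while $u\cdot\rho_{\mathfrak{l}}$ ranges over the compact set $U\cdot\rho_{\mathfrak{l}}\subset\mathfrak{g}^*$. Consequently $|\RE\eta|$ is uniformly bounded by some constant $C_0$ on the contour, and
\[
|\IM\eta|\ \geq\ |g\cdot\lambda|-C_0.
\]
This reduces the problem to estimating the modified integral $\int_{\mathcal{C}(\mathcal{O}_{\lambda},\mathfrak{q})}(1+|g\cdot\lambda|^2)^{-k_M/2}\,|\nu|$, which no longer involves the subtle behavior of the compact factor.

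To bound this integral, I would combine a Cartan-type decomposition of $G_{\mathbb{R}}$ adapted to $L_{\mathbb{R}}$, writing $g=k_1\exp(Y)k_2$ with $Y$ in a subspace transverse to $\mathfrak{l}_{\mathbb{R}}$, with the explicit expression for the Kirillov--Kostant--Souriau density on the regular orbit through $\lambda+\rho_{\mathfrak{l}}$. In such coordinates one checks that $\phi^*|\nu|$ is dominated by a polynomial in $|g\cdot\lambda|$ whose coefficients are bounded uniformly for $\lambda\in\Lambda$, the uniformity coming from the lower bound $|\langle\lambda+\rho_{\mathfrak{l}},\alpha^{\vee}\rangle|\geq d$ for $\alpha\notin\Delta(\mathfrak{l},\mathfrak{j})$, which keeps the orbit uniformly regular. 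Splitting the integration into the dyadic shells $\{2^j\leq|g\cdot\lambda|<2^{j+1}\}$ and summing the resulting geometric series then produces a bound of the form $C\,(1+|\lambda|^2)^{n-k_M/2}$, and choosing $k_M$ large in terms of $M$ and $n$ gives the claim.

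The main obstacle is establishing the uniform polynomial bound on $\phi^*|\nu|$ across the unbounded region $\Lambda$. The naive scaling argument, which uses only the homogeneity $\psi_t^*\omega_t=t\,\omega_1$ of the KKS form, fails to account for the $\rho_{\mathfrak{l}}$-shift in the definition of the contour and degenerates near the singular locus of coadjoint orbits. The fix is to exploit the semi-algebraic structure of the contour together with the uniform regularity encoded in $\Lambda$, so that the constants $k_M, C_M$ can be chosen independently of $\lambda$; once this uniformity is in hand, the rest of the proof is a routine dyadic decomposition.
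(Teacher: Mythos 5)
Your outline identifies the correct obstacle but does not resolve it, and the resolution is where essentially all the content of the lemma lives. You reduce to bounding an integral of $(1+|g\cdot\lambda|^2)^{-k_M/2}|\nu|$ over the contour, propose to control $\phi^*|\nu|$ via a Cartan decomposition $g = k_1\exp(Y)k_2$ plus an explicit expression for the Kirillov--Kostant--Souriau density, and then close with dyadic shells. You then concede that the \emph{uniform} polynomial bound on $\phi^*|\nu|$ across $\lambda\in\Lambda$ is ``the main obstacle'' and say the fix is to ``exploit the semi-algebraic structure.'' That sentence is not an argument: it names the ingredient without supplying it, and the remaining steps (dyadic decomposition, pulling out the factor of $(1+|\lambda|^2)^{-M/2}$) are routine once the uniform bound is in hand.

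Two more concrete issues. First, the Cartan-decomposition route is not as clean as sketched: the source of $\phi$ is the fibered set $\{(g,u)\in G_{\mathbb{R}}\times U : u^{-1}g\in Q\}$, so $g$ and $u$ cannot be decoupled, and writing out $\phi^*|\nu|$ as a determinant in $(k_1,Y,k_2,u)$-coordinates and then proving a \emph{uniform} polynomial bound as $\lambda$ ranges over the noncompact region $\Lambda$ would be a substantial computation that you do not attempt. Second, and more fundamentally, the way the paper actually achieves uniformity is quite specific and nonobvious: it fixes a reference contour $\mathcal{C}(\mathcal{O}_{\lambda_0},\mathfrak{q})$, first establishes $\int(1+|\xi|^2)^{-N/2}\nu_E<\infty$ on it by compactifying $\mathfrak{g}$ to a sphere and using that a compact semi-algebraic set has finite volume, and then identifies $\mathcal{C}(\mathcal{O}_{\lambda},\mathfrak{q})\simeq \mathcal{C}(\mathcal{O}_{\lambda_0},\mathfrak{q})$ via $g\cdot\lambda+u\cdot\rho_{\mathfrak{l}}\mapsto g\cdot\lambda_0+u\cdot\rho_{\mathfrak{l}}$. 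On the semi-algebraic family $\mathcal{C}(\Lambda)=\{(\lambda,\eta):\eta\in\mathcal{C}(\mathcal{O}_{\lambda},\mathfrak{q})\}$ one then bounds the three semi-algebraic functions $|\nu|/i^*\nu_E$, $1+|i(\eta)|^2$, $1+|\lambda|^2$ by powers of $1+|\IM\eta|^2$ via H\"ormander's \L{}ojasiewicz-type inequality, using that sublevel sets of $1+|\IM\eta|^2$ in $\mathcal{C}(\Lambda)$ are compact. That comparison-to-a-fixed-contour mechanism, combined with the \L{}ojasiewicz estimate, is the actual engine of the proof and is absent from your proposal; without it you have no way to make $k_M,C_M$ independent of $\lambda$, which is exactly what the lemma asserts.
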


\begin{proof}[proof of Lemma~\ref{lem:integral_semialgebraic}]
Fix any $\lambda_0\in \Lambda$.
The Euclidean metric on $\mathfrak{g}^*$ induces 
 a Riemannian metric on the submanifold
 $\mathcal{C}(\mathcal{O}_{\lambda_0},\mathfrak{q})$.
Let $\nu_E$ be the volume form of this Riemannian manifold
 $\mathcal{C}(\mathcal{O}_{\lambda_0},\mathfrak{q})$.

We first claim that
\begin{align}\label{integral_Euclid}
\int_{\xi\in \mathcal{C}(\mathcal{O}_{\lambda_0},\mathfrak{q})}
 (1+|\xi|^2)^{-N/2} \nu_E < \infty
\end{align}
for sufficiently large $N>0$.
To see this, we use an argument similar to \cite[(3.14)]{SV98}.
Consider the one point compactification of $\mathfrak{g}$, which is a sphere $S$. 
Let $\overline{\mathcal{C}(\mathcal{O}_{\lambda_0},\mathfrak{q})}$
 be the closure of $\mathcal{C}(\mathcal{O}_{\lambda_0},\mathfrak{q})$ in $S$.
With respect to a standard metric on the sphere $S$, its compact semialgebraic subset
 $\overline{\mathcal{C}(\mathcal{O}_{\lambda_0},\mathfrak{q})}$ has finite volume
 (see e.g.\ \cite{OS17}).
By comparing the standard metric on $S$ and the Euclidean metric on $\mathfrak{g}$,
 this can be restated as \eqref{integral_Euclid} for $N\geq 4n$.

Next, define a semialgebraic set
\[
\mathcal{C}(\Lambda):=\{(\lambda,\eta)\in\Lambda\times \mathfrak{g}^*:
 \eta\in \mathcal{C}(\mathcal{O}_{\lambda},\mathfrak{q})\}.
\]
For each $\lambda\in \Lambda$, there is an isomorphism
\[i\colon \mathcal{C}(\mathcal{O}_{\lambda},\mathfrak{q})
\xrightarrow{\sim}
\mathcal{C}(\mathcal{O}_{\lambda_0},\mathfrak{q}),
\quad g\cdot \lambda+u\cdot \rho_{\mathfrak{l}}
\mapsto g\cdot \lambda_0+u\cdot \rho_{\mathfrak{l}}.
\]
Define the semialgebraic functions
 $f((\lambda,\eta)):= 1+|\IM \eta|^2$ on $\mathcal{C}(\Lambda)$.
In the following, we will compare some other semialgebraic functions
 on $\mathcal{C}(\Lambda)$ with $f$.
On $\mathcal{C}(\mathcal{O}_{\lambda},\mathfrak{q})$,
 we have two volume forms $i^*\nu_E$ and $|\nu|$.
Define $h:=\frac{|\nu|}{i^*\nu_E}$,
 which is a semialgebraic function on $\mathcal{C}(\Lambda)$.
It is easy to see that
 the set $\{(\lambda,\eta) : |f(\lambda,\eta)|\leq t\}$ is compact for any $t>0$.
Then the function
\[
\overline{h}(t)=\sup \{ h((\lambda,\eta)) : |f(\lambda,\eta)|\leq t \}
\]
is defined for large $t>0$ and is semialgebraic.
By \cite[Theorem A.2.5]{Hor83b},
 $\overline{h}(t)\leq A_1\cdot t^{N_1}$ for some constants $A_1, N_1>0$.
Hence we get
\begin{align}\label{estimate_h}
h((\lambda,\eta))\leq A_1\cdot (1+|\IM \eta|^2)^{N_1}
\end{align}
for $(\lambda,\eta)\in\mathcal{C}(\Lambda)$.

The functions $(\lambda,\eta)\mapsto 1+|i(\eta)|^2$
 and $(\lambda,\eta)\mapsto 1+|\lambda|^2$ are also semialgebraic on $\mathcal{C}(\Lambda)$.
Hence we similarly have 
\begin{align}\label{estimate_eta}
1+|i(\eta)|^2 \leq A_2\cdot (1+|\IM \eta|^2)^{N_2}, \quad 
1+|\lambda|^2 \leq A_3\cdot (1+|\IM \eta|^2)^{N_3}
\end{align}
for some constants $A_2,N_2, A_3,N_3>0$.
The lemma follows from 
 an isomorphism $i\colon \mathcal{C}(\mathcal{O}_{\lambda_0},\mathfrak{q})
 \simeq \mathcal{C}(\mathcal{O}_{\lambda},\mathfrak{q})$
 and the estimates \eqref{integral_Euclid}, \eqref{estimate_h} and \eqref{estimate_eta}.
\end{proof}

\begin{proof}[proof of Lemma~\ref{lem:SS}] 
Since $m$ is of at most polynomial growth, $p_*m\leq (1+|\lambda|^2)^{M_0/2}m_f$
 for a finite measure $m_f$ and a constant $M_0>0$.
By our assumption on $m$, we may assume that $\op{supp} m_f$ is contained in $\Lambda$.
In addition, $|\RE \eta|$ for $\eta\in \mathcal{C}(\mathcal{O},\mathfrak{q})$
 is bounded by a constant.
Hence the absolute convergence of \eqref{eq:integral} follows from
 Lemma~\ref{lem:integral_semialgebraic} and \eqref{eq:alpha_bound}.

To prove part \eqref{SSpart2}, recall that for $\varphi\in C_c^{\infty}(\mathfrak{g}_{\mathbb{R}})$, the Paley-Wiener Theorem assures us that there exists a constant $B>0$ and for every $N\in \mathbb{N}$, there exists a constant $A_N>0$ such that
\[\left|\Check{\varphi}(\eta)\right|\leq \frac{A_Ne^{B |\RE\eta|}}{(1+|\IM\eta|^2)^{N/2}}.\]
Hence, we may plug in $\Check{\varphi}$ for $\alpha$ and the absolute convergence of \eqref{eq:Fourier_integral} follows from part \eqref{SSpart1}. 
Further, the constants that bound this integral can be shown to be bounded by seminorms on the space of smooth compactly supported densities
 on $\mathfrak{g}_{\mathbb{R}}$. 
Therefore, the integral the $\theta(m)$ defined in part \eqref{SSpart2} is given as
 a well-defined distribution
\[
\varphi\mapsto \langle C(m), \Check{\varphi}\rangle.
\]
By \eqref{eq:HO_paper}, this is the integral of $\theta(\mathcal{O},\Gamma)$.

Next, we prove part \eqref{SSpart3}. 
Let $\varphi\in C_c^{\infty}(\mathfrak{g}_{\mathbb{R}})$. 
Then $\theta(m)\varphi$ is a distribution with compact support.
Hence the Fourier transform $(\theta(m)\varphi)\sphat$ is
 a smooth, polynomially bounded function on $\sqrt{-1}\mathfrak{g}_{\mathbb{R}}^*$. 
The value of $(\theta(m)\varphi)\sphat$ at
 $\xi\in \sqrt{-1}\mathfrak{g}_{\mathbb{R}}^*$ is given as
\begin{align*}
(\theta(m)\varphi)\sphat\, (\xi)
=\langle \theta(m)\varphi, e^{-\langle \xi, \cdot \rangle}\rangle
=\langle \theta(m), e^{-\langle\xi,\cdot\rangle} \varphi \rangle
&=\langle C(m), (e^{-\langle\xi,\cdot\rangle} \varphi)\spcheck \rangle \\
&=\langle C(m)_{\eta}, \Check{\varphi}(\eta-\xi) \rangle.
\end{align*}
Thus, \eqref{SSpart3} is proved.

For part \eqref{SSpart4}, we require some additional notation. Choose a basis $\{X_1,\ldots,X_n\}$ of $\mathfrak{g}$, and define the differential operator
\[D^{\alpha}:=\partial_{X_1}^{\alpha_1}\cdots \partial_{X_n}^{\alpha_n}\]
for every multi-index $\alpha=(\alpha_1,\ldots,\alpha_N)\in \mathbb{N}^n$. In addition, define $|\alpha|=\alpha_1+\cdots+\alpha_n$.  If $0\in \mathcal{U}_1\subset \mathcal{U}_2\subset \mathfrak{g}$ are precompact, open subsets of $\mathfrak{g}$ with $\overline{\mathcal{U}_1}\subset \mathcal{U}_2$, then there exists a sequence $\{\varphi_{N,\mathcal{U}_1,\mathcal{U}_2}\}$ of functions indexed by $N\in \mathbb{N}$ and satisfying the following properties (see pages 25--26, 282 of \cite{Hor83a}):
\begin{enumerate}
\item $\varphi_{N,\mathcal{U}_1,\mathcal{U}_2}\in C_c^{\infty}(\mathcal{U}_2)$ for all $N\in \mathbb{N}$
\item $\varphi_{N,\mathcal{U}_1,\mathcal{U}_2}(x)=1$ if $x\in \mathcal{U}_1$
\item There exists a constant $C_{\alpha}>0$ for every multi-index $\alpha\in \mathbb{N}^n$ such that
\begin{equation*}
\sup_{x\in \mathcal{U}_2}|(D^{\alpha+\beta}\varphi_{N,\mathcal{U}_1,\mathcal{U}_2})(x)|\leq C_{\alpha}^{|\beta|+1}(N+1)^{|\beta|}
\end{equation*}
for every multi-index $\beta\in \mathbb{N}^n$ with $|\beta|\leq N$.
\end{enumerate}
For the sequel, we fix $\mathcal{U}_1$, $\mathcal{U}_2$, and 
 take a sequence of functions $\{\varphi_{N,\mathcal{U}_1,\mathcal{U}_2}\}$
 satisfying (1)--(3).
Write $\varphi_{N}:=\varphi_{N,\mathcal{U}_1,\mathcal{U}_2}$.

Fix 
\[\xi\notin \op{AC}\left(\bigcup_{\pi(\mathcal{O},\Gamma)
\in \op{supp}m} \mathcal{O}\right).\]
In order to prove \eqref{eq:SS},
 it is enough to show the following by \cite[\S 8.4]{Hor83a}: 
 there exists an open subset
 $\xi\in W\subset \sqrt{-1}\mathfrak{g}_{\mathbb{R}}^*$
 and a constant $C>0$ such that
\begin{equation}\label{eq:bound_C}
\left| \langle C(m)_{\eta},\, 
 \Check{\varphi}_{N}(\eta-t\xi') \rangle
 \right|\leq C^{N+1}\frac{(N+1)^N}{(1+t^2)^{N/2}}
\end{equation}
for all $\xi'\in W$ and $t>0$. 

Choose an open cone $\Psi\subset \sqrt{-1}\mathfrak{g}_{\mathbb{R}}^*$ such that
\[\xi\in \Psi\subset \overline{\Psi}\setminus \{0\}\subset \sqrt{-1}\mathfrak{g}_{\mathbb{R}}^*\setminus \op{AC}\left(\bigcup_{\pi(\mathcal{O},\Gamma)\in \op{supp}\mu} \mathcal{O}\right),\]
and define 
\[W:=\left\{\xi'\in \Psi \, \Bigl|\, \frac{|\xi|}{2}< |\xi'|< 2|\xi|\right\}.\]
We require a lemma.
\begin{lemma}\label{lem:cone}
There exist constants $D,\epsilon,\epsilon'>0$ such that
\begin{equation}\label{eq:cone}
|\sqrt{-1}\IM\eta-t\xi'|\geq \epsilon t
\end{equation}
and
\begin{equation}\label{eq:cone_2}
|\sqrt{-1}\IM\eta-t\xi'|\geq \epsilon' |\IM\eta|
\end{equation}
if 
\[\eta\in \bigcup_{\pi(\mathcal{O},\Gamma)\in \op{supp}m}
 \mathcal{C}(\mathcal{O},\mathfrak{q}),\ \ \ \xi'\in \overline{W},
 \ \ \text{and}\ \ t>D.\]
\end{lemma}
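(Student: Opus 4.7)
The key observation is that every point of the contour $\mathcal{C}(\mathcal{O},\mathfrak{q})$ decomposes as
\[\eta = g\cdot\lambda + u\cdot\rho_{\mathfrak{l}},\qquad g\in G_{\mathbb{R}},\ u\in U,\]
where $g\cdot\lambda\in \sqrt{-1}\mathfrak{g}_{\mathbb{R}}^*$ (since $\lambda\in\sqrt{-1}Z(\mathfrak{l}_{\mathbb{R}})^*$ and $G_\mathbb{R}$ preserves this real form) and $u\cdot\rho_{\mathfrak{l}}$ ranges over the compact set $U\cdot\rho_{\mathfrak{l}}$. In particular $\sqrt{-1}\IM\eta = g\cdot\lambda + \sqrt{-1}\IM(u\cdot\rho_{\mathfrak{l}})$, and the second summand is uniformly bounded by some constant $R>0$ independent of $\eta$.

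\textbf{First inequality.} I will establish $|\sqrt{-1}\IM\eta - t\xi'|\geq \epsilon t$ by contradiction. If no such $\epsilon,D$ exist, then there are sequences $\eta_n=g_n\cdot\lambda_n + u_n\cdot\rho_{\mathfrak{l}}$ (with $\pi(G_{\mathbb{R}}\cdot\lambda_n,\Gamma_n)\in \operatorname{supp} m$), $\xi'_n\in\overline{W}$, and $t_n\to\infty$, such that $|\sqrt{-1}\IM\eta_n - t_n\xi'_n|/t_n\to 0$. Using the decomposition above and the uniform bound on $u_n\cdot\rho_{\mathfrak{l}}$, this forces $(g_n\cdot\lambda_n)/t_n - \xi'_n \to 0$. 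Since $\overline{W}$ is bounded (and bounded away from $0$, with $|\xi'_n|\geq |\xi|/2$), we may pass to a subsequence so that $\xi'_n\to \xi''\in\overline{W}$, whence $(g_n\cdot\lambda_n)/t_n\to\xi''$ with $|\xi''|\geq |\xi|/2>0$. In particular $|g_n\cdot\lambda_n|\to\infty$, and by definition this puts $\xi''$ in $\operatorname{AC}\bigl(\bigcup_{\pi(\mathcal{O},\Gamma)\in \operatorname{supp} m}\mathcal{O}\bigr)$. But by the choice of $\Psi$ and $W$,
\[\xi''\in\overline{W}\subset\overline{\Psi}\setminus\{0\}\subset \sqrt{-1}\mathfrak{g}_{\mathbb{R}}^*\setminus \operatorname{AC}\Biggl(\bigcup_{\pi(\mathcal{O},\Gamma)\in \operatorname{supp} m}\mathcal{O}\Biggr),\]
a contradiction.

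\textbf{Second inequality.} Given $\epsilon$ and $D$ as above, I will derive $|\sqrt{-1}\IM\eta - t\xi'|\geq \epsilon'|\IM\eta|$ by splitting into two cases. If $|\IM\eta|\geq 2t|\xi'|$, then by the triangle inequality $|\sqrt{-1}\IM\eta - t\xi'|\geq |\IM\eta|-t|\xi'|\geq \tfrac{1}{2}|\IM\eta|$. If $|\IM\eta|\leq 2t|\xi'|$, then using the first inequality together with the uniform bound $|\xi'|\leq 2|\xi|$ on $\overline{W}$,
\[|\sqrt{-1}\IM\eta-t\xi'|\geq \epsilon t\geq \frac{\epsilon}{2|\xi'|}|\IM\eta|\geq \frac{\epsilon}{4|\xi|}|\IM\eta|.\]
Taking $\epsilon':=\min\bigl(\tfrac{1}{2},\tfrac{\epsilon}{4|\xi|}\bigr)$ gives the claim.

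\textbf{Main obstacle.} The only subtle point is ensuring that the contradictory limit $\xi''$ in the first step genuinely lies in the asymptotic cone of the union of orbits in $\operatorname{supp} m$; this hinges on the (elementary but essential) fact that the bounded correction $u\cdot\rho_{\mathfrak{l}}$ is negligible after dividing by $t_n\to\infty$, so the direction of $\sqrt{-1}\IM\eta_n$ at infinity is determined entirely by the orbit part $g_n\cdot\lambda_n$. Everything else is bookkeeping with the triangle inequality.
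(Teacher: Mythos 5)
Your proof is correct and follows essentially the same approach as the paper: a compactness/contradiction argument for \eqref{eq:cone} using that the $U\cdot\rho_{\mathfrak{l}}$ correction is bounded and hence negligible after dividing by $t_n\to\infty$, so the limit of $(g_n\cdot\lambda_n)/t_n$ lands in both $\overline{W}$ and the asymptotic cone; and elementary triangle-inequality bookkeeping for \eqref{eq:cone_2}. Your case split in the second step is a minor cosmetic variant of the paper's "collect terms and solve" manipulation, but the underlying estimate is identical.
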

\begin{proof}[proof of Lemma~\ref{lem:cone}] 
Assume that \eqref{eq:cone} does not hold. 
Then we may find sequences $\{\xi_j\}\subset \overline{W}$, 
 $\{t_j\}\subset \mathbb{R}_{>0}$, and $\{\eta_j\}$
 with $\eta_j\in \mathcal{C}(\mathcal{O}_j,\mathfrak{q})$
 satisfying $\pi(\mathcal{O}_j,\Gamma_j)\in \op{supp}m$
 such that $|t_j\xi_j-\sqrt{-1}\IM\eta_j|<\frac{t_j}{j}$ and $t_j>j$. 
Further, we may write $\eta_j=\eta_j'+\eta_j''$
 where $\eta_j'\in \mathcal{O}_j$
 and $\eta_j''\in U\cdot \rho_{\mathfrak{l}}$. 
Since $\RE \eta_j$ and $\eta_j''$ are bounded,
$|t_j\xi_j-\eta_j'|<\frac{t_j}{j}+a$ for a constant $a>0$.
But, then $\{\eta_j'/t_j\}$ has a convergent subsequence
 which must therefore lie in both $\overline{W}$
 and $\op{AC}\left(\bigcup_{\pi(\mathcal{O},\Gamma)\in \op{supp}m}\mathcal{O}\right)$,
 which is a contradiction. 
This implies \eqref{eq:cone}.

Next, we utilize the triangle inequality to obtain
\[|t\xi'|\geq |\IM\eta|-|\sqrt{-1}\IM\eta-t\xi'|.\]
Combining with \eqref{eq:cone} yields
\[|\sqrt{-1}\IM\eta-t\xi'|\geq \epsilon t\geq
 \frac{\epsilon}{|\xi'|}\left(|\IM\eta|-|\sqrt{-1}\IM\eta-t\xi'|\right).\]
Recall $|\xi'|\leq 2|\xi|$, collect the $|\sqrt{-1}\IM\eta-t\xi'|$
 terms on one side of the equation, and put
$\epsilon':=(1+\frac{\epsilon}{2|\xi|})^{-1}\frac{\epsilon}{2|\xi|}$.
Then \eqref{eq:cone_2} follows.
\end{proof}

In order to prove \eqref{eq:bound_C}, for each $M>0$,
 we will first show the existence of
 a constant $C_{M}>0$ such that
\begin{equation}\label{eq:bound_C2}
\left|\int_{\eta\in \mathcal{C}(\mathcal{O}_{\lambda},\mathfrak{q})}
 \Check{\varphi}_{N} (\eta-t\xi')\, \nu \right|
\leq \frac{C_{M}^{N+1}}{(1+|\lambda|^2)^{M/2}}
 \frac{(N+1)^N}{(1+t^2)^{N/2}}
\end{equation}
for all $\xi'\in W$, for all $\pi(\mathcal{O},\Gamma)\in \op{supp} m$,
 for all $N\in \mathbb{N}$, and for $t>0$. 
In order to prove \eqref{eq:bound_C2}, we need an estimate of $\Check{\varphi}_{N}$.
By the proof of the Paley-Wiener Theorem
 (see for instance page 181 of \cite{Hor83a})
 and part (3) of the definition of $\{\varphi_{N,\mathcal{U}_1,\mathcal{U}_2}\}$,
 there exist constants $B,C'>0$ such that
\[|\Check{\varphi}_{N}(\eta)|\leq
 \frac{(C')^{N+1}(N+1)^Ne^{B\cdot|\RE{\eta}|}}{(1+|\IM\eta|^2)^{N/2}}.\]
Using that $|\RE\eta|$ is bounded by a constant $a>0$
 for $\eta\in \mathcal{C}(\mathcal{O},\mathfrak{q})$, 
 and putting $C:=C'e^{B\cdot a}$, we deduce
\begin{equation}\label{eq:Paley_Wiener_4}
|\Check{\varphi}_{N}(\eta-t\xi')|
 \leq \frac{C^{N+1}(N+1)^N}{(1+|\sqrt{-1}\IM\eta-t\xi'|^2)^{N/2}}
\end{equation}
whenever $\xi'\in\sqrt{-1}\mathfrak{g}_{\mathbb{R}}^*$ and 
 $\eta\in \mathcal{C}(\mathcal{O}_{\lambda},\mathfrak{q})$
 with $\lambda\in \sqrt{-1}Z(\mathfrak{l}_{\mathbb{R}})^{*,\mathfrak{q}}_{\rm gr}$. 
For fixed $M$, define
\[\varphi^M_{N}:=\varphi_{N+k_M},\]
where $k_M$ is the constant in Lemma~\ref{lem:integral_semialgebraic}.
Observe that for every $M$, the sequence $\varphi^M_{N}$ still satisfies the properties (1)--(3). 
Therefore, in order to verify part \eqref{SSpart4},
 we may replace $\varphi_{N}$ with $\varphi^M_{N}$. 
Utilizing Lemma~\ref{lem:integral_semialgebraic} and \eqref{eq:Paley_Wiener_4},
 we obtain for all $M,N\in \mathbb{N}$ and $\pi(\mathcal{O}_{\lambda},\Gamma)\in \op{supp} m$,
\begin{align}\label{eq:bound_C3}
&\phantom{=} \left|\int_{\eta\in \mathcal{C}(\mathcal{O}_{\lambda},\mathfrak{q})}
 (\varphi^M_{N})\spcheck(\eta-t\xi')\,\nu \right| \nonumber\\
&\leq  \frac{C_M}{(1+|\lambda|^2)^{M/2}}
 \sup_{\eta\in \mathcal{C}(\mathcal{O}_{\lambda},\mathfrak{q})}
 (1+|\IM\eta|^2)^{k_M/2}|(\varphi^M_{N})\spcheck(\eta-t\xi')| \nonumber\\
&\leq \frac{C_M}{(1+|\lambda|^2)^{M/2}}
 \sup_{\eta\in \mathcal{C}(\mathcal{O}_{\lambda},\mathfrak{q})}
 \frac{C^{N+k_M+1}(N+k_M+1)^{N+k_M}}{(1+|\sqrt{-1}\IM\eta-t\xi'|^2)^{(N+k_M)/2}}
 \cdot (1+|\IM\eta|^2)^{k_M/2}
\end{align}
for some constant $C_M>0$. 
For fixed $M$, if $N$ is sufficiently large, we have
\begin{equation}\label{eq:s^t}
\begin{split}
(N+k_M+1)^{N+k_M} &= (N+k_M+1)^N(N+k_M+1)^{k_M} \\
&\leq 2^N (N+1)^N k_M^{N+k_M+1}
\end{split}
\end{equation}
where we have used that $t^s>s^t$ for $s>t\geq 3$. For every fixed $M\in \mathbb{N}$ and sufficiently large $N$, we may utilize \eqref{eq:cone}, \eqref{eq:cone_2}
 and \eqref{eq:s^t} to bound \eqref{eq:bound_C3} by
\begin{align*}
&\leq \frac{C_{M}^{N+1}(N+1)^{N}}{(1+|\lambda|^2)^{M/2}} \cdot \frac{(1+|\IM\eta|^2)^{k_M/2}}{(1+(\epsilon'|\IM\eta|)^2)^{k_M/2}}\cdot \frac{1}{(1+(\epsilon t)^2)^{N/2}}\\
&\leq \frac{C_{M}^{N+1}(N+1)^N}{(1+|\lambda|^2)^{M/2}} \cdot \frac{1}{(1+t^2)^{N/2}}
\end{align*}
for the constant $C_{M}>0$ which we increased in each line. 
Thus, \eqref{eq:bound_C2} is proved.
Then
\begin{align*}
&\phantom{=} \left|\int_{\pi(\mathcal{O},\Gamma)\in
  \widehat{G}_{\mathbb{R}}^{(\mathfrak{l}_{\mathbb{R}},\mathfrak{q})}}
 \int_{\eta\in \mathcal{C}(\mathcal{O},\mathfrak{q})}
 (\varphi_{N}^M)\spcheck (\eta-t\xi')\,\nu dm\right|\\
&\leq  \frac{C_{M}^{N+1}(N+1)^N}{(1+t^2)^{N/2}}
 \int_{\pi(\mathcal{O}_{\lambda},\Gamma)\in
 \widehat{G}_{\mathbb{R}}^{(\mathfrak{l}_{\mathbb{R}},\mathfrak{q})}}
 \frac{1}{(1+|\lambda|^2)^{M/2}}dm\\
&\leq  \frac{C_{M}^{N+1}(N+1)^N}{(1+t^2)^{N/2}}
 \int_{\sqrt{-1}Z(\mathfrak{l}_{\mathbb{R}})^{*,\mathfrak{q}}_{\rm gr}}
 \frac{1}{(1+|\lambda|^2)^{M/2}} (1+|\lambda|^2)^{M_0/2}dm_f\\
&\leq  \frac{C_{M}^{N+1}(N+1)^N}{(1+t^2)^{N/2}}
 \int_{\sqrt{-1}Z(\mathfrak{l}_{\mathbb{R}})^{*,\mathfrak{q}}_{\rm gr}}
 \frac{1}{(1+|\lambda|^2)^{(M-M_0)/2}}dm_f.
\end{align*}
where we have increased the constant $C_{M}$ as necessary throughout the calculation. Since the final integral converges if $M\geq M_0$, we may absorb the value of
 the integral into the constant $C_{M}$ to bound the entire expression by
\[\leq \frac{C_{M}^{N+1}(N+1)^N}{(1+t^2)^{N/2}}.\]
Part \eqref{SSpart4} follows.
\end{proof}

The proof of Lemma \ref{lem:inclusion1} now proceeds exactly line by line the same as the proof of
 \cite[Proposition 7.1]{HHO16}
 except one must substitute \eqref{eq:SS} in for (7.1) of \cite{HHO16}.
For this argument, we only need \eqref{eq:SS} for a finite measure $m$.
Lemma~\ref{lem:SS} was stated more generally for a measure with at most polynomial growth
 because it will be necessary in the next section.


\section{Wave front sets of direct integrals for a Levi, part 2}\label{sec:WF2}

We retain the notation of the previous section.
The purpose of this section is to
 prove the following lemma using Lemma~\ref{lem:diff_form1}
 and Lemma~\ref{lem:form_lower_bound}.
The proof of these lemmas will be postponed in the next section.

\begin{lemma}\label{lem:inclusion2} 
In the setting of Theorem~\ref{thm:direct_integral},
\begin{equation}\label{eq:inclusion2}
\op{WF}(\Pi)\supset 
\op{AC}\Biggl(\bigcup_{
 \pi(\mathfrak{l}_{\mathbb{R}},\Gamma_{\lambda})\in \op{supp}m_{\Pi}}
 G_{\mathbb{R}}\cdot \lambda\Biggr)
\cap (G\cdot Z(\mathfrak{l})^*_{\rm reg}).
\end{equation}
\end{lemma}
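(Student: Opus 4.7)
The plan is to realize every point of the right-hand side of \eqref{eq:inclusion2} as a wave front direction at $e$ of a matrix coefficient of $\Pi$. By $G$-invariance of both sides and a standard reduction, it suffices to consider $\xi\in \sqrt{-1}Z(\mathfrak{l}_{\mathbb{R}})^*_{\mathrm{reg}}$ lying in the asymptotic cone on the right, and to exhibit $u,v\in V_\Pi$ with $\xi\in \op{WF}_e(\Pi(g)u,v)$. By the definition of the asymptotic cone, fix sequences $t_j\to+\infty$, $g_j\in G_{\mathbb{R}}$, and $\lambda_j$ with $\pi(\mathfrak{l}_{\mathbb{R}},\Gamma_{\lambda_j})\in\op{supp}m_\Pi$ and $t_j^{-1}g_j\cdot\lambda_j\to\xi$. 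Using the finite decomposition \eqref{eq:union} of $\widehat{G}_{\mathbb{R}}^{\mathfrak{l}_{\mathbb{R}}}$ and passing to a subsequence, I may assume all $\lambda_j$ lie in one stratum $\sqrt{-1}Z(\mathfrak{l}_{\mathbb{R}})^{*,\mathfrak{q}}_{\mathrm{gr}}$.

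Next I would \emph{localize} the measure. Let $m_0$ be the restriction of $m_\Pi$ to a small open conic neighborhood $E\subset \widehat{G}_{\mathbb{R}}^{(\mathfrak{l}_{\mathbb{R}},\mathfrak{q})}$ of $\{\pi(\mathfrak{l}_{\mathbb{R}},\Gamma_{\lambda_j})\}$; when $E$ is sufficiently small, every orbit $G_{\mathbb{R}}\cdot\lambda$ with $\pi(\mathfrak{l}_{\mathbb{R}},\Gamma_\lambda)\in E$ has its asymptotic direction close to $G_{\mathbb{R}}\cdot\xi$. The corresponding sub-direct-integral $\Pi_0$ is a unitary subrepresentation of $\Pi$, so $\op{WF}(\Pi_0)\subset\op{WF}(\Pi)$, and $m_0$ inherits polynomial growth from $m_\Pi$. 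Lemma~\ref{lem:SS} then supplies the distribution character $\theta(m_0)$ of $\Pi_0$ together with the Fourier-transform identity
\[
(\theta(m_0)\varphi)\sphat(\zeta)
=\langle C(m_0)_\eta,\,\check\varphi(\eta-\zeta)\rangle
\]
of Lemma~\ref{lem:SS}~\eqref{SSpart3}. Arguments parallel to those referenced at the end of Section~\ref{sec:WF1} (cf.~\cite[\S 7]{HHO16}) then reduce the problem to showing that, for a suitable $\varphi\in C_c^\infty(\mathfrak{g}_{\mathbb{R}})$ with $\varphi(0)\neq 0$, the quantity $(\theta(m_0)\varphi)\sphat(t\xi')$ does \emph{not} decay rapidly in $t$, uniformly for $\xi'$ in some open neighborhood of $\xi$.

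The heart of the argument is therefore a pointwise \emph{lower bound} on the contour integral $\int_{\mathcal{C}(\mathcal{O}_{\lambda_j},\mathfrak{q})}\check\varphi(\eta-t_j\xi')\,\nu$, complementary to the decay estimate \eqref{eq:bound_C2}. Heuristically, when $t_j^{-1}g_j\cdot\lambda_j$ is close to $\xi$, the rescaled contour $t_j^{-1}\mathcal{C}(\mathcal{O}_{\lambda_j},\mathfrak{q})$ passes near $\xi$, so a $2n$-dimensional slice of $\mathcal{C}(\mathcal{O}_{\lambda_j},\mathfrak{q})$ sits where $\IM(\eta-t_j\xi')$ is of size $o(t_j)$ while the Kirillov--Kostant--Souriau form $\nu$ remains non-degenerate; on such a slice $\check\varphi(\eta-t_j\xi')$ is bounded below and the integral is bounded below by $c(1+t_j^2)^{-k/2}$ for fixed $c,k>0$. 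Making this heuristic precise is exactly what I anticipate Lemmas~\ref{lem:diff_form1} and \ref{lem:form_lower_bound} of the next section to accomplish.

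The main obstacle is the construction and quantitative control of such a slice. The contour $\mathcal{C}(\mathcal{O}_\lambda,\mathfrak{q})$ is a semialgebraic real submanifold of $\mathfrak{g}^*$, so one must show both non-degeneracy of the Kirillov--Kostant--Souriau form transversally to the asymptotic direction $\xi'$ and the absence of destructive oscillation from $\check\varphi(\eta-t\xi')$ on the slice. Once Lemmas~\ref{lem:diff_form1} and \ref{lem:form_lower_bound} are in hand, integrating the pointwise lower bound against $m_0|_E$---using that a subset of $E$ of positive mass consists of orbits asymptotic to $\mathbb{R}_{>0}\cdot G_{\mathbb{R}}\cdot\xi$---yields the desired non-rapid decay of $(\theta(m_0)\varphi)\sphat$ in the $\xi$-direction, placing $\xi\in\op{WF}(\Pi_0)\subset\op{WF}(\Pi)$ and proving \eqref{eq:inclusion2}.
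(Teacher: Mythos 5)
Your overall strategy---fix $\xi$ in the right-hand side, convert the problem to the distribution $\theta(m)$ of Lemma~\ref{lem:SS}, and prove that its Fourier transform against a Gaussian does not decay rapidly in the direction $\xi$---is the right one, and you correctly anticipate that the analytic content is the lower bound on the contour integral over $\mathcal{C}(\mathcal{O}_\lambda,\mathfrak{q})$ that Lemmas~\ref{lem:diff_form1} and \ref{lem:form_lower_bound} supply. However, there is a genuine gap in the middle of the argument: you never establish the inclusion $\op{WF}_0(\theta(m))\subset\op{WF}(\Pi)$, and you cannot get it from the step you cite. The reference at the end of Section~\ref{sec:WF1} to \cite[\S 7]{HHO16} is used for the \emph{opposite} inclusion (Lemma~\ref{lem:inclusion1}, an \emph{upper} bound on $\op{SS}(\Pi)$); it does not show that directions detected in the direct integral of characters lie in the wave front set of $\Pi$. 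A direct integral of distribution characters is not automatically a matrix coefficient of $\Pi$, so Howe's definition of $\op{WF}(\Pi)$ does not apply to $\theta(m)$ off the shelf. The paper closes this gap with a nontrivial construction: using the compact model of the induced representations it chooses orthonormal bases $\{e_{\sigma,j}(\mathcal{O}_0,\Gamma_0,\lambda_n)\}$ that depend measurably on the continuous parameter, defines a projection $P$ onto the closed span of the resulting vectors in $V_\Pi$, verifies the identity $\op{Tr}(\Pi(g)P)=\Theta(m)$ for $g$ near $e$, shows $(I+\Omega_K)^{-N}P$ is trace class by a $K$-type estimate, and only then invokes Howe's characterization to obtain $\op{WF}_e(\Theta(m))\subset\op{WF}(\Pi)$. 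This is the step your proposal waves at but does not address, and it is the crux of why the lower bound in \eqref{eq:inclusion2} holds.

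Two smaller points. First, the normalization condition~\eqref{eq:measure_def}, $m(\widehat{G}_{\mathbb{R}}^{(\mathfrak{l}_{\mathbb{R}},\mathfrak{q})}(\mathcal{O}_0,\Gamma_0))=1$, is not cosmetic: it is what makes the vectors $e_{\sigma,j}(\mathcal{O}_0,\Gamma_0)$ unit vectors, so that $\op{Tr}(\Pi(g)P)$ equals the character integral. Your localization of $m_\Pi$ to a conic neighborhood $E$ does not supply this, and it also does not replace the precise mass condition~\eqref{eq:measure_def2} that the FBI lower bound requires. Second, your replacement of the full measure by $m_\Pi|_E$ is unnecessary---the paper instead passes to a measure \emph{equivalent} to $m_\Pi$ (with the two normalizations above) on all of $\widehat{G}_{\mathbb{R}}^{(\mathfrak{l}_{\mathbb{R}},\mathfrak{q})}$---but this substitution is not itself wrong, since passing to a subrepresentation only shrinks the wave front set.
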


Lemma \ref{lem:inclusion1} and Lemma \ref{lem:inclusion2} combine to imply Theorem \ref{thm:direct_integral} since $\op{WF}(\Pi)\subset \op{SS}(\Pi)$ for any unitary representation $\Pi$ of $G_{\mathbb{R}}$.

We first show the following:
\begin{lemma}\label{lem:asymptotic_cone}
For any subset $S\subset \sqrt{-1}Z(\mathfrak{l}_{\mathbb{R}})^*$,
\begin{equation}\label{eq:asymptotic_cone}
\op{AC}(G_{\mathbb{R}}\cdot S) \cap (G\cdot Z(\mathfrak{l})^*_{\rm reg})
= G_{\mathbb{R}}\cdot
 (\op{AC}(S) \cap \sqrt{-1}Z(\mathfrak{l}_{\mathbb{R}})^*_{\rm reg}).
\end{equation}
In addition, if $(G_{\mathbb{R}}\cdot S)\cap\sqrt{-1}Z(\mathfrak{l}_{\mathbb{R}})^* =S$ holds,
 then
\begin{equation}\label{eq:asymptotic_cone2}
\op{AC}(G_{\mathbb{R}}\cdot S) \cap \sqrt{-1}Z(\mathfrak{l}_{\mathbb{R}})^*_{\rm reg}
= \op{AC}(S) \cap \sqrt{-1}Z(\mathfrak{l}_{\mathbb{R}})^*_{\rm reg}.
\end{equation}
\end{lemma}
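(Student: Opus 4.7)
The plan is to mimic the argument for the complex identity \eqref{eq:asymp_cone_complex} proved earlier, using the $G$-invariant quotient $p\colon\mathfrak{g}^*\to\mathfrak{j}^*/W$ to locate the limit inside $\sqrt{-1}Z(\mathfrak{l}_{\mathbb{R}})^*$, and then to upgrade the resulting $W$-ambiguity to an honest $G_{\mathbb{R}}$-orbit statement by exploiting that the $G_{\mathbb{R}}$-conjugacy class of $\mathfrak{l}_{\mathbb{R}}$ inside the variety of real Lie subalgebras is closed. The inclusion $\supset$ in both equations is immediate from continuity of the coadjoint action, so I focus on $\subset$.

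For the first equation, I would take $\xi\in\op{AC}(G_{\mathbb{R}}\cdot S)\cap(G\cdot Z(\mathfrak{l})^*_{\mathrm{reg}})$, so that $g_i\cdot\eta_i\to\xi$ for $g_i\in G_{\mathbb{R}}$ and $\eta_i:=t_i^{-1}s_i\in\sqrt{-1}Z(\mathfrak{l}_{\mathbb{R}})^*$ with $s_i\in S$ and $t_i\to +\infty$. $G$-invariance of $p$ yields $p(\eta_i)=p(g_i\cdot\eta_i)\to p(\xi)$, and since $p|_{\mathfrak{j}^*}$ is a finite (hence proper) map, a subsequence satisfies $\eta_i\to\eta_0$ for some $\eta_0\in\sqrt{-1}Z(\mathfrak{l}_{\mathbb{R}})^*$; because the fiber $p^{-1}(p(\xi))\subset\mathfrak{j}^*$ lies in the regular set, $\eta_0\in\sqrt{-1}Z(\mathfrak{l}_{\mathbb{R}})^*_{\mathrm{reg}}\cap\op{AC}(S)$.

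The hard part will be to upgrade $g_i\cdot\eta_i\to\xi$ with $\eta_i\to\eta_0$ to the statement $\xi\in G_{\mathbb{R}}\cdot\eta_0$, and this is where the main obstacle lies. My approach is to observe that $\mathfrak{g}_{\mathbb{R}}(g_i\cdot\eta_i)=\op{Ad}(g_i)\mathfrak{l}_{\mathbb{R}}$ converges in the Grassmannian to $\mathfrak{g}_{\mathbb{R}}(\xi)$ by upper semicontinuity of stabilizer dimension together with regularity, and then to invoke the fact that the real Levi subalgebras of $\mathfrak{g}_{\mathbb{R}}$ whose complexifications are $G$-conjugate to $\mathfrak{l}$ split into finitely many $G_{\mathbb{R}}$-conjugacy classes, all of equal real dimension $\dim_{\mathbb{R}}\mathfrak{l}_{\mathbb{R}}$, hence each closed in the variety of subalgebras. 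Thus $\mathfrak{g}_{\mathbb{R}}(\xi)=\op{Ad}(h_0)\mathfrak{l}_{\mathbb{R}}$ for some $h_0\in G_{\mathbb{R}}$, and the orbit map $G_{\mathbb{R}}/N_{G_{\mathbb{R}}}(\mathfrak{l}_{\mathbb{R}})\to\mathrm{Gr}$ is a closed embedding, hence proper; applying this to $h_0^{-1}g_i$ gives a factorization $h_0^{-1}g_i=k_i n_i$ with $k_i$ convergent in $G_{\mathbb{R}}$ and $n_i\in N_{G_{\mathbb{R}}}(\mathfrak{l}_{\mathbb{R}})$. Since $W_{\mathbb{R}}(\mathfrak{l}):=N_{G_{\mathbb{R}}}(\mathfrak{l}_{\mathbb{R}})/L_{\mathbb{R}}$ is finite, after a further subsequence the class of $n_i$ is constant, and taking limits yields $\xi=h\cdot\eta_0$ with $h\in G_{\mathbb{R}}$.

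The second equation will then follow from the first. The hypothesis $(G_{\mathbb{R}}\cdot S)\cap\sqrt{-1}Z(\mathfrak{l}_{\mathbb{R}})^*=S$ is exactly $W_{\mathbb{R}}(\mathfrak{l})$-stability of $S$, and hence of $\op{AC}(S)$. Given $\xi\in\op{AC}(G_{\mathbb{R}}\cdot S)\cap\sqrt{-1}Z(\mathfrak{l}_{\mathbb{R}})^*_{\mathrm{reg}}$, the first equation writes $\xi=g\cdot\eta$ with $g\in G_{\mathbb{R}}$ and $\eta\in\op{AC}(S)\cap\sqrt{-1}Z(\mathfrak{l}_{\mathbb{R}})^*_{\mathrm{reg}}$; comparing stabilizers of $\xi$ and $\eta$ forces $g\in N_{G_{\mathbb{R}}}(\mathfrak{l}_{\mathbb{R}})$, and therefore $\xi\in W_{\mathbb{R}}(\mathfrak{l})\cdot\op{AC}(S)=\op{AC}(S)$, as required.
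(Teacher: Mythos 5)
Your strategy is genuinely different from the paper's, and most of it goes through, but one key link is justified incorrectly. The paper never passes to limits of subalgebras in the Grassmannian: it observes that the orbit map $a\colon G_{\mathbb{R}}\times\sqrt{-1}(\mathfrak{l}'_{\mathbb{R}})^{*}\to\sqrt{-1}\mathfrak{g}_{\mathbb{R}}^*$ is submersive on $G_{\mathbb{R}}\times\sqrt{-1}(\mathfrak{l}'_{\mathbb{R}})^{*,o}$ (where $\mathfrak{l}'_{\mathbb{R}}=\mathfrak{g}_{\mathbb{R}}(\xi)$), takes a small cone $V\cdot C$ around $\xi$, and from unboundedness of $(G_{\mathbb{R}}\cdot S)\cap C$ produces directly a single element $g\cdot\lambda\in\sqrt{-1}(\mathfrak{l}'_{\mathbb{R}})^{*,o}$ with $\lambda\in S$, $g\in G_{\mathbb{R}}$. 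Comparing $\mathfrak{g}(g\cdot\lambda)=g\cdot\mathfrak{g}(\lambda)\supset g\cdot\mathfrak{l}$ with $\mathfrak{g}(g\cdot\lambda)\subset\mathfrak{l}'$ (equality by dimension) immediately gives $g\cdot\mathfrak{l}_{\mathbb{R}}=\mathfrak{l}'_{\mathbb{R}}$, and the rest is a $W_{\mathbb{R}}$-pigeonhole on cones. This sidesteps all questions about Grassmannian closure.

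The gap in your argument is the sentence ``\ldots split into finitely many $G_{\mathbb{R}}$-conjugacy classes, all of equal real dimension $\dim_{\mathbb{R}}\mathfrak{l}_{\mathbb{R}}$, hence each closed in the variety of subalgebras,'' and the subsequent ``the orbit map $G_{\mathbb{R}}/N_{G_{\mathbb{R}}}(\mathfrak{l}_{\mathbb{R}})\to\mathrm{Gr}$ is a closed embedding.'' Neither is true as stated: equal dimension does not imply closed, and the orbit $G_{\mathbb{R}}\cdot\mathfrak{l}_{\mathbb{R}}$ is typically \emph{not} closed in the Grassmannian of subalgebras. Already for $G_{\mathbb{R}}=\operatorname{SL}(2,\mathbb{R})$ acting on lines in $\mathfrak{sl}(2,\mathbb{R})$, the orbit of a split Cartan line is the set of spacelike lines, whose closure in $\mathbb{P}(\mathfrak{sl}(2,\mathbb{R}))$ contains the nilpotent (lightlike) lines; so the orbit map is an embedding onto a locally closed, but not closed, subset. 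What you actually need, and what is true, is weaker: each $G_{\mathbb{R}}$-orbit on the \emph{real locus} of the complex orbit $G\cdot\mathfrak{l}$ in the Grassmannian of $\mathfrak{g}$ is open there (a tangent space count: $\dim_{\mathbb{R}}\mathfrak{g}_{\mathbb{R}}/\mathfrak{l}_{\mathbb{R}}=\dim_{\mathbb{C}}\mathfrak{g}/\mathfrak{l}$), hence also closed by finiteness of these orbits. Since you already know $\mathfrak{g}_{\mathbb{R}}(\xi)$ lies in that real locus (because $\xi\in(G\cdot Z(\mathfrak{l})^*_{\mathrm{reg}})\cap\sqrt{-1}\mathfrak{g}_{\mathbb{R}}^*$), the limit $\lim\operatorname{Ad}(g_i)\mathfrak{l}_{\mathbb{R}}=\mathfrak{g}_{\mathbb{R}}(\xi)$ must belong to $G_{\mathbb{R}}\cdot\mathfrak{l}_{\mathbb{R}}$. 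Likewise, local closedness of the orbit (not a closed embedding) is exactly what makes the orbit map a homeomorphism onto its image, which is what the factorization $h_0^{-1}g_i=k_in_i$ with $k_i$ convergent actually uses. With these corrections your argument is sound; the step $\eta_i\to\eta_0$ via properness of $p|_{\mathfrak{j}^*}$, the openness of $\sqrt{-1}Z(\mathfrak{l}_{\mathbb{R}})^*_{\mathrm{reg}}$ giving regularity of $\eta_i$ for large $i$, the convergence of kernels of equal dimension, and the treatment of \eqref{eq:asymptotic_cone2} via $W_{\mathbb{R}}(\mathfrak{l})$-stability are all correct. Overall: a workable alternative route, but heavier machinery (Grassmannian limits, orbit topology) where the paper gets by with a single submersion.
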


\begin{proof}
Since $\op{AC}(G_{\mathbb{R}}\cdot S)\supset \op{AC}(S)$
 and $\op{AC}(G_{\mathbb{R}}\cdot S)$ is
 $G_{\mathbb{R}}$-stable,
 we have $\op{AC}(G_{\mathbb{R}}\cdot S)\supset
 G_{\mathbb{R}}\cdot \op{AC}(S)$.
The inclusion 
\begin{equation*}
\op{AC}(G_{\mathbb{R}}\cdot S) \cap (G\cdot Z(\mathfrak{l})^*_{\rm reg})
\supset G_{\mathbb{R}}\cdot 
 (\op{AC}(S) \cap \sqrt{-1}Z(\mathfrak{l}_{\mathbb{R}})^*_{\rm reg})
\end{equation*}
then follows from 
$G\cdot Z(\mathfrak{l})^*_{\rm reg}\supset
 G_{\mathbb{R}}\cdot \sqrt{-1}Z(\mathfrak{l}_{\mathbb{R}})^*_{\rm reg}$.

To prove the other inclusion, take a vector $\xi$ 
 in the left hand side of \eqref{eq:asymptotic_cone}.
Then in particular
 $\xi\in\sqrt{-1}\mathfrak{g}_{\mathbb{R}}^*\cap
 (G\cdot Z(\mathfrak{l})^*_{\rm reg})$.
Therefore, if $\mathfrak{l}'_{\mathbb{R}}:=\mathfrak{g}_{\mathbb{R}}(\xi)$,
 then $\mathfrak{l}'$ is $G$-conjugate to $\mathfrak{l}$.
Consider the map
\[
a\colon G_{\mathbb{R}}\times \sqrt{-1}(\mathfrak{l}'_{\mathbb{R}})^*
\to \sqrt{-1}\mathfrak{g}_{\mathbb{R}}^*
\]
given by $(g,\eta)\mapsto g\cdot\eta$.
Identify $\sqrt{-1}(\mathfrak{l}'_{\mathbb{R}})^{*}\simeq \mathfrak{l}'_{\mathbb{R}}$
 in an $L_{\mathbb{R}}$-invariant way and define 
\[\sqrt{-1}(\mathfrak{l}'_{\mathbb{R}})^{*,o}
:=\{\eta\in \sqrt{-1}(\mathfrak{l}'_{\mathbb{R}})^*
 \simeq \mathfrak{l}'_{\mathbb{R}} \mid
 \det(\op{ad}(\eta)|_{\mathfrak{g}/\mathfrak{l}'})\neq 0\}.
\]
Then $a$ is submersive on the open set
 $G_{\mathbb{R}}\times \sqrt{-1}(\mathfrak{l}'_{\mathbb{R}})^{*,o}$.
We see that $\xi\in \sqrt{-1}(\mathfrak{l}'_{\mathbb{R}})^{*,o}$.
Take an open cone $C\subset \sqrt{-1}(\mathfrak{l}'_{\mathbb{R}})^{*,o}$ containing $\xi$
 and take a small neighborhood $e\in V\subset G_{\mathbb{R}}$.
Then $V\cdot C$ is an open cone in $\sqrt{-1}\mathfrak{g}_{\mathbb{R}}^*$
 containing $\xi$.
By $\xi\in \op{AC}(G_{\mathbb{R}}\cdot S)$
 and the definition of the asymptotic cone, 
\begin{equation*}
(G_{\mathbb{R}}\cdot S)\cap (V\cdot C)
 \text{ is unbounded.}
\end{equation*}
Since 
$(G_{\mathbb{R}}\cdot S)\cap (V\cdot C)
\supset V \cdot ((G_{\mathbb{R}}\cdot S) \cap C)$
 and $V$ is bounded, 
\begin{equation*}
(G_{\mathbb{R}}\cdot S)\cap C
 \text{ is unbounded.}
\end{equation*}
Hence there exists $\lambda\in S\subset \sqrt{-1}Z(\mathfrak{l}_{\mathbb{R}})^*_{\rm reg}$
 and $g\in G_{\mathbb{R}}$ such that
 such that $g\cdot \lambda\in \sqrt{-1}(\mathfrak{l}'_{\mathbb{R}})^{*,o}$.
Since $\eta\in \sqrt{-1}(\mathfrak{l}'_{\mathbb{R}})^{*,o}$ implies 
 $\mathfrak{g}(\eta)\supset \mathfrak{l}'$, we have
 $g\cdot \mathfrak{l}\supset \mathfrak{l}'$.
Combining with $\mathfrak{l}\sim \mathfrak{l}'$, we have
  $g\cdot \mathfrak{l}_{\mathbb{R}}=\mathfrak{l}'_{\mathbb{R}}$.

Replacing $\xi$ by $g^{-1} \cdot \xi$, we have
 $\xi\in \sqrt{-1}Z(\mathfrak{l}_{\mathbb{R}})^*_{\rm reg}$
 and $\mathfrak{l}_{\mathbb{R}}=\mathfrak{l}'_{\mathbb{R}}$.
Take a Cartan subalgebra
 $\mathfrak{j}_{\mathbb{R}}\subset \mathfrak{l}_{\mathbb{R}}$.
If two elements in
 $\sqrt{-1}Z(\mathfrak{l}_{\mathbb{R}})^*
 (\subset \sqrt{-1}\mathfrak{j}_{\mathbb{R}}^*)$
 are $G_{\mathbb{R}}$-conjugate, they lie in the same orbit for the Weyl group
 $W_{\mathbb{R}}=N_{G_{\mathbb{R}}}(\mathfrak{j}_{\mathbb{R}})
 /Z_{G_{\mathbb{R}}}(\mathfrak{j}_{\mathbb{R}})$.
Hence 
\[(G_{\mathbb{R}}\cdot S)\cap C
=(W_{\mathbb{R}}\cdot S)\cap C.\]
Since $W_{\mathbb{R}}$ is finite, 
 there exists $w\in W_{\mathbb{R}}$ such that
 $(w\cdot S) \cap C$, or equivalently, $S\cap (w^{-1}\cdot C)$ is unbounded
 for any $C$.
This shows $w^{-1}\cdot \xi \in \op{AC}(S)$
 and hence $\xi\in G_{\mathbb{R}}\cdot \op{AC}(S)$,
 which implies the desired inclusion in \eqref{eq:asymptotic_cone}.

To prove \eqref{eq:asymptotic_cone2},  take a vector 
 $\xi\in \op{AC}(G_{\mathbb{R}}\cdot S) \cap \sqrt{-1}Z(\mathfrak{l}_{\mathbb{R}})^*_{\rm reg}$.
Then by \eqref{eq:asymptotic_cone}, we may write $\xi=g\cdot \xi'$
 such that $g\in G_{\mathbb{R}}$ and $\xi'\in \op{AC}(S)$.
Since $\mathfrak{g}_{\mathbb{R}}(\xi)=\mathfrak{g}_{\mathbb{R}}(\xi')=\mathfrak{l}_{\mathbb{R}}$,
  $g$ normalizes $\mathfrak{l}_{\mathbb{R}}$.
By our assumption, $g\cdot S=S$ and $g\cdot \op{AC}(S)=\op{AC}(S)$.  
Hence $\xi\in \op{AC}(S)$.
This proves \eqref{eq:asymptotic_cone2}.
\end{proof}

By applying Lemma~\ref{lem:asymptotic_cone} to
 $S=\{\lambda\in \sqrt{-1}Z(\mathfrak{l}_{\mathbb{R}})^*_{\rm gr}\mid
 \pi(\mathfrak{l}_{\mathbb{R}},\Gamma_{\lambda})\in \op{supp}m_{\Pi}\}$, 
 the right hand side of \eqref{eq:inclusion2} equals
\begin{equation*}
G_{\mathbb{R}}\cdot 
\left(\op{AC}\bigl(\{\lambda\in \sqrt{-1}Z(\mathfrak{l}_{\mathbb{R}})^*_{\rm gr}\mid
 \pi(\mathfrak{l}_{\mathbb{R}},\Gamma_{\lambda})\in \op{supp}m_{\Pi}\}
 \bigr) \cap \sqrt{-1}Z(\mathfrak{l}_{\mathbb{R}})^*_{\rm reg}\right).
\end{equation*}
Since the wave front set $\op{WF}(\Pi)$ is $G_{\mathbb{R}}$-stable,
 it is enough to show
\begin{equation}
\label{eq:inclusion2a}
\op{WF}(\Pi)\supset 
\op{AC}\bigl(\{\lambda\in \sqrt{-1}Z(\mathfrak{l}_{\mathbb{R}})^*_{\rm gr}\mid
 \pi(\mathfrak{l}_{\mathbb{R}},\Gamma_{\lambda})\in \op{supp}m_{\Pi}\}
 \bigr) \cap \sqrt{-1}Z(\mathfrak{l}_{\mathbb{R}})^*_{\rm reg}.
\end{equation}

Recall the decompositions
\[
\sqrt{-1}Z(\mathfrak{l}_{\mathbb{R}})^*_{\rm gr}
=\bigsqcup_{\mathfrak{q}} \sqrt{-1}Z(\mathfrak{l}_{\mathbb{R}})_{\rm gr}^{*,\mathfrak{q}},
\quad 
\text{ and } \quad
\widehat{G}_{\mathbb{R}}^{\mathfrak{l}_{\mathbb{R}}}
=\bigcup_{\mathfrak{q}} \widehat{G}_{\mathbb{R}}^{(\mathfrak{l}_{\mathbb{R}},\mathfrak{q})}
\]
defined in the previous section.
Then since the asymptotic cone commutes with finite union,
 it is enough to show \eqref{eq:inclusion2a}
 when $m_{\Pi}$ is a measure on
 $\widehat{G}_{\mathbb{R}}^{(\mathfrak{l}_{\mathbb{R}},\mathfrak{q})}$
 for one parabolic subalgebra $\mathfrak{q}$.
Moreover, fix $d\in \mathbb{R}$ such that 
 $d>\max_{\alpha\in\Delta(\mathfrak{g},\mathfrak{j})}|
 \langle\rho_{\mathfrak{l}},\alpha^\vee\rangle|$ and 
 write $m_{\Pi}$ as a sum of two measures according to the decomposition
\[\widehat{G}_{\mathbb{R}}^{(\mathfrak{l}_{\mathbb{R}},\mathfrak{q})}
=\bigl(\widehat{G}_{\mathbb{R}}^{(\mathfrak{l}_{\mathbb{R}},\mathfrak{q})}
 \cap\widehat{G}_{\mathbb{R}}(\mathfrak{l},d)\bigr) \cup
\bigl(\widehat{G}_{\mathbb{R}}^{(\mathfrak{l}_{\mathbb{R}},\mathfrak{q})}
 \setminus\widehat{G}_{\mathbb{R}}(\mathfrak{l},d)\bigr).
\]
Since 
\begin{equation*}
\op{AC}\bigl(\{\lambda\in \sqrt{-1}Z(\mathfrak{l}_{\mathbb{R}})^*_{\rm gr} \mid
 \pi(\mathfrak{l}_{\mathbb{R}},\Gamma_{\lambda})
 \in \widehat{G}_{\mathbb{R}}(\mathfrak{l},d)\}\bigr)
 \cap Z(\mathfrak{l})^*_{\rm reg} = \emptyset, 
\end{equation*}
it is enough to show \eqref{eq:inclusion2a} when
 $\op{supp} m_\Pi \cap\widehat{G}_{\mathbb{R}}(\mathfrak{l},d)=\emptyset$.
We thus assume
 $m_{\Pi}$ is a measure on $\widehat{G}_{\mathbb{R}}^{(\mathfrak{l}_{\mathbb{R}},\mathfrak{q})}$
 and  $\op{supp} m_\Pi \cap\widehat{G}_{\mathbb{R}}(\mathfrak{l},d)=\emptyset$.

In order to prove \eqref{eq:inclusion2a}, we first show 
\begin{equation}\label{eq:WF_character_integral}
\op{WF}(\Pi)\supset \op{WF}_0\theta(m)
\end{equation}
if $m$ is a measure on
 $\widehat{G}_{\mathbb{R}}^{(\mathfrak{l}_{\mathbb{R}},\mathfrak{q})}$
 which is equivalent to $m_{\Pi}$ and 
 satisfies the condition \eqref{eq:measure_def} given below.
We will see later that \eqref{eq:measure_def} implies $m_{\Pi}$
 is of at most polynomial growth
 and hence $\theta(m)$ is defined as in Lemma~\ref{lem:SS}.

We next take $\xi$ in the right hand side of \eqref{eq:inclusion2a},
 and define a measure $m$ depending on $\xi$,
 which is equivalent to $m_{\Pi}$
 and satisfies the condition \eqref{eq:measure_def}.
Then prove that
\begin{equation}\label{eq:WF_equation}
\op{WF}_0\theta(m)\ni \xi.
\end{equation}

In the next few pages, we prove \eqref{eq:WF_character_integral}
 for $m$ with the condition \eqref{eq:measure_def}.
Let $(\mathcal{O},\Gamma)$ be a semisimple orbital parameter 
 with $\mathcal{O}=G_{\mathbb{R}}\cdot\lambda$
 and $\lambda\in \sqrt{-1}Z(\mathfrak{l}_{\mathbb{R}})^{*,\mathfrak{q}}_{\rm gr}$.
We decompose the unitary representation
 $(\pi(\mathcal{O},\Gamma),V_{(\mathcal{O},\Gamma)})\in \widehat{G}_{\mathbb{R}}^{(\mathfrak{l}_{\mathbb{R}},\mathfrak{q})}$ as
\[V_{(\mathcal{O},\Gamma)}= \widehat{\bigoplus_{\sigma\in \widehat{K}_{\mathbb{R}}}} V_{(\mathcal{O},\Gamma)}(\sigma)\]
where $K_{\mathbb{R}}:=G_{\mathbb{R}}^{\theta}\subset G_{\mathbb{R}}$ is a maximal compact subgroup. We wish to choose an orthonormal basis $\{e_{\sigma,j}(\mathcal{O},\Gamma)\}_{j}$ of $V_{(\mathcal{O},\Gamma)}(\sigma)$ for each $\pi(\mathcal{O},\Gamma)\in \widehat{G}_{\mathbb{R}}^{(\mathfrak{l}_{\mathbb{R}},\mathfrak{q})}$ and each $\sigma\in \widehat{K}_{\mathbb{R}}$. However, we must be careful to choose these bases in a consistent way across parameters $(\mathcal{O},\Gamma)$. To write down this condition correctly, we require additional notation.

Following Section~\ref{sec:quantization_semisimple} or \cite[Section 2]{HO20}, 
 define a parabolic subgroup
 $P_{\mathbb{R}}=M_{\mathbb{R}}A_{\mathbb{R}}(N_P)_{\mathbb{R}}$.
For each semisimple orbital parameter $(\mathcal{O},\Gamma)$
 with $\mathcal{O}=G_{\mathbb{R}}\cdot\lambda$,
 we decompose $\lambda=\lambda_c+\lambda_n$ and
 define an elliptic orbital parameter
 $(\mathcal{O}^{M_{\mathbb{R}}},\Gamma^{M_{\mathbb{R}}})$ for $M_{\mathbb{R}}$. 

For an elliptic orbital parameter
 $(\mathcal{O}_0,\Gamma_0)$ for $M_{\mathbb{R}}$, define
\[\widehat{G}_{\mathbb{R}}^{(\mathfrak{l}_{\mathbb{R}},\mathfrak{q})}(\mathcal{O}_0,\Gamma_0)
=\bigl\{\pi(\mathcal{O},\Gamma)\in
 \widehat{G}_{\mathbb{R}}^{(\mathfrak{l}_{\mathbb{R}},\mathfrak{q})}\mid
 (\mathcal{O}^{M_{\mathbb{R}}},\Gamma^{M_{\mathbb{R}}})
 = (\mathcal{O}_0,\Gamma_0) \bigr\}.\]
Then $\widehat{G}_{\mathbb{R}}^{(\mathfrak{l}_{\mathbb{R}},\mathfrak{q})}$
 is the disjoint union of 
 $\widehat{G}_{\mathbb{R}}^{(\mathfrak{l}_{\mathbb{R}},\mathfrak{q})}(\mathcal{O}_0,\Gamma_0)$
 for various $(\mathcal{O}_0,\Gamma_0)$.
In \cite[Sections 2.3 and 2.4]{HO20}, we give a unitary representation $(\pi(\mathcal{O}^{M_{\mathbb{R}}},\Gamma^{M_{\mathbb{R}}}),V_{(\mathcal{O}^{M_{\mathbb{R}}},\Gamma^{M_{\mathbb{R}}})})$ of $M_{\mathbb{R}}$ associated to $(\mathcal{O}^{M_{\mathbb{R}}},\Gamma^{M_{\mathbb{R}}})$. 
Then we form the bundle
\[\mathcal{V}:=G_{\mathbb{R}}\times_{P_{\mathbb{R}}} 
\bigl(V_{(\mathcal{O}^{M_{\mathbb{R}}},\Gamma^{M_{\mathbb{R}}})}
\boxtimes e^{\lambda_n+\rho(\mathfrak{n}_\mathfrak{p})}\bigr),\]
and we define
\[V_{(\mathcal{O},\Gamma)}:=L^2(G_{\mathbb{R}}/P_{\mathbb{R}},\mathcal{V}).\]
In order to study the action of $K_{\mathbb{R}}$ on $V_{(\mathcal{O},\Gamma)}$, it is convenient to use the compact model for the induced representation (see e.g.\ \cite[Chapter 7]{Kna86}) obtained by restricting the sections on $G_{\mathbb{R}}/P_{\mathbb{R}}$ to sections on $K_{\mathbb{R}}/(K_{\mathbb{R}}\cap M_{\mathbb{R}})$. This gives us an identification
\[L^2(G_{\mathbb{R}}/P_{\mathbb{R}},\mathcal{V})\stackrel{\sim}{\rightarrow} L^2(K_{\mathbb{R}}/(K_{\mathbb{R}}\cap M_{\mathbb{R}}),\mathcal{V}|_{K_{\mathbb{R}}/(K_{\mathbb{R}}\cap M_{\mathbb{R}})})\]
as unitary $K_{\mathbb{R}}$ representations. Notice that this compact picture only depends on the elliptic orbital parameter $(\mathcal{O}^{M_{\mathbb{R}}},\Gamma^{M_{\mathbb{R}}})$ since $\mathcal{V}|_{K_{\mathbb{R}}/(K_{\mathbb{R}}\cap M_{\mathbb{R}})}$ is independent of $\lambda_n$. Now, for every $\sigma\in \widehat{K}_{\mathbb{R}}$, we may fix an orthonormal basis for $L^2(K_{\mathbb{R}}/(K_{\mathbb{R}}\cap M_{\mathbb{R}}),\mathcal{V}|_{K_{\mathbb{R}}/(K_{\mathbb{R}}\cap M_{\mathbb{R}})})(\sigma)$, and we may pull this basis back to an orthonormal basis $\{e_{\sigma,j}(\mathcal{O}^{M_{\mathbb{R}}},\Gamma^{M_{\mathbb{R}}},\lambda_n)\}$ of $V_{(\mathcal{O},\Gamma)}(\sigma)$. Since the compact model for $\pi(\mathcal{O},\Gamma)$ agrees with the compact model for $\pi(\mathcal{O}',\Gamma')$ whenever $(\mathcal{O}^{M_{\mathbb{R}}},\Gamma^{M_{\mathbb{R}}})=((\mathcal{O}')^{M_{\mathbb{R}}},(\Gamma')^{M_{\mathbb{R}}})$, we note that the basis $\{e_{\sigma,j}(\mathcal{O}^{M_{\mathbb{R}}},\Gamma^{M_{\mathbb{R}}},\lambda_n)\}$ depends continuously on the parameter $(\mathcal{O},\Gamma)$. 

Let $(\widehat{M}_{\mathbb{R}})_{\text{ell}}^{\Pi}$ denote the collection of
 all elliptic semisimple orbital parameters
 $(\mathcal{O}_0,\Gamma_0)$ for $M_{\mathbb{R}}$ with
 $m_{\Pi}( \widehat{G}_{\mathbb{R}}^{(\mathfrak{l}_{\mathbb{R}},\mathfrak{q})}
 (\mathcal{O}_0,\Gamma_0))\neq 0$.
Fix a measure $m$ on $\widehat{G}_{\mathbb{R}}^{(\mathfrak{l}_{\mathbb{R}},\mathfrak{q})}$
 equivalent to $m_{\Pi}$ such that 
\begin{equation}\label{eq:measure_def}
m(\widehat{G}_{\mathbb{R}}^{(\mathfrak{l}_{\mathbb{R}},\mathfrak{q})}
 (\mathcal{O}_0,\Gamma_0))=1
\end{equation}
for every $(\mathcal{O}_0,\Gamma_0)\in (\widehat{M}_{\mathbb{R}})_{\text{ell}}^{\Pi}$. 
\eqref{eq:measure_def} implies that $m$ is of at most polynomial growth.
Indeed, we have
\begin{equation*}
\begin{split}
&\int_{\lambda\in \sqrt{-1}Z(\mathfrak{l}_{\R})_{\rm gr}^{*,\mathfrak{q}}}
 \frac{p_*dm}{(1+|\lambda|^2)^N} \\
&\leq 
\int_{\lambda\in \sqrt{-1}Z(\mathfrak{l}_{\R})_{\rm gr}^{*,\mathfrak{q}}}
 \frac{p_*dm}{(1+|\lambda_c|^2)^N} \\
&=\sum_{(\mathcal{O}_0,\Gamma_0)\in (\widehat{M}_{\mathbb{R}})_{\text{ell}}^{\Pi}}
 \int_{\widehat{G}_{\mathbb{R}}^{(\mathfrak{l}_{\mathbb{R}},\mathfrak{q})}(\mathcal{O}_0,\Gamma_0)}
 \frac{dm}{(1+|\lambda_c|^2)^N} \\
&=\sum_{(\mathcal{O}_0,\Gamma_0)\in (\widehat{M}_{\mathbb{R}})_{\text{ell}}^{\Pi}}
 \frac{1}{(1+|\lambda_c|^2)^N},
\end{split}
\end{equation*}
where $\mathcal{O}_0=M_{\mathbb{R}}\cdot \lambda_c$.
Since the last expression is a sum of over a lattice with uniformly bounded finite multiplicities,
 it converges for a sufficiently large $N$, showing that $m$ is of at most polynomial growth.

We now fix a multiplicity free subrepresentation
\[\int^{\oplus}_{\pi(\mathcal{O},\Gamma)\in \widehat{G}_{\mathbb{R}}^{(\mathfrak{l}_{\mathbb{R}},\mathfrak{q})}(\mathcal{O}_0,\Gamma_0)}V_{(\mathcal{O},\Gamma)}dm
 \simeq V^{(\mathcal{O}_0,\Gamma_0)}\subset V_{\Pi}\]
for every $(\mathcal{O}_0,\Gamma_0)\in (\widehat{M}_{\mathbb{R}})_{\text{ell}}^{\Pi}$. 
Here, $V_{\Pi}$ denotes the representation space of $\Pi$.
We may then view $\lambda_n\mapsto e_{\sigma,j}(\mathcal{O}_0,\Gamma_0,\lambda_n)$ as a vector in $V^{(\mathcal{O}_0,\Gamma_0)}$ which we will denote by $e_{\sigma,j}(\mathcal{O}_0,\Gamma_0)$. Now, since $|e_{\sigma,j}(\mathcal{O}_0,\Gamma_0,\lambda_n)|=1$ for all $\lambda_n$ and $m(\widehat{G}_{\mathbb{R}}^{(\mathfrak{l}_{\mathbb{R}},\mathfrak{q})}(\mathcal{O}_0,\Gamma_0))=1$, we deduce $|e_{\sigma,j}(\mathcal{O}_0,\Gamma_0)|=1$.

Define 
\[V':=\overline{\langle e_{\sigma,j}(\mathcal{O}_0,\Gamma_0) \mid \sigma\in \widehat{K}_{\mathbb{R}},\ (\mathcal{O}_0,\Gamma_0)\in (\widehat{M}_{\mathbb{R}})_{\text{ell}}^{\Pi}\rangle}\]
to be the closure of the span of the $e_{\sigma,j}(\mathcal{O}_0,\Gamma_0)$,
 and let $P\colon V_{\Pi}\rightarrow V'$ denote the orthogonal projection onto $V'$. 
Observe that for $g$ in a small neighborhood of $e\in G_{\mathbb{R}}$, we have
\begin{equation}\label{eq:Theta}
\begin{split}
&\phantom{=}\op{Tr}(\Pi(g)P) \\
&=\sum_{\pi(\mathcal{O}_0,\Gamma_0)\in (\widehat{M}_{\mathbb{R}})^{\Pi}_{\text{ell}}}\sum_{\sigma\in \widehat{K}_{\mathbb{R}}}\sum_j (\Pi(g)e_{\sigma,j}(\mathcal{O}_0,\Gamma_0),e_{\sigma,j}(\mathcal{O}_0,\Gamma_0)) \\
&= \sum_{\pi(\mathcal{O}_0,\Gamma_0)\in (\widehat{M}_{\mathbb{R}})^{\Pi}_{\text{ell}}}\sum_{\sigma\in \widehat{K}_{\mathbb{R}}}\sum_j\int_{\lambda_n}(\Pi(g)e_{\sigma,j}(\mathcal{O}_0,\Gamma_0,\lambda_n),e_{\sigma,j}(\mathcal{O}_0,\Gamma_0,\lambda_n))dm \\
&= \sum_{\pi(\mathcal{O}_0,\Gamma_0)\in (\widehat{M}_{\mathbb{R}})^{\Pi}_{\text{ell}}}\int_{\lambda_n}\sum_{\sigma\in \widehat{K}_{\mathbb{R}}}\sum_j(\Pi(g)e_{\sigma,j}(\mathcal{O}_0,\Gamma_0,\lambda_n),e_{\sigma,j}(\mathcal{O}_0,\Gamma_0,\lambda_n))dm \\
&= \sum_{\pi(\mathcal{O}_0,\Gamma_0)\in (\widehat{M}_{\mathbb{R}})^{\Pi}_{\text{ell}}}\int_{\lambda_n}\Theta_{\pi(\mathcal{O},\Gamma)}dm \\
&= \int_{\pi(\mathcal{O},\Gamma)\in \widehat{G}_{\mathbb{R}}^{(\mathfrak{l}_{\mathbb{R}},\mathfrak{q})}}\Theta_{\pi(\mathcal{O},\Gamma)}dm\\
&=\Theta(m).
\end{split}
\end{equation}
We have defined $\Theta(m)$ on the group in the same way that we defined $\theta(m)$ on the Lie algebra. It is a well-defined distribution in a sufficiently small neighborhood of the identity by Lemma \ref{lem:SS} and the fact that $\exp$ restricts to a diffeomorphism of a neighborhood of zero onto a neighborhood of $e\in G_{\mathbb{R}}$. 

Next, let $\Omega_K\in \mathcal{U}(\mathfrak{k})\subset \mathcal{U}(\mathfrak{g})$ denote the Casimir operator for $K$. We wish to show that $(I+\Omega_K)^{-N}P$ is a trace class operator on $V_{\Pi}$ for sufficiently large $N$. Let $T_{\mathbb{R}}\subset K_{\mathbb{R}}$ be a maximal torus with Lie algebra $\mathfrak{t}_{\mathbb{R}}$, and let $\mathcal{C}\subset \sqrt{-1}\mathfrak{t}_{\mathbb{R}}^*$ be a closed Weyl chamber in $\sqrt{-1}\mathfrak{t}_{\mathbb{R}}^*$. For each $(\sigma,W_{\sigma})\in \widehat{K}_{\mathbb{R}}$, let $\lambda_{\sigma}\in \mathcal{C}$ be the corresponding highest weight. Then there exists a norm $|\cdot|$ on the vector space $\sqrt{-1}\mathfrak{t}_{\mathbb{R}}^*$ such that 
$\Omega_K\cdot v=|\lambda_{\sigma}|^2 v$ for all $v\in W_{\sigma}$.
We calculate 
\begin{equation}\label{eq:K_types}
\begin{split}
&\op{Tr}((I+\Omega_K)^{-N}P) \\
=&\sum_{\pi(\mathcal{O}_0,\Gamma_0)\in (\widehat{M}_{\mathbb{R}})^{\Pi}_{\text{ell}}}\sum_{\sigma\in \widehat{K}_{\mathbb{R}}}\sum_{j}((I+\Omega_K)^{-N}e_{\sigma,j}(\mathcal{O}_0,\Gamma_0),e_{\sigma,j}(\mathcal{O}_0,\Gamma_0)) \\
=&\sum_{\pi(\mathcal{O}_0,\Gamma_0)\in (\widehat{M}_{\mathbb{R}})^{\Pi}_{\text{ell}}}\sum_{\sigma\in \widehat{K}_{\mathbb{R}}} \frac{n(\mathcal{O}_0,\Gamma_0,\sigma)}{(1+|\lambda_{\sigma}|^2)^N},
\end{split}
\end{equation}
where $n(\mathcal{O}_0,\Gamma_0,\sigma)$ denotes the multiplicity of $\sigma\in \widehat{K}_{\mathbb{R}}$ in $\pi(\mathcal{O},\Gamma)\in \widehat{G}_{\mathbb{R}}^{(\mathfrak{l}_{\mathbb{R}},\mathfrak{q})}(\mathcal{O}_0,\Gamma_0)$. Recall (see page 205 of \cite{Kna86}) that 
\begin{equation}\label{eq:mult_bound}
n(\mathcal{O}_0,\Gamma_0,\sigma)\leq \dim \sigma
\end{equation}
for all $\pi(\mathcal{O}_0,\Gamma_0)\in (\widehat{M}_{\mathbb{R}})_{\text{ell}}$ and all $\sigma\in \widehat{K}_{\mathbb{R}}$. Further, there exists a natural number $r\in \mathbb{N}$ and a constant $C>0$ such that
\begin{equation}\label{eq:mult_bound2}
\dim\sigma \leq C(1+|\lambda_{\sigma}|^2)^r.
\end{equation}
Therefore, utilizing \eqref{eq:mult_bound} and \eqref{eq:mult_bound2}, we have that \eqref{eq:K_types} is bounded by
\begin{align*}
&\leq \sum_{\pi(\mathcal{O}_0,\Gamma_0)\in (\widehat{M}_{\mathbb{R}})_{\text{ell}}}\sum_{\sigma\in \widehat{K}_{\mathbb{R}}} \frac{\dim\sigma}{(1+|\lambda_{\sigma}|^2)^N}\\
&\leq C \sum_{\pi(\mathcal{O}_0,\Gamma_0)\in (\widehat{M}_{\mathbb{R}})_{\text{ell}}}\sum_{\sigma\in \widehat{K}_{\mathbb{R}}} \frac{1}{(1+|\lambda_{\sigma}|^2)^{N-r}}.
\end{align*}
Since $\{\lambda_{\sigma}\}_{\sigma\in \widehat{K}_{\mathbb{R}}}$ form a subset of a lattice in $\sqrt{-1}\mathfrak{t}_{\mathbb{R}}^*$, we obtain the bound
\[\sum_{\sigma\in \widehat{K}_{\mathbb{R}}}
 \frac{1}{(1+|\lambda_{\sigma}|^2)^{N-r}}
 \leq C\max_{L^2(K_{\mathbb{R}}/M_{\mathbb{R}},\mathcal{V})(\sigma)\neq 0} \frac{1}{(1+|\lambda_{\sigma}|^2)^{N-r}}\]
for $N\geq r+\dim\mathfrak{t}_{\mathbb{R}}+1$ and for some $C>0$. 
Let $\xi_{(\mathcal{O}_0,\Gamma_0)}\in \sqrt{-1}\mathfrak{t}_{\mathbb{R}}^*$ denote the highest weight of the minimal $K$-type of $L^2(K_{\mathbb{R}}/M_{\mathbb{R}},\mathcal{V}_{(\mathcal{O}_0,\Gamma_0)})$. By Theorem 10.44 of \cite{KV95} and the definition of $\pi(\mathcal{O}_0,\Gamma_0)$ (see \cite[\S 2.3]{HO20}), we have $\xi_{(\mathcal{O}_0,\Gamma_0)}=\lambda_c-\rho_{(\mathfrak{n}\cap \mathfrak{k})}+\rho_{(\mathfrak{n}\cap \mathfrak{g}^{-\theta})}$ when $\mathcal{O}_0=M_{\mathbb{R}}\cdot \lambda_c$ ($\lambda_c\in\sqrt{-1}\mathfrak{t}_{\mathbb{R}}^*$). We observe 
\begin{align*}
&\sum_{\pi(\mathcal{O}_0,\Gamma_0)\in (\widehat{M}_{\mathbb{R}})_{\text{ell}}}\frac{1}{(1+|\xi_{(\mathcal{O}_0,\Gamma_0)}|^2)^{N-r}}\\
=&\sum_{\pi(\mathcal{O}_0,\Gamma_0)\in (\widehat{M}_{\mathbb{R}})_{\text{ell}}}\frac{1}{(1+|\lambda_c-\rho_{(\mathfrak{n}\cap \mathfrak{k})}+\rho_{(\mathfrak{n}\cap \mathfrak{g}^{-\theta})}|^2)^{N-r}}
\end{align*}
is a sum over a lattice, and we observe that each term occurs with uniformly bounded, finite multiplicity. By standard calculus arguments, we deduce that the sum converges for sufficiently large $N$. It follows that $(I+\Omega_K)^{-N}P$ is of trace class for sufficiently large $N$. Utilizing Howe's original definition of the wave front set of a Lie group representation (\cite{How81}, see also \cite[\S 2]{HHO16} for an exposition), we have
\begin{equation}\label{eq:Howe_def}
\op{WF}_e(\op{Tr}(\Pi(g)(I+\Omega_K)^{-N}P))\subset \op{WF}(\Pi)
\end{equation}
for sufficiently large $N$. Next, utilizing \eqref{eq:Theta},
 we compute for $\varphi\in C_c^{\infty}(G_{\mathbb{R}})$
\begin{align*}
\langle \Theta(m),\varphi\rangle
&=\op{Tr}(\Pi(\varphi)P)\\
&=\op{Tr}(\Pi(\varphi)(I+\Omega_K)^N(I+\Omega_K)^{-N}P)\\
&=\op{Tr}(\Pi(L_{(I+\Omega_K)^N}\varphi)(I+\Omega_K)^{-N}P)\\
&=L_{(I+\Omega_K)^N}\op{Tr}(\Pi(\varphi)(I+\Omega_K)^{-N}P).
\end{align*}
Since applying the differential operator $L_{(I+\Omega_K)^N}$ can only decrease the wave front set of the distribution $\op{Tr}(\Pi(\varphi)(I+\Omega_K)^{-N}P)$,  we conclude
\[\op{WF}_e(\Theta(m))\subset \op{WF}_e(\op{Tr}(\Pi(\varphi)(I+\Omega_K)^{-N}P)).\]
Combining with \eqref{eq:Howe_def}, we have
\[\op{WF}_e(\Theta(m))\subset \op{WF}(\Pi).\]
Finally, since $\theta(m)$ differs from $\exp^*\Theta(m)$ only by multiplication with a real analytic function, we conclude \eqref{eq:WF_character_integral}.

Next, we will define a measure $m$
 which is equivalent to $m_{\Pi}$
 and satisfies \eqref{eq:WF_equation} and \eqref{eq:measure_def}.
We fix a positive definite, $K_{\mathbb{R}}$-invariant bilinear form $(\cdot,\cdot)$
 on $\mathfrak{g}_{\mathbb{R}}$, which is extended by complex linearity to $\mathfrak{g}$. 
We may then use $(\cdot,\cdot)$ to give an isomorphism
 $\mathfrak{g}_{\mathbb{R}}\simeq \mathfrak{g}_{\mathbb{R}}^*$,
 and we write $(\cdot,\cdot)$ for the corresponding bilinear form on $\mathfrak{g}^*$,
 which is positive definite on $\mathfrak{g}_{\mathbb{R}}^*$
 and negative definite on $\sqrt{-1}\mathfrak{g}_{\mathbb{R}}^*$.
For $\xi\in \mathfrak{g}^*$, write 
 $\xi=\RE\xi+\sqrt{-1}\IM\xi$ with $\RE\xi,\IM\xi\in \mathfrak{g}_{\mathbb{R}}^*$.
We write $|\xi|:=((\RE\xi,\RE\xi)+(\IM\xi, \IM\xi))^{1/2}$ for $\xi\in \mathfrak{g}^*$.

Fix 
\[\xi\in \op{AC} \bigl(\bigl\{\lambda \in \sqrt{-1}Z(\mathfrak{l}_{\mathbb{R}})^*_{\rm gr}
\mid \pi(\mathfrak{l}_{\mathbb{R}},\Gamma_\lambda)\in
 \op{supp} m_{\Pi}\bigr\}\bigr)
 \cap \sqrt{-1}Z(\mathfrak{l}_{\mathbb{R}})^*_{\rm reg}.\]
Replacing $\xi$ by $|\xi|^{-1} \cdot \xi$ we may assume $|\xi|=1$.
Write $p_*m_{\Pi}$ for the pushforward of $m_{\Pi}$
 by the map
\[p\colon \widehat{G}_{\mathbb{R}}^{(\mathfrak{l}_{\mathbb{R}},\mathfrak{q})}
\ni \pi(\mathfrak{l}_{\mathbb{R}},\Gamma_{\lambda})
\mapsto \lambda \in \sqrt{-1}Z(\mathfrak{l}_{\mathbb{R}})^{*,\mathfrak{q}}_{\rm gr}.\]
Then we can take a sequence
 $\{\zeta_i\}_{i\in \mathbb{Z}_{>0}}\subset \sqrt{-1}Z(\mathfrak{l}_{\mathbb{R}})^{*,\mathfrak{q}}$
 and $\{t_i\}_{i\in \mathbb{Z}_{>0}} \subset \mathbb{R}_{>0}$ such that
\begin{equation*}
|\zeta_i|=1,\ \ 
\lim_{i\to \infty} \zeta_i =\xi,\ \ 
t_i >2^{i+1},  \text{ and }\ 
t_i\zeta_i \in \op{supp} p_*m_{\Pi}.
\end{equation*}
We now want a measure $m$ on
 $\widehat{G}_{\mathbb{R}}^{(\mathfrak{l}_{\mathbb{R}},\mathfrak{q})}$ satisfying
\begin{equation}\label{eq:measure_def2}
p_*m(B_{1}(t_i\zeta_i))\geq 2^{-i-1}
\end{equation}
for all $i$.
Here, $B_{1}(t_i\zeta_i)$ is the open ball in $\sqrt{-1}Z(\mathfrak{l}_{\mathbb{R}})^*$
 with radius $1$ and center $t_i\zeta_i$.
It is easy to see that there exists a measure $m$
 which is equivalent to $m_{\Pi}$
 and satisfies \eqref{eq:measure_def} and \eqref{eq:measure_def2}.
We fix such $m$.

In order to prove \eqref{eq:WF_equation}, we require a lemma.
 Suppose $W$ is a finite-dimensional, real vector space with a positive definite inner product. 
Let 
\[\mathcal{G}(x)=e^{-|x|^2/2},\quad \mathcal{G}_t(x)=e^{-t|x|^2/2}\]
denote the corresponding Gaussian and family of Gaussians on $W$ for $t>0$.
\begin{lemma}[\cite{Fo89}]\label{lem:FBI}  
Suppose $u$ is a tempered distribution on a finite-dimensional,
 real vector space $W$. 
Then a vector $\xi\in W^*$ belongs to $\op{WF}_0(u)$
 if there exists a sequence $\zeta_i\in W^*$
 and $t_i>0$ such that
\[\lim_{i\to \infty} \zeta_i =\xi,\qquad 
\lim_{i\to \infty} t_i =\infty\] 
 and there exist $N\in \mathbb{N}$ and $C>0$ such that 
\begin{equation}\label{eq:Fol_bound}
|(u\cdot \mathcal{G}_{t_i})\sphat\,(t_i\zeta_i)|\geq C\cdot (1+t_i^2)^{-N/2}
\end{equation}
for sufficiently large $i$.
\end{lemma}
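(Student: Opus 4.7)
The plan is to establish the contrapositive: if $\xi \notin \op{WF}_0(u)$, then for any sequences $\zeta_i \to \xi$ and $t_i \to \infty$, the quantity $|(u \cdot \mathcal{G}_{t_i})\sphat(t_i\zeta_i)|$ decays faster than any polynomial in $t_i$, which directly contradicts \eqref{eq:Fol_bound}. By the standard microlocal characterization of the wave front set at $0$, the hypothesis $\xi \notin \op{WF}_0(u)$ yields a cutoff $\varphi \in C_c^{\infty}(W)$ with $\varphi(0) \neq 0$ and an open conic neighborhood $V \subset W^*$ of $\xi$ such that $\widehat{\varphi u}$ is rapidly decreasing on $V$; explicitly, for each $N$ there is $C_N$ with $|\widehat{\varphi u}(\eta)| \leq C_N(1+|\eta|)^{-N}$ for $\eta \in V$. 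Fix such $\varphi$ and $V$ and decompose $u = \varphi u + (1-\varphi)u$.

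The contribution of $(1-\varphi)u$ is exponentially small. Since $1-\varphi$ vanishes on a ball of radius $\delta > 0$ around $0$, the function $(1-\varphi)\mathcal{G}_{t_i}$ and all of its derivatives admit bounds of the form $P(t_i)\, e^{-\delta^2 t_i/2}$ for a polynomial $P$ independent of $i$, so this product goes to $0$ in every Schwartz seminorm exponentially fast. Pairing against the tempered distribution $u$ and taking Fourier transform therefore yields
\[
\bigl|\bigl((1-\varphi)u\cdot\mathcal{G}_{t_i}\bigr)\sphat(\eta)\bigr| \leq C\, e^{-c t_i}
\]
uniformly in $\eta \in W^*$, for some $c > 0$.

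For the compactly supported piece $\varphi u$, I exploit the convolution identity
\[
(\varphi u \cdot \mathcal{G}_{t_i})\sphat(t_i\zeta_i)
= \bigl(\widehat{\varphi u} * \widehat{\mathcal{G}}_{t_i}\bigr)(t_i\zeta_i),
\]
where $\widehat{\mathcal{G}}_{t_i}(\eta) = (2\pi/t_i)^{n/2}\,e^{-|\eta|^2/(2t_i)}$. Choose an open cone $V'$ with $\xi \in V' \subset \overline{V'}\setminus\{0\} \subset V$, so that $\zeta_i \in V'$ for large $i$, and split the integral as $\eta \in t_iV$ versus $\eta \in W^* \setminus t_iV$. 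On the first region, the rapid decay of $\widehat{\varphi u}$ together with a rescaling $\eta = t_i\zeta_i + \sqrt{t_i}\,w$ that absorbs $\widehat{\mathcal{G}}_{t_i}$ into a fixed Gaussian of unit mass gives a bound of order $t_i^{-N}$ for every $N$. On the complementary region, the conic separation of $V'$ from the complement of $V$ implies $|t_i\zeta_i - \eta|^2 \geq c_1(|\eta|^2 + t_i^2)$ for a fixed $c_1 > 0$; the Gaussian then provides a factor $e^{-c_2 t_i}\cdot e^{-c_2|\eta|^2/t_i}$ which, against the at-most-polynomial growth of the tempered function $\widehat{\varphi u}$, produces an exponentially small contribution.

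The main technical obstacle will be the off-cone estimate: one must use the conic separation of $V'$ from $W^*\setminus V$ to convert the Gaussian weight $\widehat{\mathcal{G}}_{t_i}(t_i\zeta_i - \eta)$ into a genuine $t_i$-exponential gain uniformly in $\eta$, while still absorbing the polynomial growth of the tempered function $\widehat{\varphi u}$; the correct choice of $V'$ strictly inside $V$ and a careful matching of the Gaussian decay in $|\eta|^2/t_i$ against that polynomial growth are exactly what makes this work. Combining the three contributions—exponential decay from $(1-\varphi)u$, super-polynomial decay from the inside-cone piece, and exponential decay from the off-cone piece—yields $|(u\cdot\mathcal{G}_{t_i})\sphat(t_i\zeta_i)| = o\bigl((1+t_i^2)^{-N/2}\bigr)$ for every $N$, contradicting \eqref{eq:Fol_bound} and thus placing $\xi$ in $\op{WF}_0(u)$.
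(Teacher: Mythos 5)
Your argument takes a genuinely different route from the paper: the paper simply cites \cite[Theorem 3.22]{Fo89} (Folland's wave-packet characterization of the wave front set, with $\phi$ a Gaussian), whereas you reprove that criterion from scratch via the Fourier/convolution characterization of $\op{WF}_0$, a cutoff decomposition $u=\varphi u + (1-\varphi)u$, and an on-cone/off-cone split of $\widehat{\varphi u}\ast\widehat{\mathcal G}_{t_i}$. This buys self-containedness and makes transparent exactly where the conic separation and the $\sqrt{t_i}$-scale concentration of the Gaussian enter; the paper's approach buys brevity by outsourcing the whole estimate. The structure of your estimate (rescale $\eta=t_i\zeta_i+\sqrt{t_i}\,w$ on the cone, use $|t_i\zeta_i-\eta|^2\gtrsim t_i^2+|\eta|^2$ off the cone, absorb the tempered growth of $\widehat{\varphi u}$ into the Gaussian) is sound and is essentially the content of Folland's proof, so the two are morally the same computation viewed from different starting points.

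One localized gap worth fixing: you invoke a cutoff with only ``$\varphi(0)\neq 0$'' but then assert that $1-\varphi$ vanishes on a ball about $0$. Those are incompatible as written. You need $\varphi\equiv 1$ on a neighborhood of $0$, which is a legitimate strengthening of the standard characterization (since $\op{WF}(u)$ is closed, one can shrink the neighborhood of $0$ and replace $\varphi$ by a bump identically $1$ there while retaining rapid decay of $\widehat{\varphi u}$ in a slightly smaller cone $V$), but it should be stated. Relatedly, the claimed uniformity ``$\bigl|\bigl((1-\varphi)u\cdot\mathcal G_{t_i}\bigr)\sphat(\eta)\bigr|\le C e^{-ct_i}$ uniformly in $\eta$'' is not quite right since the tempered pairing produces a polynomial factor in $|\eta|$; but you only evaluate at $\eta=t_i\zeta_i$ with $|\eta|\sim t_i$, so the polynomial is harmless. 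With these two small repairs the argument goes through.
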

Lemma \ref{lem:FBI} is half of \cite[Theorem 3.22]{Fo89}
 with $f$ replaced by $u$ and $\phi$ replaced by $\mathcal{G}$.
We will apply Lemma \ref{lem:FBI} in the case $W=\sqrt{-1}\mathfrak{g}_{\R}^*$
 and $u=\theta(m)\cdot \mathcal{G}$.
The bilinear form $(\cdot, \cdot)$ we fixed above
 is negative definite on $\sqrt{-1}\mathfrak{g}_{\R}^*$.
For $\zeta\in \sqrt{-1}Z(\mathfrak{l}_{\mathbb{R}})^{*}_{\rm reg}$
 and $t>0$,
 it follows from Lemma~\ref{lem:SS} that 
\begin{equation}\label{eq:contour_integral}
\begin{split}
&(\theta(m)\cdot \mathcal{G}_{t+1})\sphat\,(t\zeta)\\
&=\int_{\pi(\mathcal{O},\Gamma)\in \op{supp}m}
 \int_{\eta\in\mathcal{C}(\mathcal{O},\mathfrak{q})}
 (\mathcal{G}_{t+1})\spcheck (\eta-t\zeta)\nu dm \\
&=c\int_{\pi(\mathcal{O},\Gamma)\in \op{supp}m}
 \int_{\eta\in\mathcal{C}(\mathcal{O},\mathfrak{q})}
 \frac{1}{\sqrt{t+1}}e^{(t\zeta-\eta,t\zeta-\eta)/2(t+1)}\nu dm.
\end{split}
\end{equation}
where the constant $c\neq 0$ depends only on the bilinear form $(\cdot,\cdot)$. 
We will estimate this integral and set $\zeta=\zeta_i$ and $t=t_i$ to prove
 the inequality \eqref{eq:Fol_bound}.
Note that this integral converges absolutely by part \eqref{SSpart1} of Lemma \ref{lem:SS}. In addition, since we wish to bound this integral as in \eqref{eq:Fol_bound}, we may safely ignore the constant $c$  and the factor $\frac{1}{\sqrt{t+1}}$ in what follows.

We estimate the integral as $t\to \infty$
 uniformly when $\zeta$ varies in a compact subset of
 $\sqrt{-1}Z(\mathfrak{l}_{\mathbb{R}})^{*}_{\rm reg}$.
Fix a compact set $V\subset \sqrt{-1}Z(\mathfrak{l}_{\mathbb{R}})^*_{\rm reg}$
 and suppose $\zeta\in V$.
We break up the integral \eqref{eq:contour_integral} into two pieces\\
\begin{align}\label{eq:positive}
&\int_{\pi(\mathcal{O},\Gamma)\in \op{supp}m}\int_{\substack{\eta\in\mathcal{C}(\mathcal{O},\mathfrak{q})\\ |t\zeta-\eta|\leq \delta t}}
 e^{(t\zeta-\eta,t\zeta-\eta)/2(t+1)}\nu dm\\ \label{eq:small}
+&\int_{\pi(\mathcal{O},\Gamma)\in \op{supp}m}\int_{\substack{\eta\in\mathcal{C}(\mathcal{O},\mathfrak{q})\\ |t\zeta-\eta|> \delta t}}e^{(t\zeta-\eta,t\zeta-\eta)/2(t+1)}\nu dm.
\end{align}
for some $\delta>0$. First, we wish to show that for every $\delta>0$, the size of the integral \eqref{eq:small} decays faster than any rational function of $t$ as $t\rightarrow \infty$. Then we will show that for sufficiently small $\delta>0$ and sufficiently large $t$, the imaginary part of the integral \eqref{eq:positive} is small relative to the real part of the integral \eqref{eq:positive}. Finally, we will show that the real part of the integral \eqref{eq:positive} is positive and bounded below by a rational function of $t$.

To analyze these integrals,
 we put $\eta':=\sqrt{-1}\IM\eta\in\sqrt{-1}\mathfrak{g}^*_{\mathbb{R}}$, and we expand
\begin{equation}\label{eq:Gaussian_expansion}
e^{(t\zeta-\eta,t\zeta-\eta)/2(t+1)}
=e^{(t\zeta-\eta',t\zeta-\eta')/2(t+1)}\cdot e^{-(t\zeta-\eta',\RE\eta)/(t+1)}\cdot e^{(\RE\eta,\RE\eta)/2(t+1)}.
\end{equation}
Now, we consider the integral \eqref{eq:small}. 
We observe $(t\zeta-\eta',\RE\eta)/(t+1)$
 is an imaginary number. 
Hence
\begin{equation}\label{eq:exp_bound2}
|e^{(t\zeta-\eta',\RE\eta)/(t+1)}|=1.
\end{equation}
In addition, there exists a constant $B>0$
 such that $|\RE\eta|\leq B$
 for all $\eta\in \mathcal{C}(\mathcal{O},\mathfrak{q})$
 and all $(\mathcal{O},\Gamma)$. 
Therefore,
\begin{equation}\label{eq:exp_bound3}
|e^{(\RE\eta,\RE\eta)/2(t+1)}|\leq e^{B^2/2(t+1)}\leq e^{B^2}.
\end{equation}

Plugging \eqref{eq:Gaussian_expansion}, \eqref{eq:exp_bound2}, and \eqref{eq:exp_bound3}
 into the integral \eqref{eq:small}, we obtain
\begin{align}\label{eq:small_integral}
&\left|\int_{\pi(\mathcal{O},\Gamma)\in \op{supp}m}
\int_{\substack{\eta\in\mathcal{C}(\mathcal{O},\mathfrak{q})\\ 
|t\zeta-\eta|> \delta t}}e^{(t\zeta-\eta,t\zeta-\eta)/2(t+1)}
\nu dm\right|\nonumber\\
& \leq 
 c_1\int_{\pi(\mathcal{O},\Gamma)\in \op{supp}m}
\int_{\substack{\eta\in\mathcal{C}(\mathcal{O},\mathfrak{q})\\
  |t\zeta-\eta|> \delta t}}
|e^{-|t\zeta-\eta'|^2/2(t+1)} \nu |dm
\end{align}
for some constant $c_1>0$ independent of $\zeta$, $\delta$, and $t$. To bound this latter integral, we will apply part \eqref{SSpart1} of Lemma \ref{lem:SS}
with 
\[\alpha(\eta)=e^{-|t\zeta-\eta'|^2/2(t+1)}.\] 
In order to apply part \eqref{SSpart1} of Lemma \ref{lem:SS},
 we need a lemma bounding the growth of our $\alpha(\eta)$ as a function of $t$.

\begin{lemma}\label{lem:A_bound}
For every $N,k\in \mathbb{N}$ and every $\delta>0$,
 there exist constants $B_{N,k,\delta}>0$ and $t_0>0$ such that
\begin{equation*}
\sup_{\substack{\eta\in \mathcal{C}(\mathcal{O},\mathfrak{q})\\ |t\zeta-\eta|\geq \delta t}}
 (1+|\eta'|^2)^{N/2}e^{-|t\zeta-\eta'|^2/2(t+1)}\leq
 \frac{B_{N,k,\delta}}{(1+t^2)^{k/2}}
\end{equation*}
for $t>t_0$.
The constants $B_{N,k,\delta}$ and $t_0$
 do not depend on $\zeta\in V$ or $\mathcal{O}$.
\end{lemma}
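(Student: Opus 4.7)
\medskip

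\noindent\textbf{Proof proposal for Lemma~\ref{lem:A_bound}.}
The plan is to exploit the super-polynomial decay of the Gaussian factor against the polynomial growth of $(1+|\eta'|^2)^{N/2}$, using the hypothesis $|t\zeta-\eta|\ge \delta t$ and the fact that $|\mathrm{Re}\,\eta|$ is uniformly bounded on every contour $\mathcal{C}(\mathcal{O},\mathfrak{q})$. Since $\zeta\in V$ is confined to a compact subset of $\sqrt{-1}Z(\mathfrak{l}_{\mathbb{R}})^*_{\mathrm{reg}}$, we can fix a uniform bound $|\zeta|\le C_V$. Recall $\mathrm{Re}\,\eta\in\mathfrak{g}_{\mathbb{R}}^*$ while $t\zeta$ and $\eta'=\sqrt{-1}\,\mathrm{Im}\,\eta$ lie in $\sqrt{-1}\mathfrak{g}_{\mathbb{R}}^*$, so the two components are orthogonal with respect to the bilinear form chosen above, giving
\begin{equation*}
|t\zeta-\eta|^2 \;=\; |\mathrm{Re}\,\eta|^2 + |t\zeta-\eta'|^2.
\end{equation*}
Combined with the uniform bound $|\mathrm{Re}\,\eta|\le B$, this yields $|t\zeta-\eta'|^2\ge \delta^2 t^2-B^2\ge \tfrac{1}{2}\delta^2 t^2$ once $t\ge t_0:=2B/\delta$.

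Next I would control $(1+|\eta'|^2)^{N/2}$ in terms of $|t\zeta-\eta'|$ and $t$ via the triangle inequality $|\eta'|\le |t\zeta-\eta'|+C_V t$, which after elementary manipulations gives a bound of the form
\begin{equation*}
(1+|\eta'|^2)^{N/2} \;\le\; D_N\bigl(1 + t^{N} + |t\zeta-\eta'|^{N}\bigr)
\end{equation*}
for a constant $D_N>0$. Substituting this into the expression to be estimated and splitting the exponent $-|t\zeta-\eta'|^2/2(t+1)$ into halves, I plan to bound each of the three resulting terms by sup$_{u\ge 0}$ of $u^M e^{-u^2/4(t+1)}$, which is at most $(2M(t+1))^{M/2}e^{-M/2}$. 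The remaining half of the exponent still supplies the factor $e^{-|t\zeta-\eta'|^2/4(t+1)}\le e^{-\delta^2 t^2/8(t+1)}$.

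Finally, since $e^{-\delta^2 t^2/8(t+1)}\le e^{-\delta^2 t/16}$ for $t\ge 1$, and a standard estimate $e^{-x}\le m!\,x^{-m}$ shows that the exponential decays faster than any polynomial in $t$, I can absorb any fixed polynomial factor $(t+1)^{N/2}$ into the Gaussian and obtain, for each fixed $k$, a bound of the shape
\begin{equation*}
\sup_{\substack{\eta\in\mathcal{C}(\mathcal{O},\mathfrak{q})\\ |t\zeta-\eta|\ge \delta t}}(1+|\eta'|^2)^{N/2}e^{-|t\zeta-\eta'|^2/2(t+1)} \;\le\; \frac{B_{N,k,\delta}}{(1+t^2)^{k/2}}
\end{equation*}
for all $t>t_0$ (enlarging $t_0$ if necessary depending on $N,k$), with constants depending only on $N$, $k$, $\delta$, $B$, and $C_V$, hence independent of $\zeta\in V$ and of $\mathcal{O}$. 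There is no serious obstacle here; the only care needed is in checking that the exceptional set $\{|t\zeta-\eta|\ge \delta t\}$ translates cleanly to an estimate on $|t\zeta-\eta'|$, which is handled by the orthogonality decomposition above.
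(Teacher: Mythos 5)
Your proof is correct and follows essentially the same strategy as the paper's (very terse) argument: use the boundedness of $\mathrm{Re}\,\eta$ to deduce $|t\zeta-\eta'|\gtrsim t$, and then let the Gaussian factor overwhelm the polynomial in $|\eta'|$. You are in fact more careful than the paper: you handle the regime where $|\eta'|\gg t$ explicitly by bounding $|\eta'|\le |t\zeta-\eta'|+C_V t$ and splitting the exponent, whereas the paper's one-line claim that the condition $|t\zeta-\eta|\ge\delta t$ forces $|\eta'|$ to be "at most of order $t$" is not literally true (that region is unbounded in $|\eta'|$); your argument supplies the missing case.
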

\begin{proof} 
Since $\zeta$ and $\eta-\eta'$ lies in a bounded set,
 $|t\zeta-\eta|\geq \delta t$ implies
 that $|\eta'|$ is at most of order $t$ when $t\to \infty$.
On the other hand, $|t\zeta-\eta'|$ is at least of order $t$.
Hence $e^{-|t\zeta-\eta'|^2/2(t+1)}$ decays exponentially when $t\to \infty$.
This shows the existence of the constant $B_{N,k,\delta}$ as in the lemma.
\end{proof}

Now, to bound \eqref{eq:small_integral}, we apply the bound in Lemma \ref{lem:A_bound} to Lemma~\ref{lem:integral_semialgebraic},
 where we set $M$ in Lemma~\ref{lem:integral_semialgebraic}
 to be the exponent $M_0$ in the polynomial growth bound on the measure $m$
 (see \eqref{eq:at_most_polynomial}). 
We deduce that for every $\delta>0$ and $k\in \mathbb{N}$,
 there exists a constant $B_{k,\delta}>0$ such that 
\begin{equation*}
\int_{\pi(\mathcal{O},\Gamma)\in \op{supp}m}
\int_{\begin{subarray}{l}\,\eta\in\mathcal{C}(\mathcal{O},\mathfrak{q})\\ |t\zeta-\eta|> \delta t\end{subarray}}
\bigl| e^{-|t\zeta-\eta'|^2/2(t+1)}\nu \bigr| dm
\leq \frac{B_{k,\delta}}{(1+t^2)^{k/2}}.
\end{equation*}
Combining with \eqref{eq:small_integral}, we obtain
\begin{equation} \label{eq:first_integral_bound}
\left|\int_{\pi(\mathcal{O},\Gamma)\in \op{supp}m}
\int_{\begin{subarray}{l}\,\eta\in\mathcal{C}(\mathcal{O},\mathfrak{q})\\ 
|t\zeta-\eta|> \delta t \end{subarray}}e^{(t\zeta-\eta,t\zeta-\eta)/2(t+1)}
\nu dm\right|
\leq  \frac{c_1 B_{k,\delta}}{(1+t^2)^{k/2}}.
\end{equation}
The constant $c_1B_{k,\delta}$ does not depend on $\zeta\in V$.

Next, we focus on the integral \eqref{eq:positive}
\[ \int_{\pi(\mathcal{O},\Gamma)\in \op{supp}m}
 \int_{\begin{subarray}{l}\,\eta\in\mathcal{C}(\mathcal{O},\mathfrak{q})\\
	 |t\zeta-\eta|\leq \delta t\end{subarray}}
e^{(t\zeta-\eta,t\zeta-\eta)/2(t+1)}\nu dm.\]
There are two parts to the integral, the function $e^{(t\zeta-\eta,t\zeta-\eta)/2(t+1)}$
 and the differential form $\nu$. 
We must analyze both separately. 
We begin to analyze the function $e^{(t\zeta-\eta,t\zeta-\eta)/2(t+1)}$
 by expanding it into three terms as in \eqref{eq:Gaussian_expansion}. 
Since $\RE\eta$ is bounded, we see that given $\epsilon'>0$,
 there exists $t_0>0$ such that whenever $t>t_0$, we have
\begin{equation}\label{eq:3term}
|e^{(\RE\eta,\RE\eta)/2(t+1)}-1|<\epsilon'
\end{equation} 
for all $\eta\in \mathcal{C}(\mathcal{O},\mathfrak{q})$. 
This bounds the third term in the expansion \eqref{eq:Gaussian_expansion}. 
Choose $B>0$ such that $|\RE\eta|\leq B$ for all
 $\eta\in \mathcal{C}(\mathcal{O},\mathfrak{q})$. 
If $|t\zeta-\eta|\leq \delta t$, then
\[\frac{|(t\zeta-\eta',\RE\eta)|}{2(t+1)}
 \leq \frac{|t\zeta-\eta'||\RE\eta|}{2(t+1)}
 \leq \frac{\delta t B}{2(t+1)}\leq \delta B.\]
Therefore, given $\epsilon'>0$, we may choose $\delta>0$ sufficiently small
 such that we have
\begin{equation}\label{eq:2term}
|e^{(t\zeta-\eta',\RE\eta)/2(t+1)}-1|<\epsilon'
\end{equation}
whenever $|t\zeta-\eta'|\leq \delta t$ and
 $\eta\in \mathcal{C}(\mathcal{O},\mathfrak{q})$.
This bounds the second term in the expansion \eqref{eq:Gaussian_expansion}. 
Since $|t\zeta-\eta'|^2 \in \mathbb{R}$, we note
\begin{equation}\label{eq:1term}
e^{-|t\zeta-\eta'|^2/2(t+1)}\in \mathbb{R}_{>0}.
\end{equation}
Define
\[f_{t\zeta}(\eta):=e^{(t\zeta-\eta,t\zeta-\eta)/2(t+1)}.\] 
Write $f_{t\zeta}=\RE f_{t\zeta}+\sqrt{-1}\IM f_{t\zeta}$
 with $\RE f_{t\zeta}, \IM f_{t\zeta}\in \mathbb{R}$. 

\begin{lemma}\label{lem:positive_function} There exist $t_0>0$ and $\delta_0>0$ such that whenever $t>t_0$, $\delta_0>\delta>0$, $\zeta\in V$,
 $\eta\in \mathcal{C}(\mathcal{O},\mathfrak{q})$ and
 $|t\zeta-\eta'|\leq \delta t$, 
 we have
\[|\IM f_{t\zeta}|<\frac{1}{5}\RE f_{t\zeta}.\]  
\end{lemma}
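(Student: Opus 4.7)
My plan is to exploit the three-factor decomposition \eqref{eq:Gaussian_expansion} of $f_{t\zeta}(\eta)$, and argue that two of those three factors are strictly positive real numbers, so that the entire phase of $f_{t\zeta}$ comes from the remaining middle factor, which has unit modulus. Once that is established, the bounds $|t\zeta-\eta'|\leq \delta t$ and $|\RE\eta|\leq B$ will control that phase by a quantity proportional to $\delta$, and a small choice of $\delta_0$ will force the imaginary part to be negligible compared with the real part.

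Concretely, I would first observe that factor one is positive real by \eqref{eq:1term}, and that factor three, namely $e^{(\RE\eta,\RE\eta)/2(t+1)}$, is positive real as well, since $\RE\eta\in\mathfrak{g}_{\mathbb{R}}^*$ and $(\cdot,\cdot)$ is positive definite there. Consequently, only factor two contributes to the argument of $f_{t\zeta}$.

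Next, I would invoke the observation (already used to justify \eqref{eq:exp_bound2}) that $(t\zeta-\eta',\RE\eta)$ is purely imaginary, because $t\zeta-\eta'\in \sqrt{-1}\mathfrak{g}_{\mathbb{R}}^*$ while $\RE\eta\in\mathfrak{g}_{\mathbb{R}}^*$ and the bilinear form is the complex-linear extension of a real form. Writing $t\zeta-\eta'=\sqrt{-1}v$ with $v\in\mathfrak{g}_{\mathbb{R}}^*$, the real Cauchy--Schwarz inequality yields
\[
|(t\zeta-\eta',\RE\eta)|=|(v,\RE\eta)|\leq |v|\cdot|\RE\eta|=|t\zeta-\eta'|\cdot|\RE\eta|\leq \delta t B,
\]
so that the argument of factor two, hence of $f_{t\zeta}$, has absolute value at most $\delta tB/(t+1)\leq \delta B$.

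The last step is purely trigonometric: write $f_{t\zeta}=re^{\sqrt{-1}\theta}$ with $r>0$ and $|\theta|\leq \delta B$, choose $\delta_0>0$ with $\delta_0 B<\arctan(1/5)$ (in particular $\delta_0 B<\pi/2$), and conclude that $\cos\theta>0$ and $|\tan\theta|<1/5$ whenever $0<\delta<\delta_0$; then $\RE f_{t\zeta}=r\cos\theta>0$ and $|\IM f_{t\zeta}|=r|\sin\theta|<\tfrac{1}{5}\RE f_{t\zeta}$. The constraint $t>t_0$ plays no essential role in this argument, so any $t_0>0$ (or $t_0=0$) works. The only real obstacle is bookkeeping---keeping straight which of the three factors in \eqref{eq:Gaussian_expansion} contribute to the modulus and which to the phase---rather than any genuine analytic estimate.
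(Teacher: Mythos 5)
Your proof is correct and follows essentially the same route as the paper: the paper's (unwritten) argument also rests on \eqref{eq:Gaussian_expansion} with \eqref{eq:1term}, \eqref{eq:2term}, \eqref{eq:3term}, namely that the first factor is a positive real and the other two factors are controllably close to $1$. Your version is marginally cleaner in that you observe the third factor is already positive real for all $t>0$ (since $(\RE\eta,\RE\eta)\geq0$), so the phase lives entirely in the unit-modulus middle factor and the lower bound $t_0$ on $t$ is in fact unnecessary; the paper instead uses \eqref{eq:3term} to push the third factor near $1$, which is where its $t_0$ comes from.
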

Lemma \ref{lem:positive_function} follows from the expansion \eqref{eq:Gaussian_expansion} together with \eqref{eq:3term}, \eqref{eq:2term}, \eqref{eq:1term}. 
Lemma \ref{lem:positive_function} is half of our analysis of the integral \eqref{eq:positive}. 
The other half involves analyzing the differential form $\nu$.
In the next section, we define a new real-valued differential form $\nu^{\op{O}}$
 on $\mathcal{C}(\mathcal{O},\mathfrak{q})$.
Then we bound the size of the differential form $\nu-\nu^{\op{O}}$
 and prove the following lemma.

\begin{lemma} \label{lem:diff_form1} 
There exist $t_0>0$ and $\delta_0>0$ such that for $t>t_0$, $\delta_0>\delta>0$,
 $\zeta\in V$, we have
\[|\nu-\nu^{\op{O}}|\leq \frac{1}{5}|\nu^{\op{O}}|\]
on  $\mathcal{C}(\mathcal{O},\mathfrak{q})\cap B_{\delta t}(t\zeta)$.
\end{lemma}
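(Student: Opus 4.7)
The plan is to exploit scaling: on $\mathcal{C}(\mathcal{O},\mathfrak{q})\cap B_{\delta t}(t\zeta)$ with $t$ large and $\delta$ small, a point $\eta=g\cdot\lambda+u\cdot\rho_{\mathfrak{l}}$ has a $G_{\mathbb{R}}$-orbital component $g\cdot\lambda$ of size comparable to $t$, while the compact-group piece $u\cdot\rho_{\mathfrak{l}}$ is uniformly bounded in $u\in U$. The form $\nu^{\op{O}}$, to be introduced in Section~\ref{sec:KKSform}, should be understood as the pullback to $\mathcal{C}(\mathcal{O},\mathfrak{q})$ of the real Kirillov-Kostant-Souriau volume form on the coadjoint orbit $\mathcal{O}=G_{\mathbb{R}}\cdot\lambda$ --- equivalently, as the ``leading part'' of $\nu$ obtained by evaluating $\langle\cdot,[\cdot,\cdot]\rangle$ at $g\cdot\lambda$ instead of at the full $\eta$. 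The lemma is then a quantitative comparison of the leading $t^n$ term against a bounded perturbation of order $t^{n-1}$.

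First I would establish the lower bound $|g\cdot\lambda|\geq ct$ on the region of interest. With $|\zeta|=1$, $V$ a compact subset of $\sqrt{-1}Z(\mathfrak{l}_{\mathbb{R}})^*_{\rm reg}$, and $|u\cdot\rho_{\mathfrak{l}}|\leq R$ uniformly in $u\in U$, the constraint $|t\zeta-\eta|\leq\delta t$ together with the triangle inequality forces $|g\cdot\lambda|\geq (1-\delta)t-R\geq ct$ once $t$ exceeds a threshold depending only on $V$ and $R$. Then I would split $\omega=\omega^{\op{O}}+\omega^{\op{rem}}$ using the formula $\omega_\eta(\op{ad}^*(X)\eta,\op{ad}^*(Y)\eta)=\langle\eta,[X,Y]\rangle$, with $\omega^{\op{O}}$ collecting the $g\cdot\lambda$-contribution and $\omega^{\op{rem}}$ the $u\cdot\rho_{\mathfrak{l}}$-contribution. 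Expanding the $n$-fold wedge by the binomial formula, each term containing $k\geq 1$ factors of $\omega^{\op{rem}}$ evaluates on a suitable local frame to a quantity bounded by $C|g\cdot\lambda|^{n-k}\leq Ct^{n-k}$, whereas the leading term $(\omega^{\op{O}})^{\wedge n}$ should be of size at least $ct^n$. Hence the ratio $|\nu-\nu^{\op{O}}|/|\nu^{\op{O}}|$ is $O(1/t)$, dropping below $1/5$ for $t>t_0$ after enlarging $t_0$ if necessary.

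The main obstacle is the lower bound $|\nu^{\op{O}}|\geq ct^n$, which requires uniform nondegeneracy. The upper bound $|(\omega^{\op{O}})^{\wedge n}|\leq C|g\cdot\lambda|^n$ is immediate, but the matching lower bound demands that the wedge does not accidentally degenerate. This is where the regularity hypothesis $\lambda\in\sqrt{-1}Z(\mathfrak{l}_{\mathbb{R}})^*_{\rm reg}$ enters: combined with a careful choice of local frame on $\mathcal{C}(\mathcal{O},\mathfrak{q})$ adapted to the splitting into $G_{\mathbb{R}}$-orbit and $U$-orbit directions, one can produce tangent vectors on which $\omega^{\op{O}}$ pairs like $\langle g\cdot\lambda,[\cdot,\cdot]\rangle$ with a uniformly nondegenerate bracket form. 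Both the explicit construction of such frames and the precise definition of $\nu^{\op{O}}$ will be carried out in Section~\ref{sec:KKSform}, and the present lemma will then follow by assembling the two bounds just described and choosing $t_0,\delta_0$ so that the relative error is at most $1/5$ uniformly in $\zeta\in V$ and in $(\mathcal{O},\Gamma)$ with $\pi(\mathcal{O},\Gamma)\in\op{supp}m$.
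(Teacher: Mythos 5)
Your decomposition misidentifies what $\nu^{\op{O}}$ is and, as a consequence, gets the scaling of the error wrong. You propose to take $\omega^{\op{O}}$ to be ``the $g\cdot\lambda$-contribution'' (equivalently, the pullback via $\varpi$ of the KKS form on $\mathcal{O}_{\lambda}=G_{\mathbb{R}}\cdot\lambda$) and to sweep the $u\cdot\rho_{\mathfrak{l}}$-contribution into the remainder. That cannot work: $\mathcal{C}(\mathcal{O},\mathfrak{q})$ is a $2n$-manifold fibered over the $2k$-manifold $\mathcal{O}_{\lambda}$ with $2l$-dimensional fibers $\simeq (U\cap L)/(U\cap J)$, where $n=k+l$, and the pairing $\langle g\cdot\lambda,[\cdot,\cdot]\rangle$ is \emph{degenerate} on the fiber directions (at the base point, $\lambda$ is central in $\mathfrak{l}$ and $[\mathfrak{u}\cap\mathfrak{l},\mathfrak{u}\cap\mathfrak{l}]\subset[\mathfrak{l},\mathfrak{l}]$). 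So your ``leading term'' $(\omega^{\op{O}})^{\wedge n}$ vanishes and there is nothing to compare against. The $\nu^{\op{O}}$ the paper actually uses is the block-diagonal form whose restriction to the fiber direction is $\omega^{U\cap L}_{\rho_{\mathfrak{l}}}$, with the two blocks orthogonal; the $\rho_{\mathfrak{l}}$-piece is an essential ingredient of the model form, not part of the error.

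Once the model form is set up correctly, the scaling exponents you quote are also off. With the block-diagonal $A^{\op{O}}$, $\op{Pf}(A^{\op{O}})$ grows like $|\lambda|^{k}$ (the base block is linear in $\lambda$; the fiber block is a $\lambda$-independent constant), not like $t^{n}$. More importantly, the entrywise estimates in Lemma~\ref{lem:form_estimate} give two kinds of perturbation: the base-block and cross-block entries are off by $O(1)$, contributing $O(|\lambda|^{k-1})$ to $\op{Pf}(A)-\op{Pf}(A^{\op{O}})$, but the fiber-block entries are only off by $O(\epsilon)$ (where $\epsilon$ measures how far $g,u'$ are from the identity), contributing $O(\epsilon\,|\lambda|^{k})$. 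This second term is \emph{not} $O(1/t)$ relative to $\op{Pf}(A^{\op{O}})$ --- it is of the same order in $t$, with a small coefficient. That is precisely why the lemma needs $\delta_{0}$ small and not merely $t_{0}$ large: one fixes $\epsilon$ so that $C_{3}|\lambda|^{k-1}+C_{4}\epsilon|\lambda|^{k}\leq\frac{1}{5}C_{1}|\lambda|^{k}$ for large $|\lambda|$, then invokes Lemma~\ref{lem:gu} to choose $\delta$ so that points of $\mathcal{C}(\mathcal{O}_{\lambda},\mathfrak{q})\cap B_{\delta t}(t\zeta)$ admit the representation $g\cdot\lambda+u\cdot\rho_{\mathfrak{l}}$ with $g\in B_{\epsilon}^{G_{\mathbb{R}}}$, $u'\in B_{\epsilon}^{U}$. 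Your concluding claim that the ratio is ``$O(1/t)$, dropping below $1/5$ for $t>t_{0}$'' therefore does not establish the statement; the argument has a genuine gap at the point where the $\delta$-uniformity is supposed to be used.
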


In the above lemma, the inequality $|\nu-\nu^{\op{O}}|\leq \frac{1}{5}|\nu^{\op{O}}|$ means 
\[|(\nu-\nu^{\op{O}})(Z_1,\ldots,Z_{2n})|\leq \frac{1}{5}|\nu^{\op{O}}(Z_1,\ldots,Z_{2n})|\]
 for all bases $\{Z_1,\ldots,Z_{2n}\}$ of $T_{\eta}\mathcal{C}(\mathcal{O},\mathfrak{q})$. Now, we combine Lemma \ref{lem:positive_function} and Lemma \ref{lem:diff_form1} to estimate the integral \eqref{eq:positive}. Define 
\begin{equation*}
I_{\zeta,\delta,t}^{\op{O}}
 :=\int_{\pi(\mathcal{O},\Gamma)\in \op{supp}m}
 \int_{\begin{subarray}{l}\,\eta\in\mathcal{C}(\mathcal{O},\mathfrak{q})\\ |t\zeta-\eta|\leq \delta t\end{subarray}}
 (\RE f_{t\zeta})\nu^{\op{O}}dm.
\end{equation*}

\begin{lemma}\label{lem:integral_estimate} There exist $t_0>0$ and $\delta_0>0$ such that whenever $\delta_0>\delta>0$ and $t>t_0$, we have
\[|I_{\zeta,\delta,t}-I_{\zeta,\delta,t}^{\op{O}}|\leq \frac{1}{2}I_{\zeta,\delta,t}^{\op{O}}.\]
\end{lemma}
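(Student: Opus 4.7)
The plan is to estimate the integrand $f_{t\zeta}\nu - (\RE f_{t\zeta})\nu^{\op{O}}$ pointwise on $\mathcal{C}(\mathcal{O},\mathfrak{q})\cap B_{\delta t}(t\zeta)$ by a small multiple of $(\RE f_{t\zeta})\,\nu^{\op{O}}$ and then integrate. Writing $f_{t\zeta}=\RE f_{t\zeta}+\sqrt{-1}\,\IM f_{t\zeta}$ and decomposing $\nu=\nu^{\op{O}}+(\nu-\nu^{\op{O}})$, one has the identity
\[
f_{t\zeta}\nu - (\RE f_{t\zeta})\nu^{\op{O}}
 = (\RE f_{t\zeta})(\nu-\nu^{\op{O}}) + \sqrt{-1}\,(\IM f_{t\zeta})\,\nu.
\]

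First I would apply Lemma~\ref{lem:diff_form1} to get $|\nu-\nu^{\op{O}}|\leq \tfrac{1}{5}|\nu^{\op{O}}|$ on the domain of integration, which in turn yields $|\nu|\leq \tfrac{6}{5}|\nu^{\op{O}}|$ by the triangle inequality. Next, Lemma~\ref{lem:positive_function} gives $|\IM f_{t\zeta}|\leq \tfrac{1}{5}\RE f_{t\zeta}$, and we have already noted $\RE f_{t\zeta}\geq 0$. Substituting these into the displayed identity produces the pointwise estimate
\[
\bigl|f_{t\zeta}\nu - (\RE f_{t\zeta})\nu^{\op{O}}\bigr|
 \leq \tfrac{1}{5}\RE f_{t\zeta}\,|\nu^{\op{O}}|
 + \tfrac{1}{5}\cdot\tfrac{6}{5}\RE f_{t\zeta}\,|\nu^{\op{O}}|
 = \tfrac{11}{25}\RE f_{t\zeta}\,|\nu^{\op{O}}|,
\]
valid uniformly for $\zeta\in V$, $\pi(\mathcal{O},\Gamma)\in \op{supp}m$, and all $\eta$ in the prescribed ball, once $t_0$ and $\delta_0$ are chosen as the larger (resp.\ smaller) of those provided by the two lemmas.

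Finally I would integrate this bound over $\mathcal{C}(\mathcal{O},\mathfrak{q})\cap B_{\delta t}(t\zeta)$ against $dm$. Since $\nu^{\op{O}}$ is real-valued and, by its construction in the next section, is oriented so that it defines a positive measure on the contour (which is what makes $I_{\zeta,\delta,t}^{\op{O}}$ a genuine positive real number worth comparing against), we have $\int g\,\nu^{\op{O}}=\int g\,|\nu^{\op{O}}|$ for any $g\geq 0$. Pulling the absolute value inside the integral in the standard way yields
\[
|I_{\zeta,\delta,t}-I_{\zeta,\delta,t}^{\op{O}}|
 \leq \tfrac{11}{25}\,I_{\zeta,\delta,t}^{\op{O}}
 < \tfrac{1}{2}\,I_{\zeta,\delta,t}^{\op{O}},
\]
as claimed.

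The main obstacle here is really delegated to Lemmas~\ref{lem:positive_function} and~\ref{lem:diff_form1}: once both are available, the present statement is a bookkeeping exercise in triangle inequalities. What must be checked is that the two constants $\tfrac{1}{5}$ in those lemmas were chosen tightly enough so that $\tfrac{1}{5}+\tfrac{1}{5}\cdot\tfrac{6}{5}=\tfrac{11}{25}<\tfrac{1}{2}$; if either factor were relaxed much beyond $\tfrac{1}{5}$, one would have to sharpen the inputs.
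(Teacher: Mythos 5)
Your argument is correct and is essentially the same as the paper's: both apply Lemma~\ref{lem:positive_function} and Lemma~\ref{lem:diff_form1} to obtain the pointwise estimate $|f_{t\zeta}\nu-(\RE f_{t\zeta})\nu^{\op{O}}|\leq \bigl(\tfrac{1}{5}\cdot\tfrac{6}{5}+\tfrac{1}{5}\bigr)(\RE f_{t\zeta})|\nu^{\op{O}}|=\tfrac{11}{25}(\RE f_{t\zeta})|\nu^{\op{O}}|<\tfrac{1}{2}(\RE f_{t\zeta})|\nu^{\op{O}}|$, and then integrate using the positivity of $(\RE f_{t\zeta})\nu^{\op{O}}$. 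The only cosmetic difference is that you make the algebraic identity behind the triangle-inequality split explicit, whereas the paper passes directly to the two-term bound.
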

In the next section,
 we will see that $\nu^{\op{O}}$ is positive with respect to
 the given orientation of $\mathcal{C}(\mathcal{O},\mathfrak{q})$.
Using Lemma \ref{lem:positive_function} and Lemma \ref{lem:diff_form1}, we have the pointwise estimate
\begin{align*}
|f_{t\zeta}\nu-(\RE f_{t\zeta})\nu^{\op{O}}|&\leq |\IM f_{t\zeta}||\nu|+|\RE f_{t\zeta}(\nu-\nu^{\op{O}})|\\
&\leq \frac{1}{5}|\RE f_{t\zeta}| \cdot \frac{6}{5}|\nu^{\op{O}}|
 +|\RE f_{t\zeta}|\cdot \frac{1}{5}|\nu^{\op{O}}|\\
&\leq \frac{1}{2}|\RE f_{t\zeta}||\nu^{\op{O}}|.
\end{align*}
Combining this pointwise estimate with the positivity of $(\RE f_{t\zeta})\nu^{\op{O}}$ yields Lemma~\ref{lem:integral_estimate}.

Next, define
\begin{equation*}
I_{\zeta,\delta,t}^{\op{O},\frac{1}{2}}:=
\int_{\pi(\mathcal{O},\Gamma)\in \op{supp}m}
\int_{\begin{subarray}{l} \,\eta\in\mathcal{C}(\mathcal{O},\mathfrak{q})\\ |t\zeta-\eta|\leq \delta t^{1/2}\end{subarray}}
(\RE f_{t\zeta})\nu^{\op{O}}dm.
\end{equation*}
The following lemma will be proved in the next section.

\begin{lemma}\label{lem:form_lower_bound}
For any positive numbers $\delta>\delta'>0$, there exist $t_0$ and $C>0$ such that
\[\int_{\begin{subarray}{l}\,
 \eta\in\mathcal{C}(\mathcal{O}_{\lambda},\mathfrak{q})\\
 |t\zeta-\eta|\leq \delta t^{1/2}\end{subarray}} \nu^{\op{O}}
 \geq C \]
if $\zeta\in V$, $\lambda\in \sqrt{-1}Z(\mathfrak{l}_{\mathbb{R}})^{*,\mathfrak{q}}_{\rm gr}$,
 $|t\zeta-\lambda|< \delta' t^{1/2}$ and $t>t_0$.
\end{lemma}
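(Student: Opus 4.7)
My plan is a rescaling argument exploiting the fact that $|\lambda|\sim t$ while the ball of integration has radius only $\delta t^{1/2}$. First, I would parametrize a neighborhood of $\lambda+\rho_{\mathfrak{l}}$ inside $\mathcal{C}(\mathcal{O}_\lambda,\mathfrak{q})$ by a local diffeomorphism $\Phi_\lambda\colon \mathfrak{v}\to \mathcal{C}(\mathcal{O}_\lambda,\mathfrak{q})$, where $\mathfrak{v}$ is a transversal, defined by exponential coordinates, inside the set of pairs $(X,Y)\in \mathfrak{g}_{\mathbb{R}}\oplus \mathfrak{u}$ subject to the infinitesimal constraint coming from $\op{Ad}(\exp X)\mathfrak{q}=\op{Ad}(\exp Y)\mathfrak{q}$. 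Because $\zeta$ varies in a compact subset $V\subset \sqrt{-1}Z(\mathfrak{l}_{\mathbb{R}})^*_{\rm reg}$ and the perturbation $|t\zeta-\lambda|<\delta' t^{1/2}$ is negligible next to $|\lambda|$, all relevant $\lambda$ have stabilizer $\mathfrak{l}_{\mathbb{R}}$ and the transversal $\mathfrak{v}$ can be chosen uniformly.

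Next, I would pass to the rescaled coordinates $(\tilde X,\tilde Y)=t^{1/2}(X,Y)$ and set $\tilde\Phi_{\lambda,t}(\tilde X,\tilde Y):=\Phi_\lambda(t^{-1/2}(\tilde X,\tilde Y))$. A Taylor expansion gives
\[
\tilde\Phi_{\lambda,t}(\tilde X,\tilde Y)
 = \lambda+\rho_{\mathfrak{l}}
 + t^{-1/2}\bigl(\op{ad}^*(\tilde X)\lambda + \op{ad}^*(\tilde Y)\rho_{\mathfrak{l}}\bigr)
 + O(t^{-1}).
\]
Since $\op{ad}^*(\tilde X)\lambda$ is comparable to $t\cdot \op{ad}^*(\tilde X)\zeta$ by the hypothesis on $\lambda$, the rescaled map moves points by $O(t^{1/2})$ when $(\tilde X,\tilde Y)$ ranges over a fixed bounded set. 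Choosing a sufficiently small fixed compact neighborhood $K\subset \mathfrak{v}$ of the origin, the triangle inequality combined with $|t\zeta-\lambda|<\delta' t^{1/2}$ then forces $\tilde\Phi_{\lambda,t}(K)\subset \{\eta:|t\zeta-\eta|\leq \delta t^{1/2}\}$ once $t$ is large enough, so it suffices to bound $\int_K \tilde\Phi_{\lambda,t}^*\nu^{\op{O}}$ from below.

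Then I would analyze the pullback of the form. Since $\nu^{\op{O}}$ is (up to lower order terms handled by Lemma~\ref{lem:diff_form1}) built from the Kirillov--Kostant--Souriau symplectic form, and each orbit tangent vector $\op{ad}^*(X)\lambda$ scales as $|\lambda|\sim t$, the top form $\Phi_\lambda^*\nu^{\op{O}}$ carries a factor of $t^n$; the change-of-variable Jacobian $t^{-n}$ from $(X,Y)\mapsto t^{-1/2}(\tilde X,\tilde Y)$ cancels this exactly. A direct computation then shows that $\tilde\Phi_{\lambda,t}^*\nu^{\op{O}}$ converges, locally uniformly on $\mathfrak{v}$, to a smooth limiting $2n$-form $\omega_\infty^\zeta$ that depends continuously on $\zeta\in V$, is independent of $\rho_{\mathfrak{l}}$ in its leading part, and is positive with respect to the orientation. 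Uniform convergence on the compact $K$ yields
\[
\int_K \tilde\Phi_{\lambda,t}^*\nu^{\op{O}} \;\longrightarrow\; \int_K \omega_\infty^\zeta \;>\; 0
\]
as $t\to\infty$, uniformly in $\zeta\in V$ and $\lambda$ satisfying the hypothesis, and I would take $C$ to be half of $\inf_{\zeta\in V}\int_K \omega_\infty^\zeta$.

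The main obstacle is verifying that $\omega_\infty^\zeta$ is genuinely nondegenerate. This reduces to showing that the linear map $(\tilde X,\tilde Y)\mapsto \op{ad}^*(\tilde X)\zeta + \op{ad}^*(\tilde Y)\rho_{\mathfrak{l}}$ on $\mathfrak{v}$ has full rank $2n$ and that it interacts correctly with the explicit formula for $\nu^{\op{O}}$ to be given in the next section. Nondegeneracy should ultimately follow from the fact that $\mathcal{C}(\mathcal{O}_\lambda,\mathfrak{q})$ is a genuine $2n$-dimensional real submanifold of the regular coadjoint orbit through $\lambda+\rho_{\mathfrak{l}}$ on which $\omega^{\wedge n}$ is nondegenerate, but actually extracting a bound uniform in $\zeta\in V$ and in $\lambda$ within the allowed shell requires careful comparison of the tangent spaces at $\lambda+\rho_{\mathfrak{l}}$ with those at $t\zeta+\rho_{\mathfrak{l}}$.
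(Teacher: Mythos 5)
Your rescaling argument has a genuine gap. The uniform rescaling $(\tilde X,\tilde Y)=t^{1/2}(X,Y)$ treats base and fiber directions on equal footing, but the contour $\mathcal{C}(\mathcal{O}_\lambda,\mathfrak{q})$ is anisotropic in exactly the way that matters here. Decompose $2n=2k+2l$ with $2k=\dim_{\R}\mathcal{O}_\lambda$ and $2l=\dim_{\R}(U\cap L)/(U\cap J)$. On the base directions, $\omega^{G_\R}_\lambda(\op{ad}^*(X_1)\lambda,\op{ad}^*(X_2)\lambda)=\lambda([X_1,X_2])$ scales like $|\lambda|\sim t$, so $\Phi_\lambda^*\nu^{\op{O}}$ picks up $t^k$ from the base — but the fiber form $\nu^{U\cap L}_{\rho_{\mathfrak{l}}}$ is a fixed form of $\rho_{\mathfrak{l}}$-size, independent of $\lambda$ and $t$. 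So $\Phi_\lambda^*\nu^{\op{O}}$ carries a factor of order $t^k$, not $t^n=t^{k+l}$ as you claimed. Against your Jacobian $t^{-n}$, the net scaling is $t^{-l}$, so $\tilde\Phi_{\lambda,t}^*\nu^{\op{O}}\to 0$ locally uniformly and the proposed limit $\omega_\infty^\zeta$ is degenerate (identically zero in the fiber directions) whenever $l>0$, i.e.\ whenever $\mathfrak{l}$ is strictly larger than a Cartan subalgebra. Your final lower bound would then be $0$, not a positive constant $C$.

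The paper avoids this by not scaling at all in the fiber. Using the fiber bundle $\varpi\colon\mathcal{C}(\mathcal{O}_\lambda,\mathfrak{q})\to\mathcal{O}_\lambda$, it observes that the fiber volume $\int_{\varpi^{-1}(pt)}\nu^{U\cap L}_{\rho_{\mathfrak{l}}}=c$ is a $\lambda$-independent constant, reducing everything to $c\int_B\nu^{G_\R}_\lambda$ over a ball $B\subset\mathcal{O}_\lambda$. The scaling $\psi\colon X\mapsto\exp(X)\cdot\lambda$ is then applied only in the $2k$ base directions, giving $|\psi^*\nu^{G_\R}_\lambda|\geq C_2|\lambda|^k|dx|$ on a fixed coordinate ball of radius $\epsilon'$, and the ball $\varpi^{-1}(B_{\epsilon|\lambda|}(\lambda))$ fits inside $B_{\delta t^{1/2}}(t\zeta)$ by the exact choice $\delta t^{1/2}=\epsilon|\lambda|+\max_{u\in U}|u\cdot\rho_{\mathfrak{l}}|+\delta't^{1/2}$, which forces $\epsilon\sim t^{-1/2}$, so $\epsilon^{2k}|\lambda|^k$ stays bounded below. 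Salvaging your argument would require rescaling $X$ by $t^{1/2}$ while leaving $Y$ unscaled; but then the $\tilde Y$-region is not a fixed compact $K$ (the whole fiber is always inside the $\delta t^{1/2}$-ball once $t$ is large, precisely because of the bounded $\max_u|u\cdot\rho_{\mathfrak{l}}|$ term), which is effectively the paper's fiber bundle computation in different notation. The same anisotropy issue infects your sentence that $\omega_\infty^\zeta$ is ``independent of $\rho_{\mathfrak{l}}$ in its leading part'': the fiber part of $\nu^{\op{O}}$ is entirely $\rho_{\mathfrak{l}}$-dependent and is what provides the positive constant factor $c$; if the limit were truly $\rho_{\mathfrak{l}}$-independent, it would be degenerate along the fiber.
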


We now complete the proof of Lemma~\ref{lem:inclusion2}.
Let $V$ be a compact neighborhood of $\xi$ in
 $\sqrt{-1}Z(\mathfrak{l}_{\mathbb{R}})^{*}_{\rm reg}$.
Then $\zeta_i\in V$ for sufficiently large $i$.
Take $\delta>0$ sufficiently small so it satisfies 
 $\delta<\delta_0$ in Lemma~\ref{lem:integral_estimate}.
To estimate $I_{\delta,t}^{\op{O},\frac{1}{2}}$,
 we see that $\RE f_{t\zeta}\geq C_{\delta}$
 if $|t\zeta-\eta|\leq \delta t^{1/2}$
 for a constant $C_{\delta}$.
Hence
\[
I_{\delta,t}^{\op{O},\frac{1}{2}}
\geq 
C_{\delta}
\int_{\pi(\mathcal{O},\Gamma)\in \op{supp}m}
\int_{\begin{subarray}{l}\,\eta\in\mathcal{C}(\mathcal{O},\mathfrak{q})\\ |t\zeta-\eta|\leq \delta t^{1/2}\end{subarray}}
\nu^{\op{O}}dm.
\]
Then by applying Lemma~\ref{lem:form_lower_bound} to $\zeta=\zeta_i$ and $t=t_i$, we have
\begin{equation*}
\int_{\pi(\mathcal{O},\Gamma)\in \op{supp}m}
\int_{\begin{subarray}{l}\,\eta\in\mathcal{C}(\mathcal{O},\mathfrak{q})\\
 |t_i\zeta_i-\eta|\leq \delta t_i^{1/2}\end{subarray}}
\nu^{\op{O}}dm
\geq 
C \int_{\begin{subarray}{l}\,\pi(\mathcal{O}_{\lambda},\Gamma)\in \op{supp}m\\
 |t_i\zeta_i-\lambda|<\delta't_i^{1/2}\end{subarray}}dm.
\end{equation*}
When $i$ is sufficiently large, we have
 $\delta't_i^{1/2}>1$.
Hence we have 
\begin{equation*}
\int_{\begin{subarray}{l}\,\pi(\mathcal{O}_{\lambda},\Gamma)\in \op{supp}m\\
 |t_i\zeta_i-\lambda|<\delta't_i^{1/2}\end{subarray}}dm
\geq \int_{\begin{subarray}{l}\, \pi(\mathcal{O}_{\lambda},\Gamma)\in \op{supp}m\\
 |t_i\zeta_i-\lambda|<1 \end{subarray}}dm
= p_*m(B_1(t_i\zeta_i))\geq 2^{-i-1}>t_i^{-1}
\end{equation*}
by \eqref{eq:measure_def2}.
Since $(\RE f_{t_i\zeta_i})\nu^{\op{O}}$ is positive, we have
$I_{\zeta_i,\delta,t_i}^{\op{O},\frac{1}{2}}\leq I_{\zeta_i,\delta,t_i}^{\op{O}}$.
Therefore, 
\[
I_{\zeta_i,\delta,t_i}^{\op{O}}\geq C_{\delta} C \cdot t_i^{-1}
\]
for sufficiently large $i$.
Combining with 
 \eqref{eq:contour_integral}, \eqref{eq:first_integral_bound}
 and Lemma~\ref{lem:integral_estimate},
 we deduce that there exists a constant $C>0$ such that
\[|(\theta(m)\cdot \mathcal{G}_{t_i+1})\sphat\,(t_i\zeta_i)|
 \geq C t_i^{-3/2}\] for sufficiently large $i$. 
By Lemma \ref{lem:FBI}, we have $\xi\in \op{WF}_0(\theta(m))$. 
Therefore, we obtain \eqref{eq:inclusion2a}
 and then Lemma \ref{lem:inclusion2}.


\section{Estimate of Kirillov-Kostant-Souriau form}\label{sec:KKSform}

The purpose of this section is to estimate the volume form
 on the contour $\mathcal{C}(\mathcal{O},\mathfrak{q})$
 defined by the Kirillov-Kostant-Souriau symplectic form
 and to prove Lemma~\ref{lem:diff_form1} and Lemma~\ref{lem:form_lower_bound}.

Recall that for an coadjoint orbit
 $\mathcal{O}_{\lambda}= G_{\mathbb{R}}\cdot \lambda$
 with $\lambda\in \sqrt{-1}Z(\mathfrak{l}_{\mathbb{R}})^*_{\rm gr}$, 
 and a polarization $\mathfrak{q}$, 
 the contour $\mathcal{C}(\mathcal{O}_{\lambda},\mathfrak{q})$ is defined as 
\[
\mathcal{C}(\mathcal{O}_{\lambda},\mathfrak{q})=
\{g\cdot \lambda+u\cdot \rho_{\mathfrak{l}}\mid
g\in G_{\mathbb{R}},\ u\in U,\ g\cdot \mathfrak{q}=u\cdot \mathfrak{q}\},
\]
which is a closed submanifold of the complex coadjoint orbit
 $G \cdot (\lambda+\rho_{\mathfrak{l}})$.
The tangent space of $\mathcal{C}(\mathcal{O}_{\lambda},\mathfrak{q})$ is given as
\[
T_{g\cdot \lambda+u\cdot \rho_{\mathfrak{l}}}
 \mathcal{C}(\mathcal{O}_{\lambda},\mathfrak{q})
=\{\op{ad}^*(X)(g\cdot \lambda)+\op{ad}^*(Y)(u\cdot \rho_{\mathfrak{l}})
 \mid X\in \mathfrak{g}_{\mathbb{R}},\ Y\in \mathfrak{u},\ X-Y\in g\cdot \mathfrak{q}\}.
\]
Then for each such $X$ and $Y$, there exists $Z\in \mathfrak{g}$
 such that 
\[
\op{ad}^*(X)(g\cdot \lambda)+\op{ad}^*(Y)(u\cdot \rho_{\mathfrak{l}})
=\op{ad}^*(Z)(g\cdot \lambda+u\cdot \rho_{\mathfrak{l}}).
\]
Recall that the Kirillov-Kostant-Souriau symplectic form $\omega$ on
 the complex coadjoint orbit $G\cdot(\lambda+\rho_{\mathfrak{l}})$ is defined by 
\[\omega_{\eta}(\op{ad}^*(Z)(\eta),\op{ad}^*(Z')(\eta)):=\eta([Z,Z'])\]
and then we defined a complex-valued $2n$-form 
$\nu:=(2\pi\sqrt{-1})^{-n}(n!)^{-1}\omega^{\wedge n}$,
 where $2n$ is the dimension of the orbit
 $G\cdot(\lambda+\rho_{\mathfrak{l}})$.

Let us define another $2$-form $\omega^O$
 on $\mathcal{C}(\mathcal{O}_{\lambda},\mathfrak{q})$.
Recall from \cite{HO20} that we have a fiber bundle structure
\begin{equation}\label{eq:fiber_bundle}
\varpi \colon
\mathcal{C}(\mathcal{O}_{\lambda},\mathfrak{q})
\ni g\cdot \lambda+u\cdot \rho_{\mathfrak{l}} \mapsto
 g\cdot \lambda\in \mathcal{O}_{\lambda}.
\end{equation}
The fiber over $\lambda$ is identified with
$(U\cap L)\cdot\rho_{\mathfrak{l}} \simeq (U\cap L)/(U\cap J)$.
For any $g_0\in G_{\mathbb{R}}$, there exists $u_0\in U$
 such that $g_0\cdot \mathfrak{q} = u_0\cdot \mathfrak{q}$.
Then the fiber $\varpi^{-1}(g_0\cdot \lambda)$ is identified with 
$(u_0(U\cap L))\cdot\rho_{\mathfrak{l}}$
and then with $(U\cap L)\cdot\rho_{\mathfrak{l}}$
 by the action of $u_0^{-1}$.

Let $\omega^{G_{\mathbb{R}}}_\lambda$
 (resp.\ $\omega^{U\cap L}_{\rho_{\mathfrak{l}}}$)
 denote the Kirillov-Kostant-Souriau form on the real coadjoint orbit
 $\mathcal{O}_{\lambda}=G_{\mathbb{R}}\cdot \lambda$
 (resp.\ $(U\cap L)\cdot \rho_{\mathfrak{l}}$).
To define $\omega^O$, we will decompose the tangent space 
$T_{\eta}\mathcal{C}(\mathcal{O}_{\lambda},\mathfrak{q})$
at $\eta=g\cdot \lambda+u\cdot \rho_{\mathfrak{l}}$ as
\[T_{\eta}\mathcal{C}(\mathcal{O}_{\lambda},\mathfrak{q})
= T_{\eta}^b\mathcal{C} \oplus T_{\eta}^f \mathcal{C}.\]
We define  $T_{\eta}^f \mathcal{C}$
 as the vectors that are tangent to the fiber of $\varpi$.
In other words,
\[
T_{\eta}^f \mathcal{C}
=\{\op{ad}^*(Y)(u\cdot \rho_{\mathfrak{l}})
 \mid Y\in \mathfrak{u}\cap (u\cdot \mathfrak{q})\}.
\]
To define $T_{\eta}^b \mathcal{C}$,
 consider the natural maps
\[
\mathfrak{g}_{\mathbb{R}}\to
\mathfrak{g} \to \mathfrak{g}/(g\cdot\mathfrak{q})
 \simeq \mathfrak{u}/(\mathfrak{u}\cap (g\cdot\mathfrak{q}))
\simeq (\mathfrak{u}\cap (g\cdot\mathfrak{q}))^{\perp},
\]
where $(\mathfrak{u}\cap (g\cdot\mathfrak{q}))^{\perp}$ is the orthogonal complement
 of $\mathfrak{u}\cap (g\cdot\mathfrak{q})$ in $\mathfrak{u}$
 with respect to an invariant form on $\mathfrak{u}$, which we fix now.
Write 
\[\varphi\colon \mathfrak{g}_{\mathbb{R}}\to (\mathfrak{u}\cap (g\cdot\mathfrak{q}))^{\perp}\]
for the composite map.
Then $X-\varphi(X)\in g\cdot \mathfrak{q}$ for any $X\in\mathfrak{g}_{\mathbb{R}}$.
Define
\[T_{\eta}^b \mathcal{C}
=\{\op{ad}^*(X)(g\cdot \lambda)+\op{ad}^*(\varphi(X))(u\cdot \rho_{\mathfrak{l}})
 \mid X\in \mathfrak{g}_{\mathbb{R}}\}.\]
$T_{\eta}^b \mathcal{C}$ can be identified with
 $T_{g\cdot\lambda} \mathcal{O}_{\lambda}$ via $\varpi$.
Define $\omega^O$ as the $2$-form on $\mathcal{C}(\mathcal{O}_{\lambda},\mathfrak{q})$ as 
\begin{equation*}
\omega^O|_{T_{\eta}^b \mathcal{C}} = \omega^{G_{\mathbb{R}}}_{\lambda},\quad
\omega^O|_{T_{\eta}^f \mathcal{C}} = \omega^{U\cap L}_{\rho_{\mathfrak{l}}},\quad
\omega^O(T_{\eta}^b \mathcal{C},T_{\eta}^f \mathcal{C})=0.
\end{equation*}
Here, we use the identifications
$T_{\eta}^b \mathcal{C}\simeq T_{g\cdot\lambda} \mathcal{O}_{\lambda}$
 and $\varpi^{-1}(g\cdot \lambda)\simeq \varpi^{-1}(\lambda)$.
Since $\lambda\in \sqrt{-1}\mathfrak{g}_{\mathbb{R}}^*$
 and $\rho_{\mathfrak{l}}\in \sqrt{-1}(\mathfrak{u}\cap\mathfrak{l}_{\R})^*$,
 the $2$-form $\omega^O$ is purely imaginary.
Then define a real-valued $2n$-form $\nu^O$ on
 $\mathcal{C}(\mathcal{O}_{\lambda},\mathfrak{q})$ as
\[
\nu^O:=\frac{(\omega^O)^{\wedge n}}{(2\pi\sqrt{-1})^n n!}.
\]
In \cite[Section 3.1]{HO20} an orientation on 
 $\mathcal{C}(\mathcal{O}_{\lambda},\mathfrak{q})$
 is defined in terms of symplectic forms $\omega^{G_{\mathbb{R}}}_{\lambda}$
 and $\omega^{U\cap L}_{\rho_{\mathfrak{l}}}$
 and the fiber bundle structure $\varpi$.
Then it directly follows from definition that $\nu^O$ is positive
 with respect to that orientation.

In the following, we estimate the differences $\omega-\omega^O$ and $\nu-\nu^O$
 to prove Lemma~\ref{lem:diff_form1}.

As in the previous section, we fix an inner product on $\mathfrak{g}$
 and let $|\cdot|$ denote the corresponding norm on $\mathfrak{g}$
 and on $\mathfrak{g}^*$.
For $A\in \op{End}(\mathfrak{g}^*)$
 let $\|A\|$ denote the corresponding operator norm.

We fix a compact set $V\subset \sqrt{-1}Z(\mathfrak{l}_{\mathbb{R}})^{*,\mathfrak{q}}$
 throughout this section.
We will estimate $\nu-\nu^O$ on
 $\mathcal{C}(\mathcal{O},\mathfrak{q})\cap B_{\delta t}(t\zeta)$,
 which is an open subset of $\mathcal{C}(\mathcal{O},\mathfrak{q})$, 
 for any $\zeta\in V$ and any $\mathcal{C}(\mathcal{O},\mathfrak{q})$
 when $\delta$ is sufficiently small and $t$ is sufficiently large.
Here, $B_{\delta t}(t\zeta)$ denotes the open ball with radius $\delta t$
 and center $t\zeta$ in $\mathfrak{g}$ with respect to our fixed norm on $\mathfrak{g}$.
For $\epsilon>0$, let
\begin{align*}
&B_{\epsilon}^{G_{\mathbb{R}}}
 :=\left\{g\in G_{\mathbb{R}}\mid \|\op{Ad}^*(g)-\it{id}_{\mathfrak{g}^*}\|<\epsilon\right\},\\ 
&B_{\epsilon}^{U}:=\left\{u\in U\mid \|\op{Ad}^*(u)-\it{id}_{\mathfrak{g}^*}\|<\epsilon\right\},\\
&B_{\epsilon}^{G}
 :=\left\{g\in G\mid \|\op{Ad}^*(g)-\it{id}_{\mathfrak{g}^*}\|<\epsilon\right\}.
\end{align*}

We need lemmas:
\begin{lemma}\label{lem:gu}
Given any $\epsilon>0$,
 there exist $\delta>0$ and $t_0>0$ such that the following holds:
 if $t>t_0$, $\zeta\in V$,
 $\lambda\in \sqrt{-1}Z(\mathfrak{l}_{\mathbb{R}})^{*,\mathfrak{q}}_{\rm gr}$, and
\[\eta\in \mathcal{C}(\mathcal{O}_{\lambda},\mathfrak{q})
 \cap B_{\delta t}(t\zeta),\]
then $|\lambda-t\zeta|<\epsilon t$ and there exist
 $g\in B_{\epsilon}^{G_{\mathbb{R}}}$,
 $u'\in B_{\epsilon}^{U}$ and $u_L\in U\cap L$
 such that 
\[g\cdot \mathfrak{q}=(u'u_L)\cdot \mathfrak{q} \ \text{ and }\ 
\eta = g \cdot \lambda + (u'u_L) \cdot \rho_{\mathfrak{l}}.\]
\end{lemma}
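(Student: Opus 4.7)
The plan is to split the lemma into two parts: first prove $|\lambda - t\zeta|<\epsilon t$, and then use this to modify an arbitrary decomposition $\eta = g_0\cdot\lambda + u_0\cdot\rho_{\mathfrak{l}}$ (with $g_0\cdot\mathfrak{q} = u_0\cdot\mathfrak{q}$, provided by the definition of $\mathcal{C}(\mathcal{O}_\lambda,\mathfrak{q})$) into the required form with $g$ and $u'$ close to the identity. I would begin by observing that, choosing the norm on $\mathfrak{g}^*$ to be $U$-invariant (possible since $U$ is compact), $|u_0\cdot\rho_{\mathfrak{l}}| = |\rho_{\mathfrak{l}}|$ is a constant independent of $t$, so $g_0\cdot\lambda$ lies within distance $\delta t + |\rho_{\mathfrak{l}}|$ of $t\zeta$.

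For the first assertion, the plan is to apply the inverse function theorem to the action map $\Phi(g,\mu) := g\cdot\mu$ from $G_{\mathbb{R}}\times \sqrt{-1}Z(\mathfrak{l}_{\mathbb{R}})^*$ into $\mathfrak{g}^*$. Its derivative at $(e, t\zeta)$ is $(X,\nu)\mapsto t\op{ad}^*(X)\zeta + \nu$, which is a submersion onto $G_{\mathbb{R}}\cdot \sqrt{-1}Z(\mathfrak{l}_{\mathbb{R}})^*_{\mathrm{reg}}$ because $\zeta$ is $\mathfrak{l}$-regular. After rescaling $X\mapsto X/t$ to absorb the linear growth in $t$, one obtains a compact family of submersions parametrized by $\zeta\in V$ and $t\geq t_0$, to which a uniform inverse function theorem applies. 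This gives $g_0\cdot\lambda = g'\cdot\mu$ with $g'\in B_\epsilon^{G_{\mathbb{R}}}$ and $\mu\in\sqrt{-1}Z(\mathfrak{l}_{\mathbb{R}})^*$ satisfying $|\mu - t\zeta|<C\delta t$ for a constant $C$ depending only on $V$; taking $\delta < \epsilon/C$ keeps $\mu$ in the chamber $\sqrt{-1}Z(\mathfrak{l}_{\mathbb{R}})^{*,\mathfrak{q}}_{\mathrm{gr}}$. Since $\lambda$ and $\mu$ then lie in the same $G_{\mathbb{R}}$-orbit and both in $\sqrt{-1}Z(\mathfrak{l}_{\mathbb{R}})^*_{\mathrm{reg}}$, they are related by an element of the ``little Weyl group'' $N_{W(\mathfrak{g},\mathfrak{j})}(\mathfrak{l})/W(\mathfrak{l},\mathfrak{j})$; belonging to the same chamber forces this element to act trivially, hence $\lambda = \mu$ and $|\lambda - t\zeta|<\epsilon t$.

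For the second assertion, the identity $g_0\cdot\lambda = g'\cdot\lambda$ forces $(g')^{-1}g_0 \in G_{\mathbb{R}}(\lambda) = L_{\mathbb{R}}$ (using $\lambda\in Z(\mathfrak{l}_{\mathbb{R}})^*_{\mathrm{reg}}$); writing $g_0 = g' l$ with $l\in L_{\mathbb{R}}$ and setting $g := g'\in B_\epsilon^{G_{\mathbb{R}}}$, the compatibility $g\cdot\mathfrak{q} = u_0\cdot\mathfrak{q}$ follows since $l$ normalizes $\mathfrak{q}$. For the $u$-decomposition, I would use that the compact form $U$ acts transitively on the flag variety $G/Q$ with stabilizer $U\cap Q = U\cap L$, giving a diffeomorphism $U/(U\cap L)\simeq G/Q$. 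The coset $g\cdot\mathfrak{q}\in G/Q$ is close to the base point, so admits a representative $u'\in U\cap B_\epsilon^U$ under this identification; then $u'\cdot\mathfrak{q} = g\cdot\mathfrak{q} = u_0\cdot\mathfrak{q}$ yields $u_0 = u'u_L$ with $u_L \in U\cap L$, concluding $\eta = g\cdot\lambda + u'u_L\cdot\rho_{\mathfrak{l}}$ as required.

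The main obstacle will be the uniformity of the inverse function theorem in the first step: one must verify that the neighborhood of invertibility of $\Phi$ does not shrink as $t\to\infty$ or as $\zeta$ varies in $V$. The rescaling $X\mapsto X/t$ together with the compactness of $V$ should reduce this to a routine argument, but careful bookkeeping of the constant $C$ relating $|\mu - t\zeta|$ to $|g_0\cdot\lambda - t\zeta|$ will be needed so that all constants in the lemma are properly tuned and so that $\mu$ remains in the chamber where the Weyl-group rigidity argument applies.
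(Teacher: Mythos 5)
Your overall strategy mirrors the paper's: write $\eta=g_0\cdot\lambda+u_0\cdot\rho_{\mathfrak{l}}$, then adjust $g_0$ and $u_0$ so they land near the identity, using the submersive nature of the conjugation map near $t\zeta$, a Weyl group rigidity step to fix the representative $\lambda$, and the $U$-transitive action on the partial flag variety to split $u_0$. Your inverse function theorem route (with the $X\mapsto X/t$ rescaling) is a legitimate variant of the paper's argument, which instead obtains a fixed $\delta'$ with $B_{\epsilon'}^{G_{\mathbb{R}}}\cdot\widetilde{V}\supset\bigcup_{\zeta\in V}B_{\delta'}(\zeta)$ for a small neighborhood $\widetilde{V}\supset V$ and then scales the whole inclusion by $t$ (using that $B_{\epsilon'}^{G_{\mathbb{R}}}$ is scale-invariant). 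The scaling argument sidesteps the uniformity bookkeeping you flag as your main obstacle.

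The genuine gap is in your rigidity step. You assert that shrinking $\delta$ forces $\mu$ into the chamber $\sqrt{-1}Z(\mathfrak{l}_{\mathbb{R}})^{*,\mathfrak{q}}_{\rm gr}$, and that ``belonging to the same chamber forces this element to act trivially.'' But $\sqrt{-1}Z(\mathfrak{l}_{\mathbb{R}})^{*,\mathfrak{q}}$ is \emph{not} an open subset of $\sqrt{-1}Z(\mathfrak{l}_{\mathbb{R}})^*_{\rm reg}$: the defining condition involves a tie-breaking rule (when $\operatorname{Im}\langle\lambda,\alpha^\vee\rangle=0$ one falls back on $\operatorname{Re}\langle\lambda,\alpha^\vee\rangle>0$), so if $\zeta\in V$ lies on such a wall, every neighborhood of $t\zeta$ contains points outside the chamber, no matter how small $\delta$ is. Thus you cannot conclude $\mu\in\sqrt{-1}Z(\mathfrak{l}_{\mathbb{R}})^{*,\mathfrak{q}}$ and the ``same chamber'' argument does not go through as written. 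The paper circumvents this by proving a rigidity claim that only requires $\lambda'\in\widetilde{V}\cap\sqrt{-1}Z(\mathfrak{l}_{\mathbb{R}})^*$ for a sufficiently small neighborhood $\widetilde{V}$ of $V$ — no chamber membership for $\lambda'$ — and establishes it by a compactness/limiting argument: if the claim failed there would be sequences $w_j\cdot\lambda_j=\lambda'_j$ with $w_j\in W_{\mathbb{R}}$ nontrivial on $Z(\mathfrak{l}_{\mathbb{R}})^*$, and passing to a limit produces $w\cdot\lambda=\lambda'$ with $\lambda\in\overline{\sqrt{-1}Z(\mathfrak{l}_{\mathbb{R}})^{*,\mathfrak{q}}}$ and $\lambda'\in\sqrt{-1}Z(\mathfrak{l}_{\mathbb{R}})^{*,\mathfrak{q}}$, which contradicts the definition of the chamber. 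You should replace your gloss with this compactness argument. A further small point: because the intertwiner $(g')^{-1}g_0$ lies in $G_{\mathbb{R}}$, the relevant symmetry group is the \emph{real} Weyl group $W_{\mathbb{R}}=N_{G_{\mathbb{R}}}(\mathfrak{j}_{\mathbb{R}})/Z_{G_{\mathbb{R}}}(\mathfrak{j}_{\mathbb{R}})$ (or rather its action on $Z(\mathfrak{l}_{\mathbb{R}})^*$), not the absolute little Weyl group $N_{W(\mathfrak{g},\mathfrak{j})}(\mathfrak{l})/W(\mathfrak{l},\mathfrak{j})$ that you invoke; the paper correctly works with $W_{\mathbb{R}}$.

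Your treatment of the second assertion (extracting $g\in B^{G_{\mathbb{R}}}_\epsilon$ from $g_0=g'l$ with $l\in L_{\mathbb{R}}$, and splitting $u_0=u'u_L$ via the $U$-transitive action on $G/Q$ with stabilizer $U\cap L$) is correct and matches the paper.
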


\begin{proof}
Consider the map
\[G_{\mathbb{R}}\times \sqrt{-1}\mathfrak{l}_{\mathbb{R}}^*
 \rightarrow \sqrt{-1}\mathfrak{g}_{\mathbb{R}}^*,
\quad (g,\eta)\mapsto g\cdot\eta, \]
 which is a submersion at $(e,t\zeta)$.
Define $\sqrt{-1}\mathfrak{l}_{\mathbb{R}}^{*,o}$
 as in the proof of Lemma~\ref{lem:asymptotic_cone}.

Take an open set $\widetilde{V}\subset \sqrt{-1}\mathfrak{l}_{\mathbb{R}}^{*,o}$
 which contains $V$. 
We claim that when $\widetilde{V}$ is sufficiently small, we have the following:
if $\lambda\in \sqrt{-1}Z(\mathfrak{l}_{\mathbb{R}})^{*,\mathfrak{q}}$, 
 $\lambda'\in \widetilde{V}\cap \sqrt{-1}Z(\mathfrak{l}_{\mathbb{R}})^*$
 and $g\cdot \lambda=\lambda'$ for some $g\in G_{\mathbb{R}}$,
 then $\lambda=\lambda'$.
Indeed, if this is not the case, we may find sequences
 $\lambda_j\in \sqrt{-1}Z(\mathfrak{l}_{\mathbb{R}})^{*,\mathfrak{q}}$,
 $\lambda'_j\in \sqrt{-1}Z(\mathfrak{l}_{\mathbb{R}})^*$
 and $w_j \in W_{\mathbb{R}}$
 such that $w_j\cdot \lambda_j=\lambda'_j$,
 $w_j|_{Z(\mathfrak{l}_{\R})^*}\neq 1$ and $\lambda'_j\to \lambda'\in V$.
Here, $W_{\mathbb{R}}=N_{G_{\mathbb{R}}}(\mathfrak{j}_{\mathbb{R}})
 /Z_{G_{\mathbb{R}}}(\mathfrak{j}_{\mathbb{R}})$ denotes the real Weyl group.
By taking a subsequence, we may assume $\lambda_j$ has a limit $\lambda$
 and that $w_j=w$ for all $j$.
Then we have $w\cdot\lambda=\lambda'$ with
  $\lambda \in \overline{\sqrt{-1}Z(\mathfrak{l}_{\mathbb{R}})^{*,\mathfrak{q}}}$, 
 $\lambda'\in \sqrt{-1}Z(\mathfrak{l}_{\mathbb{R}})^{*,\mathfrak{q}}$, and 
 $w|_{Z(\mathfrak{l}_{\R})^*}\neq 1$.
It is easy to see from the definition of 
 $\sqrt{-1}Z(\mathfrak{l}_{\mathbb{R}})^{*,\mathfrak{q}}$
 that this is not possible.
Thus, the claim is proved.

Take $\widetilde{V}$ that satisfies above claim. 
For any $\epsilon'>0$ with $\epsilon>\epsilon'$,
 there exists $\delta'>0$ such that 
\[ B_{\epsilon'}^{G_{\mathbb{R}}}\cdot \widetilde{V}\supset
 \bigcup_{\zeta\in V} B_{\delta'}(\zeta).\]
Scaling everything by $t$ yields
\[B_{\epsilon'}^{G_{\mathbb{R}}}\cdot (t \widetilde{V})\supset
 \bigcup_{\zeta\in V} B_{\delta' t}(t\zeta).\]
Let $c=\sup_{u\in U} |u\cdot \rho_{\mathfrak{l}}|$, and fix $0<\delta<\delta'$. 
Then we may find $t_0>0$ sufficiently large such that $c+\delta t < \delta' t$ if $t>t_0$. 

Now, suppose that 
$t>t_0$, $\zeta\in V$,
 $\lambda\in \sqrt{-1}Z(\mathfrak{l}_{\mathbb{R}})^{*,\mathfrak{q}}_{\rm gr}$, and
 $\eta\in \mathcal{C}(\mathcal{O}_{\lambda},\mathfrak{q})
 \cap B_{\delta t}(t\zeta)$.
Then by the definition of $\mathcal{C}(\mathcal{O}_{\lambda},\mathfrak{q})$,
 we may write $\eta=g\cdot \lambda+u\cdot \rho_{\mathfrak{l}}$
 such that $g\in G_{\mathbb{R}}$, $u\in U$, and $g\cdot \mathfrak{q}=u\cdot \mathfrak{q}$.
We have 
\[|g\cdot \lambda - t\zeta|\leq |u\cdot \rho_{\mathfrak{l}}|+ |\eta-t\zeta|< \delta' t.\]
Hence $g\cdot \lambda \in B_{\epsilon'}^{G_{\mathbb{R}}}\cdot(t \widetilde{V})$
 and we can write $g\cdot \lambda=g'\cdot \lambda'$ with $g'\in B_{\epsilon'}^{G_{\mathbb{R}}}$
 and $\lambda'\in t\widetilde{V}$.
Then $\mathfrak{g}(\lambda)(=\mathfrak{l})$ and $\mathfrak{g}(\lambda')$ are conjugate
 and $\mathfrak{g}(\lambda')\subset \mathfrak{l}$.
Therefore, $\mathfrak{g}(\lambda')= \mathfrak{l}$ and
 $\lambda'\in \sqrt{-1} Z(\mathfrak{l}_{\mathbb{R}})^*$. 
Since $\lambda\in \sqrt{-1} Z(\mathfrak{l}_{\mathbb{R}})^{*,\mathfrak{q}},
 \lambda'\in t\widetilde{V}\cap \sqrt{-1} Z(\mathfrak{l}_{\mathbb{R}})^{*}$
 and they are in the same $G_{\mathbb{R}}$-orbit,
 the claim at the beginning of the proof implies $\lambda=\lambda'$.
Then $g^{-1}g'\in L_{\mathbb{R}}$ and we may replace $g$ by $g'$.
We may thus assume $g\in B_{\epsilon'}^{G_{\mathbb{R}}}$.

Let $Y$ denote the partial flag variety,
 the collection of all parabolic subalgebras of $\mathfrak{g}$
 that are $G$-conjugate to $\mathfrak{q}$.
Then $B_{\epsilon'}^{G_{\mathbb{R}}}\cdot \mathfrak{q}$
 is a small open neighborhood of $\mathfrak{q}$ in $Y$.
If $\epsilon'$ is small enough, then we can take $u'\in B_{\epsilon}^{U}$
 such that $g\cdot \mathfrak{q}=u'\cdot \mathfrak{q}$.
Then $u_L:=(u')^{-1}\cdot u$ satisfies $u_L\cdot \mathfrak{q}=\mathfrak{q}$
 and hence $u_L\in U\cap L$.

Moreover,
\[|\lambda-t\zeta|\leq |\lambda-g\cdot \lambda|+|g\cdot \lambda - t\zeta|
 < \epsilon'|\lambda|+ \delta' t
 \leq \epsilon'|\lambda-t\zeta| + \epsilon' t|\zeta| + \delta' t.\]
By decreasing $\epsilon'$ and $\delta'$ if necessary 
 we deduce that $|\lambda-t\zeta|<\epsilon t$.
\end{proof}

Note that there exists $d>0$ such that
 if $\delta$ is sufficiently small
 and $t$ is sufficiently large, then
 $\lambda\in B_{\delta t}(t\zeta)$ with $\zeta\in V$
 and $\lambda\in\sqrt{-1}Z(\mathfrak{l}_{\mathbb{R}})^*$ implies that
\begin{equation}\label{eq:regular_condition}
|\langle \lambda+\rho_{\mathfrak{l}}, \alpha^{\vee}\rangle|\geq d|\lambda|\ 
 (\forall\alpha\in \Delta(\mathfrak{n},\mathfrak{j})) \ \text{ and }\ 
|\lambda|\geq 2|\rho_{\mathfrak{l}}|.
\end{equation}
Here, $\mathfrak{n}$ is the nilradical of $\mathfrak{q}$.
We fix such $d$.

\begin{lemma}\label{lem:g_c}
Let $0<\epsilon<1$ and
let $\lambda\in\sqrt{-1}Z(\mathfrak{l}_{\mathbb{R}})^*$
 which satisfies \eqref{eq:regular_condition}.
Let $g\in B_{\epsilon}^{G_{\mathbb{R}}}$,
 $u'\in B_{\epsilon}^{U}$ and $u_L\in U\cap L$ such that
\[ g\cdot \mathfrak{q}=(u'u_L)\cdot \mathfrak{q}\ \text{ and }\ 
\eta = g \cdot \lambda + (u'u_L) \cdot \rho_{\mathfrak{l}}.\]
Then there exist $d'>0$ and $g_c\in B_{d'\epsilon}^{G}$ such that 
\[\eta = (g_c u_L) \cdot (\lambda+\rho_{\mathfrak{l}}).\]
The constant $d'$ depends only on $d$
 and does not depend on $\lambda$ or $\epsilon$.
\end{lemma}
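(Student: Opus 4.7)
The plan is to use the hypothesis $g\cdot\mathfrak{q}=(u'u_L)\cdot\mathfrak{q}$ to factor $(u'u_L)^{-1}g\in Q=LN$, push the $L$-part through using the $L$-invariance of $\lambda\in Z(\mathfrak{l}_{\mathbb{R}})^*$, and then solve a quantitative implicit-function-theorem problem in the complex unipotent radical $N$, where the regularity condition \eqref{eq:regular_condition} supplies the lower bound needed to invert the linearization. First, write $g=u'u_L\,\ell\,n$ with $\ell\in L$, $n\in N$; since multiplication $L\times N\to Q$ is a diffeomorphism and $\|(u'u_L)^{-1}g-1\|\lesssim\epsilon$, we get $\|n-1\|\leq C_1\epsilon$. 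Because $L$ is connected and acts trivially on $Z(\mathfrak{l})$, $\ell\cdot\lambda=\lambda$ and $u_L\cdot\lambda=\lambda$. Hence
\[\eta=g\cdot\lambda+(u'u_L)\cdot\rho_{\mathfrak{l}}=u'u_L\cdot(n\cdot\lambda+\rho_{\mathfrak{l}}),\]
and setting $g_c:=u'\,u_L\,n_c\,u_L^{-1}$ for $n_c\in N$ yields
\[(g_c u_L)\cdot(\lambda+\rho_{\mathfrak{l}})=u'u_L\cdot n_c\cdot(\lambda+\rho_{\mathfrak{l}}),\]
so it suffices to find $n_c\in N$ with $n_c\cdot\xi=n\cdot\lambda+\rho_{\mathfrak{l}}$, where $\xi:=\lambda+\rho_{\mathfrak{l}}$, and to control $\|n_c-1\|$.

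For the existence of $n_c$, I would invert the analytic map $\Psi\colon\mathfrak{n}\to\mathfrak{g}^*$, $\Psi(X)=\exp(X)\cdot\xi-\xi$. Because $\xi\in\mathfrak{j}^*$, each term $\mathrm{ad}^*(X)^k\xi$ lies in $\bar{\mathfrak{n}}^*$ (the annihilator of $\mathfrak{j}\oplus\mathfrak{n}$), so $\Psi$ takes values in $\bar{\mathfrak{n}}^*$; likewise $\tau:=n\cdot\lambda-\lambda\in\bar{\mathfrak{n}}^*$. The differential $d\Psi(0)(X)=\mathrm{ad}^*(X)\xi$ is diagonal on root spaces: for $\alpha\in\Delta(\mathfrak{n},\mathfrak{j})$ it maps $\mathfrak{g}_\alpha$ to $\mathfrak{g}_{-\alpha}^*$ isomorphically with norm comparable to $|\langle\xi,\alpha^\vee\rangle|$, which by \eqref{eq:regular_condition} is at least $d|\lambda|$. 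Hence $\|d\Psi(0)^{-1}\|\leq C_2/(d|\lambda|)$, while the quadratic remainder is bounded by $|\Psi(X)-d\Psi(0)X|\leq C_3|X|^2|\lambda|$. On the other hand, a direct expansion of $\mathrm{Ad}^*(n)\lambda-\lambda$ gives $|\tau|\leq C_4\|n-1\|\cdot|\lambda|\leq C_1C_4\,\epsilon|\lambda|$.

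Now a standard contraction-mapping argument applied to $\Psi$ — valid provided $\epsilon$ is small compared to $d$, which we may assume — produces a unique small solution $X_c$ of $\Psi(X_c)=\tau$ satisfying $|X_c|\leq 2C_2|\tau|/(d|\lambda|)\leq C_5\epsilon/d$. Setting $n_c:=\exp(X_c)$ gives $\|\mathrm{Ad}^*(n_c)-\mathit{id}\|\leq C_6\epsilon/d$, and since $u_L\in U$ acts by isometries (in a $U$-invariant norm) and $\|\mathrm{Ad}^*(u')-\mathit{id}\|<\epsilon$, the triangle inequality yields
\[\|\mathrm{Ad}^*(g_c)-\mathit{id}\|\leq \|\mathrm{Ad}^*(u')-\mathit{id}\|+\|\mathrm{Ad}^*(u_L n_c u_L^{-1})-\mathit{id}\|\leq \epsilon+C_6\epsilon/d,\]
so $g_c\in B_{d'\epsilon}^G$ with $d'=1+C_6/d$, a constant depending only on $d$. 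The one delicate point is the quantitative contraction estimate: the quadratic error contributes $\sim|X_c|^2|\lambda|$, which must be dominated by the linear gain $\sim d|\lambda||X_c|$, requiring $|X_c|\ll d$; this is automatic for $\epsilon$ sufficiently small (in terms of $d$), which is exactly the setting ensured by the preceding lemma.
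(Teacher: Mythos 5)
Your overall strategy is the same as the paper's: pass from the hypothesis $g\cdot\mathfrak{q}=(u'u_L)\cdot\mathfrak{q}$ to $(u'u_L)^{-1}g\in Q$, reduce the problem to adjusting $\xi=\lambda+\rho_{\mathfrak{l}}$ by an element of $N$, and use the regularity hypothesis \eqref{eq:regular_condition} to invert. The one genuinely different ingredient is how you solve $n_c\cdot\xi=n\cdot\lambda+\rho_{\mathfrak{l}}$: you run a single contraction-mapping (quantitative inverse function theorem) argument on $\Psi(X)=\exp(X)\cdot\xi-\xi$, while the paper does a finite induction through the lower-central-series filtration $\mathfrak{n}=\mathfrak{n}_1\oplus\cdots\oplus\mathfrak{n}_k$, $[\mathfrak{n}_i,\mathfrak{n}]\subset\mathfrak{n}_{>i}$, solving exactly at each degree and letting the nilpotency absorb the error terms. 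The paper's scheme avoids having to verify a smallness-of-$\epsilon$ threshold: at each step the correction $N_i\in\mathfrak{n}_i$ is obtained by inverting the $\mathfrak{n}_i$-block of $\mathrm{ad}^*(\cdot)\xi$, which is exact linear algebra, and the quadratic-and-higher remainders land automatically in $\mathfrak{n}_{>i}$. Your contraction argument instead needs the linear gain $d|\lambda|$ to beat the quadratic loss $|X_c|^2|\lambda|$, which you correctly identify as requiring $\epsilon$ small relative to $d$ --- a restriction slightly stronger than the lemma's stated range $0<\epsilon<1$, though harmless for the paper's applications where $\epsilon$ is chosen small at the outset.

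Two small points to tidy. First, the factorization should be $(u'u_L)^{-1}g = n\ell$ with $n\in N$, $\ell\in L$ (not $\ell n$); otherwise $g\cdot\lambda = u'u_L\cdot\bigl(\ell\cdot(n\cdot\lambda)\bigr)$ and you cannot drop the $\ell$, since $\ell$ fixes $\lambda$ but not $n\cdot\lambda$. Equivalently, conjugate $\ell$ past $n$ and absorb. Second, the bound $\|\mathrm{Ad}^*(g_c)-\mathit{id}\|\leq\|\mathrm{Ad}^*(u')-\mathit{id}\|+\|\mathrm{Ad}^*(u_Ln_cu_L^{-1})-\mathit{id}\|$ is not a literal triangle inequality for a product of operators, but the multiplicative version costs only a bounded factor (compactness of $U$), so the conclusion $g_c\in B^G_{d'\epsilon}$ with $d'$ depending only on $d$ still holds. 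With these two repairs your argument is a valid alternative to the paper's filtration induction.
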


\begin{proof}
Let $Q$ be the parabolic subgroup of $G$ with Lie algebra $\mathfrak{q}$,
 or equivalently the normalizer of $\mathfrak{q}$.
By the assumption $g\cdot \mathfrak{q}=(u'u_L)\cdot \mathfrak{q}$,
 we have $(u'u_L)^{-1}g\in Q$.
Then $(u'u_L)^{-1}g\cdot \lambda-\lambda\in \mathfrak{n}$
 with the identification $\mathfrak{g}\simeq \mathfrak{g}^*$.
By our assumption on $g$ and $u'$,
 we have $(u'u_L)^{-1}gu_L = u_L^{-1} (u')^{-1} gu_L \in B_{c_1\epsilon}^G$
 for some constant $c_1>0$.
Then 
\[|(u'u_L)^{-1}g \cdot \lambda-\lambda|
=|(u'u_L)^{-1}gu_L\cdot \lambda-\lambda|< c_1 \epsilon|\lambda|.\]

Decompose $\mathfrak{n}$ into root spaces
\[
\mathfrak{n}=\bigoplus_{i=1}^{k}\mathfrak{n}_i\ \text{ and put }
\mathfrak{n}_{>j}:=\bigoplus_{i>j}\mathfrak{n}_i.
\]
The ordering is chosen to satisfy $[\mathfrak{n}_i,\mathfrak{n}]\subset\mathfrak{n}_{>i}$.
We claim that for any $1\leq i\leq k$,
 there exist a constant $d_i>0$ and $g_c^i\in B_{d_i\epsilon}^{G}$
such that 
\begin{equation}\label{eq:gci}
\bigl(g_c^i\cdot (\lambda+\rho_{\mathfrak{l}})-(\lambda+\rho_{\mathfrak{l}}\bigr))
-\bigl((u'u_L)^{-1}g\cdot \lambda-\lambda\bigr)
\in\mathfrak{n}_{>i}.
\end{equation}
This can be seen by induction on $i$.
Given $g_c^{i-1}$, we can find $g_c^i=\exp(N_i)g_c^{i-1}$ with $N_i\in \mathfrak{n}_i$ 
 which satisfies \eqref{eq:gci}.
Moreover, it follows from \eqref{eq:regular_condition}
 that $|N_i|$ is bounded by the product of $\epsilon$
 and a constant.
Hence we get $g_c^i\in B_{d_i\epsilon}^{G}$ for some constant $d_i$.

The claim for $i=k$ yields 
\[g_c^{k}\cdot (\lambda+\rho_{\mathfrak{l}})-(\lambda+\rho_{\mathfrak{l}})
=(u'u_L)^{-1}g\cdot \lambda-\lambda.
\]
Then putting $g_c:=u'u_Lg_c^{k}u_L^{-1}$ we get
\[g_cu_L \cdot (\lambda+\rho_{\mathfrak{l}})
=g\cdot \lambda+(u'u_L)\cdot \rho_{\mathfrak{l}}.
\]
By $u'\in B_{\epsilon}^{U}$ and $g_c^{k}\in B_{d_{k}\epsilon}^{G}$
 we can choose a constant $d'$ such that $g_c\in B_{d'\epsilon}^G$.
\end{proof}

Fix vectors $X_1^o,\dots,X_{2k}^o$ in $\mathfrak{g}_{\mathbb{R}}$
 which form a basis of $\mathfrak{g}_{\mathbb{R}}/\mathfrak{l}_{\mathbb{R}}$.
We have 
\[\op{ad}^*(g\cdot X_i^o)(g\cdot \lambda)
 +\op{ad}^*(\varphi(g\cdot X_i^o))(u\cdot \rho_{\mathfrak{l}})\in
 T_{\eta}^b\mathcal{C}, \]
where $\eta=g\cdot\lambda+u\cdot\rho_{\mathfrak{l}}$.
We take
$X_i\in\mathfrak{g}$ such that 
\begin{equation}\label{eq:Xi}
\op{ad}^*(g\cdot X_i^o)(g\cdot \lambda)
 +\op{ad}^*(\varphi(g\cdot X_i^o))(u\cdot\rho_{\mathfrak{l}})
=\op{ad}^*(X_i)(g\cdot\lambda+u\cdot\rho_{\mathfrak{l}})
\end{equation}
for $1\leq i\leq 2k$.

Next, fix vectors $Y_1^o,\dots,Y_{2l}^o$ in $\mathfrak{u}\cap \mathfrak{l}$
 which form a basis in $(\mathfrak{u}\cap \mathfrak{l})/(\mathfrak{u}\cap \mathfrak{j})$.
Then 
\[\op{ad}^*(u\cdot Y_i^o)(u\cdot \rho_{\mathfrak{l}})\in T_{\eta}^f\mathcal{C}.\]
We take $Y_i\in \mathfrak{g}$ such that
\begin{equation}\label{eq:Yi}
\op{ad}^*(u\cdot Y_i^o)(u\cdot\rho_{\mathfrak{l}})
=\op{ad}^*(Y_i)(g\cdot \lambda+u\cdot\rho_{\mathfrak{l}})
\end{equation}
for $1\leq i\leq 2l$.

Define $Z_i\in\mathfrak{g}$ for $1\leq i\leq 2k+2l=2n$ as
\[
Z_i:=X_i\ (1\leq i\leq 2k),\quad Z_{2k+i}:=Y_i\ (1\leq i\leq 2l).
\]
The vectors
$\op{ad}^*(Z_i)(g\cdot \lambda+u\cdot \rho_{\mathfrak{l}})$
 form a basis of the tangent space $T_{\eta}\mathcal{C}(\mathcal{O},\mathfrak{q})$.
Let $A$ be a $2n$ by $2n$ matrix whose $(i,j)$ entry is
 $\omega_{\eta}(\op{ad}^*(Z_i)(\eta),\op{ad}^*(Z_j)(\eta))=\eta([Z_i,Z_j])$.
Then $A$ is skew symmetric and 
 the $2n$-form $\nu=(2\pi\sqrt{-1})^{-n} (n!)^{-1}\omega^{\wedge n}$
 is given by
\[\nu(Z_1,\dots,Z_{2n})=(2\pi\sqrt{-1})^{-n} \op{Pf}(A),\]
where $\op{Pf}(A)$ denotes the Pfaffian of $A$.

We now estimate each entry of $A$:

\begin{lemma}\label{lem:form_estimate}
Let $\epsilon, d>0$.
Suppose that $\lambda\in \sqrt{-1}Z(\mathfrak{l}_{\mathbb{R}})^{*,\mathfrak{q}}_{\rm gr}$
 satisfying \eqref{eq:regular_condition}, 
 $g\in B_{\epsilon}^{G_{\mathbb{R}}}$,
 $u'\in B_{\epsilon}^{U}$, and 
 $u_L\in U\cap L$ such that $g\cdot \mathfrak{q}=u'u_L\cdot \mathfrak{q}$.
Define $X_i$ and $Y_i$ as above for $g$ and $u:=u'u_L$.
Then we have 
\begin{enumerate}
\item
$|(g\cdot \lambda+u\cdot \rho_{\mathfrak{l}})([X_i,X_j])
 -\lambda([X_i^o,X_j^o])|\leq C$,
\item
$|(g\cdot \lambda+u\cdot \rho_{\mathfrak{l}})([X_i,Y_j])|\leq C$,
\item
$|(g\cdot \lambda+u\cdot \rho_{\mathfrak{l}})([Y_i,Y_j])
 -\rho_{\mathfrak{l}}([Y_i^o,Y_j^o])|\leq \epsilon C$
\end{enumerate}
for some constant $C>0$.
Here, $C$ depends on $\epsilon$ and $d$, but does not depend on
 $\lambda$, $g$, $u'$ or $u_L$.
\end{lemma}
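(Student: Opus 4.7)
The strategy is to work directly with the defining relations \eqref{eq:Xi} and \eqref{eq:Yi}, exploiting both the regularity bound \eqref{eq:regular_condition} and the algebraic vanishing $\lambda([\mathfrak{l},\mathfrak{l}])=0$, $\op{ad}^*(\mathfrak{l})\lambda = 0$ inherited from $\lambda\in\sqrt{-1}Z(\mathfrak{l}_{\mathbb{R}})^*$. Fix any complement $\mathfrak{m}$ of $\mathfrak{l}$ in $\mathfrak{g}$ and decompose vectors accordingly.

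First I would establish the key size estimates. Rewriting \eqref{eq:Xi} as
\[
\op{ad}^*(X_i - g\cdot X_i^o)(g\cdot\lambda) = \op{ad}^*(\varphi(g\cdot X_i^o)-X_i)(u\cdot\rho_{\mathfrak{l}}),
\]
the right-hand side is uniformly bounded. The operator $Z\mapsto\op{ad}^*(Z)(g\cdot\lambda)$ is injective on $g\cdot\mathfrak{m}$ with inverse of norm $O(|\lambda|^{-1})$ by \eqref{eq:regular_condition}; hence the $g\cdot\mathfrak{m}$-component of $X_i - g\cdot X_i^o$ has norm $O(|\lambda|^{-1})$, while its $g\cdot\mathfrak{l}$-component is $O(1)$. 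Similarly, using Lemma~\ref{lem:gu} to conclude $u\cdot\mathfrak{l} = g\cdot\mathfrak{l}+O(\epsilon)$, and exploiting the stabilizer ambiguity in \eqref{eq:Yi}, one chooses $W_i:=Y_i-u\cdot Y_i^o$ inside $g\cdot\mathfrak{m}$ with $\|W_i\| = O(|\lambda|^{-1}) + O(\epsilon)$.

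Substitute these decompositions into $\eta([Z,W]) = (g\cdot\lambda)([Z,W]) + (u\cdot\rho_{\mathfrak{l}})([Z,W])$ and expand bilinearly. For (1): the leading term $(g\cdot\lambda)([g\cdot X_i^o, g\cdot X_j^o]) = \lambda([X_i^o, X_j^o])$; cross-terms with the $g\cdot\mathfrak{l}$-component of the correction vanish because $\lambda([X_i^o,\mathfrak{l}])=0$; cross-terms with the $g\cdot\mathfrak{m}$-component contribute $|\lambda|\cdot O(|\lambda|^{-1}) = O(1)$; the $(u\cdot\rho_{\mathfrak{l}})$-pairing is $O(1)$. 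For (2): the leading $\lambda$-pairing vanishes since $u\cdot Y_j^o\in u\cdot\mathfrak{l}\subset g\cdot\mathfrak{l}+O(\epsilon)$, and the remainder is bounded. For (3): the crucial identity is $(g\cdot\lambda)([u\cdot Y_i^o, u\cdot Y_j^o]) = 0$ exactly, since $[u\cdot\mathfrak{l}, u\cdot\mathfrak{l}] = u\cdot[\mathfrak{l},\mathfrak{l}]$ and, writing $q := u^{-1}g\in Q$, the image $\op{Ad}(q^{-1})[\mathfrak{l},\mathfrak{l}]\subset [\mathfrak{l},\mathfrak{l}] + \mathfrak{n}$ is annihilated by $\lambda$; the $(u\cdot\rho_{\mathfrak{l}})$-leading term is $\rho_{\mathfrak{l}}([Y_i^o,Y_j^o])$; all cross-terms are $O(\epsilon)$ by combining the $O(\epsilon)$-discrepancy between $u\cdot\mathfrak{l}$ and $g\cdot\mathfrak{l}$ with the small size of $W_i$.

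The main obstacle will be confirming the sharp $O(\epsilon)$ bound in (3): a naive expansion produces $O(1)$ errors, not the claimed $O(\epsilon)$. The improvement depends on the exact vanishing $(g\cdot\lambda)(u\cdot[\mathfrak{l},\mathfrak{l}])=0$ established above, plus the careful observation that every remaining contribution factorizes either as an $\epsilon$-small factor times a bounded one, or as $|\lambda|\cdot O(\epsilon|\lambda|^{-1}) = O(\epsilon)$ via the smallness of $W_i$.
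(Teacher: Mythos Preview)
Your treatment of parts (1) and (2) is essentially sound and parallel to the paper's argument (though for (2) the paper simply writes $\eta([X_i,Y_j])=\langle X_i,\op{ad}^*(Y_j)\eta\rangle=\langle X_i,\op{ad}^*(u\cdot Y_j^o)(u\cdot\rho_{\mathfrak{l}})\rangle$, which is immediately bounded).

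Part (3), however, has a genuine gap. Your claim that one can choose $W_i:=Y_i-u\cdot Y_i^o$ inside $g\cdot\mathfrak{m}$ with $\|W_i\|=O(|\lambda|^{-1})+O(\epsilon)$ is not justified: the stabilizer ambiguity in \eqref{eq:Yi} is only the Cartan $g_cu_L\cdot\mathfrak{j}$, which is far too small to force $W_i$ into $g\cdot\mathfrak{m}$. If one computes honestly, $\op{ad}^*(W_i)(\eta)=-\op{ad}^*(u\cdot Y_i^o)(g\cdot\lambda)$ lies in $g\cdot\mathfrak{n}$ with norm $O(\epsilon|\lambda|)$; solving for $W_i$ in the root-space decomposition relative to $g_cu_L\cdot\mathfrak{j}$ produces a component in the $\mathfrak{l}$-root-spaces of size $O(\epsilon^2|\lambda|)$, since those eigenvalues of $\op{ad}(\lambda+\rho_{\mathfrak{l}})$ are only $O(1)$. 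Your exact vanishing $(g\cdot\lambda)(u\cdot[\mathfrak{l},\mathfrak{l}])=0$ does kill the linear cross-terms $\eta([u\cdot Y_i^o,W_j])$, but the quadratic term $\eta([W_i,W_j])=(g\cdot\lambda)([u\cdot Y_j^o,W_i])$ survives and is of order $\epsilon|\lambda|$ times the $g\cdot\mathfrak{n}^-$-component of $W_i$, which your analysis does not control uniformly in $|\lambda|$.

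The paper circumvents this precisely by invoking Lemma~\ref{lem:g_c}: the element $g_c\in B_{d'\epsilon}^G$ satisfies $\eta=(g_cu_L)\cdot(\lambda+\rho_{\mathfrak{l}})$ \emph{exactly}. Setting $Y_i':=Y_i-(g_cu_L)\cdot Y_i^o$ instead of your $W_i$ gives
\[
\op{ad}^*(Y_i')(\eta)=(u'u_L)\cdot\op{ad}^*(Y_i^o)(\rho_{\mathfrak{l}})-(g_cu_L)\cdot\op{ad}^*(Y_i^o)(\rho_{\mathfrak{l}})=O(\epsilon),
\]
since $\op{ad}^*(Y_i^o)(\lambda)=0$ kills the large term. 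This is the missing ingredient: working relative to $g_cu_L$ rather than $u$ makes $\op{ad}^*(Y_i')(\eta)$ genuinely $O(\epsilon)$ rather than $O(\epsilon|\lambda|)$, and then all cross-terms are bounded-times-$O(\epsilon)$ as required.
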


\begin{proof}
By Lemma~\ref{lem:g_c}, there exists $g_c\in B_{d' \epsilon}^G$ such that 
\[g_cu_L\cdot (\lambda+\rho_{\mathfrak{l}})
=g\cdot \lambda+u\cdot \rho_{\mathfrak{l}}.
\]
In the following proof,
 we say a vector in $\mathfrak{g}$ or an element in $G$ is \emph{bounded}
 if it lies in a compact set which depends only on $\epsilon$ and $d$. 
For instance $g$, $u'$, and $u_L$ are bounded,
 but $\lambda$ is not bounded.

Consider the equation
\begin{align}\label{eq:X'i}
-\op{ad}^*(g\cdot X_i^o)(u\cdot \rho_{\mathfrak{l}})
 +\op{ad}^*(\varphi(X_i^o))(u\cdot\rho_{\mathfrak{l}})
&=\op{ad}^*(X'_i)(g\cdot \lambda+u\cdot\rho_{\mathfrak{l}}) \\ \nonumber
\bigl(&=\op{ad}^*(X'_i)(g_cu_L\cdot (\lambda+\rho_{\mathfrak{l}}))\bigr).
\end{align}
If we put $X'_i:=X_i-g\cdot X_i^o$, then this is equivalent to \eqref{eq:Xi}.
In particular, \eqref{eq:X'i} is satisfied for at least one $X'_i$
 and hence the left hand side of \eqref{eq:X'i}
 is contained in $g_cu_L\cdot [\mathfrak{g},\mathfrak{j}]$
 with the identification $\mathfrak{g}\simeq \mathfrak{g}^*$.
Since the left hand side of \eqref{eq:X'i} is bounded and 
 $g_cu_L$ is bounded, the first condition of \eqref{eq:regular_condition}
 implies that there exists a bounded vector $X'_i$ which
 satisfies \eqref{eq:X'i}.
Then by putting $X_i=X'_i+g\cdot X_i^o$, we find a bounded vector $X_i$
 which satisfies \eqref{eq:Xi}.
Note that by \eqref{eq:X'i} again,
 $\op{ad}^*(X'_i)(g\cdot \lambda)$ is also bounded.

We may thus assume that $X_i$ are bounded vectors. 
To prove (1), it is enough to show that
 $(g\cdot \lambda) ([X_i,X_j])-\lambda([X_i^o,X_j^o])$ is bounded.
We calculate
\begin{align*}
&(g\cdot \lambda) ([X_i,X_j])-\lambda([X_i^o,X_j^o])\\
&=(g\cdot \lambda) ([g\cdot X_i^o+X'_i,\: g\cdot X_j^o+X'_j])- \lambda([X_i^o,X_j^o])\\
&=(g\cdot \lambda) ([X'_i,\: g\cdot X_j^o])
  +(g\cdot \lambda) ([g\cdot X_i^o, X'_j])
  +(g\cdot \lambda) ([X'_i,X'_j])\\
&=-\langle\op{ad}^*(X'_i)(g\cdot \lambda),\: g\cdot X_j^o \rangle
  +\langle\op{ad}^*(X'_j)(g\cdot \lambda),\: g\cdot X_i^o \rangle
  -\langle\op{ad}^*(X'_i)(g\cdot \lambda),\: g\cdot X'_j \rangle.
\end{align*}
The last three terms are all bounded and (1) is proved.

Since the left hand side of \eqref{eq:Yi} is bounded, we may assume that $Y_i$ is also bounded.
For example, if we take $Y_i$ from $g_cu_L\cdot [\mathfrak{g},\mathfrak{j}]$,
 then by \eqref{eq:regular_condition} $Y_i$ is bounded.
Moreover, we claim that 
\begin{equation}\label{eq:Yi_Yio}
\epsilon^{-1}|\op{ad}^*(Y_i-(g_c u_L)\cdot Y_i^o)(g\cdot \lambda+u\cdot\rho_{\mathfrak{l}})|
\end{equation}
is bounded.
Indeed, 
\begin{align*}
&\op{ad}^*(Y_i)(g\cdot \lambda+u\cdot\rho_{\mathfrak{l}})
 -\op{ad}^*((g_c u_L)\cdot Y_i^o)(g\cdot \lambda+u\cdot\rho_{\mathfrak{l}})\\
&=\op{ad}^*(u\cdot Y_i^o)(u\cdot \rho_{\mathfrak{l}})
 - (g_c u_L)\cdot \bigl(\op{ad}^*(Y_i^o)(\lambda+\rho_{\mathfrak{l}})\bigr)\\
&= (u'u_L)\cdot \bigl(\op{ad}^*(Y_i^o)(\rho_{\mathfrak{l}})\bigr)
 - (g_c u_L)\cdot \bigl(\op{ad}^*(Y_i^o)(\rho_{\mathfrak{l}})\bigr).
\end{align*}
Here, we used $\op{ad}^*(Y_i^o)(\lambda)=0$ which follows from $Y_i^o\in\mathfrak{l}$.
Then the claim follows from $g_c\in B_{c_1\epsilon}^{G}$ and $u'\in B_{\epsilon}^{U}$.

(2) follows from
\begin{align*}
(g\cdot \lambda+u\cdot \rho_{\mathfrak{l}})([X_i,Y_j])
&= \langle X_i, \op{ad}^*(Y_j)(g\cdot \lambda+u\cdot \rho_{\mathfrak{l}})\rangle \\
&= \langle X_i, \op{ad}^*(u\cdot Y_j^o)(u\cdot \rho_{\mathfrak{l}})\rangle.
\end{align*}

For (3), put $Y'_i:=Y_i-(g_c u_L)\cdot Y_i^o$.
Then
\begin{align*}
&(g\cdot \lambda+u\cdot \rho_{\mathfrak{l}})([Y_i,Y_j])\\
&=(g\cdot \lambda+u\cdot \rho_{\mathfrak{l}})
 ([Y'_i+(g_c u_L)\cdot Y_i^o,\: Y'_j+(g_c u_L)\cdot Y_j^o])\\
&= (g\cdot \lambda+u\cdot \rho_{\mathfrak{l}})
 ([(g_c u_L)\cdot Y_i^o,\:(g_c u_L)\cdot Y_j^o])
 -\langle (g_c u_L)\cdot Y_j^o,\: \op{ad}^*(Y'_i)(g\cdot \lambda+u\cdot\rho_{\mathfrak{l}})\rangle\\
&\qquad\qquad
 +\langle (g_c u_L)\cdot Y_i^o,\: \op{ad}^*(Y'_j)(g\cdot \lambda+u\cdot\rho_{\mathfrak{l}})\rangle
 +\langle Y'_i,\: \op{ad}^*(Y'_j)(g\cdot \lambda+u\cdot\rho_{\mathfrak{l}})\rangle.
\end{align*}
Since \eqref{eq:Yi_Yio} is bounded, 
 the last three terms are all bounded by $\epsilon C$ for some constant $C$.
The first term is calculated as 
\begin{align*}
&(g\cdot \lambda+u\cdot \rho_{\mathfrak{l}})
 ([(g_c u_L)\cdot Y_i^o,\: (g_c u_L)\cdot Y_j^o]) \\
&= ((g_cu_L)\cdot (\lambda+ \rho_{\mathfrak{l}}))
 ([(g_c u_L)\cdot Y_i^o,\: (g_c u_L)\cdot Y_j^o])\\
&= (\lambda+ \rho_{\mathfrak{l}})([Y_i^o,Y_j^o])\\
&= \rho_{\mathfrak{l}}([Y_i^o,Y_j^o]).
\end{align*}
(3) is thus proved.
\end{proof}

We now prove Lemma~\ref{lem:diff_form1}, namely, we prove
\[
\bigl|(\nu_{\eta}-\nu^O_{\eta})
\bigl(\op{ad}^*(Z_1)(\eta),\dots,\op{ad}^*(Z_{2n})(\eta)\bigr)\bigr|
 \leq \frac{1}{5}
 \bigl| \nu^O_{\eta}
 \bigl(\op{ad}^*(Z_1)(\eta),\dots,\op{ad}^*(Z_{2n})(\eta)\bigr)\bigr|
\]
on $\mathcal{C}(\mathcal{O},\mathfrak{q})\cap B_{\delta t}(t \zeta)$
 when $\delta$ is sufficiently small and 
 $t$ is sufficiently large, or equivalently,
 $|\lambda|$ is sufficiently large.
Since $\nu$ and $\nu^O$ are differential forms of top degree,
 it is enough to prove the inequality
 for our particular basis  $\op{ad}^*(Z_1)(\eta),\dots,\op{ad}^*(Z_{2n})(\eta)$
 of the tangent space chosen above.

Similarly to the matrix $A$, 
 let $A^O$ be a $2n$ by $2n$ matrix whose $(i,j)$ entry is
 $\omega^O_{\eta}(\op{ad}^*(Z_i)(\eta),\op{ad}^*(Z_j)(\eta))$.
We have
\[\nu^O_{\eta}\bigl(\op{ad}^*(Z_1)(\eta),\dots,\op{ad}^*(Z_{2n})(\eta)\bigr)
 =(2\pi\sqrt{-1})^{-n}\op{Pf}(A^O). \]
Hence it is enough to prove
\begin{equation}\label{eq:pfaffian}
|\op{Pf}(A)-\op{Pf}(A^O)|\leq \frac{1}{5}|\op{Pf}(A^O)|.
\end{equation}

By definition of $\omega^O$, the matrix $A^O$ is block diagonal and
 each entry does not depend on $\eta$. 
The upper left $2k$ by $2k$ part of $A^O$ is $\lambda([X_i^o,X_j^o])$.
The lower right $2l$ by $2l$ part is $\rho_{\mathfrak{l}}([Y_i^o,Y_j^o])$.
Since the the Kirillov-Kostant-Souriau form is nondegenerate, 
 the Pfaffian of $A^O$ does not vanish.
Assuming \eqref{eq:regular_condition}, 
 the $\op{Pf}(A^O)$ grows exactly of order $|\lambda|^k$, namely, 
 there exist constants $C_1,C_2>0$ such that
\[
C_1|\lambda|^k\leq |\op{Pf}(A^O)|\leq C_2|\lambda|^k.
\]
In light of the estimate of the entries of $A-A^O$ given in Lemma~\ref{lem:form_estimate}, 
there exist $C_3,C_4>0$ such that
\[|\op{Pf}(A)-\op{Pf}(A^O)|\leq C_3 |\lambda|^{k-1}+C_4\epsilon |\lambda|^k.\]
Therefore, for sufficiently small $\epsilon$ and sufficiently large $|\lambda|$
 we obtain \eqref{eq:pfaffian}.
We fix such sufficiently small $\epsilon>0$ and
 then Lemma~\ref{lem:gu} gives $\delta>0$.
By decreasing $\delta$ if necessary to have \eqref{eq:regular_condition},
 we conclude that the inequality in Lemma~\ref{lem:diff_form1}
 holds for sufficiently large $t$.

It remains to prove Lemma~\ref{lem:form_lower_bound}.
For this we use the fiber bundle structure
 $\varpi\colon \mathcal{C}(\mathcal{O}_{\lambda},\mathfrak{q})
  \to \mathcal{O}_{\lambda}$
 as \eqref{eq:fiber_bundle}.
We have a canonical volume form
\[\nu^{U\cap L}_{\rho_{\mathfrak{l}}}
 :=\frac{(\omega^{U\cap L}_{\rho_{\mathfrak{l}}})^{\wedge l}}{(2\pi\sqrt{-1})^{l} l!}
\]
 on the fiber $\varpi^{-1}(\lambda)$
 and then on any fiber by an isomorphism
 $\varpi^{-1}(g_0\cdot \lambda)\simeq \varpi^{-1}(\lambda)$.
The volume of the fiber with respect to this form is a constant,
 which we denote by $c$.
Then for an open subset $B\subset \mathcal{O}_{\lambda}$, we have
\begin{equation}\label{eq:volume_fiber}
\int_{\varpi^{-1}(B)} \nu^O 
= c \int_{B} \nu^{G_{\mathbb{R}}}_{\lambda},
\end{equation}
where we put 
\[\nu^{G_{\mathbb{R}}}_{\lambda} :=
 \frac{(\omega^{G_{\mathbb{R}}}_{\lambda})^{\wedge k}}{(2\pi\sqrt{-1})^{k} k!}.
\]

To study the volume form on the base $\mathcal{O}_{\lambda}$,
 we fix a constant $d>0$ and assume 
\begin{equation}\label{eq:regular_condition2}
|\langle \lambda, \alpha^{\vee}\rangle|\geq d|\lambda|\ 
 (\forall\alpha\in \Delta(\mathfrak{n},\mathfrak{j})) \ \text{ and }\ 
|\lambda|\geq 2|\rho_{\mathfrak{l}}|.
\end{equation}
Let $\mathfrak{l}_{\mathbb{R}}^{\perp}$ be the orthogonal complement
 of $\mathfrak{l}_{\mathbb{R}}$ in $\mathfrak{g}_{\mathbb{R}}$
 and fix a basis $X_1^o,\dots,X_{2k}^o$ of $\mathfrak{l}_{\mathbb{R}}^{\perp}$.
Let $x_1,\dots,x_{2k}$ be linear coordinate functions
 on $\mathfrak{l}_{\mathbb{R}}^{\perp}$ with respect to this basis. 
Then we have a natural map
\[\psi\colon
\mathfrak{l}_{\mathbb{R}}^{\perp}\to \mathcal{O}_{\lambda},
\quad X\mapsto \exp(X)\cdot \lambda.
\]
Under the assumption \eqref{eq:regular_condition2},
 there exists $0<\epsilon'<1$ which does not depend on $\lambda$ such that
 $\psi\colon B_{\epsilon'}\to \psi(B_{\epsilon'})$ 
 is a diffeomorphism,
 where $B_{\epsilon'}$ is the open ball in $\mathfrak{l}_{\mathbb{R}}^{\perp}$
 with center at origin and radius $\epsilon'$ with respect to our linear coordinate.
Decreasing $\epsilon'$ if necessary, we may further assume that
 $\psi$ restricted to some open set containing the closure of $B_{\epsilon'}$
 is a diffeomorphism onto its image.
Moreover, $\psi(B_{\epsilon'})\subset B_{C_1\epsilon' |\lambda|}(\lambda)$
 for some constant $C_1$.
We claim that
\[
|\psi^* \nu^{G_{\mathbb{R}}}_{\lambda}|
\geq C_2|\lambda|^k|dx_1\wedge \cdots \wedge dx_{2k}|
\]
on $\psi(B_{\epsilon'})$ for some constant $C_2>0$.
Indeed, we can find such $C_2$ when $|\lambda|$ is bounded.
Then the claim follows because
 $|\lambda|^{-k}|\psi^* \nu^{G_{\mathbb{R}}}_{\lambda}|$
 is invariant under the scaling $\lambda\to a\lambda\ (a>0)$.
Therefore, we have
\[
\int_{B_{C_1\epsilon' |\lambda|}(\lambda)} \nu^{G_{\mathbb{R}}}_{\lambda}
\geq 
C_2 |\lambda|^k \int_{B_{\epsilon'}} |dx_1\wedge \cdots \wedge dx_{2k}|
\geq 
C_3 (\epsilon')^{2k}|\lambda|^k
\]
for some constant $C_3>0$.

Combining with \eqref{eq:volume_fiber}, we obtain the following.
\begin{lemma}\label{lem:volume_estimate}
There exist positive numbers $\epsilon_0$ and $C$ such that
\[\int_{\varpi^{-1}(B_{\epsilon |\lambda|}(\lambda))} \nu^O
\geq C\epsilon^{2k}|\lambda|^k \]
for $0<\epsilon<\epsilon_0$ and
 any $\lambda$ satisfying \eqref{eq:regular_condition2}.
\end{lemma}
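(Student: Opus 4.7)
The plan is to assemble the ingredients just established into a single lower bound. The key identity is the fiber-bundle integration formula \eqref{eq:volume_fiber}, which reduces $\int_{\varpi^{-1}(B)} \nu^O$ to a constant multiple of $\int_B \nu^{G_{\mathbb{R}}}_{\lambda}$, so it suffices to bound the base integral over $B_{\epsilon|\lambda|}(\lambda) \subset \mathcal{O}_{\lambda}$ from below.

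To produce such a bound, I would pull back along the parameterization $\psi\colon \mathfrak{l}_{\mathbb{R}}^{\perp} \to \mathcal{O}_{\lambda}$ described above. Given $\epsilon > 0$, I would choose the auxiliary radius $\epsilon' := \epsilon/C_1$ (with $C_1$ the constant from the inclusion $\psi(B_{\epsilon'}) \subset B_{C_1\epsilon'|\lambda|}(\lambda)$), so that $\psi(B_{\epsilon'}) \subset B_{\epsilon|\lambda|}(\lambda)$. Taking $\epsilon_0$ small enough that $\epsilon < \epsilon_0$ forces $\epsilon/C_1 < \epsilon_0'$, where $\epsilon_0'$ is the uniform diffeomorphism radius, ensures $\psi|_{B_{\epsilon'}}$ is a diffeomorphism onto its image. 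Combining the pointwise estimate $|\psi^* \nu^{G_{\mathbb{R}}}_{\lambda}| \geq C_2 |\lambda|^k |dx_1 \wedge \cdots \wedge dx_{2k}|$ with the Euclidean volume of $B_{\epsilon'}$ yields
\[
\int_{B_{\epsilon|\lambda|}(\lambda)} \nu^{G_{\mathbb{R}}}_{\lambda}
\geq \int_{B_{\epsilon'}} \psi^*\nu^{G_{\mathbb{R}}}_{\lambda}
\geq C_3(\epsilon')^{2k}|\lambda|^k
= C_3 C_1^{-2k}\epsilon^{2k}|\lambda|^k.
\]
Invoking \eqref{eq:volume_fiber} and absorbing all accumulated positive constants into a single $C$ then finishes the argument.

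All of the analytic content, namely the scale invariance of $|\lambda|^{-k}|\psi^*\nu^{G_{\mathbb{R}}}_{\lambda}|$, the uniform diffeomorphism range $\epsilon_0'$, the fiber volume constant $c$, and the identity \eqref{eq:volume_fiber}, is already in place, so there is no real obstacle. The only point requiring care is orientation: the chain of inequalities above should be understood as genuine lower bounds for the signed integral $\int\nu^O$, not merely for its absolute value, and this is justified by the fact that $\nu^O$ is positive with respect to the chosen orientation on $\mathcal{C}(\mathcal{O}_{\lambda},\mathfrak{q})$, as noted immediately before the lemma.
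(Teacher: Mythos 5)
Your argument is correct and follows exactly the route the paper takes: reduce to the base integral via \eqref{eq:volume_fiber}, pull back along $\psi$, use the scale-invariant lower bound $|\psi^*\nu^{G_{\mathbb{R}}}_{\lambda}|\geq C_2|\lambda|^k|dx_1\wedge\cdots\wedge dx_{2k}|$, and assemble the constants. You in fact make explicit the substitution $\epsilon' = \epsilon/C_1$ and the positivity/orientation point, both of which the paper leaves implicit when passing from its displayed estimate (stated for a fixed $\epsilon'$) to the lemma (stated for all small $\epsilon$).
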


To prove Lemma~\ref{lem:form_lower_bound}, fix positive numbers
 $\delta>\delta'>0$.
If $t$ is sufficiently large,
 then $\zeta\in V$ and
 $|t\zeta-\lambda|< \delta' t^{1/2}$ imply that
 $\lambda$ satisfies \eqref{eq:regular_condition2}.
Moreover, $t^{-1}|\lambda|$ is bounded
 from below and from above by positive constants.
Define $\epsilon$ by the equation
\[
\delta t^{\frac{1}{2}}=\epsilon|\lambda|
 + \max_{u\in U} |u\cdot \rho_{\mathfrak{l}}| + \delta' t^{\frac{1}{2}}.
\]
When $t$ becomes larger,
 $|\lambda|$ is of order $t$ and $\epsilon$ is of order $t^{-\frac{1}{2}}$.
Hence if $t$ is sufficiently large, 
 then $\epsilon$ becomes arbitrarily small positive number.
By the inclusion $\varpi^{-1}(B_{\epsilon|\lambda|}(\lambda))
 \subset B_{\delta t^{1/2}}(t\zeta)$
 and by Lemma~\ref{lem:volume_estimate}, we have
\[
\int_{\begin{subarray}{l}\, \eta\in \mathcal{C}(\mathcal{O}_{\lambda},\mathfrak{q})\\
 |\eta-t\zeta|\leq \delta t^{1/2}\end{subarray}} \nu^O
\geq
\int_{\varpi^{-1}(B_{\epsilon |\lambda|}(\lambda))} \nu^O
\geq C\epsilon^{2k}|\lambda|^k. 
\]
Since $\epsilon^2|\lambda|$ is bounded from below by a positive constant,
 we obtain Lemma~\ref{lem:form_lower_bound}.


\section{Proof of main theorems}\label{sec:proof}

In this section, we prove Theorem~\ref{thm:main}, Theorem~\ref{thm:main2}
 and Theorem~\ref{thm:ds}.

Suppose that $X_0=G_{\mathbb{R}}/H_0$ is a locally algebraic
 homogeneous space with $G_{\mathbb{R}}$-invariant density.
Our proof depends on the following result of the wave front set of induced representation:
\begin{theorem}[{\cite[Theorem 2.1]{HW17}}]\label{thm:wavefront}
Let $\mu\colon T^*X_0\to \mathfrak{g}_{\mathbb{R}}^*$ be the moment map.
Then
\[\op{WF}(L^2(X_0))=\overline{\sqrt{-1}\mu(T^*X_0)}.\]
\end{theorem}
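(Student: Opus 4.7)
The plan is to prove both inclusions in $\op{WF}(L^2(X_0)) = \overline{\sqrt{-1}\mu(T^*X_0)}$ separately, using Howe's matrix-coefficient definition $\op{WF}(L^2(X_0)) = \overline{\bigcup_{u,v}\op{WF}_e(\phi_{u,v})}$ with $\phi_{u,v}(g):=\langle g\cdot u,v\rangle$, together with the identity $\mu(T^*X_0) = G_{\mathbb{R}}\cdot \mathfrak{h}_{\mathbb{R}}^{\perp}$ recorded just after Theorem~\ref{thm:ds}. Since $\op{WF}(\Pi)$ is closed and $G_{\mathbb{R}}$-invariant, it suffices to establish both inclusions on a dense $G_{\mathbb{R}}$-invariant subset and then pass to closures.

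For $\op{WF}(L^2(X_0)) \subset \overline{\sqrt{-1}\mu(T^*X_0)}$, take $u,v\in L^2(X_0)$ and realize $\phi_{u,v}(g) = \int_{X_0} u(g^{-1}x)\overline{v(x)}\,d\nu(x)$ as the fiber integration along $p\colon G_{\mathbb{R}}\times X_0\to G_{\mathbb{R}}$ of $F:=(u\circ\alpha)(\overline{v}\circ q)$, where $\alpha(g,x)=g^{-1}x$ and $q(g,x)=x$. H\"ormander's pullback rule gives $\op{WF}_{(e,x)}(u\circ\alpha)\subset\{(-\mu(x,\zeta),\zeta) : \zeta\in \op{WF}_x(u)\}$, where $\mu(x,\zeta)\in\mathfrak{g}_{\mathbb{R}}^*$ is the moment-map value, and trivially $\op{WF}_{(e,x)}(\overline{v}\circ q)\subset \{0\}\times \op{WF}_x(\overline{v})$. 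The product rule then bounds $\op{WF}_{(e,x)}(F)$ by pairs $(-\mu(x,\zeta),\zeta+\zeta')$ with $\zeta\in \op{WF}_x(u)\cup\{0\}$, $\zeta'\in \op{WF}_x(\overline{v})\cup\{0\}$; the pushforward rule retains only pairs whose fiber component vanishes, forcing $\zeta+\zeta'=0$ and producing covectors in $\pm\mu(T^*X_0) = \mu(T^*X_0)$. Unitarity places the result inside $\sqrt{-1}\mathfrak{g}_{\mathbb{R}}^*$; taking closures over all $u,v$ completes this inclusion.

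For $\overline{\sqrt{-1}\mu(T^*X_0)} \subset \op{WF}(L^2(X_0))$, $G_{\mathbb{R}}$-invariance reduces matters to showing $\sqrt{-1}\xi_0 \in \op{WF}(L^2(X_0))$ for every $\xi_0 \in \mathfrak{h}_{\mathbb{R}}^{\perp}$, viewed as a covector at $x_0 = eH_0$ via the identification $\mathfrak{h}_{\mathbb{R}}^{\perp}\simeq T_{x_0}^*X_0$. The idea is to arrange cancelling singularities in the WF calculation above so the nonzero pushforward covector $-\mu(x_0,\xi_0) = -\xi_0$ is actually produced. A concrete choice: pick a smooth hypersurface $S\subset X_0$ through $x_0$ with conormal along $\xi_0$ and set $u=\chi_{S^+}\cdot\chi$, $v=\chi_{S^-}\cdot\chi$, where $\chi_{S^\pm}$ are characteristic functions of the two sides of $S$ and $\chi$ is a smooth bump; both $\op{WF}_{x_0}(u)$ and $\op{WF}_{x_0}(\overline{v})$ contain the conormal ray at $\xi_0$, so the cancellation step realizes $\sqrt{-1}\xi_0$ inside $\op{WF}_e(\phi_{u,v})$. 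Alternatively one may use the oscillatory test vector $u_t = e^{\sqrt{-1}\,t\psi}\chi$ with real-valued phase $\psi$ near $x_0$ and $d\psi(x_0)=\xi_0$, estimate the Gaussian-weighted Fourier transform from below via a stationary-phase analysis of
\[
\phi_{u_t,u_t}(\exp Y) = \int_{X_0} e^{\sqrt{-1}\,t(\psi(\exp(-Y)x)-\psi(x))}\chi(\exp(-Y)x)\overline{\chi(x)}\,d\nu(x),
\]
and apply Lemma~\ref{lem:FBI} to conclude $\sqrt{-1}\xi_0\in \op{WF}_e(\phi_{u_t,u_t})$.

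The main obstacle is the realization step in the lower bound. The cancellation of fiber covectors must be executed rigorously, and in the oscillatory formulation the phase $t(\psi(\exp(-Y)x)-\psi(x))$ in $(Y,x)\in\mathfrak{g}_{\mathbb{R}}\times X_0$ has a positive-dimensional critical set, corresponding to the $\mathfrak{h}_{\mathbb{R}}$-direction in which the infinitesimal action annihilates covectors; this forces a partial-stationary-phase reduction transverse to the critical manifold, with careful bookkeeping of the amplitude, the Jacobian of the $G_{\mathbb{R}}$-action, and the invariant density $d\nu$. A secondary subtlety is reaching the full closure $\overline{\sqrt{-1}\mu(T^*X_0)}$: covectors at which $\mathfrak{h}(x)$ jumps in rank or at which the moment map degenerates must be handled by limiting arguments from the regular locus, and one must verify uniformity of the constructions as $x_0$ varies in order to recover all of $G_{\mathbb{R}}\cdot\mathfrak{h}_{\mathbb{R}}^{\perp}$.
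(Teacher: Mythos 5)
This theorem is not proved in the present paper; it is quoted from \cite[Theorem~2.1]{HW17}, so there is no internal proof to compare against, and I can only evaluate your proposal on its own terms.

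Your upper-bound argument is essentially correct, and in fact the worry you raise about the product rule does not arise. Transitivity of the $G_{\mathbb{R}}$-action on $X_0$ makes $\mu(x,\cdot)\colon T_x^*X_0\to\mathfrak{g}_{\mathbb{R}}^*$ injective: the infinitesimal action $\mathfrak{g}_{\mathbb{R}}\to T_xX_0$ is surjective, so $\mu(x,\zeta)=0$ forces $\zeta=0$. Hence a covector $(-\mu(x,\zeta),\zeta)$ in $\op{WF}(u\circ\alpha)$ with $\zeta\neq 0$ has nonzero $G_{\mathbb{R}}$-component and can never be the negative of a fiber-only covector $(0,\zeta')$ from $\op{WF}(\overline{v}\circ q)$. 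H\"ormander's transversality hypothesis for the product is therefore satisfied, the pullback--product--pushforward calculus applies, and $\op{WF}_e(\phi_{u,v})\subset\sqrt{-1}\mu(T^*X_0)$ follows (the factor $\sqrt{-1}$ is a Fourier convention, not a consequence of unitarity as you state). So this direction you can consider settled.

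The genuine gap is the lower bound, which you diagnose correctly but do not close. The wave-front calculus yields only upper bounds at every step; proving $\sqrt{-1}\xi_0\in\op{WF}(L^2(X_0))$ requires a definite lower bound on an oscillatory quantity, for instance via Lemma~\ref{lem:FBI}. In your oscillatory formulation the phase $\psi(\exp(-Y)x)-\psi(x)$ has a critical manifold of positive dimension at $Y=0$ (at least the $\mathfrak{h}_{\mathbb{R}}$-direction, since $\xi_0\in\mathfrak{h}_{\mathbb{R}}^{\perp}$ annihilates the $\mathfrak{h}_{\mathbb{R}}$-tangent directions at $x_0$), and you would need to carry out stationary phase transverse to this manifold, verify nondegeneracy of the transverse Hessian, and then check that the leading coefficient is nonzero --- i.e.\ that the FBI-type quantity genuinely fails to decay, not merely that it is permitted not to. The characteristic-function alternative does not sidestep any of this; it simply relocates the nonvanishing and nondegeneracy questions into the choice of hypersurface and cutoff. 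Finally, reaching the full closure $\overline{\sqrt{-1}\mu(T^*X_0)}$ requires a limiting argument from the regular locus of $\mu$ that you acknowledge but do not execute. As written, the proposal establishes one inclusion and correctly lists, without resolving, the obstructions to the other.
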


First, we prove Theorem~\ref{thm:main2}.
According to Theorem~\ref{thm:reduction_semisimple},
 we can divide the set $\op{supp} L^2(X_0)$ as
\begin{equation}\label{eq:incl_support}
\op{supp} L^2(X_0)
\subset \widehat{G}_{\mathbb{R}}(\mathfrak{l}_X,d)
 \cup \bigcup_{\mathfrak{l}_{\mathbb{R}}}
 \widehat{G}_{\mathbb{R}}^{\mathfrak{l}_{\mathbb{R}}}
\end{equation}
for some constant $d$,
where $\mathfrak{l}_{\mathbb{R}}$
 runs over representatives of all $G_{\mathbb{R}}$-conjugacy classes
 such that $\mathfrak{l}(=\mathfrak{l}_{\mathbb{R}}\otimes \mathbb{C})$
 is $G$-conjugate to $\mathfrak{l}_X$. 
If $d$ is large enough, 
 $\pi(\mathfrak{l}_{\mathbb{R}},\Gamma_{\lambda})\in 
 \widehat{G}_{\mathbb{R}}^{\mathfrak{l}_{\mathbb{R}}}\setminus
 \widehat{G}_{\mathbb{R}}(\mathfrak{l}_X,d)$
 implies $\lambda$ is far from
 $Z(\mathfrak{l}_{\mathbb{R}})^*\setminus Z(\mathfrak{l}_{\mathbb{R}})^*_{\rm reg}$.
In view of the Langlands parameter of
 $\pi(\mathfrak{l}_{\mathbb{R}},\Gamma_{\lambda})$
 in Section~\ref{sec:quantization_semisimple}, we have
\begin{equation}\label{eq:disjoint}
\bigl(\widehat{G}_{\mathbb{R}}^{\mathfrak{l}_{\mathbb{R}}}\setminus
 \widehat{G}_{\mathbb{R}}(\mathfrak{l}_X,d)\bigr)
 \cap
\bigl(\widehat{G}_{\mathbb{R}}^{\mathfrak{l}'_{\mathbb{R}}}\setminus
 \widehat{G}_{\mathbb{R}}(\mathfrak{l}_X,d)\bigr)
 =\emptyset
\end{equation}
 if $\mathfrak{l}_{\mathbb{R}}$ and $\mathfrak{l}'_{\mathbb{R}}$
 are not $G_{\mathbb{R}}$-conjugate and if $d$ is sufficiently large.
We fix $d$ satisfying \eqref{eq:incl_support} and \eqref{eq:disjoint}.
Then we obtain the decomposition of $\op{supp} L^2(X_0)$:
\begin{equation*}
\op{supp} L^2(X_0)
= \bigl(\op{supp} L^2(X_0)\cap \widehat{G}_{\mathbb{R}}(\mathfrak{l}_X,d)\bigr)
 \sqcup \bigsqcup_{\mathfrak{l}_{\mathbb{R}}}
 \bigl( (\op{supp} L^2(X_0)\cap
 \widehat{G}_{\mathbb{R}}^{\mathfrak{l}_\mathbb{R}})
 \setminus \widehat{G}_{\mathbb{R}}(\mathfrak{l}_X,d) \bigr)
\end{equation*}
In this decomposition, we note that 
 $\op{supp} L^2(X_0)\cap \widehat{G}_{\mathbb{R}}(\mathfrak{l}_X,d)$
 is open in $\op{supp} L^2(X_0)$ and 
 $(\op{supp} L^2(X_0)\cap
 \widehat{G}_{\mathbb{R}}^{\mathfrak{l}_\mathbb{R}})
 \setminus \widehat{G}_{\mathbb{R}}(\mathfrak{l}_X,d)$
 is closed in $\op{supp} L^2(X_0)$.

Let
\[
L^2(X_0)\simeq \int_{\widehat{G}_{\mathbb{R}}}^{\oplus} \pi^{\oplus n(\pi)} dm
\]
be the irreducible decomposition.  Define 
\[
V'=\int_{\widehat{G}_{\mathbb{R}}(\mathfrak{l}_X,d)}^{\oplus}
 \pi^{\oplus n(\pi)} dm,\qquad
V_{\mathfrak{l}_{\mathbb{R}}}
=\int_{\widehat{G}_{\mathbb{R}}^{\mathfrak{l}_\mathbb{R}}
 \setminus \widehat{G}_{\mathbb{R}}(\mathfrak{l}_X,d)}^{\oplus}
 \pi^{\oplus n(\pi)} dm,
\]
and regard them as subrepresentations of $L^2(X_0)$ so we have
\[
L^2(X_0)=
 V'\oplus \bigoplus_{\mathfrak{l}_{\mathbb{R}}} V_{\mathfrak{l}_{\mathbb{R}}},\qquad
\op{WF}(L^2(X_0))= \op{WF}(V')\cup\bigcup_{\mathfrak{l}_{\mathbb{R}}}
\op{WF}(V_{\mathfrak{l}_{\mathbb{R}}}).
\]
By Lemma~\ref{lem:singular_spectrum},
 we have 
\[
\op{WF}(L^2(X_0))\cap G\cdot Z(\mathfrak{l}_X)^*_{\rm reg}
=\bigcup_{\mathfrak{l}_{\mathbb{R}}}
 \bigl(\op{WF}(V_{\mathfrak{l}_{\mathbb{R}}})\cap G\cdot Z(\mathfrak{l}_X)^*_{\rm reg}\bigr).
\]
Hence Theorem~\ref{thm:direct_integral} and Theorem~\ref{thm:wavefront} imply
\[
\overline{\sqrt{-1}\mu(T^*X_0)}\cap G\cdot Z(\mathfrak{l}_X)^*_{\rm reg}
=\bigcup_{\mathfrak{l}_{\mathbb{R}}}
\op{AC}
\Biggl(\bigcup_{\pi(\mathfrak{l}_{\mathbb{R}},\Gamma_{\lambda})
 \in \op{supp} V_{\mathfrak{l}_{\mathbb{R}}}}G_{\mathbb{R}}\cdot \lambda \Biggr)
\cap G\cdot Z(\mathfrak{l}_X)^*_{\rm reg}.
\]
Since  $(\op{supp} L^2(X_0)\cap
 \widehat{G}_{\mathbb{R}}^{\mathfrak{l}_\mathbb{R}})
 \setminus \widehat{G}_{\mathbb{R}}(\mathfrak{l}_X,d)$
 is closed in $\op{supp} L^2(X_0)$, we have
\[\op{supp} V_{\mathfrak{l}_{\mathbb{R}}}
= (\op{supp} L^2(X_0) \cap  \widehat{G}_{\mathbb{R}}^{\mathfrak{l}_{\mathbb{R}}})
 \setminus \widehat{G}_{\mathbb{R}}(\mathfrak{l}_X,d).\]
As in \eqref{eq:asymp_Xi}, we can easily show that  
\begin{align*}
\op{AC}
\Biggl(\bigcup_{\pi(\mathfrak{l}_{\mathbb{R}},\Gamma_{\lambda})
 \in \widehat{G}_{\mathbb{R}}(\mathfrak{l}_X,d)}
 G_{\mathbb{R}}\cdot \lambda \Biggr)
\cap G\cdot Z(\mathfrak{l}_X)^*_{\rm reg}=\emptyset.
\end{align*} 
Hence
\begin{align*}
&\bigcup_{\mathfrak{l}_{\mathbb{R}}}\op{AC}
\Biggl(\bigcup_{\pi(\mathfrak{l}_{\mathbb{R}},\Gamma_{\lambda})
 \in \op{supp} V_{\mathfrak{l}_{\mathbb{R}}}}G_{\mathbb{R}}\cdot \lambda \Biggr)
\cap G\cdot Z(\mathfrak{l}_X)^*_{\rm reg}\\
&=\op{AC}
\Biggl(\bigcup_{\pi(\mathfrak{l}_{\mathbb{R}},\Gamma_{\lambda})\in \op{supp} L^2(X_0)}
 G_{\mathbb{R}}\cdot \lambda \Biggr)
\cap G\cdot Z(\mathfrak{l}_X)^*_{\rm reg}.
\end{align*} 
Therefore, putting 
\[
S_{\mathfrak{l}_{\mathbb{R}}}:=
 \{\lambda\in\sqrt{-1}Z(\mathfrak{l}_{\mathbb{R}})^*_{\rm gr}
 \mid \exists \Gamma_{\lambda} \text{ such that }
 \pi(\mathfrak{l}_{\mathbb{R}},\Gamma_{\lambda})
 \in \op{supp} L^2(X_0)\},
\]
we have
\begin{equation}\label{eq:wavefront_decomp}
\overline{\sqrt{-1}\mu(T^*X_0)}\cap G\cdot Z(\mathfrak{l}_X)^*_{\rm reg}
=\bigcup_{\mathfrak{l}_{\mathbb{R}}}
\bigl(\op{AC}
(G_{\mathbb{R}}\cdot S_{\mathfrak{l}_{\mathbb{R}}})
\cap G\cdot Z(\mathfrak{l}_X)^*_{\rm reg}\bigr).
\end{equation}
This proves the equation
\[\overline{\sqrt{-1}\mu(T^*X_0)}\cap G\cdot Z(\mathfrak{l}_X)^*_{\rm reg}
=\op{AC}
\Biggl(\bigcup_{\pi(\mathcal{O},\Gamma) \in \op{supp} L^2(X_0)} \mathcal{O} \Biggr)
\cap G\cdot Z(\mathfrak{l}_X)^*_{\rm reg}\]
in Theorem~\ref{thm:main2}.
To show the remaining equation in Theorem~\ref{thm:main2},
 we replace \eqref{eq:incl_support} by 
\begin{equation*}
\op{supp} L^2(X_0)
\subset \widehat{G}_{\mathbb{R}}(\mathfrak{l}_X,d)
 \cup\bigcup_{\mathfrak{l}_{\mathbb{R}}}
 \{\pi(\mathfrak{l}_{\mathbb{R}},\Gamma_{\lambda})
 \in \widehat{G}_{\mathbb{R}}^{\mathfrak{l}_{\mathbb{R}}} \mid
 \lambda\in\mathfrak{a}_X^*\},
\end{equation*}
which was proved in Theorem~\ref{thm:reduction_semisimple}.
Then the same argument shows 
\[\overline{\sqrt{-1}\mu(T^*X_0)}\cap G\cdot Z(\mathfrak{l}_X)^*_{\rm reg}
=\op{AC}
\Biggl(\bigcup_{\substack{\pi(\mathcal{O},\Gamma) \in \op{supp} L^2(X_0)\\
 (G\cdot \mathcal{O})\cap \mathfrak{a}_X \neq \emptyset}} \mathcal{O} \Biggr)
\cap G\cdot Z(\mathfrak{l}_X)^*_{\rm reg}.\]
This completes the proof of Theorem~\ref{thm:main2}.

\bigskip

Next, we prove \eqref{eq:main} in Theorem~\ref{thm:main}.
Fix a Levi subalgebra $\mathfrak{l}_{\mathbb{R}}$ with $\mathfrak{l}\sim\mathfrak{l}_X$.
Taking the intersection of $\sqrt{-1}Z(\mathfrak{l}_{\mathbb{R}})^*_{\rm reg}$
 and \eqref{eq:wavefront_decomp}, we have
\begin{equation}\label{eq:wavefront_decomp2}
\overline{\sqrt{-1}\mu(T^*X_0)}\cap \sqrt{-1} Z(\mathfrak{l}_{\mathbb{R}})^*_{\rm reg}
=\bigcup_{\mathfrak{l}'_{\mathbb{R}}}
\op{AC}(G_{\mathbb{R}}\cdot S_{\mathfrak{l}'_{\mathbb{R}}})
\cap \sqrt{-1} Z(\mathfrak{l}_{\mathbb{R}})^*_{\rm reg}.
\end{equation}
If $\mathfrak{l}_{\mathbb{R}}=\mathfrak{l}'_{\mathbb{R}}$, then
\begin{equation*}
\op{AC}(G_{\mathbb{R}}\cdot S_{\mathfrak{l}_{\mathbb{R}}})
\cap \sqrt{-1} Z(\mathfrak{l}_{\mathbb{R}})^*_{\rm reg}
=\op{AC}(S_{\mathfrak{l}_{\mathbb{R}}})
\cap \sqrt{-1} Z(\mathfrak{l}_{\mathbb{R}})^*_{\rm reg}
\end{equation*}
by applying \eqref{eq:asymptotic_cone2}.
If $\mathfrak{l}_{\mathbb{R}}$ and $\mathfrak{l}'_{\mathbb{R}}$
 are not $G_{\mathbb{R}}$-conjugate,
 then \eqref{eq:asymptotic_cone} gives
\begin{equation*}
\op{AC}(G_{\mathbb{R}}\cdot S_{\mathfrak{l}'_{\mathbb{R}}})
\cap \sqrt{-1} Z(\mathfrak{l}_{\mathbb{R}})^*_{\rm reg}
=G_{\mathbb{R}}\cdot \bigl(\op{AC}(S_{\mathfrak{l}'_{\mathbb{R}}})
\cap \sqrt{-1} Z(\mathfrak{l}'_{\mathbb{R}})^*_{\rm reg}\bigr)
\cap \sqrt{-1} Z(\mathfrak{l}_{\mathbb{R}})^*_{\rm reg}
=\emptyset
\end{equation*}
because
 $G_{\mathbb{R}}\cdot \sqrt{-1} Z(\mathfrak{l}'_{\mathbb{R}})^*_{\rm reg}$
 does not intersect $\sqrt{-1} Z(\mathfrak{l}_{\mathbb{R}})^*_{\rm reg}$.
Therefore, the right hand side of \eqref{eq:wavefront_decomp2} equals
\begin{align*}
&\op{AC}(S_{\mathfrak{l}_{\mathbb{R}}})
\cap \sqrt{-1} Z(\mathfrak{l}_{\mathbb{R}})^*_{\rm reg} \\
&=\op{AC}\bigl(
\bigl\{\lambda\in \sqrt{-1}Z(\mathfrak{l}_{\mathbb{R}})^*_{\mathrm{gr}}
\mid \pi(\mathfrak{l}_{\mathbb{R}},\Gamma_{\lambda})\in \op{supp}L^2(X_0)\bigr\}\bigr)
\cap \sqrt{-1}Z(\mathfrak{l}_{\mathbb{R}})_{\mathrm{reg}}^*.
\end{align*}
This prove the second equation of \eqref{eq:main}.
The other equation of \eqref{eq:main}
 can be proved in the same way.

To prove the remaining assertion of Theorem~\ref{thm:main},
 we may replace $\mu(T^*X_0)$ by $\mu(T^* X_{\mathbb{R}})$,
 where $X_{\mathbb{R}}:=G_{\mathbb{R}}/H_{\mathbb{R}}$.
Indeed, if $\mathfrak{h}_{\mathbb{R}}^{\perp}:=
 \{\xi\in \mathfrak{g}_{\mathbb{R}}^*\mid \xi|_{\mathfrak{h}_{\mathbb{R}}}=0\}$,
 then $\mu(T^*X_0)=G_{\mathbb{R}}\cdot \mathfrak{h}_{\mathbb{R}}^{\perp}
 =\mu(T^* X_{\mathbb{R}})$.
The manifold $X_{\mathbb{R}}$ may not be an algebraic variety
 but a union of connected components of the $\mathbb{R}$-valued points of $G/H$.
We have $X_{\mathbb{R}}\subset X$ and for $x\in X_{\mathbb{R}}$
 there is a natural decomposition
 $T_x X=T_x X_{\mathbb{R}}\oplus \sqrt{-1}T_x X_{\mathbb{R}}$.
Hence there exists a natural inclusion $T^*X_{\mathbb{R}}\subset T^*X$.
Put $n:=\dim_{\mathbb{C}} \mu(T^* X)$.
By Theorem~\ref{thm:moment_image}, 
\[n=\dim_{\mathbb{C}} G\cdot \mathfrak{a}_X^*
 = \dim_{\mathbb{C}} \mathfrak{a}_X^* +\dim_{\mathbb{C}} \mathfrak{g}/\mathfrak{l}.\]
Define
\begin{align*}
&(T^*X)^o:=\{(x,\xi) \in T^*X \mid \xi\in G\cdot Z(\mathfrak{l}_X)^*_{\rm reg}
 \text{ and } \op{rank} d\mu_{(x,\xi)}=n\},\\
&(T^*X_{\mathbb{R}})^o:= T^*X_{\mathbb{R}}\cap (T^*X)^o.
\end{align*}
Then $(T^*X)^o$ is a Zariski open dense set in $T^*X$.
Therefore, $(T^*X_{\mathbb{R}})^o$ is open and dense in $T^*X_{\mathbb{R}}$.

Observe that
\begin{equation}\label{eq:regular_decomp}
\bigl(G\cdot Z(\mathfrak{l}_X)^*_{\rm reg}\bigr)
  \cap \mathfrak{g}_{\mathbb{R}}^*
=\bigsqcup_{\mathfrak{l}_{\mathbb{R}}}
 G_{\mathbb{R}}\cdot Z(\mathfrak{l}_{\mathbb{R}})^*_{\rm reg}.
\end{equation}
Here, as in \eqref{eq:incl_support}, $\mathfrak{l}_{\mathbb{R}}$ runs over 
 representatives of all $G_{\mathbb{R}}$-conjugacy classes
 of Levi subalgebras of $\mathfrak{g}_{\mathbb{R}}$
 such that $\mathfrak{l}\sim \mathfrak{l}_X$.
Indeed, if $\xi$ is in the left hand side of \eqref{eq:regular_decomp},
 then $\mathfrak{g}_{\mathbb{R}}(\xi)$ is $G_{\mathbb{R}}$-conjugate to
 exactly one of $\mathfrak{l}_{\mathbb{R}}$ in the right hand side
 of \eqref{eq:regular_decomp}.
Then $\xi\in Z(\mathfrak{l}_{\mathbb{R}})^*_{\rm reg}$
 for this $\mathfrak{l}_{\mathbb{R}}$.
Let $S=\sqrt{-1}Z(\mathfrak{l}_{\mathbb{R}})^*$ and
 apply Lemma~\ref{lem:asymptotic_cone}.
Since $G_{\mathbb{R}}\cdot S$ is a cone,
 $\op{AC}(G_{\mathbb{R}}\cdot S)= \overline{G_{\mathbb{R}}\cdot S}$.
Then \eqref{eq:asymptotic_cone} multiplied by $\sqrt{-1}$ becomes
\[
\overline{G_{\mathbb{R}}\cdot Z(\mathfrak{l}_{\mathbb{R}})^*}\cap
  \bigl(G\cdot Z(\mathfrak{l}_X)^*_{\rm reg}\bigr)
= G_{\mathbb{R}}\cdot Z(\mathfrak{l}_{\mathbb{R}})^*_{\rm reg}. 
\]
This shows each
 $G_{\mathbb{R}}\cdot Z(\mathfrak{l}_{\mathbb{R}})^*_{\rm reg}$
 is closed and hence also open in
 $\bigl(G\cdot Z(\mathfrak{l}_X)^*_{\rm reg}\bigr)
 \cap \mathfrak{g}_{\mathbb{R}}^*$.

Fix $\mathfrak{l}_{\mathbb{R}}$.
Suppose first that $\mu((T^*X_{\mathbb{R}})^o)$
 intersects $G_{\mathbb{R}}\cdot Z(\mathfrak{l}_{\mathbb{R}})^*$,
Then since the rank of $\mu$ equals $n$ everywhere
 on $T^*X_{\mathbb{R}}\cap (T^*X)^o$, we have 
\[\dim_{\mathbb{R}}
 \bigl(\mu((T^*X_{\mathbb{R}})^o)\cap
 G_{\mathbb{R}}\cdot Z(\mathfrak{l}_{\mathbb{R}})^*_{\rm reg}\bigr)=n.\]
By $\overline{\mu((T^*X_{\mathbb{R}})^o)}=\overline{\mu(T^*X_{\mathbb{R}})}$,
we have
 $\dim_{\mathbb{R}}
 \bigl(\overline{\mu(T^*X_{\mathbb{R}})}\cap
 G_{\mathbb{R}}\cdot Z(\mathfrak{l}_{\mathbb{R}})^*_{\rm reg}\bigr)=n.$
Since $\overline{\mu(T^*X_{\mathbb{R}})}$ is $G_{\mathbb{R}}$-stable,
 $\dim_{\mathbb{R}}
 \bigl(\overline{\mu(T^*X_{\mathbb{R}})}\cap
  Z(\mathfrak{l}_{\mathbb{R}})^*_{\rm reg}\bigr)=\dim_{\mathbb{C}}\mathfrak{a}_X^*$.
Hence \eqref{eq:moment} follows.

Suppose next that
 $\mu((T^*X_{\mathbb{R}})^o)\cap G_{\mathbb{R}}\cdot Z(\mathfrak{l}_{\mathbb{R}})^*
 =\emptyset$.
Then since $\overline{\mu((T^*X_{\mathbb{R}})^o)}=\overline{\mu(T^*X_{\mathbb{R}})}$
 and since $G_{\mathbb{R}}\cdot Z(\mathfrak{l}_{\mathbb{R}})^*_{\rm reg}$
 is open in \eqref{eq:regular_decomp}, we have 
\[\overline{\mu(T^*X_{\mathbb{R}})}\cap
  G_{\mathbb{R}}\cdot Z(\mathfrak{l}_{\mathbb{R}})^*_{\rm reg}
= \overline{\mu(T^*X_{\mathbb{R}})}\cap
  Z(\mathfrak{l}_{\mathbb{R}})^*_{\rm reg}=\emptyset.\]
Finally, as $\mu((T^*X_{\mathbb{R}})^o)$ is nonempty
 and contained in the set \eqref{eq:regular_decomp},
 $\overline{\mu(T^*X_{\mathbb{R}})}$ intersects
 $G_{\mathbb{R}}\cdot Z(\mathfrak{l}_{\mathbb{R}})^*_{\rm reg}$
 for at least one $\mathfrak{l}_{\mathbb{R}}$.
We finish the proof of Theorem~\ref{thm:main}.

\bigskip

Let us prove Theorem~\ref{thm:ds}.
There exists a local isomorphism between $T^*X_0$ and $T^*X_{\mathbb{R}}$
 so we may replace the assumption of Theorem~\ref{thm:ds} by 
\[\text{$\mu(T^*X_{\mathbb{R}})\cap
 (\mathfrak{g}_{\mathbb{R}}^*)_{\mathrm{ell}}$
 contains a nonempty open subset of $\mu(T^*X_{\mathbb{R}})$.}
\]
Let us assume this.
Take a nonempty open subset $U\subset T^*X_{\mathbb{R}}$ such that
 $\mu(U)\subset (\mathfrak{g}_{\mathbb{R}}^*)_{\mathrm{ell}}$.
Define $(T^*X)^o$ and $(T^*X_{\mathbb{R}})^o$
 as in the proof of Theorem~\ref{thm:main} above.
Since $(T^*X_{\mathbb{R}})^o$ is open and dense in $T^*X_{\mathbb{R}}$,
 we may assume $U\subset (T^*X_{\mathbb{R}})^o$.
Then by shrinking $U$ if necessary, we may further assume that
 $\mu(U)$ is a real submanifold of
 $ \bigl(G\cdot Z(\mathfrak{l}_X)^*_{\rm reg}\bigr)\cap \mathfrak{g}_{\mathbb{R}}^*$
 of dimension $n:=\dim_{\mathbb{C}} \mu(T^* X)$.
On the other hand, 
 $\mu(U)\subset \overline{G\cdot\mathfrak{a}_X^*}\cap \mathfrak{g}_{\mathbb{R}}^*$
 by Theorem~\ref{thm:moment_image}.
Since $(G\cdot\mathfrak{a}_X^*)\cap \mathfrak{g}_{\mathbb{R}}^*$
 is a semialgebraic set of (real) dimension $n$,
 we can find a vector $\xi(\neq 0) \in \mu(U)$
 and an open neighborhood $V$ of $\xi$ in $\mathfrak{g}_{\mathbb{R}}^*$
 such that 
\[V\cap \mu(U)
 =V\cap (G\cdot\mathfrak{a}_X^*)\cap \mathfrak{g}_{\mathbb{R}}^*.\]
By our assumption, the left hand side is contained
 in $(\mathfrak{g}_{\mathbb{R}}^*)_{\mathrm{ell}}$. 
If we put $\tilde{V}:=\sqrt{-1}\mathbb{R}_{>0}\cdot V$, then $\tilde{V}$ is an open cone
 containing $\sqrt{-1}\xi$ and 
\[\tilde{V}\cap (G\cdot\mathfrak{a}_X^*)\cap \sqrt{-1}\mathfrak{g}_{\mathbb{R}}^*
 \subset \sqrt{-1}(\mathfrak{g}_{\mathbb{R}}^*)_{\mathrm{ell}}.\] 
Since $\sqrt{-1}\xi\in
 \overline{\sqrt{-1}\mu(T^*X_0)}\cap (G\cdot Z(\mathfrak{l}_X)^*_{\mathrm{reg}})$,
Theorem~\ref{thm:main2} yields 
\[
\sqrt{-1}\xi\in \op{AC}\Biggl(
\bigcup_{\substack{\pi(\mathcal{O},\Gamma)\in \op{supp} L^2(X_0) \\
 (G\cdot \mathcal{O})\cap \mathfrak{a}_X^* \neq \emptyset}}
 \mathcal{O}\Biggr).
\]
Hence there exist infinitely many semisimple orbital parameter
 $(\mathcal{O}_j,\Gamma_j)$ and $\lambda_j\in \mathcal{O}_j$ such that
\begin{align*}
&\pi(\mathcal{O}_j,\Gamma_j)\in \op{supp} L^2(X_0),\quad 
 \frac{\lambda_j}{|\lambda_j|}\to \frac{\sqrt{-1}\xi}{|\sqrt{-1}\xi|}\ (j\to \infty) \text{ and }\\
&(G\cdot \mathcal{O}_j)\cap \mathfrak{a}_X^* \cap Z(\mathfrak{l}_X)^*_{\rm reg}\neq \emptyset.
\end{align*}
For large enough $j$, we have $\lambda_j\in\tilde{V}$ and then
 $\lambda_j\in\sqrt{-1}(\mathfrak{g}_{\mathbb{R}})_{\mathrm{ell}}$.
Moreover, it is easy to see that $\pi(\mathcal{O}_j,\Gamma_j)$ is an isolated point in
 the set
\[
\{\pi(\mathcal{O},\Gamma)\mid (G\cdot \mathcal{O})\cap \mathfrak{a}_X^*\neq\emptyset\}
\]
with respect to the Fell topology.
Hence it is an isolated point in $\op{supp} L^2(X_0)$.
As a consequence, $\pi(\mathcal{O}_j,\Gamma_j)$ appears in the discrete spectrum
 of the decomposition in $L^2(X_0)$ for large $j$
 and therefore $L^2(X_0)$ has infinitely many discrete series. 
This completes the proof of Theorem~\ref{thm:ds}.


\section{Examples}\label{sec:example}

\subsection{\texorpdfstring{$\op{GL}(n,\mathbb{R})/(\op{GL}(m,\mathbb{R})\times \op{GL}(k,\mathbb{Z}))$}{GL(n,R)/(GL(m,R)xGL(k,Z))}}\label{subsec:GL}

\

Let $X_0=\op{GL}(n,\mathbb{R})/(\op{GL}(m,\mathbb{R})\times \op{GL}(k,\mathbb{Z}))$ for $m+k\leq n$, where $\op{GL}(m,\mathbb{R})\times \op{GL}(k,\mathbb{Z})$ is embedded as a subgroup of $\op{GL}(n,\mathbb{R})$ in a standard way.
 Below, we compute the image of the real moment map $\mu(\sqrt{-1}T^*X_0)$ for every $n,m,k$. Combining this calculation with Theorem \ref{thm:main}, we obtain the asymptotic support of Plancherel measure for $X_0$.
The discrete group part $\op{GL}(k,\mathbb{Z})$ does not affect the moment map image or
 the asymptotic support of Plancherel measure.

\begin{proposition} 
Let $X_0=\op{GL}(n,\mathbb{R})/(\op{GL}(m,\mathbb{R})\times \op{GL}(k,\mathbb{Z}))$.
\begin{enumerate}[{\rm (i)}]
\item  \label{GLcase1} 
If $2m\leq n$, then $\mu(\sqrt{-1}T^*X_0)$ contains a Zariski open dense subset of $\sqrt{-1}\mathfrak{gl}(n,\mathbb{R})^*$. If $\mathfrak{j}_{\mathbb{R}}\subset \mathfrak{gl}(n,\mathbb{R})$ is a Cartan subalgebra, then 
\[\op{AC}\bigl(\bigl\{\lambda \in \sqrt{-1}(\mathfrak{j}_{\mathbb{R}})_{\op{reg}}^*\mid
 \pi(\mathfrak{j}_{\mathbb{R}},\Gamma_{\lambda})\in \op{supp}L^2(X_0)\bigr\}\bigr)
=\sqrt{-1}\mathfrak{j}_{\mathbb{R}}^*.\]
In particular, $\operatorname{supp}L^2(X_0)$ ``asymptotically contains the entire tempered dual of $\op{GL}(n,\mathbb{R})$''.
\item \label{GLcase2} 
If $2m> n$, form the Levi subgroup 
\[L:= \op{GL}(1,\mathbb{C})^{\times (2n-2m)} \times \op{GL}(2m-n,\mathbb{C})\]
with Lie algebra $\mathfrak{l}$. Let $\mathfrak{l}_{\mathbb{R}}\subset \mathfrak{l}$ be a real form contained in $\mathfrak{gl}(n,\mathbb{R})$, and identify $\sqrt{-1}Z(\mathfrak{l}_{\mathbb{R}})^*\simeq Z(\mathfrak{l}_{\mathbb{R}})$ by dividing by $\sqrt{-1}$ and using the trace form. Let $Z(\mathfrak{l}_{\mathbb{R}})_0$ denote the set of matrices $X_0\in Z(\mathfrak{l}_{\mathbb{R}})$ with 
\[\operatorname{rank}X_0\leq 2n-2m,\] 
and let 
\[\sqrt{-1}Z(\mathfrak{l}_{\mathbb{R}})^*_{0,\op{reg}}\subset 
\sqrt{-1}Z(\mathfrak{l}_{\mathbb{R}})^*\] 
denote the set of regular elements in the corresponding subset of 
$\sqrt{-1}Z(\mathfrak{l}_{\mathbb{R}})^*$. 
Then 
\begin{multline*}
\op{AC}\bigl(\bigl\{ \lambda \in \sqrt{-1}Z(\mathfrak{l}_{\mathbb{R}})_{\op{gr}}^*\mid
 \pi(\mathfrak{l}_{\mathbb{R}},\Gamma_{\lambda})\in \op{supp}L^2(X_0)\bigr\}\bigr)
 \cap \sqrt{-1}Z(\mathfrak{l}_{\mathbb{R}})^*_{\op{reg}} \\
 =\sqrt{-1}Z(\mathfrak{l}_{\mathbb{R}})^*_{0,\op{reg}}.
\end{multline*}
\end{enumerate}
\end{proposition}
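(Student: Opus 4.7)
The plan is to apply Theorem~\ref{thm:main} after computing the moment map image explicitly. First I would note that $H_0=\op{GL}(m,\mathbb{R})\times \op{GL}(k,\mathbb{Z})$ is unimodular with Lie algebra $\mathfrak{h}_{\mathbb{R}}=\mathfrak{gl}(m,\mathbb{R})$ (the discrete factor $\op{GL}(k,\mathbb{Z})$ contributes nothing to $\mathfrak{h}_{\mathbb{R}}$), and it is locally algebraic via the complex algebraic subgroup $H=\op{GL}(m,\mathbb{C})$ of $G=\op{GL}(n,\mathbb{C})$. By the remark following Theorem~\ref{thm:ds}, $\mu(T^*X_0)=G_{\mathbb{R}}\cdot\mathfrak{h}_{\mathbb{R}}^{\perp}$. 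Using the trace form to identify $\mathfrak{gl}(n,\mathbb{R})^{*}$ with $\mathfrak{gl}(n,\mathbb{R})$, the subspace $\mathfrak{h}_{\mathbb{R}}^{\perp}$ becomes the set of real $n\times n$ matrices whose upper-left $m\times m$ block is zero.

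The key linear-algebra step is the equality
\[
G_{\mathbb{R}}\cdot\mathfrak{h}_{\mathbb{R}}^{\perp}
 = \{Y\in\mathfrak{gl}(n,\mathbb{R}) : \op{rank}(Y)\le \min(n,2(n-m))\}.
\]
Any matrix with vanishing upper-left $m\times m$ block has its nonzero entries confined to its last $n-m$ rows and its last $n-m$ columns, so its rank is at most $2(n-m)$; since conjugation preserves rank, this gives one inclusion. Conversely, given $Y$ with $\op{rank}(Y)\le 2(n-m)$ one constructs an $m$-dimensional subspace $W'\subset\mathbb{R}^n$ with $W'\cap Y(W')=\{0\}$ by taking $\max(0,2m-n)$ vectors inside $\ker Y$ together with sufficiently generic additional vectors in a complement; a basis change sending $W'$ to $\mathbb{R}^m\subset\mathbb{R}^n$ then carries $Y$ into $\mathfrak{h}_{\mathbb{R}}^{\perp}$. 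This verification is the main obstacle, but reduces to a dimension count on incidence varieties of $m$-planes meeting $\ker Y$.

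For case \eqref{GLcase1} ($2m\le n$) the rank bound $2(n-m)\ge n$ is vacuous, so $\mu(T^*X_0)$ is Zariski dense in $\mathfrak{gl}(n,\mathbb{R})^{*}$; Theorem~\ref{thm:moment_image} then forces $\mathfrak{l}_X$ to be a Cartan subalgebra $\mathfrak{j}$ with $\mathfrak{a}_X^*=\mathfrak{j}^*$, so Theorem~\ref{thm:main} yields $\op{AC}(\cdots)=\sqrt{-1}\mathfrak{j}_{\mathbb{R}}^{*}$. Combined with the identification of the Fell closure of $\widehat{G}_{\mathbb{R}}^{\mathfrak{j}}$ with the tempered dual recorded in Section~\ref{sec:introduction}, this gives the tempered-dual assertion. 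For case \eqref{GLcase2} ($2m>n$), a generic element of $\mu(T^*X_0)$ has rank exactly $2(n-m)$ and is diagonalizable with $2(n-m)$ distinct nonzero eigenvalues together with eigenvalue $0$ of multiplicity $2m-n$; its centralizer in $\mathfrak{g}$ is $\mathfrak{l}=\mathfrak{gl}(1,\mathbb{C})^{2(n-m)}\oplus\mathfrak{gl}(2m-n,\mathbb{C})$, so $\mathfrak{l}_X=\mathfrak{l}$ by Theorem~\ref{thm:moment_image}, and $\mathfrak{a}_X^{*}$ is the hyperplane of $Z(\mathfrak{l}_X)^{*}$ on which the scalar coordinate of the $\mathfrak{gl}(2m-n,\mathbb{C})$-factor vanishes. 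Theorem~\ref{thm:main} combined with the trace-form identification of the proposition statement then gives
\[
\op{AC}(\cdots)\cap\sqrt{-1}Z(\mathfrak{l}_{\mathbb{R}})^{*}_{\op{reg}}
 = \overline{\sqrt{-1}\mu(T^*X_0)}\cap\sqrt{-1}Z(\mathfrak{l}_{\mathbb{R}})^{*}_{\op{reg}}
 = \sqrt{-1}Z(\mathfrak{l}_{\mathbb{R}})^{*}_{0,\op{reg}},
\]
since the trace-form image of $\overline{\mu(T^*X_0)}\cap Z(\mathfrak{l}_{\mathbb{R}})_{\op{reg}}$ is exactly $Z(\mathfrak{l}_{\mathbb{R}})_{0,\op{reg}}$.
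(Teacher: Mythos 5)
Your key linear-algebra claim --- that $G_{\mathbb{R}}\cdot\mathfrak{h}_{\mathbb{R}}^{\perp}$ equals the full rank variety $\{Y\in\mathfrak{gl}(n,\mathbb{R}):\operatorname{rank}(Y)\le\min(n,2(n-m))\}$ --- is false, and your constructive argument fails on precisely the matrices that witness the failure. Take $Y=I_n$ with $2m\le n$: its rank is $n$, within the bound, but $I_n$ is central, so its $\operatorname{GL}(n,\mathbb{R})$-orbit is the singleton $\{I_n\}$, which does not lie in $\mathfrak{h}_{\mathbb{R}}^{\perp}$. Your $W'$-construction cannot recover, since $W'\cap I_n(W')=W'$ never vanishes. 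When $2m>n$, take for $Y$ the orthogonal projection onto a $2(n-m)$-dimensional coordinate subspace: it has rank $2(n-m)$, yet a short computation shows no rank-$2(n-m)$ idempotent has vanishing upper-left $m\times m$ block, so $Y\notin G_{\mathbb{R}}\cdot\mathfrak{h}_{\mathbb{R}}^{\perp}$. In both cases the obstruction is a repeated (nonzero) eigenvalue: the ``generic $W'$'' dimension count you invoke handles pairs $(v,W')$ with $v$ and $Yv$ independent, but the eigenvector incidence locus $\{(v,W'):v\in W',\ Yv\in\mathbb{R} v\}$ becomes large enough to cover the Grassmannian once $Y$ has a positive-dimensional eigenspace.

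What the application actually needs, and what is true, is that $\overline{\mu(T^*X_0)}$ coincides with the rank variety, i.e.\ the moment map image is Zariski dense in it. Your construction does establish this, because for $Y$ with $2(n-m)$ distinct nonzero eigenvalues (a Zariski-dense set) the incidence count goes through and the generic $W'$ yields $W'\cap Y(W')=\{0\}$; one then takes closures. So your route is salvageable, but you must replace the asserted set-equality with a density statement and verify the genericity argument only on that generic locus. The paper sidesteps the delicacy entirely: it exhibits explicit companion-block matrices in $\mathfrak{gl}(m,\mathbb{R})^{\perp}$ realizing every admissible spectrum, whence density is immediate and one reads off exactly which regular semisimple orbits appear without any genericity analysis.
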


\begin{remark}
In the case \eqref{GLcase2}, real forms of $L$ are of the form 
\[L_{\mathbb{R}}^s= 
 \op{GL}(1,\mathbb{C})^{\times s}\times 
 \op{GL}(1,\mathbb{R})^{\times 2(n-m-s)}\times 
 \op{GL}(2m-n,\mathbb{R})\]
with $0\leq s\leq n-m$. 
For fixed $s$, we may form the larger real Levi subgroup
\[\widetilde{L_{\mathbb{R}}}^s=
 \op{GL}(2,\mathbb{R})^{\times s} \times 
  \op{GL}(1,\mathbb{R})^{\times 2(n-m-s)}\times \op{GL}(2m-n,\mathbb{R}).\]
Take a representation of the form 
\begin{equation}\label{eq:L'GL} 
\sigma_{1}\boxtimes \cdots \boxtimes \sigma_{s}\boxtimes
 \tau_{1}\boxtimes \cdots \boxtimes \tau_{2(n-m-s)} \boxtimes \tau_{\nu}
\end{equation}
where $\tau_{i}$ $(1\leq i\leq 2(n-m-s))$
 and $\tau_{\nu}$ are one-dimensional unitary representations
 and $\sigma_i$ are relative discrete series representations. 
If $P_{\mathbb{R}}^s$ is a real parabolic with Levi factor
 $\widetilde{L}_{\mathbb{R}}^s$,
 then the representations $\pi(\mathfrak{l}_{\mathbb{R}}^s,\Gamma_{\lambda})$
 with $\lambda\in Z(\mathfrak{l}_{\mathbb{R}}^{s})^{*}_{\op{gr}}$ are obtained
 by unitary parabolic induction 
 from $P_{\mathbb{R}}^s$-representations of the form \eqref{eq:L'GL}
 to $\op{GL}(n,\mathbb{R})$.

When $2m-n>1$, the condition
 $\lambda\in \sqrt{-1}Z(\mathfrak{l}_{\mathbb{R}}^s)^*_{0,\op{reg}}$
 implies that $\lambda$ vanishes on the last component
 $\mathfrak{gl}(2m-n,\mathbb{R})$ of $\mathfrak{l}_{\mathbb{R}}^s$
 and hence $\tau_{\nu}$ is trivial on the identity component of
 $\op{GL}(2m-n,\mathbb{R})$.

We remark that when $k=0$, according to a result of Benoist-Kobayashi~\cite{BK15},
 $L^2(X_0)$ is tempered if and only if $2m\leq n+1$.
\end{remark}

\begin{proof} First, we prove part \eqref{GLcase1}. Assume $n$ even, and put $p:=\frac{n}{2}$. Consider the set $\mathcal{F}_p$ consisting of all matrices of the following form:
\[ A=
\begin{pmatrix} a_1 & 0 & \cdots & 0 & b_1 & 0 & \cdots & 0\\
0 & a_2 & \cdots & 0 & 0 & b_2 & \cdots & 0\\ 
\vdots & \vdots & \ddots & \vdots & \vdots & \vdots & \ddots & \vdots\\
0 & 0 & \cdots & a_p & 0 & 0 & \cdots & b_p\\
1 & 0 & \cdots & 0 & 0 & 0 & \cdots & 0\\
0 & 1 & \cdots & 0 & 0 & 0 & \cdots & 0\\ 
\vdots & \vdots & \ddots & \vdots & \vdots & \vdots & \ddots & \vdots\\
0 & 0 & \cdots & 1 & 0 & 0 & \cdots & 0 \end{pmatrix}.\]
Each matrix $A$ has $s$ submatrices of the form
\[A_j=\begin{pmatrix} a_j & b_j\\ 1 & 0\end{pmatrix}.\]
We note that the $2n$ eigenvalues of a matrix $A\in \mathcal{F}_p$ is simply the union of the eigenvalues of these $n$ two by two matrices $A_j$. Now, for fixed eigenvalues $\lambda_1$, $\lambda_2$ with either (a) $\lambda_1$, $\lambda_2$ both real or (b) $\overline{\lambda_1}=\lambda_2$, we may choose $a_j$ and $b_j$ such that $A_j$ has the eigenvalues $\lambda_1$, $\lambda_2$ by setting $a_j:=\lambda_1+\lambda_2$ and $b_j:=-\lambda_1\lambda_2$. After identifying $\sqrt{-1}\mathfrak{gl}(n,\mathbb{R})^*\simeq \mathfrak{gl}(n,\mathbb{R})$, notice that all of the above matrices $A\in \mathcal{F}_p$ lie in $\mathfrak{gl}(m,\mathbb{R})^{\perp}\subset \mu(\sqrt{-1}L^2(X_0))$. Since two $\mathbb{C}$-diagonalizable matrices in $\mathfrak{gl}(n,\mathbb{R})$ are $\op{GL}(n,\mathbb{R})$-conjugate if, and only if they have the same eigenvalues, part \eqref{GLcase1} follows in the case $n$ even. When $n$ odd, add an extra row and column to every $A\in \mathcal{F}_p$ with all zeroes except for the desired $n$th (real) eigenvalue in the diagonal entry. 

For part \eqref{GLcase2}, put $p:=n-m$ and note $0\leq p< n/2$. 
Take complex numbers $\{\lambda_{11},\lambda_{12},\lambda_{21},\lambda_{22},\ldots,\lambda_{p1},\lambda_{p2}\}$  such that either (a) $\lambda_{k1}$, $\lambda_{k2}$ both real or (b) $\overline{\lambda}_{k1}=\lambda_{k2}$. As before, we can find a $2p$ by $2p$ matrix in $\mathcal{F}_p$ with these specified eigenvalues.
Adding extra rows and columns to get a $2n$ by $2n$ matrix $A$.
When the $2p$ eigenvalues are distinct,
 we see that the stabilizer of $A$ for 
 the adjoint action of $\operatorname{GL}(n,\mathbb{C})$ is isomorphic to
\[L:=\op{GL}(1,\mathbb{C})^{\times (2n-2m)}\times \op{GL}(2m-n,\mathbb{C}).\]
Further, we see that for every real form $\mathfrak{l}_{\mathbb{R}}$ of $\mathfrak{l}$ (as described in the remark above),
 every element of $Z(\mathfrak{l}_{\mathbb{R}})_0$
 is a conjugate of a matrix of the form $A$ as above.

Next, recall $\mu(\sqrt{-1}T^*X_0) = \op{Ad}^*(\op{GL}(n,\mathbb{R}))\cdot \mathfrak{gl}(m,\mathbb{R})^{\perp}$. We see that every $B\in \mathfrak{gl}(m,\mathbb{R})^{\perp}\subset \mathfrak{gl}(n,\mathbb{R})$ with zeroes in an $m\times m$ block in the bottom right is a sum of a matrix $B_1$ with $m$ zero columns and a matrix $B_2$ with $m$ zero rows. In particular, $\operatorname{rank} B\leq 2n-2m$. It follows that $\mathfrak{l}$ is the Levi subalgebra $\mathfrak{l}_X$ in Theorem~\ref{thm:moment_image} for $X=\op{GL}(2n)/\op{GL}(2m)$ and that the closure of the conjugates of the matrices of the form $B$ intersected with $Z(\mathfrak{l}_{\mathbb{R}})$ constitute $Z(\mathfrak{l}_{\mathbb{R}})_0$. Part \eqref{GLcase2} follows.
\end{proof}

\subsection{\texorpdfstring{$\op{Sp}(2n,\mathbb{R})/(\op{Sp}(2m,\mathbb{R})\times \op{Sp}(2k,\mathbb{Z}))$}{Sp(2n,R)/(Sp(2m,R)xSp(2k,R))}}\label{subsec:Sp}

\ 

Similarly to the previous subsection, 
 we calculate the moment map image 
 for $X_0=\op{Sp}(2n,\mathbb{R})/(\op{Sp}(2m,\mathbb{R})\times \op{Sp}(2k,\mathbb{Z}))$ with $m+k\leq n$, where $\op{Sp}(2m,\mathbb{R})\times \op{Sp}(2k,\mathbb{Z})$
 is embedded as a subgroup of $\op{Sp}(2n,\mathbb{R})$ in a standard way.

Let $G_\mathbb{R}=\op{Sp}(2n,\mathbb{R})$ and $H_\mathbb{R}=\op{Sp}(2m,\mathbb{R})$.
Let $V=\mathbb{R}^{2n}$ with a symplectic form $(\cdot,\cdot)$.
Then we identify $G_{\mathbb{R}}$ with the automorphism group of $(V, (\cdot,\cdot))$.
The Lie algebra $\mathfrak{g}_{\mathbb{R}}$ consists of $A\in \mathfrak{gl}(V)$ satisfying
\[
\langle A v_1, v_2\rangle + \langle v_1, A v_2\rangle = 0.
\]
For $A\in \mathfrak{g}_{\mathbb{R}}$, define a bilinear form $(\cdot, \cdot)_{A}$ on $V$ by 
\[(v_1,v_2)_{A}:=\langle A v_1, v_2 \rangle.\]
This form is symmetric and hence its signature $(p,q) = \op{sign}(\cdot,\cdot)_{A}$
 is defined.
Write $\op{sign}(A) := \op{sign}(\cdot,\cdot)_{A}$.

Let $V=W\oplus W'$ be an orthogonal decomposition into symplectic vector spaces
 with $\dim W=2m$.
Let 
\[\mathfrak{h}_{\mathbb{R}}:=\{A\in \mathfrak{g}_{\mathbb{R}}\mid A(W)\subset W,\ A(W')=0\}\simeq
 \mathfrak{sp}(2m,\mathbb{R}).\]
Then 
\[\mathfrak{h}_{\mathbb{R}}^{\perp}=\{A\in \mathfrak{g}_{\mathbb{R}}\mid \langle A(W), W\rangle=0\}.\]
Here and in what follows, we identify $\mathfrak{g}_{\mathbb{R}}$ with
 $\mathfrak{g}_{\mathbb{R}}^*$ by an invariant form.

\begin{lemma}\label{lem:Sp}
Let $A\in \mathfrak{g}_{\mathbb{R}}$. 
Then $A\in G_{\mathbb{R}}\cdot \mathfrak{h}^{\perp}_{\mathbb{R}}$ if and only if there exists
 a $2m$-dimensional subspace $W_1 \subset V$
 such that $\langle \cdot, \cdot \rangle|_{W_1}$ is nondegenerate
 and $(\cdot, \cdot)_A |_{W_1} = 0$.
\end{lemma}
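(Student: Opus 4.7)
The plan is to reduce the statement to a question about symplectic subspaces of $V$ and then invoke Witt's extension theorem for symplectic forms. First I would unwind the definitions. By the very definition of $\mathfrak{h}_{\mathbb{R}}^{\perp}$, an element $B\in\mathfrak{g}_{\mathbb{R}}$ lies in $\mathfrak{h}_{\mathbb{R}}^{\perp}$ if and only if the symmetric bilinear form $(\cdot,\cdot)_B$ vanishes identically on $W$. Since we are identifying $\mathfrak{g}_{\mathbb{R}}^*$ with $\mathfrak{g}_{\mathbb{R}}$ via an invariant form, the coadjoint action of $G_{\mathbb{R}}$ corresponds to the adjoint action $A\mapsto gAg^{-1}$ on matrices.

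The key computation is that for $g\in G_{\mathbb{R}}=\op{Sp}(V)$ and $A\in\mathfrak{g}_{\mathbb{R}}$,
\[
(v_1,v_2)_{gAg^{-1}}=\langle gAg^{-1}v_1,v_2\rangle
 =\langle Ag^{-1}v_1,g^{-1}v_2\rangle
 =(g^{-1}v_1,g^{-1}v_2)_A,
\]
where the middle equality uses $g\in\op{Sp}(V)$. Thus $(\cdot,\cdot)_{gAg^{-1}}$ is the pullback of $(\cdot,\cdot)_A$ by $g^{-1}$. Combining with the preceding paragraph, $A\in G_{\mathbb{R}}\cdot\mathfrak{h}_{\mathbb{R}}^{\perp}$ if and only if $(\cdot,\cdot)_{g^{-1}Ag}=0$ on $W$ for some $g$, equivalently, $(\cdot,\cdot)_A$ vanishes on the subspace $W_1:=gW$. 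Note that $W_1$ is automatically a $2m$-dimensional subspace on which $\langle\cdot,\cdot\rangle$ is nondegenerate, since $g$ preserves $\langle\cdot,\cdot\rangle$ and $\langle\cdot,\cdot\rangle|_W$ is nondegenerate. This proves the ``only if'' direction.

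For the ``if'' direction, suppose $W_1\subset V$ is a $2m$-dimensional subspace with $\langle\cdot,\cdot\rangle|_{W_1}$ nondegenerate and $(\cdot,\cdot)_A|_{W_1}=0$. Both $W$ and $W_1$ are symplectic subspaces of $(V,\langle\cdot,\cdot\rangle)$ of the same dimension, so there exists a linear isomorphism $W\to W_1$ preserving the symplectic form. By Witt's extension theorem for symplectic (or more generally nondegenerate alternating) forms, any such isomorphism between nondegenerate symplectic subspaces of $V$ extends to an element $g\in\op{Sp}(V)=G_{\mathbb{R}}$. For this $g$, we have $gW=W_1$ and hence $(\cdot,\cdot)_{g^{-1}Ag}=0$ on $W$, i.e.\ $g^{-1}Ag\in\mathfrak{h}_{\mathbb{R}}^{\perp}$, which shows $A\in G_{\mathbb{R}}\cdot\mathfrak{h}_{\mathbb{R}}^{\perp}$.

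The argument is essentially a routine unwinding, and there is no real obstacle once one recognizes that the adjoint action of $\op{Sp}(V)$ on $\mathfrak{g}_{\mathbb{R}}$ corresponds, under $A\mapsto(\cdot,\cdot)_A$, to the natural action on symmetric bilinear forms by pullback, and that Witt's theorem guarantees transitivity of $\op{Sp}(V)$ on the set of $2m$-dimensional symplectic subspaces of $V$.
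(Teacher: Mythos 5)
Your proposal is correct and follows essentially the same route as the paper: the ``only if'' direction sets $W_1=gW$, and the ``if'' direction uses the transitivity of $\op{Sp}(V)$ on nondegenerate $2m$-dimensional subspaces, which the paper phrases in terms of extending standard symplectic bases and you phrase via Witt's extension theorem — the same underlying fact.
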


\begin{proof}
If $A\in g\cdot \mathfrak{h}_{\R}^{\perp}$, then $W_1=g\cdot W$ satisfies the condition.

Conversely, suppose $W_1$ satisfies the condition in the lemma.
Then standard symplectic bases of $W_1$ and $W$ can
 be extended to a standard symplectic basis of $V$, respectively.
Hence we can find $g\in G_{\mathbb{R}}$ such that $g\cdot W = W_1$ and
 then we have $A\in g\cdot \mathfrak{h}_{\R}^{\perp}$. 
\end{proof}

For semisimple $A$, this condition is characterized by $\op{sign}(A)$.
\begin{lemma}\label{lem:Sp2}
Suppose that $A\in \mathfrak{g}_{\mathbb{R}}$ is semisimple
 and let $\op{sign}(A)=(p,q)$.
Then the following two conditions are equivalent.
\begin{enumerate}
\item \label{ExistSubsp}
There exists a $2m$-dimensional subspace $W_1 \subset V$
 such that $\langle \cdot, \cdot \rangle|_{W_1}$ is nondegenerate
 and $(\cdot, \cdot)_A |_{W_1} = 0$.
\item \label{SignCond}
 $\max\{p,q\}\leq 2n-2m$.
\end{enumerate}
\end{lemma}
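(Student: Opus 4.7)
The plan is to prove $(1) \Leftrightarrow (2)$ by analyzing the $A$-invariant decomposition of $V$ coming from the classification of semisimple elements of $\mathfrak{sp}(V,\langle\cdot,\cdot\rangle)$, and then either bounding or building $W_1$ piece by piece.

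For $(1) \Rightarrow (2)$, I would first observe that the radical of the symmetric form $(\cdot,\cdot)_A$ equals $\ker A$: indeed $v$ lies in the radical iff $\langle Av,w\rangle = 0$ for all $w \in V$, which by nondegeneracy of $\langle\cdot,\cdot\rangle$ forces $Av = 0$. Hence $(\cdot,\cdot)_A$ has signature $(p,q)$ on a nondegenerate complement to a $(2n-p-q)$-dimensional radical, so the maximal dimension of a $(\cdot,\cdot)_A$-isotropic subspace equals $\min(p,q) + (2n-p-q) = 2n - \max(p,q)$. Applying this to $W_1$ yields $2m \leq 2n - \max(p,q)$.

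For $(2)\Rightarrow(1)$, I would decompose $V$ as an $A$-invariant, $\langle\cdot,\cdot\rangle$-orthogonal direct sum of symplectic subspaces of four types: the kernel $\ker A$; $h$ hyperbolic $2$-dimensional pieces with real nonzero eigenvalues $\pm\lambda_j$, each contributing signature $(1,1)$ to $(\cdot,\cdot)_A$; $a_+ + a_-$ elliptic $2$-dimensional pieces with eigenvalues $\pm i\mu_j$, each contributing signature $(2,0)$ or $(0,2)$ according to the sign of $\mu_j$; and $c$ complex $4$-dimensional pieces with eigenvalues $\pm\lambda \pm i\mu$ ($\lambda,\mu > 0$), each contributing signature $(2,2)$. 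Then $p = 2a_+ + h + 2c$ and $q = 2a_- + h + 2c$; assuming without loss of generality that $p \geq q$ (so $a_+ \geq a_-$), the hypothesis becomes $2m \leq 2n - p$. I would then build $W_1$ as the direct sum of: (a) the full kernel $\ker A$; (b) one $2$-dimensional symplectic $(\cdot,\cdot)_A$-isotropic subspace inside each of $\lfloor h/2\rfloor$ pairs of hyperbolic pieces, using the explicit construction that for pieces with generators $v_1,v_2$ and $w_1,w_2$ and respective eigenvalues $\lambda,\mu$ the span of $v_1 + c\, w_1$ and $v_2 + d\, w_2$ with $cd = -\lambda/\mu$ works when $\lambda \neq \mu$, while the span of $v_1 + w_1$ and $v_2 - w_2$ works when $\lambda = \mu$; (c) one $2$-dimensional symplectic $(\cdot,\cdot)_A$-isotropic subspace in each of $a_-$ pairs matching a $(2,0)$-piece with a $(0,2)$-piece, the coefficients being chosen so that the positive and negative contributions to $(\cdot,\cdot)_A$ cancel while the symplectic pairing remains nondegenerate; and (d) one $2$-dimensional symplectic $(\cdot,\cdot)_A$-isotropic subspace inside each complex $4$-dimensional piece, exhibited by explicit calculation in the standard model. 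The dimensions then sum to $(2n-p-q) + 2\lfloor h/2\rfloor + 2a_- + 2c$, which equals $2n - p$ when $h$ is even and $2n - p - 1$ when $h$ is odd; in either case this is the largest even integer bounded by $2n - \max(p,q)$, and since $2m$ is even, any $2m$ satisfying (2) can be realized by choosing $W_1$ as a subspace of the constructed maximal one.

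The main technical obstacle will be the construction inside a complex $4$-dimensional piece, where the signature $(2,2)$ alone does not guarantee the existence of a $2$-dimensional subspace that is simultaneously $\langle\cdot,\cdot\rangle$-symplectic and $(\cdot,\cdot)_A$-isotropic. I plan to handle this by working in the standard model where $A$ is the block matrix $\left(\begin{smallmatrix} a & 0 \\ 0 & -a^{\top}\end{smallmatrix}\right)$ on $\mathbb{R}^4$ with $a = \left(\begin{smallmatrix}\lambda & -\mu\\ \mu & \lambda\end{smallmatrix}\right)$, writing a candidate basis $w_i = v_i + \sum_j \alpha_{ij} w_j$, solving the linear conditions $(w_1,w_1)_A = (w_2,w_2)_A = (w_1,w_2)_A = 0$ and $\langle w_1, w_2\rangle \neq 0$ directly, and verifying by a short computation that a real solution exists for every $\lambda,\mu > 0$.
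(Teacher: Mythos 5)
Your proposal follows essentially the same route as the paper: the forward direction computes the maximal dimension of a $(\cdot,\cdot)_A$-isotropic subspace to be $2n - \max\{p,q\}$, and the converse decomposes $V$ into $A$-invariant symplectic pieces of dimension $2$ or $4$ (kernel, hyperbolic, elliptic, properly complex) and builds $W_1$ piece by piece, pairing hyperbolic $(1,1)$-pieces with each other, pairing $(2,0)$-pieces with $(0,2)$-pieces, and extracting one $2$-dimensional subspace from each irreducible $4$-dimensional piece. Your explicit dimension count, verifying that the constructed subspace achieves the largest even integer $\leq 2n - \max\{p,q\}$, is a useful clarification of what the paper compresses into the phrase ``making appropriate pairs.''

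There is one small computational slip. For a pair of hyperbolic pieces with equal eigenvalue ratio $\lambda/\mu = 1$, your candidate $\op{span}(v_1 + w_1,\, v_2 - w_2)$ fails: while it is $(\cdot,\cdot)_A$-isotropic, one has
$\langle v_1 + w_1,\, v_2 - w_2\rangle = \langle v_1,v_2\rangle - \langle w_1,w_2\rangle = 1 - 1 = 0$,
so the symplectic restriction is degenerate. This is easily repaired by cross-pairing: take $v_1 + c\,w_2$ and $v_2 + d\,w_1$ with $cd = -\lambda/\mu$; isotropy again holds, and now
$\langle v_1 + c\,w_2,\, v_2 + d\,w_1\rangle = 1 - cd = 1 + \lambda/\mu$,
which is nonzero whenever $\lambda$ and $\mu$ have the same sign. (The straight pairing covers $\lambda/\mu \neq 1$ and the cross-pairing covers $\lambda/\mu \neq -1$, so together they handle all cases.) Apart from this, the argument is sound; your plan for the $4$-dimensional complex piece in the standard model does close out the remaining technical point --- for instance $\alpha = 0$, $\beta = -\lambda/\mu$ works in the parametrization you set up.
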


\begin{proof}
It is easy to see that the maximal isotropic subspace
 of $V$ with respect to the symmetric form $(\cdot,\cdot)_{A}$, which has signature $(p,q)$,
 is $2n-\max\{p,q\}$.
Hence \eqref{ExistSubsp} implies \eqref{SignCond}.

We now prove the other implication.
Since $V = \op{Im} (A) \oplus \op{Ker} (A)$, 
 by considering $A|_{\op{Im} (A)}$, our claim is reduced to the case when $\op{Im}(A)=V$.
Thus we assume $\op{rank} A = 2n$.

Since $A$ is semisimple,
 we can find an orthogonal decomposition $V=\bigoplus_i V_i$ as a symplectic vector space
 such that $A(V_i)=V_i$ and $\dim V_i=2$ or $4$. 
This follows from the classification of Cartan subalgebras of
 $\mathfrak{sp}(2n,\mathbb{R})$.
See \cite[\S 3, Type (CI)]{Sug59} for such a classification result.
Let $A_i:=A|_{V_i}$ so that $A_i$ is regarded as an element in
 $\mathfrak{sp}(2,\mathbb{R})$ or $\mathfrak{sp}(4,\mathbb{R})$.

When $\dim V_i = 4$, we may assume that it cannot decompose into
 two $A_i$-stable $2$-dimensional symplectic vector spaces.
Then $\op{sign}(A_i)=(2,2)$.
In this case, there exists a $2$-dimensional subspace $W_i\subset V_i$
 such that $\langle\cdot,\cdot\rangle$ is nondegenerate and
 $(\cdot,\cdot)_A=0$ on $W_i$.

When $\dim V_i=\dim V_{i'}=2$ and
 $\op{sign}(A_i)=\op{sign}(A_{i'})=(1,1)$ with $i\neq i'$,
 there exists a $2$-dimensional subspace $W_i\subset V_i\oplus V_{i'}$
 such that $\langle\cdot,\cdot\rangle$ is nondegenerate and
 $(\cdot,\cdot)_A=0$ on $W_i$.

Similarly, 
 when $\dim V_i=\dim V_{i'}=2$, 
 $\op{sign}(A_{i})=(2,0)$ and
 $\op{sign}(A_{i'})=(0,2)$,
 there exists a $2$-dimensional subspace $W_i\subset V_i\oplus V_{i'}$
 satisfying the same conditions.

Making appropriate pairs among $V_i$ and 
 taking sum of above $W_i$, we obtain $W$ in \eqref{ExistSubsp}.
\end{proof}

For complex Lie algebras $\mathfrak{g}\supset \mathfrak{h}$
 analogues of  Lemmas \ref{lem:Sp} and \ref{lem:Sp2} are proved in a similar and easier way.
We have for a semisimple element $A\in \mathfrak{g}$ 
\begin{equation}\label{eq:SpCond}
A\in G\cdot\mathfrak{h}^{\perp} \Leftrightarrow \op{rank} A \leq 4n-4m,
\end{equation}
where $\op{rank} A$ is the rank of $A$ viewed as a $2n$ by $2n$ matrix with complex entries.

For $0\leq r\leq n$, let
\[L^r:=\op{GL}(1,\mathbb{C})^{\times r}\times \op{Sp}(2(n-r),\mathbb{C}),\]
 the Levi subgroup of $\op{Sp}(2n,\mathbb{C})$.
By \eqref{eq:SpCond},
 the Levi subalgebra $\mathfrak{l}_X$
 in Theorem~\ref{thm:moment_image} for $X=G/H$ is 
 a Cartan algebra if $2m\leq n$;
 and $\mathfrak{l}^{2(n-m)}$ if $2m>n$.
For $s,t,u\geq 0$ with $s+2t+u\leq n$, let 
\[L_{\mathbb{R}}^{s,t,u}
= \op{U}(1)^{\times s}\times \op{GL}(1,\mathbb{C})^{\times t}\times
 \op{GL}(1,\mathbb{R})^{\times u} \times \op{Sp}(2(n-s-2t-u),\mathbb{R}).\]
Then $L_{\mathbb{R}}^{s,t,u}$ with $s+2t+u=r$
 are all the real Levi subgroups of
 $\op{Sp}(2n,\mathbb{R})$ whose complexifications are conjugate to $L^r$.
In particular, $L_{\mathbb{R}}^{s,t,u}$ for $s+2t+u=n$ are
 all the Cartan subgroups of $\op{Sp}(2n,\mathbb{R})$
 up to conjugation.

For fixed $s,t,u$, we may form the larger real Levi subgroup
\[\widetilde{L}_{\mathbb{R}}^{s,t,u}
= \op{GL}(2,\mathbb{R})^{\times t} \times 
  \op{GL}(1,\mathbb{R})^{\times u}\times \op{Sp}(2(n-2t-u),\mathbb{R}).\]
Take a representation of the form 
\begin{equation}\label{eq:L'Sp} 
\sigma_{1}\boxtimes \cdots \boxtimes \sigma_{t}
 \boxtimes \tau_{1}\boxtimes \cdots \boxtimes \tau_{2(n-m-s)}
 \boxtimes \kappa
\end{equation}
where $\tau_{i}$ are one-dimensional unitary representations,
 $\sigma_{i}$ are relative discrete series representations,
 and $\kappa$ is (a Hilbert completion of) $A_{\mathfrak{q}}(\lambda)$
 such that the Levi factor of $\mathfrak{q}$ is the complexification
 of $\op{U}(1)^{\times s} \times \op{Sp}(2(n-s-2t-u),\mathbb{R})$. 
If $P_{\mathbb{R}}^{s,t,u}$ is a real parabolic with Levi factor
 $\widetilde{L}_{\mathbb{R}}^{s,t,u}$,
 then the representations $\pi(\mathfrak{l}_{\mathbb{R}}^{s,t,u},\Gamma_{\lambda})$
 with $\lambda\in \sqrt{-1}Z(\mathfrak{l}_{\mathbb{R}}^{s,t,u})^{*}_{\op{gr}}$ are obtained
 by unitary parabolic induction 
 from $P_{\mathbb{R}}^{s,t,u}$-representations of the form \eqref{eq:L'Sp}
 to $\op{Sp}(2n,\mathbb{R})$.

Let $\lambda\in \sqrt{-1}Z(\mathfrak{l}_{\mathbb{R}}^{s,t,u})^*$.
It has $s$ parameters corresponding
 to the first component $\op{U}(1)^{\times s}$, 
 which we denote by $(a_1,\dots,a_s)\in \mathbb{R}^s$.
If $\lambda\in \sqrt{-1}Z(\mathfrak{l}_{\mathbb{R}}^{s,t,u})^*_{\op{reg}}$,
 then $a_1,\dots,a_s$ are nonzero; and 
 if one has a representation
 $\pi(\mathfrak{l}_{\mathbb{R}}^{s,t,u}, \Gamma_{\lambda})$,
 then $a_1,\dots,a_s$ are integers.
For nonnegative integers $s_1,s_2$, write 
\begin{align*}
&\sqrt{-1}Z(\mathfrak{l}_{\mathbb{R}}^{(s_1,s_2),t,u})^* \\
&:=\bigl\{\lambda\in \sqrt{-1}Z(\mathfrak{l}_{\mathbb{R}}^{s,t,u})^*\mid \#\{i \mid a_i>0\}=s_1 \text{ and } \#\{i \mid a_i<0\}=s_2\bigr\}
\end{align*}

Suppose that among $s$ parameters $(a_1,\dots,a_s)$,
 $s_1$ of them are positive and $s_2$ of them are negative.
If we regard
 $\sqrt{-1}\lambda \in Z(\mathfrak{l}_{\mathbb{R}}^{(s_1,s_2),t,u})^*
 \subset\mathfrak{g}_{\mathbb{R}}^*$
 as an element in $\mathfrak{g}_{\mathbb{R}}$,
 the signature of $\sqrt{-1}\lambda$
 defined above is $(2s_1+2t+u,2s_2+2t+u)$
 when we suitably fix a parameterization of characters of $\op{U}(1)$.
We have a decomposition 
\[
\sqrt{-1}Z(\mathfrak{l}_{\mathbb{R}}^{s,t,u})^*_{\op{reg}}
=\bigcup_{s_1+s_2=s}
\sqrt{-1}Z(\mathfrak{l}_{\mathbb{R}}^{(s_1,s_2),t,u})^*
\cap \sqrt{-1}Z(\mathfrak{l}_{\mathbb{R}}^{s,t,u})^*_{\op{reg}}.
\]

Summing up above arguments and by Theorem~\ref{thm:main}, we have the following.

\begin{proposition} 
Let $X_0=\op{Sp}(2n,\mathbb{R})/(\op{Sp}(2m,\mathbb{R})\times \op{Sp}(2k,\mathbb{Z}))$.
\begin{enumerate}[{\rm (i)}]
\item If $2m\leq n$, then $\mu(\sqrt{-1}T^*X_0)$ intersects 
 the set of regular semisimple elements in
 $\sqrt{-1}\mathfrak{g}_{\mathbb{R}}^*$. 
Take a Cartan subalgebra
\[\mathfrak{j}_{\mathbb{R}}=\mathfrak{l}^{s,t,u}_{\mathbb{R}}
= \mathfrak{u}(1)^{\oplus s} \oplus
 \mathfrak{gl}(1,\mathbb{C})^{\oplus t}\oplus
 \mathfrak{gl}(1,\mathbb{R})^{\oplus u},\]
 where $s+2t+u=n$.
Then 
\begin{multline*}
\op{AC}\bigl( \bigl\{\lambda \in
  \sqrt{-1}(\mathfrak{j}_{\mathbb{R}})_{\op{reg}}^* \mid  
 \pi(\mathfrak{j}_{\mathbb{R}},\Gamma_{\lambda})\in 
 \op{supp}L^2(X_0) \bigr\} \bigr)
 \cap \sqrt{-1}(\mathfrak{j}_{\mathbb{R}})_{\op{reg}}^* \\
= \bigcup_{s_1}
 \sqrt{-1}(\mathfrak{l}^{(s_1,s-s_1),t,u}_{\mathbb{R}}
 )^*_{\op{reg}},
\end{multline*}
where $s_1$ runs over nonnegative integers satisfying
\[\frac{2m-n+s}{2}\leq s_1\leq \frac{n-2m+s}{2}.\]
\item If $2m> n$, then take a Levi subalgebra 
 \[\mathfrak{l}_{\mathbb{R}}^{s,t,u}
= \mathfrak{u}(1)^{\oplus s} \oplus
 \mathfrak{gl}(1,\mathbb{C})^{\oplus t}\oplus
 \mathfrak{gl}(1,\mathbb{R})^{\oplus u}\oplus 
 \mathfrak{sp}(2(n-s-2t-u),\mathbb{R})\]
for nonnegative integers $s,t,u$ such that $s+2t+u=2(n-m)$.
Then 
\[\op{AC}\bigl( \bigl\{ \lambda \in
  \sqrt{-1}(\mathfrak{l}_{\mathbb{R}}^{s,t,u})_{\op{reg}}^* \mid 
 \pi(\mathfrak{l}_{\mathbb{R}}^{s,t,u},\Gamma_{\lambda})\in 
 \op{supp}L^2(X_0) \bigr\} \bigr)
 \cap \sqrt{-1}(\mathfrak{l}_{\mathbb{R}}^{s,t,u})_{\op{reg}}^*\]
equals 
$\sqrt{-1}(\mathfrak{l}^{(\frac{s}{2},\frac{s}{2}),t,u}_{\mathbb{R}})^*_{\op{reg}}$
if $s$ is even; and empty if $s$ is odd.
\end{enumerate}
\end{proposition}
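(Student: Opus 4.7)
The plan is to combine Theorem~\ref{thm:main} with the moment map analysis already developed in Lemmas~\ref{lem:Sp} and~\ref{lem:Sp2}. Since $\op{Sp}(2k,\mathbb{Z})$ is discrete, its Lie algebra is zero, so replacing the homogeneous space by $G_{\mathbb{R}}/\op{Sp}(2m,\mathbb{R})$ does not change $\mu(T^*X_0)=G_{\mathbb{R}}\cdot\mathfrak{h}_{\mathbb{R}}^{\perp}$; we may therefore work with this simpler quotient throughout. Theorem~\ref{thm:main} reduces the problem to computing
\[
\overline{\sqrt{-1}\mu(T^*X_0)}\cap\sqrt{-1}Z(\mathfrak{l}_{\mathbb{R}})^*_{\mathrm{reg}}
\]
for $\mathfrak{l}_{\mathbb{R}}$ a Cartan subalgebra (case (i)) or $\mathfrak{l}_{\mathbb{R}}=\mathfrak{l}_{\mathbb{R}}^{s,t,u}$ with $s+2t+u=2(n-m)$ (case (ii)).

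First I would verify that the complex Levi subalgebra $\mathfrak{l}_X$ of Theorem~\ref{thm:moment_image} is a Cartan subalgebra when $2m\leq n$ and is $G$-conjugate to $\mathfrak{l}^{2(n-m)}$ when $2m>n$. This follows from the complex analogue \eqref{eq:SpCond}: a generic semisimple element of $\mathfrak{h}^{\perp}$ has rank $4n-4m$, and its centralizer is a maximal torus in the first case, and is conjugate to $\mathfrak{gl}(1,\mathbb{C})^{\oplus 2(n-m)}\oplus\mathfrak{sp}(2(2m-n),\mathbb{C})$ in the second case, giving the Levi subgroups $L^0$ and $L^{2(n-m)}$ respectively. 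This identifies exactly which real Levi subalgebras $\mathfrak{l}_{\mathbb{R}}^{s,t,u}$ need to be considered.

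The main step is the signature computation. Fix $\mathfrak{l}_{\mathbb{R}}=\mathfrak{l}_{\mathbb{R}}^{s,t,u}$ of the appropriate type, and a character parameter $\lambda\in\sqrt{-1}Z(\mathfrak{l}_{\mathbb{R}}^{(s_1,s_2),t,u})^*_{\mathrm{reg}}$. Viewing $\sqrt{-1}\lambda\in\mathfrak{g}_{\mathbb{R}}$ via a suitable choice of invariant form and of parameterization of characters of $\op{U}(1)$, I would compute directly from the block decomposition that the symmetric form $(\cdot,\cdot)_{\sqrt{-1}\lambda}$ has signature $(2s_1+2t+u,\,2s_2+2t+u)$: each $\mathfrak{u}(1)$ factor contributes $(\pm 1,0)$ or $(0,\pm 1)$ according to the sign of the corresponding parameter, each $\mathfrak{gl}(1,\mathbb{C})$ factor contributes signature $(1,1)$ from its two complex conjugate eigenvalues, each $\mathfrak{gl}(1,\mathbb{R})$ factor contributes $(1,1)$ from its pair of real eigenvalues, and the central part of $\mathfrak{sp}(2(n-s-2t-u),\mathbb{R})$ is zero. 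Lemma~\ref{lem:Sp2} then shows $\sqrt{-1}\lambda\in G_{\mathbb{R}}\cdot\mathfrak{h}_{\mathbb{R}}^{\perp}$ if and only if $\max(2s_1+2t+u,\,2s_2+2t+u)\leq 2n-2m$.

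Solving this inequality with $s+2t+u=n$ yields the bounds $(2m-n+s)/2\leq s_1\leq (n-2m+s)/2$ in case (i); with $s+2t+u=2(n-m)$ it forces $s_1=s_2=s/2$, hence $s$ must be even, giving case (ii). Since $G_{\mathbb{R}}\cdot\mathfrak{h}_{\mathbb{R}}^{\perp}$ is already a cone, its intersection with $\sqrt{-1}Z(\mathfrak{l}_{\mathbb{R}})^*_{\mathrm{reg}}$ coincides with its asymptotic cone in this subspace, and Theorem~\ref{thm:main} yields the stated equalities. The only subtle point I expect is bookkeeping the signature contributions on the $\mathfrak{gl}(1,\mathbb{C})$ factors (where one must be careful that the two complex-conjugate eigenvalues always contribute a hyperbolic plane, independent of the sign of the parameter); this is a short but non-mechanical verification and is the one place where the computation could go wrong.
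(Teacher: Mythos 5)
Your strategy matches the paper's: reduce via Theorem~\ref{thm:main} to computing $\overline{\sqrt{-1}\mu(T^*X_0)}\cap\sqrt{-1}Z(\mathfrak{l}_{\mathbb{R}})^*_{\op{reg}}$, identify $\mathfrak{l}_X$ from the complex analogue \eqref{eq:SpCond}, compute $\op{sign}(\sqrt{-1}\lambda)$ factor by factor, and feed it into Lemma~\ref{lem:Sp2}. But the itemized signature contributions you give are wrong, and precisely in the place you flagged as the likely trouble spot. Each $\mathfrak{u}(1)$ factor sits inside a $2$-dimensional symplectic block on which $(\cdot,\cdot)_A$ is \emph{definite}, contributing $(2,0)$ or $(0,2)$ depending on the sign of the parameter, not $(\pm 1,0)$ or $(0,\pm 1)$. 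Each $\mathfrak{gl}(1,\mathbb{C})$ factor sits inside a $4$-dimensional symplectic block (the eigenvalues of an element of $\mathfrak{sp}$ come in $\pm$-pairs, so the four eigenvalues are $z,\bar z,-z,-\bar z$, unlike the $\op{GL}(n,\mathbb{R})$ case of \S\ref{subsec:GL} where the block is $2\times 2$); there the form is hyperbolic of signature $(2,2)$, not $(1,1)$. Only the $\mathfrak{gl}(1,\mathbb{R})$ factor correctly contributes $(1,1)$. As written, your itemized contributions sum to $(s_1+t+u,\,s_2+t+u)$, which does not equal the total $(2s_1+2t+u,\,2s_2+2t+u)$ that you then state and correctly insert into the inequality $\max(p,q)\leq 2n-2m$. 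The correct per-factor signatures do give that total, and from there the derivation of the bounds on $s_1$ in case (i) (using $s+2t+u=n$) and the forced equality $s_1=s_2=s/2$ with $s$ even in case (ii) (using $s+2t+u=2(n-m)$) is right. Everything else---discarding the discrete $\op{Sp}(2k,\mathbb{Z})$ factor, identifying $\mathfrak{l}_X$ as a Cartan when $2m\leq n$ and as a real form of $\mathfrak{l}^{2(n-m)}$ when $2m>n$, and using that $\mu(T^*X_0)$ is a cone to drop the $\op{AC}$---is sound and matches the paper's argument.
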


Note that when $k=0$,  $L^2(X_0)$ is tempered if and only if $2m\leq n$ by \cite{BK15}.

We now deduce which elliptic orbits appear in the image of moment map. 
Let $\mathfrak{t}$ be a Cartan subalgebra of $K(\simeq \op{GL}(n,\mathbb{C}))$
 and let $\epsilon_1,\dots,\epsilon_n$ be a standard basis of $\mathfrak{t}^*$.
The roots in $\mathfrak{k}$ and $\mathfrak{g}$ are as follows
\begin{align*}
&\Delta(\mathfrak{k},\mathfrak{t})=\{\epsilon_i-\epsilon_j:1\leq i,j\leq n,\ i\neq j\},\\
&\Delta(\mathfrak{g},\mathfrak{t})=\{\pm 2\epsilon_i:1\leq i\leq n\}\cup
\{\pm\epsilon_i\pm \epsilon_j:1\leq i,j\leq n,\ i\neq j\}.
\end{align*}

Suppose first that $n\geq 2m$.
This case was previously studied in \cite[Example 7.5]{HW17}.
Then the moment map image
 $\mu(T^*X_0)$ contains a regular semisimple orbit of $\mathfrak{g}_{\R}^*$.
Suppose $A$ is regular so that $\op{sign}(A)=(p,2n-p)$ for some $p$.
By Lemma~\ref{lem:Sp} and Lemma~\ref{lem:Sp2},
 $A\in \mu(T^*X_0)$ if and only if
 $2m\leq p\leq 2n-2m$.
If $n=2m$, then $\op{sign}(A)=(n,n)$ is the only possibility.
The Harish-Chandra parameters for
 discrete series of $\op{Sp}(2n,\mathbb{R})$ are given
 in terms of standard coordinates as follows:
\begin{equation} \label{eq:Sp.HCpar}
 \sum_{i=1}^{n} a_{i}\epsilon_{i}
 \text{ with } a_i\in \mathbb{Z}\text{ and }|a_1| > |a_2| > \cdots > |a_n|>0.
\end{equation}
If $(p,q)$ is the signature for the corresponding orbit,
 then $p$ is the number of positives in $\{a_1,\dots,a_n\}$
 and $q$ is the number of negatives.
As a consequence of Theorem \ref{thm:ds},
 for any given subset $S$ of $\{1,2,\dots,n\}$ with $2m\leq \# S\leq 2n-2m$,
 there exist infinitely many distinct discrete series representations of $\op{Sp}(2n,\mathbb{R})$
 which are isomorphic to subrepresentations of $L^2(X_0)$
 and has the Harish-Chandra parameters as \eqref{eq:Sp.HCpar} satisfying $\{i:a_i>0\}=S$.

Suppose next that $n < 2m$.
Then the maximal rank of $A$ is $4n-4m$.
If $\op{rank} A = 4n-4m$, then 
 $A\in \mu(T^*X_0)$ if and only if $(p,q)=(2n-2m,2n-2m)$.
Let $S$ be a subset of $\{1,\dots,2n-2m\}$ such that $\# S=n-m$.
Let $S':=\{1,\dots,2n-2m\}\setminus S$.
Let $\mathfrak{q}_S$ be a parabolic subalgebra of $\mathfrak{g}$ such that 
 the roots of its nilradical $\mathfrak{n}_S$ are 
\begin{align*}
\Delta(\mathfrak{n}_S,\mathfrak{t})
&=\{\epsilon_i\pm \epsilon_j: i\in S,\ i < j\}\cup\{2\epsilon_i:i\in S\} \\
&\cup 
\{-\epsilon_i\pm \epsilon_j: i\in S',\ i < j\}\cup\{-2\epsilon_i:i\in S'\}.
\end{align*}
The real Levi factor for $\mathfrak{q}$
 is isomorphic to $\mathfrak{u}(1)^{\oplus (2n-2m)}\oplus \mathfrak{sp}(4m-2n,\mathbb{R})$.
The elliptic coadjoint orbits with signature $(p,q)=(2n-2m,2n-2m)$
 correspond to $A_{\mathfrak{q}_S}(\lambda)$ for some $S$ as above.
Therefore, for any given $S\subset \{1,\dots,2n-2m\}$ with $\# S=n-m$, 
 there exist infinitely many parameters $\lambda$ in the good range such that 
 (Hilbert completions of) $A_{\mathfrak{q}_S}(\lambda)$ occurs
 as a discrete spectrum of $L^2(X_0)$.

In particular, we have
\begin{corollary}
$\op{Sp}(2n,\mathbb{R})/(\op{Sp}(2m,\mathbb{R})\times \op{Sp}(2k,\mathbb{Z}))$
 has discrete series for any $n,m,k$ with $m+k\leq n$.
\end{corollary}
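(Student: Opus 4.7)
The plan is to reduce the statement to Theorem~\ref{thm:ds}: it suffices to exhibit, in each case, a nonempty open subset of $\mu(T^*X_0)$ that consists of elliptic elements. Since the arithmetic factor $\op{Sp}(2k,\mathbb{Z})$ contributes no tangent vectors, its Lie algebra is zero and $\mathfrak{h}_{\mathbb{R}}=\mathfrak{sp}(2m,\mathbb{R})\subset\mathfrak{g}_{\mathbb{R}}$; consequently $\mu(T^*X_0)=G_{\mathbb{R}}\cdot\mathfrak{h}_{\mathbb{R}}^{\perp}$ agrees with the moment image for the symmetric space $\op{Sp}(2n,\mathbb{R})/\op{Sp}(2m,\mathbb{R})$. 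Thus the verification reduces directly to the signature analysis supplied by Lemma~\ref{lem:Sp2}.

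For the case $n\geq 2m$, Lemma~\ref{lem:Sp2} places every regular semisimple element of signature $(p,q)$ with $\max(p,q)\leq 2n-2m$ in $\mu(T^*X_0)$. An elliptic semisimple element necessarily has an even signature $(2r,2(n-r))$, since purely imaginary eigenvalue pairs $\pm ia_j$ each contribute $(2,0)$ or $(0,2)$. Choosing $r=m$ yields an admissible signature $(2m,2n-2m)$, and the regular elliptic set of this signature is open in $\mathfrak{g}_{\mathbb{R}}^*$ and contained in $\mu(T^*X_0)$, hence open in $\mu(T^*X_0)$ in the subspace topology. For the case $n<2m$, Lemma~\ref{lem:Sp2} forces semisimple elements of $\mu(T^*X_0)$ to have rank at most $4n-4m$; at maximal rank, combining $\max(p,q)\leq 2n-2m$ with $p+q=4n-4m$ pins down $p=q=2n-2m$. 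The elliptic elements of this signature are realized via the compact Cartan $\op{U}(1)^{\times 2(n-m)}$ inside the real Levi $L_{\mathbb{R}}^{2(n-m),0,0}$ with vanishing $\op{Sp}(2(2m-n),\mathbb{R})$ component, and together these sweep out a top-dimensional stratum of $\mu(T^*X_0)$.

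The main obstacle is the openness verification: one must confirm that the elliptic orbits identified above fill out an open subset of the semialgebraic set $\mu(T^*X_0)$. This amounts to showing that the chosen regular $\mathfrak{l}_X$-type elliptic elements in $\mathfrak{h}_{\mathbb{R}}^{\perp}$ sweep out a top-dimensional piece of $G_{\mathbb{R}}\cdot\mathfrak{h}_{\mathbb{R}}^{\perp}$, which follows from the dimension comparison in Theorem~\ref{thm:moment_image} together with the regularity of the selected parameters. Once the hypothesis of Theorem~\ref{thm:ds} is verified in both cases, the theorem yields infinitely many distinct irreducible unitary representations $(\pi,V)$ with $\Hom_{G_{\mathbb{R}}}(V,L^2(X_0))\neq\{0\}$, so $X_0$ admits a discrete series, completing the proof.
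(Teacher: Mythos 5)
Your proposal is correct and takes essentially the same route as the paper: both reduce to verifying the elliptic-openness hypothesis of Theorem~\ref{thm:ds} via the signature characterization in Lemmas~\ref{lem:Sp} and~\ref{lem:Sp2}, splitting into the cases $n\geq 2m$ (where elliptic orbits of signature $(2m,2n-2m)$ fill an open set) and $n<2m$ (where the maximal-rank elliptic orbits have forced signature $(2n-2m,2n-2m)$). Two small quibbles that do not affect the argument: $\op{Sp}(2n,\mathbb{R})/\op{Sp}(2m,\mathbb{R})$ is not a symmetric space (you only need that the discrete factor has trivial Lie algebra, so the moment image is unchanged), and in the $n<2m$ case the openness verification you flag as the ``main obstacle'' is handled by the paper implicitly through the same dimension count you cite; making it precise requires noting that $\mathfrak{h}_{\mathbb{R}}^{\perp}$ intersected with the elliptic locus of the given signature has the same local dimension as $\mathfrak{h}_{\mathbb{R}}^{\perp}$, which follows since regular points of $\mathfrak{l}_X$-type in $\mathfrak{h}_{\mathbb{R}}^{\perp}$ form a dense semialgebraic subset.
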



\bibliographystyle{amsalpha}
\bibliography{UniversalBibtex}

\end{document}